\spnewtheorem{rem}{Remark}[subsection]{\bf}{\it}
\def\AP{{\rm AP}}
\def\inter{{\mathfrak s}}
\def\Aut{{\rm Aut}}
\def\GL{{\rm GL}}
\def\Hom{{\rm Hom}}
\def\id{{\rm id}}
\def\Ker{{\rm Ker}}
\def\SL{{\rm SL}}
\def\Tr{{\rm Tr}}
\def\A{{\mathbb A}}
\def\B{{\mathbb B}}
\def\C{{\mathbb C}}
\def\D{{\mathbb D}}
\def\F{{\mathbb F}}
\def\N{{\mathbb N}}
\def\Q{{\mathbb Q}}
\def\R{{\mathbb R}}
\def\Z{{\mathbb Z}}
\def\H{{\mathbb H}}
\def\T{{\mathbb T}}
\def\aarith{{\mathfrak A}}
\def\hatz{{\hat\Z^\times}}
\def\Tr{{\rm Tr}}
\def\cA{{\mathcal A}}
\def\cB{{\mathcal B}}
\def\cC{{\mathcal C}}
\def\cF{{\mathcal F}}
\def\cO{{\mathcal O}}
\def\scal{{(\rnt,\cO)}}
\def\dar[#1]{\ar@<2pt>[#1]\ar@<-2pt>[#1]}
\def\qqq{\,,\,~\forall}
\def\dst{{\underline \D^*}}
\def\pie{{\pi_1^{\rm alg}}}
\def\gaq{{\Gamma_\Q}}
\def\caq{{\cC_\Q}}
\def\caqo{{\cC_\Q^o}}
\newcommand{\ie}{{\it i.e.\/}\ }
\newcommand{\eg}{{\it e.g.\/}\ }
\newcommand{\cf}{{\it cf.}}
\newcommand{\opcit}{{\it op.cit.\/}\ }
\def\no{\noindent}
\def\mod{{\rm mod}}
\def\id{{\mbox{Id}}}
\def\im{{\mbox{Im}}}
\def\dim{{\mbox{dim}}}
\def\Hom {{\mbox{Hom}}}
\def\trop{{\R^\flat}}
\def\rma{\R_{\rm max}}
\def\zmax{{\Z_{\rm max}}}
\def\rhmax{{\R_{\rm max}}}
\def\rmax{\R_+^{\rm max}}
\def\trop{{\R^\flat}}
\def\fr{{\rm Fr}}
\def\arith{{(\wnt,\Z_{\rm max})}}
\def\bm2{{\rm Bmod^2}}
\def\b2{{\rm Bmod^{\mathfrak s}}}
\renewcommand{\projlim}{\varprojlim}
\newcommand{\nil}[1]{}
\def\nt{\N^{\times}}
\def\wnt{{\widehat{\N^{\times}}}}
\def\rnt{{[0,\infty)\rtimes{\N^{\times}}}}
\def\arith{{(\wnt,\zmax)}}
\def\aarith{{\mathscr A}}
\def\scal{{(\rnt,\cO)}}
\def\scal1{{\hat \aarith}}
\def\scal2{{\mathscr S}}
\begin{document}

\title*{The Riemann-Roch strategy
\\ \vspace{0.3cm}
Complex lift of  the Scaling Site}
\author{Alain Connes and Caterina Consani}
\institute{Alain Connes \at IHES, 35 Route de Chartres, Bures sur Yvette 91440, France.\email{alain@connes.org}
\and Caterina Consani \at Johns Hopkins University, Baltimore MD 21218 USA.  \email{kc@math.jhu.edu}}
%
%

\maketitle

\abstract*{Each chapter should be preceded by an abstract (10--15 lines long) that summarizes the content. The abstract will appear \textit{online} at \url{www.SpringerLink.com} and be available with unrestricted access. This allows unregistered users to read the abstract as a teaser for the complete chapter. As a general rule the abstracts will not appear in the printed version of your book unless it is the style of your particular book or that of the series to which your book belongs.
Please use the 'starred' version of the new Springer \texttt{abstract} command for typesetting the text of the online abstracts (cf. source file of this chapter template \texttt{abstract}) and include them with the source files of your manuscript. Use the plain \texttt{abstract} command if the abstract is also to appear in the printed version of the book.}

\abstract{We describe the Riemann-Roch strategy which consists of adapting in characteristic zero Weil's proof, of RH in positive characteristic, following the ideas of Mattuck-Tate and Grothendieck. As a new step in this strategy we implement the technique of tropical descent that allows one to deduce existence results in characteristic one from the Riemann-Roch result over $\C$. In order to deal  with arbitrary distribution functions this technique involves the results of Bohr, Jessen and Tornehave on almost periodic functions. \newline\indent
Our main result is the construction, at the adelic level, of a complex lift of the ad\`ele class space of the rationals. We interpret this lift as a moduli space of elliptic curves endowed with a {\em triangular} structure. The equivalence relation yielding the noncommutative structure is generated by isogenies. We describe the tight relation of this complex lift with the GL(2)-system. We construct the lift of the Frobenius correspondences using the Witt construction in characteristic $1$. }

\section{Introduction}
\label{sec:1}


This paper presents our latest attempts in the quest of an appropriate geometry to localize the zeros of the Riemann zeta function. The constructions described in this article  define a complex geometry  that is a ``lift'' in characteristic zero, of the (tropical) Scaling Site. This project has undergone in the past years, several  developments that we list here below in order to frame and justify this latest work.\vspace{.04in}

\no -~The interpretation of the explicit formulas of Riemann-Weil as a trace formula for the scaling action on the ad\`ele class space of a global field \cite{Co-zeta,Meyer}.
\vspace{.03in}

\no -~The interpretation of the Riemann zeta function as a Hasse-Weil counting function \cite{CC0,CC0.5}.\vspace{.03in}

\no -~The discovery of the Arithmetic Site, and the identity between the Galois action on its points over the tropical semifield $\rmax$ with  the scaling action on the ad\`ele class space of the rational numbers \cite{CC1,CC2}.\vspace{.03in}

\no -~The discovery of the Scaling Site, the identity between its points and the ad\`ele class space of the rationals \cite{CC3}.
\vspace{.03in}

\no -~In \cite{CC4} we unveiled the tropical structure of the Scaling Site, proved the Riemann-Roch theorem  on its periodic orbits, and developed   the  theory of theta functions on these orbits.\vspace{.04in}

\no At this stage, the geometric framework that we built in characteristic one is well understood. The theory of theta functions and the Riemann-Roch formula with \emph{real valued indices} on the periodic orbits of the Scaling Site, provide a convincing reason in support of the strategy of adapting Weil's proof (of the Riemann Hypothesis in positive characteristic) by following the ideas of Mattuck and Tate, and Grothendieck \cite{grmt,Crh}. However, in the process to formulate a Riemann-Roch theorem on the square of the Scaling Site one faces a substantial difficulty. The problem, which
 is still open at this time, has to do with an appropriate definition of the sheaf cohomology (as idempotent monoid) $H^1$ (the definition of $H^0$ is straightforward and that of $H^2$ can be given  by turning Serre duality into a definition). In \cite{CC5}, we have developed the beginning of a general homological algebra machine in characteristic one (\ie for tropical structures) exactly to aim for a definition of the above $H^1$. In particular, we proved the existence of non trivial Ext-functors  and we were also able  to input the resolution of the diagonal to obtain the tropical analogue of the $\check C$ech complex.
  However, when applied to $\check C$ech covers, the presence of the null elements creates unwanted contributions to the cohomology which so far we are unable to handle. The root of this problem had been already unearthed in the Example 6.5 of \cite{yoshi}. This example provides pairs $(C,D)$, $(C',D')$ of tropical curves and divisors on them, for which the tropical invariants $r(D)$ and $r(D')$, entering in the Riemann-Roch formulas \cite{BN,gathmannrr} as a substitute for the dimension of the modules $H^0(D)$ and $H^0(D')$ are different, while the modules themselves  are isomorphic.\newline
    It is well-known that the hard part of the Riemann-Roch results of \cite{BN,gathmannrr} concerns the existence of non-trivial solutions \ie the proof of a Riemann-Roch inequality. This fact leads us now to concentrate, in our set-up, exactly on the \emph{existence theme} and to develop a technique of ``tropical descent'', with the goal to deduce existence results in characteristic one from available Riemann-Roch theorems in complex geometry.\newline
    Already in the appendix of \cite{CC4}, we pointed out the relevance of the tropicalization map in the non-archimedean resp. archimedean cases. In both cases the tropicalization associates to an analytic function $f$ in a corona a piecewise affine convex function $\tau(f)$, (on a real interval $I$), whose tropical zeros are the valuations $v(z_j)$ (resp. $-\log\vert z_j\vert $) of the zeros $z_j$ of $f$. The ensuing technique of ``tropical descent'' is reported in Sect.  \ref{subsec:3.2}. In the complex (archimedean) case the tropicalization of an analytic function $f$ in the  corona $R_1< \vert z\vert <R_2$ is a convex function in the interval $-\log R_2 < \lambda < -\log R_1$ and one obtains the real half line involved in the definition of  the Scaling Site by taking 
 $R_2=1$ and $R_1=0$. Namely, one works with the punctured unit disk $\dst:=\{q\in \C\mid 0<\vert q\vert \leq 1\}$ in $\C$.
 
  \no Moreover, the action by multiplication of $\nt$ on the real half-line, which is the key structure in the definition of the Scaling Site, lifts naturally to the operation  $f(z) \to f(z^n)$ on analytic functions. This observation provides, as a starting point, the definition of the ringed topos obtained  by endowing the topos $\dst\rtimes \nt$ (for the natural action of $\nt$ on $\dst$ given by $q\mapsto q^n$) with the structure sheaf $\cO$ of complex analytic functions. Given a pair of open sets $\Omega,\Omega'$ in $\D^*$ and an integer $n\in \nt$, with $q^n\in \Omega'$ for any $q\in \Omega$, one has a natural restriction map 
$$
\Gamma(\Omega',\cO)\to \Gamma(\Omega,\cO), \ \ f(q)\mapsto f(q^n).
$$
 The map $u:\dst\to [0,\infty)$ given by $u(q)=-\log\vert q\vert$, extends to a geometric morphism $u:\dst\rtimes \nt\to \rnt$ of toposes.

 \no This development provides a first glimpse of a complex lift of the Scaling Site.   
 The piecewise affine functions obtained as tropicalization using the Jensen formula all have integral slopes, and the   zeros have integral multiplicities. To reach more general types of convex functions requires to generalize the original Jensen framework. This is achieved by the extension of the work of Jensen as developed by Jessen, Tornehave and Bohr \cite{bohr,jessen33,JT},  to the case of analytic \emph{almost periodic functions}. In this work  the Jensen formula, which counts a finite number of zeros, is extended  to measure, by the second derivative of a convex function $\varphi$,   the \emph{density} of the zeros of an analytic function $f(z)$ 
 $$\lim_{T\to \infty} \frac{1}{2T}\{\# z\mid f(z)=0,\ Re(z) = \alpha,\  |\Im z|<T\}=\varphi''(\alpha),$$
  where $f(z)$ is analytic and almost periodic on the lines $Re(z) = \alpha$. In particular, any convex function $\varphi$ (there are  minor restrictions on $\varphi$  on intervals in which the second derivative of $\varphi$ is identical to $0$) can be obtained as the ``tropicalization'' of an analytic almost periodic function. This construction resolves the problem of realizing arbitrary functions  as tropicalizations and shows (see Sect. \ref{sec:4}) how to reach continuous divisors of the form $\int n(\lambda)\delta_\lambda d^*\lambda$ as ``tropical shadows'' of discrete almost periodic divisors. This part is a first step, in our project, in order  to handle the continuous integrals $\int f(\lambda)\Psi_\lambda d^*\lambda$ of the Frobenius correspondences  $\Psi_\lambda$ involved in the implementation of the Riemann Roch strategy to a proof of the Riemann Hypothesis (RH).\vspace{.03in}

This analytic construction supplies the useful hint that in order to construct a complex lift of  the Scaling Site one needs to implement an almost periodic imaginary direction. This amounts to use the covering of the pointed disk $\dst$ by the closed Poincar\' e half plane $\bar\H:=\{ z\in \C\mid \Im(z)\geq 0\}$ defined by the map $q(z):=\exp(2\pi i z)$, and to compactify the real direction in $\bar\H$ to a compact group  $G$.   In fact, the only requirement for the sought for group compactification $\R\subset G$ is to be $\Q^\times$-invariant. We take for $G$ the smallest available choice which is the compact dual of the discrete additive group $\Q$. The compactification of the real direction in $\bar\H$ then yields  the pro-\'etale covering $\tilde \dst$ of the punctured disk $\dst$, described as the projective limit 
$\tilde \dst:=\varprojlim (E_n,p_{(n,m)})$ 
$$
	E_n:=\dst, \ \ p_{(n,m)}: E_m\to E_n, \ \  p_{(n,m)}(z):=z^a \qqq m=na, \ z\in E_m=\dst.
	$$
Here, the indexing set $\nt$ is ordered by divisibility. At the topos level one would then consider the semidirect product $\tilde \dst\rtimes \nt$. \newline
In this paper we prefer to proceed directly at the adelic level  and consider the quotient, by the action of $\Q^\times$,  of the product of  the ad\`eles  $\A_\Q$ by $G$. The noncommutative space which we reach is thus the quotient 
$$
\caq:=\Q^\times \backslash \left( G\times \A_\Q\right)
$$

\no The first key observation of this paper (see Sect. \ref{sec:5}) is that with the above choice of $G$ the quotient space $\caq$ is \emph{identical} to the quotient
$$
\caq=P(\Q)\backslash \bar P(\A_\Q), \ \  \bar P(\A_\Q):=\{\left(
\begin{array}{cc}
 a & b \\
 0 & 1 \\
\end{array}
\right)\mid a,b\in \A_\Q\}
$$
of $\bar P(\A_\Q)$ by the action by  left multiplication of the  affine ``$aX+b$" group $P(\Q)$ of the rationals. Then, the   right action of the affine group $P(\R)$ determines a natural foliation on $\caq$ whose leaves are  one-dimensional complex curves generically isomorphic to the Poincar\' e half plane $\H$. 
The sector of the Riemann zeta function is obtained after division by the right action of $\hatz$ on $\caq$ (which naturally extends its action on the ad\`ele class space).

 In Sect.  \ref{subsec:5.3}, we consider (after division by $\hatz$) the periodic orbit $\Gamma(p)$ associated to a prime $p$. We find that $\Gamma(p)$ is the mapping torus of the multiplication by $p$ in the compact group $G$. This mapping torus is an ordinary compact space and we analyze the restriction of the above foliation by one dimensional complex leaves. We show that this foliation is of type III$_\lambda$ where $\lambda=1/p$ and that the discrete decomposition of the associated factor has natural geometric interpretation. We determine the de Rham cohomology in Proposition \ref{derham}.

In Sect.  \ref{subsec:5.4} we analyze the restriction to the classical orbit of the above foliation by one dimensional complex leaves. We show that it is of type II$_\infty$ and we give an explicit construction, based on the results of Sect.  \ref{sec:4}, of the leafwise discrete lift of continuous divisors. 

The second key observation of this paper (see Sect.  \ref{sec:6})  is the tight relation of  the noncommutative space $\caq=P(\Q)\backslash
\overline{P(\A_\Q)}$  to the $\GL(2)$-system (\cite{cmbook}). The $\GL(2)$-system was conceived as a higher dimensional generalization of the BC-system and its main  feature is its arithmetic subalgebra constructed using modular functions. After recalling in Sect.  \ref{subsec:6.1} the standard notations for the Shimura variety $Sh(\GL_2,\H^\pm)=\GL_2(\Q)\backslash
\GL_2(\A_\Q)/\C^\times$, 
 we consider in Sect.  \ref{subsec:6.2} the natural map
 \begin{equation}\label{P2gl20intro}
	\caq=P(\Q)\backslash
\overline{P(\A_\Q)}\stackrel{\theta}{\to} \GL_2(\Q)\backslash
M_2(\A_\Q)^\bullet /\C^\times	=\overline{Sh^{\rm nc}(\GL_2,\H^\pm)}
\end{equation}
 from $\caq$ to the noncommutative space $\overline{Sh^{\rm nc}(\GL_2,\H^\pm)}$ underlying the $\GL(2)$-system. At the archimedean place, the corresponding inclusion $\overline{P(\R)}\subset (M_2(\R)\smallsetminus
\{0 \})$ induces a bijection of $\overline{P(\R)}$ with the complement in $(M_2(\R)\smallsetminus
\{0 \})/\C^\times$ of the point $\infty$ given by the class of matrices with vanishing second line. This result shows that the nuance between $\caq$ and $\overline{Sh^{\rm nc}(\GL_2,\H^\pm)}$ is mainly due to the non-archimedean components. In Sect.  \ref{subsec:6.3} we use the description of the $\GL(2)$-system in terms of adelic $\Q$-lattices and interpret $\caq$   in terms of {\em parabolic} $\Q$-lattices. The key result is Theorem \ref{comparecomm} which states that the natural inclusion of parabolic $\Q$-lattices among $\Q$-lattices  remains injective, except in degenerate cases, into the space of two dimensional $\Q$-lattices up to scale. The relevance of this fact originates from the richness of the function theory on the space of two dimensional $\Q$-lattices up to scale which involves in particular  modular forms of arbitrary level.

By using the geometric interpretation of $\Q$-lattices up to scale in terms of elliptic curves endowed with pairs of elements of the Tate module, we provide in  Sect.  \ref{subsec:6.4}  the geometric interpretation of the points of  $\caq$ in terms of elliptic curves endowed with a \emph{triangular structure} (reflecting the parabolic structure of the $\Q$-lattice) and modulo the equivalence relation generated by isogenies  (Sect.  \ref{subsec:6.5}). 
In Sect.  \ref{subsec:6.6} we prove that the natural complex structure of the moduli space of triangular elliptic curves is the same as the complex structure on $\caq$ defined in Sect.  \ref{sec:5} using the right action of $P(\R)$. The right action of $P(\hat \Z)$ has a simple geometric interpretation (Sect.  \ref{subsec:6.7}) and allows one to pass from $\caq$ to $\gaq$. Finally we give in the last section (Sect.  \ref{subsec:6.8}) the geometric interpretation of the degenerate cases. 
\vspace{0.05in}
 
The beginning of this paper explains a central philosophy of our strategy which is not to focus on the zeta function itself (as a function ``per se'') but to find from the start, a geometric interpretation of the zero-cycle of its zeros. We record that the original motivation for H. Bohr was the Riemann zeta function and in particular, we recall that V.~Borchsenius and B.~Jessen proved in  \cite{BJ} (Theorems 14 and 15) the ``frightening'' result that for any value $x\neq 0$, the zeros of $(\zeta(z)-x)$ have real parts which admit $1/2$ as a limit point. More precisely, their results show that for any fixed $\sigma_1>\frac 12$, the density of zeros of $(\zeta(z)-x)$, in the strip $\sigma<\Re(z) <\sigma_1$, $\frac 12<\sigma<\sigma_1$, tends to infinity when $\sigma\to \frac 12$.\newline
 In Sect.  \ref{sec:2}, we explain why the ad\`ele class space of the rationals is a natural geometric space underlying the zeros of the Riemann zeta function. First, notice that what is really central in studying these zeros  is the ideal that the Riemann zeta function generates among holomorphic functions in a suitable domain. After applying Fourier transform, the key operation on functions of a positive real variable which generates this ideal is the summation 
\begin{equation}\label{mapEintro}
	f \mapsto E(f), \ \ E(f)(v):=\sum_{\nt} f(nv), \ \ \nt:=\{n\in \N\mid n>0\}.
\end{equation}
In Sect.  \ref{sec:2} we explain the two geometric approaches suggested by this formula. The first one is of adelic nature and derived from Tate's thesis. It consists in replacing the sum over the monoid $\nt$ by a sum over the associated group $\Q_+^\times$, at the expense of crossing the half line involved at the geometric level in \eqref{mapEintro} by a non-archimedean component. This process leads directly to the ad\`ele class space. The second approach is topos theoretic and consists in considering the topos (called the Scaling Site) which is the semi-direct product of the half-line by the  monoid $\nt$. The key fact recalled in Sect.  \ref{subsec:2.2}, is that the points  of the Scaling Site, coincide with the points of the (sector of the) ad\`ele class space. Thus while one could be tempted to dismiss at first the ad\`ele class space, the topos theoretic interpretation of its points endows it with a clear geometric status. 
We explain the unavoidable noncommutative nature of this space in Sect.  \ref{subsec:2.1}.

The Riemann-Roch strategy, and in particular the technique of tropical descent which allows one to deduce existence results in characteristic one from the Riemann-Roch result over $\C$, are explained in Sect.  \ref{sec:3}.

\begin{figure}[t]
\begin{center}
\includegraphics[scale=0.19]{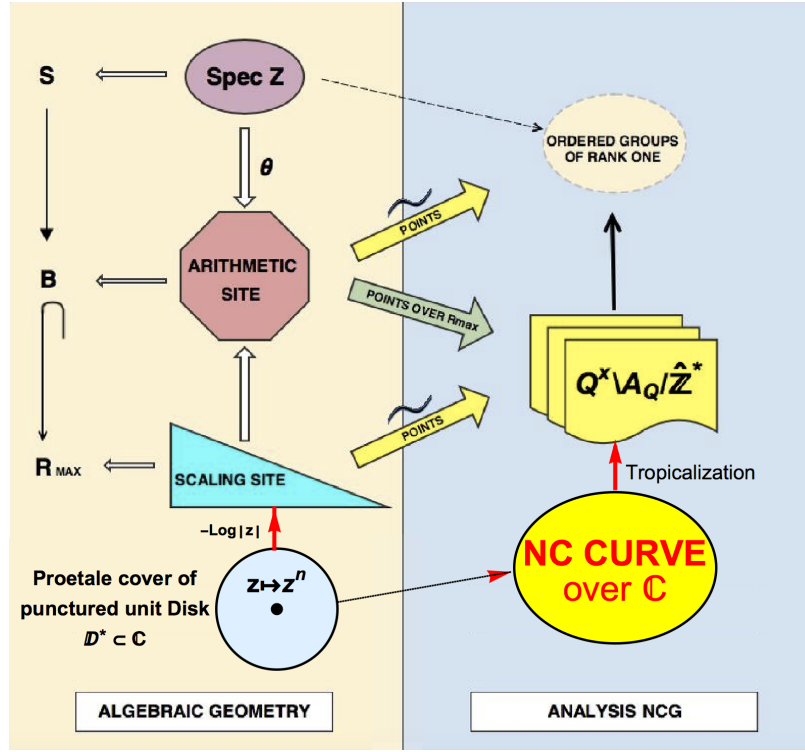}
\end{center}
\caption{Global picture \label{globalpic} }
\end{figure}

\no The framework in characteristic $1$ is perfectly adapted to the geometric role of the Frobenius. For instance, in the interpretation of the ad\`ele class space as the points of the Arithmetic Site defined over $\rmax$, the action by scaling becomes the natural action of $\Aut(\rmax)$ on these points. \newline
In the lift from characteristic $1$ to characteristic $0$ one looses the automorphisms $\Aut(\rmax)=\R_+^*$. We explain in Sect.\ref{sect:7}  the difficulty created by this loss and show in Sect. \ref{subsect:7.1} how it is resolved by the Witt construction in characteristic $1$ achieved in our previous work \cite{CCentropy, Cwitt,ccthickening}. Finally in Sect. \ref{subsect:7.2} we discuss the link between our construction of the complex lift and quantization.

 Figure \ref{globalpic} gives a visually intuitive global picture at the present time. In particular, the counterpart of $\gaq$ on the left column is the semidirect product of the  pro-\'etale cover $\tilde \dst$ of the punctured unit disk $\dst$ in the complex domain, by the natural action of $\nt$.


\section{The geometry behind the zeros of $\zeta$}
\label{sec:2}

It is important to clarify from the start  why the Riemann Hypothesis ($=$ RH), namely the problem of locating the zeros of the Riemann zeta function $\zeta(s)$, is  tightly  related to the geometry of the ad\`ele class space of the rationals $\Q$. First of all we remark that what characterizes the zeros locus of $\zeta(s)$ is not the zeta function itself rather the \emph{ideal} it generates among complex holomorphic functions in a suitable class. A key role in the description of this ideal is played by the map $E$ on functions $f(v)$ of a real positive variable $v$,  that is defined by the assignment									
\begin{equation}\label{mapE}
	f \mapsto E(f), \qquad E(f)(v):=\sum_{n\in\nt} f(nv), \quad \nt:=\{n\in \N\mid n>0\}.
\end{equation}
Notice that the map $E$ becomes, in the variable $\log v$, a sum of translations by $\log n$: $\log v\mapsto \log v+\log n $  (\ie a convolution by a sum of delta functions). Thus, after a suitable Fourier transform, $E(f)$ is a product by the Fourier transform of the sum of the Dirac masses $\delta_{\log n}$,   \ie by the function $\sum e^{-is\log n}=\zeta(is)$. Hence it should not come as a surprise that the cokernel of $E$ determines a spectral realization of the zeros of the Riemann zeta function. 

\no At this point, there are two ways of unveiling the geometric meaning of the map $E$
\begin{enumerate}
\item One may replace in \eqref{mapE} the sum over $\nt$ by a summation over the multiplicative group $\Q^*_+$ of positive rational numbers (obtained from the multiplicative monoid $\nt$ by symmetrization) with the final goal to interpret $E$ as a projection onto a quotient (of the ad\`eles of $\Q$) by the group $\Q^*_+$. This approach leads naturally to the ad\`ele class space of $\Q$, and more precisely to the sector associated to the trivial character. This construction is described  in Sect. \ref{subsec:2.1}.
\item Alternatively, one may keep the monoid $\nt$  and have it acting  on the real half line $[0,\infty)$. In this way one sees the space $\rnt$ as a Grothendieck topos. This process yields the Scaling Site of \cite{CC4} that is reviewed  in Sect. \ref{subsec:2.2}.
\end{enumerate}

 \no  The agreement of these two points of view is stated by the result, recalled in Sect. \ref{subsec:2.3}, that the points of the topos $\rnt$ coincide with the points of the (sector of the) ad\`ele class space of $\Q$. We explain the \emph{unavoidable} noncommutative nature of this space in Sect.~\ref{subsec:2.1}.

\subsection{Adelic approach}
\label{subsec:2.1}

In this part we show how the ad\`ele class space of the rationals arises naturally in connection with the study of the zeros of the Riemann zeta function. First of all notice that the summation  in \eqref{mapE} is not a summation over a group thus, in order to provide a geometric meaning to this process, we replace $\nt$ by its symmetrization \ie the group $\Q^*_+$. Then, we look for a pair $(Y,y)$ of a locally compact space $Y$ on which $\Q^*_+$ acts and a point $y\in Y$ so that the closure $F$ of the orbit $\nt y$ is {\em compact }(and also open) and  the following equivalence holds
\begin{equation}\label{Yy}
	qy \in F\iff q\in  \nt.
\end{equation}
When \eqref{Yy} holds,  one can replace the sum  in  \eqref{mapE} by the summation over the  group $\Q^*_+$ by simply considering the function $1_F\otimes f$ on the product $Y\times \R$.\newline
 A natural solution to this problem is provided by $Y=\A_f$, the finite ad\`eles of $\Q$, and by the principal ad\`ele $y=1$. One eventually achieves the minimal solution after dividing by $\hatz$. Here, we also note that the action of $\Q^*_+$ on the id\`eles cannot be used because it is a proper action.

\no A basic difficulty that one faces at this point is that the quotient of $Y\times \R=\A_\Q$ by the  action of $\Q^*_+$ is noncommutative  in the sense that classical techniques to analyze this space are here inoperative. A distinctive feature  of a noncommutative space is present  already at the level of the underlying ``set" since  a noncommutative space has the cardinality of  the continuum and at the same time it is not possible to put this space constructively in bijection with the continuum. More precisely, any explicitly constructed map from such a set to the real line fails to be injective! From these considerations one perceives immediately a major obstacle if one seeks to understand such spaces using a commutative algebra of functions. The reason why these spaces are named  ``noncommutative" is that if one accepts to use noncommuting coordinates to encode them, and one extends the traditional tools of commutative algebra to this larger noncommutative framework, everything falls correctly in place. The basic principle that one adopts is to take advantage of the  \emph{presentation} of the space as a quotient of an ordinary space (here the ad\`eles) by an equivalence relation (given here by the action of $\Q_+^*$) but then, instead of effecting the quotient in one stroke, one replaces  the equivalence relation by its convolution algebra over the complex numbers. 

\no A distinctive feature of noncommutative spaces can be seen at the level of the Borel structure allowing all sorts of countable operations on Borel functions. In the noncommutative case, the Borel structure is {\em no longer  countably separated}, in the sense that any countable family of Borel functions fails to separate points, \ie fails to be injective.  

\no Our goal in this section is to show that for whatever choice of the pair $(Y,y)$ fulfilling 
\eqref{Yy} the resulting quotient space $\Q^*_+\backslash (Y\times \R)$ is noncommutative.

\no  In Sect. \ref{subsubsec:2.1.1} we shall consider  the easier case obtained by replacing the pair $(\nt, \Q^*_+)$ with  $(\Z_{\geq 0},\Z)$. Then we show that any solution $(Y,y)$ for  $(\Z_{\geq 0},\Z)$ involves a  compactification of the discrete set $\Z_{\geq 0}$.
 In Sect. \ref{subsubsec:2.1.2} we prove, for the pair $(\Z_{\geq 0},\Z)$, the stability of ``noncommutativity". This means that given a locally compact space $R$ and a homeomorphism $S:R\to R$ whose orbit space is not countably separated,   the product action of $T\times S$ on $Y\times R$ is never countably separated, for any  auxiliary action $(Y,T)$ as in Sect. \ref{subsubsec:2.1.1}.  The strategy we follow in the subsequent Sect. \ref{subsubsec:2.1.3}, in the process of extending this result to the case of the action of $(\nt, \Q^*_+)$ on $[0,\infty)$, is reviewed by the following steps\begin{enumerate} 
 \item In the presence of a fixed point $p\in Y$ for the action of 	$\Q^*_+$, the quotient of $\{p\}\times \R_+^*$ by 	$\Q^*_+$ would be $\R_+^*/\Q^*_+$ which is not countably separated and this entails that $(Y\times \R_+^*)/\Q^*_+$ is not countably separated.
 \item If instead of a fixed point $p\in Y$ for the action of 	$\Q^*_+$ one has a fixed probability measure, then the same reasoning applies using Lemma \ref{compactsep}.
\item Using \eqref{Yy}, we construct a $\Q^*_+$-invariant probability measure on $Y$.\end{enumerate}
 Notice that condition \eqref{Yy} is essential for a meaningful development of the full strategy. Indeed, if one takes the action of  	$\Q^*_+$ on 	$Y=\Q^*_+$ by translation, the quotient $(Y\times \R_+^*)/\Q^*_+$ is the standard Borel space $\R_+^*$.

\subsubsection{Forward compactification}\label{subsubsec:2.1.1}

 To understand how to choose the pair $(Y,y)$ as in Sect. \ref{subsec:2.1}, one first considers the simpler case of the  semigroup $\nt$ replaced by the additive semigroup $\Z_{\geq 0}$ of nonnegative integers. Then the symmetrized group is $\Z$ and one looks  for a space $Y$ on which $\Z$ acts by a transformation $T$, and a point $x\in Y$ so that the closure $K$ of $T^\N x$ is  compact in $Y$ and the following equivalence is fullfilled
$$
T^nx \in K\iff n\in\Z_{\geq 0}.
$$
Next lemma states that any solution of this problem involves a compactification of the discrete set $\Z_{\geq 0}$

\begin{lemma}\label{compactif}
Let $Y$ be a locally compact space and $T\in \Aut(Y)$ an automorphism.
Let $x\in Y$ be such that the closure $K$ of the forward orbit $T^\N x$ in $Y$
is compact and the following equivalence holds 
\begin{equation}\label{tnk}
T^nx\in K\iff n\geq 0.	
\end{equation}
Then the map $\N\ni n\mapsto T^nx\in K$ turns $K$ into a compactification of
the discrete set $\Z_{\geq 0}=\{n\in \Z\mid n\geq 0\}$.
\end{lemma}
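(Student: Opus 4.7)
The statement amounts to checking that the map $\iota: n \mapsto T^n x$ realises the compact Hausdorff space $K$ as a compactification of $\Z_{\geq 0}$ with its discrete topology. Concretely, three things need to be established: (i) $\iota$ is injective, (ii) $\iota(\N)$ is dense in $K$, and (iii) each point $\iota(n)=T^n x$ is isolated in $K$ (so that $\iota$ is a topological embedding of the discrete space into $K$). Density is immediate from the definition of $K$ as the closure of the forward orbit.

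Injectivity I would deduce from \eqref{tnk} by contradiction: if $T^n x = T^m x$ with $n>m\geq 0$, then setting $p=n-m\geq 1$ gives $T^p x = x$, hence $T^{-p}x = x \in K$, which violates \eqref{tnk} since $-p<0$. So $\iota$ is injective.

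The main obstacle is step (iii), i.e.\ showing that $T^n x$ is isolated in $K$ for every $n\geq 0$. My plan is to argue by contradiction, assuming there is a net $y_\alpha \in K\setminus\{T^n x\}$ with $y_\alpha\to T^n x$. Since the forward orbit is dense in $K$ and $\iota$ is injective, I can refine to a net of the form $T^{m_\alpha}x \to T^n x$ with $m_\alpha\in \N$, $m_\alpha\neq n$. Applying the homeomorphism $T^{-n}$ yields $T^{k_\alpha}x\to x$ with $k_\alpha:=m_\alpha-n \in \Z\setminus\{0\}$. Now I split into two cases. If some cofinal subnet has $k_\alpha\geq 1$, then $T^{k_\alpha-1}x\to T^{-1}x$; but each $T^{k_\alpha-1}x$ lies in the closed set $K$ (since $k_\alpha-1\geq 0$), so $T^{-1}x\in K$, contradicting \eqref{tnk}. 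Otherwise the net is eventually in $\{k_\alpha\leq -1\}$, i.e.\ $m_\alpha\in\{0,1,\dots,n-1\}$; but a net in a finite set that converges must eventually be constant, forcing $T^{m_0}x = T^n x$ for some $m_0\neq n$, contradicting injectivity just proved.

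Combining (i)-(iii), the map $\iota$ is an injective continuous map from the discrete space $\Z_{\geq 0}$ into the compact Hausdorff space $K$ whose image is both discrete in $K$ and dense in $K$; this is precisely the assertion that $K$ is a compactification of $\Z_{\geq 0}$ via $\iota$. The subtle ingredient, and the one driving the whole argument, is the forward-invariance of $K$ (i.e.\ $T^k x\in K$ for $k\geq 0$) together with the failure $T^{-1}x\notin K$ encoded in \eqref{tnk}; these are exactly what allow the two cases in step (iii) to be ruled out.
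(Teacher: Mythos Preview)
Your proof is correct. The three ingredients (injectivity, density, isolation) are exactly what is needed, and your net argument for isolation is sound: the dichotomy ``$\{\alpha : k_\alpha\geq 1\}$ cofinal'' versus ``eventually $k_\alpha\leq -1$'' is a genuine dichotomy for directed sets, and each branch reaches a contradiction as you describe.

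The paper's argument is different in spirit: it is constructive rather than by contradiction. Setting $V=Y\setminus K$, the paper observes that $T^jx\in T^nV$ iff $T^{j-n}x\notin K$ iff $j<n$, and combines this with the general fact $U\cap\overline{A}\subseteq\overline{U\cap A}$ (for $U$ open) to conclude that the open set $T^nV\cap K$ equals exactly $\{T^jx\mid 0\leq j<n\}$. This finite set is then open in $K$, and since points are closed one peels off each $\{T^jx\}$ as open by induction. So the paper directly \emph{exhibits} an open neighborhood of $T^jx$ in $K$ meeting the orbit only in $\{T^0x,\dots,T^jx\}$, whereas you argue indirectly that no accumulation can occur. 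Your approach has the merit of making the role of injectivity and of the condition $T^{-1}x\notin K$ very transparent; the paper's approach is shorter and identifies the relevant open sets explicitly, which is useful later when one wants to compare with concrete models such as $\Z(+\infty)$.
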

\begin{proof} It is enough to prove that for $n\in\N$ the subset $\{T^n x\}\subset  K$ is open (\ie that $T^nx$  is isolated in $K$).  Note that the complement $V=K^c$ of $K$ in $Y$ is open as well as $T^nV$ for any
$n\in \Z$, and that the intersection $T^nV\cap K$  is contained in the closure (in $Y$) of $T^nV\cap T^\N x$. Next, note that \eqref{tnk} is equivalent to $T^ux\in V\iff u< 0$ and this equivalence implies   $T^jx\in T^nV\iff j<n$.
Thus for $n>0$, one gets
$$
T^nV\cap K=\{T^jx\mid 0\leq j<n\}.
$$
 Since a point of $K$ is closed, it follows that each $\{T^jx\}$ is open in $K$. \qed\end{proof}

 The simplest compactification $K$ of the discrete set $\Z_{\geq 0}$ is the Alexandrov compactification $K=\Z_{\geq 0}\cup \{\infty\}$ obtained by  adding a limit point $\infty$.  The open subsets of $K$ containing $\infty$ are the complements of finite subsets of  $\Z_{\geq 0}$.  More generally, the  Alexandrov compactification  
of a locally compact space $X$  is obtained by adding a point at infinity and the obtained pointed  space $X\cup \{\infty\}$ admits as open sets the open subsets of $X$ and the complements of compact subsets of $X$. It 
 is described by the following universal property. 
For every pointed compact Hausdorff space $(Y,*)$
 and every continuous map $f:X\to Y$ such that $f^{-1}(K)$  
 is compact for all compact sets 
$K\subset Y$
 not containing the base point $*$, there is a unique basepoint-preserving continuous map 
 that extends $f$. When passing to the associated $C^*$-algebra,  the one point compactification just means  adjoining a unit. At  the $C^*$-level, this is the smallest compactification, but since the functor $X\mapsto C_0(X)$ is contravariant, one needs to express this fact dually. From a categorical point of view, it means that the one-point compactification is a \emph{final} object among the compactifications of a given locally compact space $X$, where morphisms of compactifications are continuous maps $g:X_1\to X_2$ which restrict to the identity on $X\subset X_j$. \newline
 Taking $K=\Z_{\geq 0}\cup \{\infty\}$ yields the following minimal solution $(Y,T)$ of \eqref{tnk}.
 \begin{lemma}\label{compactif1}
Let $\Z(+\infty)$ be the union $\Z\cup \{\infty\}$ endowed with the topology whose restriction to $\Z$ is discrete and where the intervals $[m,\infty]$ form a basis of neighborhoods of $\infty$. Then $\Z(+\infty)$ is locally compact, the translation $T(m):=m+1$, $T(\infty)=\infty$ defines a homeomorphism of $\Z(+\infty)$ and any $x\in \Z$ fulfills \eqref{tnk}.
\end{lemma}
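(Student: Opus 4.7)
The plan is to verify the three assertions of the lemma in order. The statement is essentially a matter of unpacking the prescribed basis at $\infty$, so no deep obstacle is expected; the only substantive point is compactness of the neighborhoods $[m,\infty]$ of the point at infinity.

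I would begin with local compactness. At any $n\in\Z$ the discreteness of the induced topology gives the singleton $\{n\}$ as a compact open neighborhood. At $\infty$ it suffices to show that each basic neighborhood $[m,\infty]=\{m,m+1,\dots\}\cup\{\infty\}$ is compact: given any open cover, at least one member must contain $\infty$ and hence contain some $[k,\infty]$; this leaves only the finitely many isolated integers in $[m,k)$ still to be covered, which is accomplished by finitely many further elements of the cover.

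Next I would check that $T$ is a homeomorphism. Bijectivity is immediate, the action on the discrete part $\Z$ is automatically bi-continuous, and $T^{\pm 1}$ sends the basic neighborhood $[m,\infty]$ of $\infty$ to $[m\pm 1,\infty]$, preserving the prescribed neighborhood basis at $\infty$.

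Finally, for any $x\in\Z$ the forward orbit is $T^\N x=\{x,x+1,x+2,\dots\}$. Its closure must contain $\infty$, since every basic neighborhood $[m,\infty]$ of $\infty$ meets the orbit in a cofinite set; conversely no point of $\Z$ outside the orbit can be adherent to it because $\Z$ carries the discrete topology. Thus $K=[x,\infty]$, which is compact by the first step, and $T^n x=x+n$ lies in $K$ iff $n\geq 0$, which is exactly \eqref{tnk}. The only mildly delicate point to articulate is that no ``negative'' integer $x+n$ ($n<0$) belongs to $K$, but this is immediate from the discreteness of the topology on $\Z$.
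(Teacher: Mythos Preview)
Your proof is correct and essentially the same as the paper's, only more detailed: the paper simply observes that $\Z(+\infty)$ is the disjoint union of the discrete set of negative integers with the Alexandrov compactification $\Z_{\geq 0}\cup\{\infty\}$, from which all three assertions follow, whereas you verify compactness of $[m,\infty]$ and the properties of $T$ by hand. Both amount to the same elementary verification.
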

 \begin{proof} By construction $\Z(+\infty)$ is the disjoint union of the discrete space of negative integers with the  Alexandrov compactification $K=\Z_{\geq 0}\cup \{\infty\}$.\qed \end{proof}

\begin{rem}\label{adeliccompactif}
	As a topological space the quotient $Y=\A_f/\hatz$
 is the restricted product of the spaces $\Q_p/\Z_p^*$ each of which is isomorphic to $\Z(+\infty)$ using the $p$-adic valuation. Thus Lemma \ref{compactif} shows that for each rational prime $\Q_p/\Z_p^*$ is the minimal solution of \eqref{tnk} for the multiplication  by $p$. It is  in this sense that $Y=\A_f/\hatz$ is the minimal solution of \eqref{Yy}.
\end{rem}

\subsubsection{Stability of noncommutative nature of quotients} \label{subsubsec:2.1.2}

Let now $R$ be a locally compact space endowed with an action of $\Z$ given by an homeomorphism $S:R\to R$, such that the space of the orbits is \emph{not} countably separated. In this section we show that for any  auxiliary action $(Y,T)$ as in Lemma \ref{compactif}   the product action of $T\times S$ on $Y\times R$ is \emph{never} countably separated.  
In order to prove this result (Proposition \ref{measures}) we first state the following standard fact
 
 \begin{lemma}\label{compactsep} \ \begin{enumerate}\item[(i)] Let $X$ be a compact metrizable space. Then the set of compact subsets of $X$ is countably separated. 
 \item[(ii)] The quotient of a compact metrizable space by an equivalence relation whose orbits are closed is always countably separated.
 \item[(iii)] The space of probability measures on a standard Borel space is countably separated.
 \end{enumerate}
 \end{lemma}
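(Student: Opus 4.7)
My plan is to prove each assertion by exhibiting a countable family of Borel real-valued functions separating points, which is equivalent to the Borel structure being countably separated.

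\textbf{For (i):} endow the hyperspace $\mathcal{K}(X)$ of nonempty compact subsets of $X$ with the Hausdorff metric $d_H$ attached to a compatible metric $d$ on $X$; this makes $\mathcal{K}(X)$ itself compact and metrizable, hence separable. Fixing a countable dense set $\{x_k\}_{k\geq 1}\subset X$, the continuous maps $F_k : K \mapsto \dist(x_k,K)$ form the required countable separating family: if $K\neq K'$, some $y\in K\setminus K'$ satisfies $\dist(y,K')>0$, and any $x_k$ close enough to $y$ forces $F_k(K)<F_k(K')$.

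\textbf{For (ii):} each equivalence class is closed in the compact $X$ and therefore compact, so the class map $\phi : x\mapsto [x]$ has values in $\mathcal{K}(X)$ and passes to an injection $\bar\phi : X/\sim \;\hookrightarrow\; \mathcal{K}(X)$. The countable family $\{F_k\circ\phi\}$ of part (i) consists of $\sim$-invariant functions on $X$ which descend to a countable separating family on the quotient Borel space, once we verify that each $F_k\circ\phi$ is Borel on $X$. This is where the hypothesis of closed classes comes in: for any open ball $B_r(x_k)\subset X$ the level set $\{x : \dist(x_k,[x])<r\}$ equals the $\sim$-saturation $\pi^{-1}\pi(B_r(x_k))$; in the concrete situations encountered later in the paper the graph of $\sim$ is analytic (typically closed or Borel), so this saturation is Borel in $X$.

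\textbf{For (iii):} a standard Borel space $(Z,\mathcal{B})$ is Borel-isomorphic to a Borel subset of a Polish space, hence its $\sigma$-algebra admits a countable generating algebra $\{A_n\}_{n\geq 1}$. The evaluation maps $e_n : \mu\mapsto \mu(A_n)$ on the space $P(Z)$ of Borel probability measures on $Z$ are then Borel, and by the monotone class theorem two probability measures coinciding on a generating algebra are equal, so $\{e_n\}$ is a countable separating Borel family on $P(Z)$.

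The only step which requires care is the Borelness of the class map $\phi$ in (ii): the hypothesis that each orbit is closed is slightly weaker than closedness of the graph $R\subset X\times X$, but in every application in the sequel the relation $R$ is at least analytic, which makes the $\sim$-saturations of open sets Borel and the whole argument goes through.
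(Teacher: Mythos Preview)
Your arguments for (i) and (iii) are correct and close in spirit to the paper's, though the paper packages them slightly differently: for (i) it bypasses the Hausdorff metric and instead, for each $n$, fixes a finite $\epsilon_n$-net $F_n\subset X$ and sends an open set $V$ to the subset $\{t\in F_n\mid B(t,\epsilon_n)\subset V\}$ of $F_n$, obtaining an injection of the open (hence compact) subsets into the countable product $\prod_n 2^{F_n}$; for (iii) it views $P(Z)$ as the state space of the separable $C^*$-algebra $C(\tilde Z)$ for a compact metrizable model $\tilde Z$ and separates states by a countable dense family of continuous functions. Your distance-function and generating-algebra arguments achieve the same thing with no extra cost.

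The genuine point of divergence is (ii). The paper's proof is one line: closed orbits in a compact space are compact, so the set of orbits injects into the set of compact subsets of $X$, and ``a subset of a countably separated set is also countably separated''. In other words, the paper treats countable separation purely as a property of the \emph{set} of orbits, without ever checking that the separating family descends to Borel sets for the quotient Borel structure. You are more scrupulous here: you correctly observe that for the functions $F_k\circ\phi$ to be Borel on the quotient one needs the class map $x\mapsto [x]$ to be Borel into $\mathcal K(X)$, which is guaranteed once the graph of $\sim$ is Borel (or analytic) but not by the mere closedness of each orbit. Your caveat is well taken; the paper simply does not raise this issue, and in its applications the equivalence relations come from actions of countable groups by homeomorphisms, so the graph is closed and your concern evaporates. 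Flagging it as you do is reasonable, but you should not present it as a gap in \emph{your} proof: either add the harmless hypothesis that the graph of $\sim$ is Borel, or note explicitly that the paper's (ii) is being read at the level of sets, which is all that is used downstream.
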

 \begin{proof} {\it (i)}~For any $\epsilon>0$ there exists a finite subset $F$ of $X$ such that the union of open  balls of radius $\epsilon$ centered at points of $F$ cover $X$. For $n\in\N$, let $\epsilon_n=2^{-n}$ and $F_n$ an associated finite set. Let $V\subset X$ be an open set. For each $n$, let $y_n(V)=\{t\in F_n\mid B(t,\epsilon_n)\subset V\}$. The map $V\mapsto (y_n(V))_n$ from open subsets of $X$ to the product $\prod_n 2^{F_n}$ is injective since 
 $V=\cup_n\cup_{y_n(V)}B(t,\epsilon_n)$ and a product of finite sets is countably separated by construction.\newline
 {\it (ii)}~A subset of a countably separated set is also countably separated, and since the orbits are closed they are compact so that they form a subset of the set of compact subsets of $X$ which is countably separated by {\it (i)}.\newline
 {\it (iii)}~The space of probability measures on a standard Borel space is the state space of the separable $C^*$-algebra of continuous functions on a compact metrizable space. Using a countable dense set of functions one gets the assertion.\qed \end{proof} 

 With the notations of Lemma \ref{compactif} one obtains
  
\begin{lemma}\label{compactinv}
	The complement $F$ of the the forward orbit $T^\N x$ in $K$ is a compact  subset of $(Y,T)$ invariant under the action of $\Z$ on $Y$.
\end{lemma}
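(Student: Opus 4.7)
The plan has two separate tasks: showing $F$ is compact, and showing $F$ is $\Z$-invariant. Compactness should be essentially immediate from Lemma \ref{compactif}: that lemma tells us each point $T^n x$ with $n \geq 0$ is isolated in $K$, which means $T^\N x$ is open in $K$, so its complement $F$ is closed in the compact space $K$, hence compact.

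For $\Z$-invariance, the key preliminary observation is that any $y \in F$ is a limit of a net $T^{n_\alpha}x$ with $n_\alpha \to \infty$. Indeed, $y$ is in the closure of $T^\N x$ by definition of $K$, so some net $T^{n_\alpha} x$ converges to $y$ with $n_\alpha \geq 0$; if $(n_\alpha)$ were bounded it would take only finitely many values, and a subnet would be eventually constant at some $m$, forcing $y = T^m x \in T^\N x$ and contradicting $y \in F$. So after passing to a subnet we may assume $n_\alpha \to \infty$.

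Given this, fix $k \in \Z$. Since $T^k$ is continuous, $T^{n_\alpha + k}x \to T^k y$, and for $\alpha$ large enough $n_\alpha + k \geq 0$, so eventually $T^{n_\alpha + k}x \in T^\N x \subset K$; since $K$ is closed we conclude $T^k y \in K$. It remains to rule out $T^k y \in T^\N x$. Suppose $T^k y = T^m x$ with $m \geq 0$; then $y = T^{m-k}x$. If $m-k \geq 0$ this puts $y$ in $T^\N x$, contradicting $y \in F$; if $m-k < 0$ then $T^{m-k}x = y \in K$ contradicts the equivalence \eqref{tnk}. Either way we reach a contradiction, so $T^k y \in F$, proving $T^k F \subseteq F$ for every $k \in \Z$, hence $F$ is $\Z$-invariant.

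No step looks difficult here; the only mildly delicate point is the passage to a net with $n_\alpha \to \infty$, which is what lets the argument work uniformly for both positive and negative $k$. The isolation of the forward orbit established in Lemma \ref{compactif} is what makes this passage possible, and everything else is a routine continuity and case-check argument.
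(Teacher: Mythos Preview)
Your proof is correct. The only slightly delicate point is the passage ``unbounded $\Rightarrow$ pass to a subnet with $n_\alpha\to\infty$''; strictly speaking one should note that the bounded-case contradiction applies to any tail subnet as well, which rules out the possibility that the net is unbounded yet has a bounded tail. Alternatively, since $Y$ is Hausdorff and $y\notin T^{\N}x$, for each $M$ the point $y$ has a neighborhood missing $\{x,Tx,\dots,T^Mx\}$, so in fact $n_\alpha\to\infty$ outright and no subnet is needed.

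Your route differs from the paper's. The paper argues purely with set inclusions: from $TT^{\N}x\subset T^{\N}x$ and $TK\subset K$ one gets $TF\subset F$ by a short case split, and for the reverse inclusion one uses (implicitly) that $K=\{x\}\cup TK$, hence $F\subset TK$, so $Ty\in F$ forces $y\in T^{-1}(TK)=K$ and then $y\notin T^{\N}x$. Your argument instead identifies $F$ as the set of limits $T^{n_\alpha}x$ with $n_\alpha\to\infty$ and treats all $k\in\Z$ uniformly via continuity of $T^k$. The paper's proof is a line shorter but hides the key inclusion $F\subset TK$; yours is slightly longer but makes transparent why both directions of $\Z$ work at once, namely because elements of $F$ are ``limits at infinity'' of the forward orbit.
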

\begin{proof}  By Lemma \ref{compactif}, for $n\in\N$ the subset $\{T^n x\}\subset  K$ is open, thus $F\subset K$ is closed and hence compact. One has $TT^\N x\subset T^\N x$, $TK\subset K$, and if $y\in F$ and $Ty\notin F$ one has $Ty=T^mx  \in T^\N x$ for some $m\geq 0$. For $m>0$ this contradicts $y\notin T^\N x$. For $m=0$ this gives $T^{-1}x\in K$ which  contradicts \eqref{tnk}. Thus $TF\subset F$. Let then $y\in Y$ with $Ty\in F$. Then $y\in T^{-1}TK=K$ and $y\notin T^\N x$ since $TT^\N x\subset T^\N x$. Thus $y\in F$ and one has $TF=F$.	
\qed\end{proof}

One concentrates on the product action of $T\times S$ on $F\times R$. Note that it is enough to show that this action is not countably separated to obtain the same result for the action of $T\times S$ on $Y\times R$. Since $F$ is compact and $\Z$ is an amenable group, one can find a probability measure $\mu$ on $F$ invariant for the action of $T$. Then one considers the quotient $Z$ of $F\times R$ by the product action of $T\times S$, \ie the space of orbits of this action. Let $\pi:F\times R\to Z$ be the quotient map and denote by $M_1(Z)$ the space of probability measures on $Z$.

\begin{proposition}\label{measures} \
 \begin{enumerate}
\item[(i)] The map $\rho:R\to M_1(Z)$, $\rho(x)=\pi(\mu\times \delta_{x})$  given by the image in $Z$ of the probability measure $\mu\times \delta_x$ is $S$-invariant and induces an injection in $M_1(Z)$ of the orbit space of $S$ in $E$.	
\item[(ii)] If  $Z$ is a standard Borel space then the orbit space of $S$ in $R$ is countably separated. 
\end{enumerate}
\end{proposition}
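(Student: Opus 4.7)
For part (i), the $S$-invariance of $\rho$ is immediate from the $T$-invariance of $\mu$: since $(T\times S)_*(\mu\times\delta_x)=\mu\times\delta_{Sx}$ and $\pi\circ(T\times S)=\pi$, pushing forward by $\pi$ gives $\rho(Sx)=\pi_*(\mu\times\delta_{Sx})=\pi_*(\mu\times\delta_x)=\rho(x)$. For the injectivity on $S$-orbits, the key device is the second projection $p\colon F\times R\to R$, $(f,r)\mapsto r$. This map is $\Z$-equivariant from $T\times S$ to $S$, so it descends to a Borel map $\bar p\colon Z\to R/\Z$ when both sides carry their quotient Borel structures (Borelness follows from $\bar p^{-1}(B)$ pulling back to $p^{-1}(q^{-1}(B))$ in $F\times R$, where $q\colon R\to R/\Z$ is the quotient map). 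A Fubini computation gives $\bar p_*\rho(x)=(\bar p\circ\pi)_*(\mu\times\delta_x)=\delta_{[x]}$, because $\bar p\circ\pi=q\circ p$ and $\mu(F)=1$. Each orbit $O_x=\{S^nx\mid n\in\Z\}$ is a countable, hence Borel, subset of $R$, so the singleton $\{[x]\}$ is Borel in $R/\Z$ and Dirac masses at distinct orbit classes are distinct. Therefore $\rho(x)=\rho(y)$ forces $\delta_{[x]}=\delta_{[y]}$, and hence $[x]=[y]$.

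For part (ii), assume $Z$ is a standard Borel space. By Lemma \ref{compactsep}(iii) the space $M_1(Z)$ is countably separated. The map $\rho\colon R\to M_1(Z)$ is Borel, since the pushforward of a measure under a fixed Borel map (here, $\pi$ composed with $x\mapsto \mu\times\delta_x$) is a Borel operation on probability measures. By part (i), $\rho$ is $S$-invariant and descends to an injective map $R/\Z\hookrightarrow M_1(Z)$. Pulling back under $\rho$ a countable family of Borel sets separating points of $M_1(Z)$ yields a countable family of $S$-invariant Borel subsets of $R$ that separates $S$-orbits, which is exactly the statement that $R/\Z$ is countably separated.

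The main technical hurdle lies in part (i), where the descent $\bar p$ and the pushforward identity $\bar p_*\rho(x)=\delta_{[x]}$ must be made rigorous at the Borel level even though the target $R/\Z$ is, in the intended application, far from being a standard Borel space. Working throughout with the quotient Borel structures resolves this cleanly: no topological support argument is needed, only the fact that orbits are countable so that their classes are Borel points. Taking the contrapositive of (ii) and combining with the standing hypothesis that $R/\Z$ is not countably separated then yields the stability statement that the product action of $T\times S$ on $F\times R$, and hence on $Y\times R$, is itself never countably separated.
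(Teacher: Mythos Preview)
Your proof is correct and essentially identical to the paper's. The $S$-invariance argument is verbatim the same. For injectivity, the paper evaluates $\rho(x)$ and $\rho(y)$ directly on the $(T\times S)$-invariant characteristic function $h_x$ of $F\times S^{\Z}(x)$, obtaining $\rho(x)(h_x)=1$ and $\rho(y)(h_x)=0$; your detour through the descent $\bar p:Z\to R/\Z$ and the identity $\bar p_*\rho(x)=\delta_{[x]}$ unpacks to exactly this computation, since $\bar p^{-1}(\{[x]\})$ pulls back to $F\times S^{\Z}(x)$ in $F\times R$. Part (ii) is likewise the same appeal to Lemma~\ref{compactsep}(iii), with your added remarks on Borelness of $\rho$ simply making explicit what the paper leaves implicit.
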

\begin{proof} {\it (i)}~Since $T\mu=\mu$, and $ \pi\circ (T\times S)=\pi$ one has
$$
\rho(Sx)=\pi(\mu\times \delta_{Sx})=\pi((T\times S)(\mu\times \delta_{x}))=
\pi(\mu\times \delta_{x})=\rho(x).
$$
We show that $\rho$ is an injection in $M_1(Z)$ of the orbit space of $S$ in $R$. Let $x,y\in E$ belong to distinct orbits of $S$. Then the characteristic function $h_x$ of the  Borel subset of $F\times R$ given by $F\times S^\Z(x)$ is $(T\times S)$-invariant and one has 
$$
\rho(x)(h_x)=(\mu\times \delta_x)(F\times S^\Z(x))=1, \quad  
\rho(y)(h_x)=(\mu\times \delta_y)(F\times S^\Z(x))=0.
$$
 Thus one concludes that $\rho(x)\neq \rho(y)$.\newline
 {\it (ii)}~It follows from {\it (i)} that the map $\rho$ is an injection in $M_1(Z)$ of the orbit space of $S$ in $R$. By Lemma \ref{compactsep} {\it (iii)}, the space $M_1(Z)$ is countably separated if $Z$ is a standard Borel space. 
 \qed\end{proof} 
 
 \subsubsection{The need for the NCG point of view}\label{subsubsec:2.1.3}
 
 The quotient of the real half line $[0,\infty)$ by the action of the multiplicative group $\Q^*_+$ is not countably separated. Indeed, this action is ergodic for the Haar measure on the multiplicative group $\R_+^*\subset [0,\infty)$. Thus any Borel function invariant for the action of $\Q^*_+$ is almost everywhere constant. In this section we show (Theorem \ref{needofnc}) that for any auxiliary action  of $\Q^*_+$ on a locally compact space $X$ such that the forward orbit $\nt x$ of some point $x\in X$ has a compact closure in $X$, the quotient of  $X\times [0,\infty)$ by the product action of $\Q^*_+$ is \emph{never} countably separated. The multiplicative group $\Q^*_+$ is the product of  an infinite number of copies of $\Z$ parametrized by the set of primes.  Its action is denoted simply as multiplication: $(q,x)\mapsto qx$. Let $K$ be the compact closure of  $\nt x$  in $X$.  We use the compactness property to construct a probability measure $\mu$ on $X$ invariant under the action of $\Q^*_+$. To achieve this result we define an increasing sequence of finite subsets  $F_k\subset \nt$, $k\in\N$, which fulfill the following properties 
 \begin{enumerate}
 \item For any integer $n$ all elements of $F_k$ are divisible by $n$ for $k$ large enough. 
 \item For any prime $p$  one has
$$
\#(F_k \Delta (pF_k))/\#(F_k) \to 0 \quad \text{for} ~~ k\to \infty 
$$ 	
where for two subsets $A,B$ of a set $C$, we denote by $A\Delta B$ their symmetric difference, \ie the complement of $A\cap B$ in $A \cup B$.
 \end{enumerate}
A way to define the set $F_k$ is, for $p_j$ the $j$-th prime,  
$$
F_k:=\{\prod_{j=1}^k p_j^{\alpha_j}\mid k< \alpha_j\leq 2k \qqq j\}.
$$
By construction all elements in $F_k$ are divisible by any integer $n$ whose prime factorization only involves the first $k$ primes taken with powers less than $k$. This fact holds for $k$ large enough and for any given $n$, thus condition  1. is fullfilled. Moreover, for a given prime $p=p_u$ and any $k\geq u$, one has 
$$
F_k \Delta (pF_k)=\{\prod_{j=1}^k p_j^{\alpha_j}\mid k< \alpha_j\leq 2k \qqq j\neq u, \ \alpha_u\in \{k+1,2k+1\}\}
$$
and from this one derives $\#(F_k \Delta (pF_k))/\#(F_k) =\frac 2k$. Thus also 2. is achieved.
 
\begin{theorem}\label{needofnc} Let $X$ be a locally compact metrizable space on  which $\Q_+^*$ acts by homeomorphisms and assume that for some $x\in X$ the closure of $\nt x$ is compact. Then the quotient of the product $X\times \R_+^*$ by the product action of $\Q_+^*$ is not a standard Borel space.	
\end{theorem}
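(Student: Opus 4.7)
The plan is to adapt the template of Proposition \ref{measures} to the action of $\Q_+^*$, replacing the auxiliary pair $(Y,T)$ by $(X,\Q_+^*)$ and the pair $(R,S)$ by $(\R_+^*,\Q_+^*)$ acting by multiplication. Recall from the paragraph just preceding the statement that the orbit space of $\Q_+^*$ on $\R_+^*$ is not countably separated, since the action is ergodic for the Haar measure $d^*t$. The whole task therefore reduces to (i) producing a $\Q_+^*$-invariant probability measure $\mu$ on $X$ (necessarily supported on the compact closure $K=\overline{\nt x}$) and (ii) using $\mu$ to embed $\R_+^*/\Q_+^*$ injectively into $M_1(Z)$, where $Z=(X\times\R_+^*)/\Q_+^*$. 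If $Z$ were a standard Borel space, Lemma \ref{compactsep} {\it (iii)} would force $M_1(Z)$ to be countably separated and hence $\R_+^*/\Q_+^*$ as well, a contradiction.

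For (i), I would form, using the sets $F_k$ introduced just before the theorem, the empirical averages
\[
\mu_k := \frac{1}{\#F_k}\sum_{n\in F_k}\delta_{nx}\in M_1(K).
\]
Since $X$ is metrizable and $K$ is compact, $K$ is compact metrizable, so $M_1(K)$ is weak-$*$ sequentially compact; extract a weak-$*$ limit $\mu\in M_1(K)$. For every prime $p$, property 2 of the $F_k$ gives the total variation bound
\[
\|p_*\mu_k-\mu_k\|_{\mathrm{TV}}\leq \frac{\#(F_k\,\Delta\, pF_k)}{\#F_k}\longrightarrow 0,
\]
so $p_*\mu=\mu$. Since multiplication by $p$ is a homeomorphism, invariance under $p$ entails invariance under $p^{-1}$, and as the primes generate $\Q_+^*$ as a group, $\mu$ is $\Q_+^*$-invariant.

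For (ii), let $\pi:X\times\R_+^*\to Z$ be the quotient map and define $\rho:\R_+^*\to M_1(Z)$ by $\rho(t):=\pi_*(\mu\times\delta_t)$. Invariance of $\pi$ under the diagonal action and $q_*\mu=\mu$ give $\rho(qt)=\rho(t)$ for every $q\in\Q_+^*$, so $\rho$ descends to $\bar\rho:\R_+^*/\Q_+^*\to M_1(Z)$. To see injectivity, let $t,s\in\R_+^*$ lie in distinct $\Q_+^*$-orbits. The set $A_t:=X\times(\Q_+^*\cdot t)$ is a Borel $\Q_+^*$-invariant subset of $X\times\R_+^*$ (Borel because $\Q_+^*\cdot t$ is countable). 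Its characteristic function descends to a Borel function $h_t$ on $Z$ with $\int h_t\,d\rho(t)=1$ and $\int h_t\,d\rho(s)=0$, so $\rho(t)\neq\rho(s)$. Assuming $Z$ is standard Borel, Lemma \ref{compactsep} {\it (iii)} makes $M_1(Z)$ countably separated, hence $\R_+^*/\Q_+^*$ is countably separated as well, contradicting ergodicity.

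The main obstacle is step (i): producing a single probability measure on $X$ invariant under multiplication by \emph{every} prime simultaneously. This is precisely what the combinatorial design of the sets $F_k$ is engineered to deliver in the weak-$*$ limit, and the compactness of $\overline{\nt x}$ is indispensable for the limit to exist at all. Once $\mu$ is in hand, (ii) is an essentially formal transcription of the proof of Proposition \ref{measures}.
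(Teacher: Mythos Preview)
Your proposal is correct and follows essentially the same approach as the paper: construct a $\Q_+^*$-invariant probability measure on $K=\overline{\nt x}$ as a weak-$*$ limit of the empirical averages $\mu_k$ over the F{\o}lner-type sets $F_k$, then use it to inject $\R_+^*/\Q_+^*$ into $M_1(Z)$ via $\rho(t)=\pi_*(\mu\times\delta_t)$, contradicting Lemma~\ref{compactsep}~(iii). Your write-up is in fact slightly more explicit than the paper's in justifying the weak-$*$ convergence (via metrizability of $K$) and the invariance (via the total variation bound).
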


\begin{proof} We define a probability  measure $\mu$ on the compact space $K$  closure of $\nt x$ in $X$, by taking a limit point $\mu$ in the compact space $M_1(K)$ of the sequence of measures
$$
C(K)\ni f\mapsto \frac{1}{\#(F_k)} \sum_{n\in F_k} f(nx)=\mu_k(f).
$$
For $f\in C(K)$ and any prime $p$, one has $\mu(f_p)=\mu(f)$, where $f_p(y):=f(py)$. The same property $\mu(f_n)=\mu(f)$ thus holds for any integer $n$. This proves that, when viewed as a probability measure on $X$, the measure $\mu$ is invariant under the action of $\Q_+^*$. Assume now that the quotient $Z$ of the product $X\times \R_+^*$ by the product action of $\Q_+^*$ is  a standard Borel space and let $\pi$ be the quotient map. One proceeds as in Lemma \ref{measures} to show that the map $\rho:\R_+^*\to M_1(Z)$ which associates to $\lambda\in \R_+^*$ the image $\pi(\mu\times \delta_\lambda)$ in $Z$ of the probability measure $\mu\times \delta_\lambda$ is $\Q_+^*$-invariant and defines an injection in $M_1(Z)$ of the orbit space of $\Q_+^*$ in $\R_+^*$. Indeed, for any $q\in \Q_+^*$ and $\lambda \in \R_+^*$ one has
$$
\rho(q\lambda)=\pi(\mu\times \delta_{q\lambda})=\pi(q(\mu\times \delta_{\lambda}))=
\pi(\mu\times \delta_{\lambda})=\rho(\lambda).
$$
Moreover, the evaluation on the characteristic function $h_\lambda$ of the  Borel subset of $X\times \R_+^*$ given by $X\times \Q_+^*\lambda$ shows that $\rho$ is an injection in $M_1(Z)$ of the orbit space of $\Q_+^*$ in $\R_+^*$. The conclusion follows since the orbit space of $\Q_+^*$ in $\R_+^*$ is not countably separated.\qed \end{proof} 

\subsubsection{Classical orbit and cohomological meaning of the map $E$}\label{subsubsec:2.1.4}

The role of the crossed product in encoding  noncommutative spaces enters to give a conceptual meaning of the map $E$ as the cyclic homology counterpart of the map between noncommutative spaces connecting the ad\`ele class space to its ``classical orbit" which in turn can be understood as a special case of the ``cooling procedure" described in \cite{cmbook}. The cooling procedure is nothing but a testifier of the thermodynamical nature of noncommutative spaces. When applied to the BC system the cooling amounts to replace the additive Haar measure on the ad\`eles, for which the multiplicative action of $\Q^*$ is ergodic, by the product of the Haar measure of the idele class group by a power of the module. The ad\`ele class space $\Q^\times\backslash\A_\Q$ contains the idele class group and the cooling process provides a conceptual meaning of the restriction map. It turns out that once re-interpreted on cyclic homology $HC_0$, the restriction map coincides with the map $E$. 

\subsection{The Scaling Site}\label{subsec:2.2}

The  map $E$ defined in \eqref{mapE} implements the action of $\nt$ by multiplication on the real half-line $[0,\infty)$. The notion of Grothendieck topos allows one to interpret this  construction geometrically, namely as  
the Grothendieck topos $\rnt$ of $\nt$-equivariant sheaves (of sets) on the real half-line.\newline
The combinatorial skeleton of this topos is the  Arithmetic Site $\aarith=\arith$ \cite{CC1,CC2}. This is a semiringed topos where  $\wnt$ denotes the topos of sets equipped with an action of $\nt$. The structure sheaf of the  Arithmetic Site is given by  the semiring $\zmax$ of ``max-plus'' integers that plays a key role in tropical geometry and idempotent analysis. It is a semiring of characteristic $1$, \ie $1\in \zmax$ fulfills the rule $1+1:=\text{max}(1,1)=1$. Moreover $\zmax$ is the only  semifield whose multiplicative group is infinite cyclic (\cite{CC4} Appendix B2, Proposition B3). The action of $\nt$ on $\zmax$ (which turns $\zmax$ into the structure sheaf of $\aarith$) is an instance of a general result \cite{Golan} stating that in a semifield of characteristic $1$, for any $n\in\N$, the power maps $x\mapsto x^n$ are injective endomorphisms.  These maps provide the right generalization of the Frobenius endomorphisms in finite characteristic.    
By construction, $\aarith$ is a topos defined over $\B=(\{0,1\},\text{max},+)$, the only finite semifield which is not a field. Even though $\aarith$ is    a combinatorial object of countable nature it is nonetheless endowed with a 1-parameter semigroup of correspondences on its square \cite{CC1, CC2}. Two further key properties of the Arithmetic Site  are now recalled. (1) the points of $\aarith$ defined over $\rmax$ (the multiplicative version of the tropical semifield $\rhmax$) form the basic sector $\Q^\times\backslash\A_\Q/\hat\Z^*$ of the ad\`ele class space of $\Q$; (2) the canonical action of $\Aut(\rmax)$ on these points corresponds to the action of the idele class group on $\Q^\times\backslash\A_\Q/\hat\Z^*$. These facts lead us to investigate the semiringed topos obtained from the Arithmetic Site by extension of scalars from $\B$ to $\rmax$. This space admits $\rnt$ as the underlying topos, and  
moreover it inherits, from its construction by extension of scalars, a natural sheaf $\cO$ of regular functions. We call  {\em Scaling Site}  the semi-ringed topos  
\[
\scal2:=\left(\rnt,\cO\right)
\]
 so obtained \cite{CC3, CC4}.  The sections of the sheaf $\cO$ are convex,  piecewise affine functions with integral slopes.

 \subsection{Geometry of the ad\`ele class space}\label{subsec:2.3}
 
The relation between $\scal2$ and the ad\`ele class space of $\Q$ is provided 
by the following result which states that the isomorphism classes of points  of the topos $\rnt$ form the basic sector of the ad\`ele class space of $\Q$ \cite{CC4}.

\begin{theorem}\label{scaltopintro}The  space of  points of the topos $\rnt$
is canonically isomorphic to $\Q^\times\backslash\A_\Q/\hat\Z^*$.
\end{theorem}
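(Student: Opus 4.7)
The plan is to describe the set of isomorphism classes of points of the topos $\rnt$ by unwinding its presentation as $\nt$-equivariant sheaves on $[0,\infty)$, and then match the outcome with the double coset. It is convenient to use the reformulation
$$
\Q^\times\backslash\A_\Q/\hat\Z^{*}=\Q^\times_{+}\backslash\bigl([0,\infty)\times \A_f/\hat\Z^{*}\bigr),
$$
which holds because $-1\in\hat\Z^{*}$, so that the sign action of $\Q^\times$ on $\R$ is absorbed after the right quotient, leaving $[0,\infty)$ as a transversal.

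First, I would invoke the general description of points of a Grothendieck topos given by a site: they are isomorphism classes of continuous flat functors to $\mathrm{Sets}$. Applied to $\rnt$, a point amounts to the data of a point $\lambda\in[0,\infty)$ together with a coherent system of preimages under the contractions $q\mapsto nq$, $n\in\nt$, encoding compatibility under the composition law $n\circ m=nm$.

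Second, because $\nt$ is the free commutative monoid on the primes, such a coherent system factors as a product over primes $p$ of coherent $p$-power divisibility towers. By Remark \ref{adeliccompactif}, each local $p$-component is exactly a point of $\Q_p/\Z_p^{*}\cong \Z(+\infty)$ via the $p$-adic valuation, and the compactness constraint of Lemma \ref{compactif} forces the product to land in the restricted product $\A_f/\hat\Z^{*}$. Combined with $\lambda$, this assigns to every topos point an element of $[0,\infty)\times(\A_f/\hat\Z^{*})$. Topos-theoretic isomorphism of flat functors corresponds precisely to translation of the inverse system by elements of the group completion $\Q^\times_{+}$ of $\nt$, so quotienting by this equivalence produces a canonical bijection with $\Q^\times_{+}\backslash([0,\infty)\times \A_f/\hat\Z^{*})$, i.e.\ with $\Q^\times\backslash\A_\Q/\hat\Z^{*}$; naturality is built into the construction.

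The main obstacle I anticipate is the treatment of the degenerate strata: the locus $\lambda=0$, where the $\nt$-stabiliser jumps to all of $\nt$, and the points where the local valuation data takes the value $\infty\in\Z(+\infty)$ at some finite set of primes. These are precisely the strata on which the quotient $\Q^\times\backslash\A_\Q/\hat\Z^{*}$ fails to be $T_{0}$ and becomes noncommutative in the sense of Section \ref{subsec:2.1} and Theorem \ref{needofnc}. Matching the topos-theoretic description of points at these degenerate strata with the ad\`elic picture is the delicate step, requiring a careful bookkeeping of coherent systems whose germ collapses, so that the non-separatedness on both sides is produced by the same mechanism.
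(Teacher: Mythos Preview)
The paper does not actually prove this theorem: it is stated in Section~\ref{subsec:2.3} as a result imported from \cite{CC4}, with no argument given here. So there is no ``paper's own proof'' to compare your attempt against.

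That said, let me comment on your sketch. The overall shape is correct: one does want to describe points of $\rnt$ via flat continuous functors on the defining site, and the adelic reformulation $\Q^\times\backslash\A_\Q/\hat\Z^{*}\simeq \Q^\times_{+}\backslash\bigl([0,\infty)\times \A_f/\hat\Z^{*}\bigr)$ is the right target. But the step ``a point amounts to the data of a point $\lambda\in[0,\infty)$ together with a coherent system of preimages under $q\mapsto nq$'' is asserted rather than derived, and it is not literally correct as stated. The site for $\rnt$ has as objects the open subsets of $[0,\infty)$, with morphisms generated by inclusions and by the maps $U\to nU$ coming from the $\nt$-action; a flat continuous functor on this site is a filtering system of such opens, and extracting from it a single $\lambda$ plus a tower of $n$-th roots requires an argument (this is where the actual work in \cite{CC4} lies, and where one sees that the answer is governed by rank-one ordered groups, equivalently by supernatural numbers together with a scale). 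Your invocation of Remark~\ref{adeliccompactif} and Lemma~\ref{compactif} is suggestive but those statements concern the auxiliary space $Y$ in the adelic picture of Section~\ref{subsec:2.1}, not the classification of topos points; the link between the two is exactly what needs proof.

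You are right that the degenerate strata (the fixed point $\lambda=0$ and the loci where some $p$-adic valuation is infinite) require separate bookkeeping, and that this is where the non-Hausdorff nature of the quotient shows up on both sides. In the treatment of \cite{CC4} this is handled by first classifying points of the underlying topos $\wnt$ (which already gives $\A_f/\hat\Z^{*}$ up to $\Q^\times_{+}$) and then analyzing how the extension to $[0,\infty)$ adds the archimedean parameter; you may find it cleaner to follow that two-step route rather than attempting the product decomposition over primes directly at the level of flat functors.
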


\no This theorem provides an algebraic-geometric structure on the ad\`ele class space, namely that of a tropical curve in an extended sense. In \cite{CC4} this structure was examined by considering its restriction  onto the periodic orbit of the scaling flow associated to each rational prime $p$. The output is that of  a tropical structure which describes this orbit as a real variant $C_p=\R_+^*/p^\Z$ of the classical Jacobi description $\C^\times/q^\Z$ of a complex elliptic curve. On $C_p$, a theory of Cartier divisors is available, moreover the structure of the quotient  of the abelian group of divisors by the subgroup of principal divisors has been also completely described in \opcit The same paper also contains a description of the theory of theta functions on $C_p$ and finally a proof of the Riemann-Roch formula stated in terms of real valued dimensions, as in the type-II index theory.

\no The main contribution of the ad\`ele class space to this geometric picture is to provide, through the implementation of  the Riemann-Weil explicit formulas as a trace formula, the understanding of  the Riemann zeta function as a Hasse-Weil generating function. \newline In the function field case the Hasse-Weil formula
writes the  zeta function as a  generating function (the Hasse-Weil zeta function)  
\begin{equation}\label{HW}
\zeta_C(s):=Z(C,q^{-s}), \qquad  Z(C,T) := \exp\left(\sum_{r\geq 1}N(q^r)\frac{T^r}{r}\right).
\end{equation}
For function fields, $q$ is the number of elements of the  finite field $\F_q$ on which the associated curve $C$ is defined.\newline In the case of the Riemann zeta function, the analogue of \eqref{HW} was obtained in \cite{CC0,CC0.5} by considering the limit of the right hand side of \eqref{HW} when $q\to 1$. This process was originally suggested by C. Soul\' e, who introduced the zeta function of a variety $X$ over $\F_1$ using the {\em polynomial} counting function $N(x)\in\Z[x]$ associated to $X$. The definition of the zeta function is as follows
\begin{equation}\label{zetadefn}
\zeta_X(s):=\lim_{q\to 1}Z(X,q^{-s}) (q-1)^{N(1)},\qquad s\in\R.
\end{equation}
 When one seeks to apply \eqref{zetadefn} to get the Riemann zeta function (completed by the gamma factor at the archimedean place) one meets the obvious obstruction that the exponent $N(1)$ is equal to $-\infty$ due to the infinite number of its zeros. In \cite{CC0,CC0.5}  a simple way to by-pass this difficulty is described \ie one considers the logarithmic derivatives of both terms in \eqref{zetadefn} and observes that the Riemann sums of an integral appear from the right hand side. Then, instead of  dealing with \eqref{zetadefn} one works with the equation
\begin{equation}\label{logzetabis}
    \frac{\partial_s\zeta_N(s)}{\zeta_N(s)}=-\int_1^\infty  N(u)\, u^{-s}d^*u.
\end{equation} 
which points out to a precise equation for the counting function $N_C(q)=N(q)$ associated to $C$ namely
\begin{equation}\label{special}
   \frac{\partial_s\zeta_\Q(s)}{\zeta_\Q(s)}=-\int_1^\infty  N(u)\, u^{-s}d^*u.
\end{equation}
In fact, one finds that this equation admits a \emph{distribution} as a solution which is given explicitly as
\begin{equation}\label{Nu}
    N(u)=\frac{d}{du}\varphi(u)+ \kappa(u)
\end{equation}
where $ \varphi(u):=\sum_{n<u}n\,\Lambda(n)$,  and $\kappa(u)$ is the distribution
that appears in the explicit formula
$$
\int_1^\infty\kappa(u)f(u)d^*u=\int_1^\infty\frac{u^2f(u)-f(1)}{u^2-1}d^*u+cf(1)\,, \qquad c=\frac12(\log\pi+\gamma).
$$
The conclusion is that the distribution $N(u)$ is positive on $(1,\infty)$ and  is given  by
\begin{equation}\label{fin2}
    N(u)=u-\frac{d}{du}\left(\sum_{\rho\in Z}{\rm order}(\rho)\frac{u^{\rho+1}}{\rho+1}\right)+1
\end{equation}
where the derivative is taken in the sense of distributions, and the value at $u=1$ of the  term
 $\displaystyle{\omega(u)=\sum_{\rho\in Z}{\rm order}(\rho)\frac{u^{\rho+1}}{\rho+1}}$ is given  by
$\frac 12+ \frac \gamma 2+\frac{\log4\pi}{2}-\frac{\zeta'(-1)}{\zeta(-1)}
$.\newline
As explained in \cite{CC0.5} the ad\`ele class space provides the geometric meaning of the counting distribution $N(u)$ and thus shows the coherence of our geometric approach.

\section{The Riemann-Roch strategy}\label{sec:3}

In relation to the study of the zeros of the Riemann zeta function, the Riemann-Roch strategy consists in trading the question of the location of the zeros  for the problem of proving the \emph{non-positivity}  of a certain quadratic form $\inter(f,f)$ (see \eqref{negcrit}). In the function field case, this inequality derives from an argument of  algebraic geometry in finite characteristic and the most conceptual proof was obtained by applying the Riemann-Roch formula on the square of the curve defining the function field \cite{grmt}.  In that case, the  function $f$ defines a divisor $D$ on the surface, as a linear combination of Frobenius correspondences. Then, if one assumes the positivity of  $\inter(f,f)>0$ for some $f$, it is the existence part of the Riemann-Roch theorem which yields a contradiction. More precisely, the assumed positivity  $\inter(f,f)>0$, together with the appearance of $\inter(f,f)$ as the leading term in the topological side of the Riemann-Roch formula show that one can turn the divisor $nD$ for a suitable $n\in \Z$ into an effective divisor and obtain a contradiction. This argument will be reconsidered  in more details in Sect. \ref{subsec:3.1}.  For function fields, the Riemann-Roch formula relies on algebraic geometry  in the same finite characteristic. In the case of the Riemann zeta function, the structure sheaf of the Scaling Site $\scal2$ is in characteristic $1$, thus it seems reasonable trying to develop a Riemann-Roch formalism in that context. Some very encouraging results are obtained in \cite{CC4}, inclusive of a type-II Riemann-Roch formula for the periodic orbits. In this case, the cohomology $H^0$ is defined using global sections while $H^1$ is introduced by turning Serre duality into a definition.   In order to attack the two dimensional case of the square of the Scaling Site one needs to define the intermediate $H^1$ and a first direct attempt,  based on homological algebra in characteristic $1$, is developed in \cite{CC5}. It is  striking that the  existence results for the Riemann-Roch problem in tropical geometry (\cite{BN,gathmannrr,MZ}) are deeply related to potential theory and game theory (\cite{BS,Shor}) thus pointing to the relevance of these tools in a direct attack to the  Riemann-Roch formula needed for RH. Here we develop yet another approach which is based on the construction of a complex lift from a geometry in characteristic $1$ to the complex world and the use of the tropicalization map. In Sect.\ref{subsec:3.2}, we explain how this {\em tropical descent} allows one, in the context of the Riemann-Roch problem, to prove the existence results in characteristic $1$ from existence results in characteristic $0$. Sect. \ref{subsect:3.3} recalls the classical link, in characteristic zero, between the Hirzebruch-Riemann-Roch theorem and the Index theorem. Finally, Sect. \ref{subsect:3.4} lays down our actual strategy which is based on the complex lift of the Scaling Site.

\subsection{The role of the existence  part of the Riemann-Roch formula in characteristic one}\label{subsec:3.1} 

It is known (\cite{B3}) that the  RH problem is equivalent to an inequality for real valued functions $f$ on $\R_+^*$ of the form 
\begin{equation}\label{negcrit}
{\rm RH} \iff \inter(f,f)\leq 0 \qqq f \mid \int f(u)d^*u=\int f(u)du=0.
\end{equation}
Here, for real compactly supported functions on $\R_+^*$, one lets $\inter(f,g):=N(f\star \tilde g)$, where $\star$ is the convolution product on $\R_+^*$,  $\tilde g(u):=u^{-1}g(u^{-1})$, and
\begin{equation}\label{negcrit1}
N(h):= \sum_{n=1}^\infty \Lambda(n)h(n)+ \int_1^\infty\frac{u^2h(u)-h(1)}{u^2-1}d^*u+c\, h(1)\,, \ c=\frac12(\log\pi+\gamma).
\end{equation}
It follows from the geometric interpretation of the explicit formulas as in \cite{CC0.5} that  the quadratic form $\inter(f,f)$ can be expressed as the self-intersection of the divisor on the square of the Scaling Site by the formula involving the Frobenius correspondences $\Psi_{\lambda}$ 
\begin{equation}\label{negcrit1bis}
\inter(f,f)=D\bullet D, \ \ D:=\int f(\lambda) \Psi_{\lambda}\, d^*\lambda.
\end{equation}
The intersection number of divisors is provided by the formula
$$
D\bullet D':=<D\star\tilde D',\Delta>
$$
where $\tilde D'$ is the transposed of $D'$ and the composition $D\star\tilde D'$ is computed by bilinearity, while the intersection $<D\star\tilde D',\Delta>$ is obtained using the distribution $N(u)$ and the fact that $\Psi_{\lambda}$ is of degree $\lambda$.

\no The Riemann-Roch strategy seeks to obtain a contradiction by assuming that, contrary to \eqref{negcrit}, one has 
 $\inter(f,f)>0$, for some function $f$. The key missing step is provided by the implementation of a Riemann-Roch formula whose topological side is $\frac 12 D\bullet D$ and to conclude from it that one can make the divisor $ D:=\int f(\lambda) \Psi_{\lambda}\, d^*\lambda$ (or its opposite $-D$) effective. 

\no The positivity of the divisor $D+(k)$ would then contradict the fact that the degree and codegree of $D=\int f(\lambda) \Psi_{\lambda}\, d^*\lambda$ is equal to $0$ in view of the hypothesis $\int f(u)d^*u=\int f(u)du=0$. 
 
\subsection{Tropical descent}\label{subsec:3.2}

The new step in our strategy is to obtain the existence part of the Riemann-Roch theorem in the tropical shadow   from the results on the analytic  geometric version of the space.
 Obviously, the advantage of working in characteristic zero is that to have already available all the algebraic and analytical tools needed to test such formula. 

\no We first explain how the Scaling Site appears naturally from the well-known results on the localization of zeros of analytic functions by means of Newton polygons in the non-archimedean case and Jensen's formula in the complex case. These results in fact combine to show that the tropical half line   $(0,\infty)$, endowed with the structure sheaf of convex,  piecewise affine functions with integral slopes, gives a  common framework for the localization of zeros of analytic functions in the  punctured unit disk. The additional structure involved in the Scaling Site, namely the action of $\nt$ by multiplication on the tropical half-line, corresponds, as shown in \eqref{tropicalizationscal} and \eqref{tropicalizationscalbis}, to the transformation on functions given by the composition with the $n$-th power of the variable.
  The tropical notion of ``zeros" of a convex  piecewise affine function $f$ with integral slope is that a zero of order $k$ occurs at a point of discontinuity of the derivative $f'$, with the order $k$ equal to the sum of the outgoing slopes. The conceptual meaning of this notion is understood  by using Cartier divisors.


\subsubsection{Tropicalization in the $p$-adic case, Newton polygons}\label{subsubsec:3.2.1}

Let $K$ be a complete and algebraically closed extension of $\Q_p$ and  $v(x)=-\log\vert x \vert$ be the valuation. The tropicalization of a series with coefficients in $K$   is obtained by applying the transformation $a\mapsto \log\vert a \vert=-v(a)$ to the coefficients and by implementing the change of operations:
$
+ \rightarrow \vee=\text{sup} ,~\times \rightarrow +$, so that $ X^n\rightarrow -nx.
$
In this way a sum of monomials such as $\sum a_n X^n$ is replaced by $\vee (-nx-v(a_n))$. 
\begin{definition}\label{defntropna}  Let $f(X)=\sum a_n X^n$ be a  Laurent  series with coefficients in $K$ and convergent in an annulus  $A(r_1,r_2)= \{z\in K\mid r_1<\vert z\vert <r_2\}$.  The tropicalization  of $f$ is the real valued function of a real parameter
\begin{equation}\label{tropicalization4}
\tau(f)(x):= \max_n\{-nx -v(a_n) \}\qquad \forall x\in (-\log r_2, -\log r_1).
\end{equation}	
\end{definition}
\no Up-to a trivial change of variables, this notion is well-known in $p$-adic analysis, where the function $-\tau(-x)$, or rather its graph, is called the {\em  valuation polygon} of the series \cite{Robert}. 
 This polygon is dual to the Newton polygon of the series which is, by definition, the lower part of the convex hull of the points of the real plane with coordinates $(j,v(a_j))$. By construction, $\tau(f)(x)$ is finite since, using the convergence hypothesis, the terms $-nx -v(a_n)$ tend to $-\infty$ when $\vert n\vert\to \infty$. Thus one obtains a  convex and piecewise affine function. Moreover, the multiplicativity property also holds
$
\tau(fg)(x)=\tau(f)(x)+\tau(g)(x), \forall x\in (0, \infty)$
as well as the following classical result (\cite{Robert})

 \begin{theorem}\label{classical} Let $f(X)=\sum a_n X^n$ be a  Laurent  series with coefficients in $K$, convergent in an annulus  $A(r_1,r_2)= \{z\in K\mid r_1<\vert z\vert <r_2\}$.  	Then the valuations $v(z_i)$ of its zeros $z_i\in A(r_1,r_2)$ (counted with multiplicities) are the zeros (in the tropical sense and counted with multiplicities) of the tropicalization $\tau(f)$ in $(-\log r_2, -\log r_1)$.
\end{theorem}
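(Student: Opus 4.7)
The plan is to reduce the statement to the case of a Laurent polynomial via $p$-adic Weierstrass preparation, and then to verify the polynomial case directly using the multiplicativity of $\tau$ together with a linear-factor computation.

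\textbf{Step 1: multiplicativity of $\tau$.} I would first establish $\tau(fg) = \tau(f)+\tau(g)$ for Laurent series converging on a common annulus. The tropical shadow of the Gauss norm $\|f\|_r := \sup_{v(z)=-\log r}|f(z)|$ is exactly $\tau(f)(x) = -\log\|f\|_{e^{-x}}$, and the non-archimedean strict triangle inequality makes $\|\cdot\|_r$ multiplicative. Equivalently, for $x$ outside a countable set the maximum defining $\tau(f)(x)$ is attained at a unique index, so by the ultrametric property the same holds for $fg$, and the identity extends to all $x$ by continuity.

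\textbf{Step 2: reduction to a Laurent polynomial.} Fix any closed sub-annulus $A[s_1,s_2]\subset A(r_1,r_2)$. By the $p$-adic Weierstrass preparation theorem on annuli, $f = u\cdot P$ where $u$ is an analytic unit on $A[s_1,s_2]$ and $P\in K[X,X^{-1}]$ is a Laurent polynomial whose zeros (with multiplicity) are exactly the zeros of $f$ in $A[s_1,s_2]$. Since $u$ has no zeros, the Newton polygon of $u$ has a single dominating monomial on the relevant range, so $\tau(u)$ is affine on $[-\log s_2,-\log s_1]$ and contributes no tropical zeros in the interior. Combined with Step 1, this reduces the claim to the case of the Laurent polynomial $P$ on the open interval $(-\log s_2,-\log s_1)$.

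\textbf{Step 3: the polynomial case.} Since $K$ is algebraically closed, factor $P(X) = c\,X^m\prod_j(X-z_j)$. The monomial $cX^m$ has affine tropicalization $-mx - v(c)$, with no tropical zeros. For each linear factor,
\begin{equation*}
\tau(X-z_j)(x) = \max(-x,\,-v(z_j)),
\end{equation*}
whose derivative jumps from $-1$ to $0$ at $x=v(z_j)$, producing a tropical zero of order $1$ there and nowhere else. Summing by multiplicativity, the tropical zeros of $\tau(P)$ at a point $x_0$ have order $\#\{j\mid v(z_j)=x_0\}$, which matches the multiplicity of analytic zeros of $P$, hence of $f$, of valuation $x_0$. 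Finally, every $x_0\in(-\log r_2,-\log r_1)$ lies in the interior of some such $[-\log s_2,-\log s_1]$, so the local statement gives the global one.

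The principal technical obstacle is Step 2: producing the Weierstrass factorization on an arbitrary closed sub-annulus, which in its sharpest form belongs to rigid analytic geometry. If one wishes to bypass that machinery, a self-contained alternative is to prove the polynomial case first and then approximate $f$ by its truncations $f_N=\sum_{|n|\le N}a_n X^n$: uniform convergence $\tau(f_N)\to\tau(f)$ on compact subintervals (immediate from the convergence of the series) together with a non-archimedean Rou\-ché-type argument transfer both the tropical and the analytic zero counts from $f_N$ to $f$ in the limit.
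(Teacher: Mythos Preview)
The paper does not actually prove this theorem: it is stated as ``the following classical result (\cite{Robert})'' and no argument is given. So there is no proof in the paper to compare yours against; you are supplying what the authors simply imported from the literature.

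Your strategy is the standard one and is essentially correct. A couple of small points. First, in Step~1 you have a sign slip: with $v(a)=-\log|a|$ and $r=e^{-x}$ one computes
\[
\log\|f\|_{e^{-x}}=\max_n\bigl(\log|a_n|-nx\bigr)=\max_n\bigl(-v(a_n)-nx\bigr)=\tau(f)(x),
\]
so $\tau(f)(x)=+\log\|f\|_{e^{-x}}$, not $-\log\|f\|_{e^{-x}}$. This does not affect the multiplicativity conclusion. Second, in Step~2 your claim that $\tau(u)$ is affine for a unit $u$ on the closed sub-annulus is true, but as written it is close to assuming what you want to prove (no zeros $\Rightarrow$ no tropical zeros). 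The clean way is to quote the form of Weierstrass preparation that gives $u=cX^m(1+g)$ with $\|g\|<1$ on $A[s_1,s_2]$; then $\tau(u)(x)=-mx-v(c)$ is visibly affine because the $cX^m$ term strictly dominates every other monomial of $u$ throughout the interval. With that adjustment, Steps~2 and~3 go through, and your alternative via truncation plus a non-archimedean Rouch\'e argument is also a legitimate route if you prefer to avoid citing the annulus preparation theorem.
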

In particular, one can take $r_1=0$, $r_2=1$ so that $A(r_1,r_2)$ is the punctured open unit  disk $D(0,1)\setminus \{0\}$. In this case,  $\tau(f)$ are convex piecewise affine functions on $(0,\infty)$ and one derives the following compatibility with the action of $\nt$ on functions by $f(X)\mapsto f(X^n)$ 
\begin{equation}\label{tropicalizationscal}
\tau(f(X^n))(x)=\tau(f)(nx)\qqq x\in (0, \infty),\ n\in \nt.
\end{equation}

\subsubsection{Tropicalization in the archimedean case, Jensen's Formula}\label{subsubsec:3.2.2}

Over the complex numbers, unlike the non-archimedean case, it is not true that for a generic radius $r$, the modulus  $\vert f(z)\vert$ (of a complex function $f(z)$) is constant on the sphere of radius $r$. One replaces \eqref{tropicalization4} with the following

\begin{definition}\label{defntrop} Let $f(z)$ be a holomorphic function in an annulus  $A(r_1,r_2)= \{z\in \C\mid r_1<\vert z\vert <r_2\}$. Its tropicalization is the function on the interval $(-\log r_2, -\log r_1)$
\begin{equation*}\label{tropicalization4bis}
\tau(f)(x):=\frac{1}{2 \pi}\int_0^{2\pi} \log\vert f(e^{-x+i\theta})\vert d\theta.
\end{equation*}	
\end{definition}

 \no By construction, the multiplicativity property still holds:
$
\tau(fg)(x)=\tau(f)(x)+\tau(g)(x), \forall x\in (0, \infty)$.\newline
For $x\in (-\log r_2, -\log r_1)$ such that $f$ has no zero on the circle of radius $e^{-x}$, the derivative of $\tau(f)(x)$ is the opposite of the winding number $n(x)$ of the loop $\theta\mapsto f(e^{-x+i\theta})\in \C^\times$. Thus the function $\tau(f)(x)$ is piecewise affine with integral slopes. When the radius $e^{-x}$ of the circle increases, the winding number of the associated loop increases by the number of zeros of $f$ in the intermediate annulus and this shows that the function $\tau(f)(x)$ is convex and fulfills Jensen's formula (\cf~\cite{Rudin} Theorem 15.15). Thus we derive the analogue of Theorem \ref{classical} 

\begin{theorem}\label{classical1} Let $f(z)$ be a holomorphic function in an annulus  $A(r_1,r_2)= \{z\in \C\mid r_1<\vert z\vert <r_2\}$ and $z_i\in A(r_1,r_2)$ its zeros counted with their multiplicities. Then the values $-\log\vert z_i\vert$  are the zeros (in the tropical sense and counted with multiplicities) of the tropicalization $\tau(f)$ in $(-\log r_2, -\log r_1)$.
\end{theorem}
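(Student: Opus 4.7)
The plan is to exploit the sketch already indicated in the paragraph preceding the statement, turning the three bullet observations (piecewise affineness, integrality of slopes, convexity with jump = number of circle zeros) into one coherent argument via Jensen's formula and the argument principle.

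First, I would fix a radius $r=e^{-x}$ with $r_1<r<r_2$ such that $f$ has no zero on $\{|z|=r\}$, write the Laurent expansion of $\log|f|$ on a neighborhood of the circle, and reduce the derivative of $\tau(f)$ at such an $x$ to a winding number calculation. Concretely, differentiating under the integral sign and performing the change of variable $\theta\mapsto e^{-x+i\theta}$ identifies
\[
\tau(f)'(x) \;=\; -\,\frac{1}{2\pi i}\oint_{|z|=r}\frac{f'(z)}{f(z)}\,\frac{dz}{z}\cdot z \;=\; -\,n(x),
\]
where $n(x)$ is the winding number around $0$ of the loop $\theta\mapsto f(e^{-x+i\theta})$. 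Since this winding number is an integer, $\tau(f)$ is affine with integer slope on each maximal open interval of $x$ avoiding $\{-\log|z_i|\}$.

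Next I would compare the winding numbers at two radii $r'<r''$ in $(r_1,r_2)$ not meeting the zero set of $f$. The argument principle applied to the annular region $r'<|z|<r''$ (on which $f$ is holomorphic and nowhere zero on the boundary circles) gives
\[
n(r'') - n(r')\;=\;\#\{\, z_i\in A(r',r'')\mid f(z_i)=0\,\}\quad\text{(with multiplicities)}.
\]
In terms of $x=-\log r$, this means that as $x$ crosses a value $x_0 = -\log|z_j|$ from the right (small $r$) to the left (large $r$), the winding number $n(x)$ jumps upward by exactly $k=\sum_{|z_i|=|z_j|}\mathrm{ord}_{z_i}(f)$. Equivalently, $\tau(f)'(x)=-n(x)$ jumps upward by $k$ at $x_0$, so $\tau(f)$ is convex piecewise affine and the sum of its outgoing slopes at $x_0$ is precisely $k$. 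This matches the tropical definition of a zero of order $k$ at $x_0$.

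Finally I would assemble the two directions: the breakpoints of $\tau(f)$ can only occur at values $x_0$ for which some zero lies on $\{|z|=e^{-x_0}\}$ (outside such circles the derivative is locally constant), and conversely each such $x_0$ is indeed a breakpoint with tropical multiplicity equal to the total multiplicity of zeros of $f$ on that circle. This yields the claimed bijection between the multiset $\{-\log|z_i|\}$ and the tropical zero set of $\tau(f)$ in $(-\log r_2,-\log r_1)$. The only delicate point is ensuring that the argument principle is correctly applied in the annular (not disk) setting, which is why I handle differences $n(r'')-n(r')$ rather than absolute values of the winding number; this avoids any reference to zeros of a hypothetical extension of $f$ across $\{|z|\le r_1\}$.
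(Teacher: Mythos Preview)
Your proposal is correct and follows essentially the same route as the paper, which itself offers only the sketch in the paragraph preceding the theorem together with a reference to Rudin's Theorem~15.15 on Jensen's formula. Your write-up simply fleshes out that sketch---computing $\tau(f)'(x)=-n(x)$ explicitly and applying the argument principle to differences of winding numbers on the annulus---so there is nothing substantive to add beyond tidying the displayed formula (the factor $\frac{dz}{z}\cdot z$ should simply read $dz$).
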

  
\no In particular, one can take $r_1=0$, $r_2=1$ so that $A(r_1,r_2)$ is the open punctured unit  disk $D(0,1)\setminus \{0\}$. In that case the $\tau(f)$ are convex piecewise affine functions on $(0,\infty)$ and 
  one has the following compatibility with the action of $\nt$ on functions by $f(z)\mapsto f(z^n)$ 
\begin{equation}\label{tropicalizationscalbis}
\tau(f(z^n))(x)=\tau(f)(nx)\qqq x\in (0, \infty),\ n\in \nt.
\end{equation}
This fact follows from the equality for periodic functions $h(\theta)$
$$
\frac{1}{2 \pi}\int_0^{2\pi}h(n\theta) d\theta=\frac{1}{2 n\pi}\int_0^{2n\pi}h(\alpha)d\alpha=\frac{1}{2 \pi}\int_0^{2\pi}h(u)du.
$$

\subsubsection{Descent from characteristic zero to characteristic one}\label{subsubsec:3.2.3}

To explain the general technique that allows one to deduce the existence results in characteristic one from a Riemann-Roch formula in characteristic zero, we first develop the following simple example.  Consider an open interval $I$ of the real half-line and an integral (finite) divisor  $D=\sum n_j \delta_{\lambda_j}$, with $n_j\in \Z$ and $\lambda_j\in I$. The Riemann-Roch problem in characteristic one asks for the  construction of  a piecewise affine continuous function $f$ with integral slopes, whose divisor $(f)$ fulfills $D+(f)\geq 0$. Here, $(f)$ is best understood as the second derivative $\Delta(f)$, taken in the sense of distributions. Thus the Riemann-Roch problem in characteristic one corresponds to the solutions $f$, among piecewise affine continuous function $f$ with integral slopes, of the inequality 
\begin{equation}\label{rrchar1}
D+(f):=\sum n_j \delta_{\lambda_j}+\Delta(f)\geq 0.
\end{equation}
The technique we follow is to lift geometrically the divisor $D$ to a divisor $\tilde D$ (in the ordinary complex analytic sense) in the corona 
$$
\cC(I):=\{z\in \C\mid -\log \vert z\vert \in I\}.
$$
This involves a choice, for each $\lambda_j$, of points $z\in \cC(I)$ such that $-\log \vert z\vert=\lambda_j$, and of multiplicities for these points which add up to $n_j$. Now, assume that one has a solution as a meromorphic function $g$ in $\cC(I)$  such that 
$\tilde D+ (g)\geq 0$. We then consider, using Definition 
\ref{defntrop}, the tropicalization $f=\tau(g)$. 
 This formula is in fact extended to meromorphic functions by the multiplicativity rule, \ie using $\tau(h/k):=\tau(h)-\tau(k)$ for $g=h/k$. Then, Theorem \ref{classical1} shows that the divisor $\Delta(\tau(g))$ is the image by the map $u(z):=-\log \vert z\vert$ of the divisor of $g$.  This proves that the tropicalization $f=\tau(g)$ fulfills the inequality $D+(f)\geq 0$ of the Riemann-Roch problem.

\subsection{The Hirzebruch-Riemann-Roch formula and the Index theorem}\label{subsect:3.3}

Here we recall  the Hirzebruch-Riemann-Roch theorem. Let $E$ be a holomorphic complex vector bundle of rank $r$ over a compact complex manifold $X$ of dimension $n$. The Euler characteristic $\chi(E)$ of $E$ is defined by 
\begin{equation}\label{eulerchi}
\chi(E):=\sum_{j\geq 0}(-1)^j\dim(H^j(X,E)).	
\end{equation}
The cohomology $H^j(X,E)$ used in the formula is sheaf cohomology and one uses the equivalence between holomorphic vector bundles and locally free sheaves. It is known that the cohomology $H^j(X,E)$ vanishes for $j>n$.
 The relation with the analytic index is given, with the above notations, by the formula
\begin{equation}\label{eulerchi1}
	\chi(E)={\rm Ind}_a(\overline{\partial}_E)
\end{equation}
The analytic index ${\rm Ind}_a(T)$ of an operator is defined as 
$${\rm Ind}_a(T):=\dim(\Ker(T))-\dim(\Ker(T^*))$$ 
and  $\overline{\partial}_E$ denotes the ``dbar'' operator with coefficients in $E$. The Hirzebruch-Riemann-Roch formula, which is a special case of the Atiyah-Singer Index theorem, is the equality 
\begin{equation}\label{eulerchi2}
	\chi(E)=\langle {\rm Ch}(E){\rm Td}(X), [X]\rangle 
\end{equation}
of the Euler characteristic of $E$ with the topological index which is the evaluation on the fundamental class $[X]$ of $X$ of the cohomology class $ {\rm Ch}(E){\rm Td}(X)$ product of the Chern character ${\rm Ch}(E)$ of the vector bundle $E$ and the Todd genus ${\rm Td}(X)$ of $X$.

\subsection{Potential role of the complex lift of the Scaling Site}\label{subsect:3.4}

In the case of the complex lift of the (square of the) Scaling Site, we expect  $E$ to be a line bundle, the Todd genus  be equal to $1$ and that the relevant term in the topological index  comes from the term $\frac 12 c_1(E)^2$ in the Chern character of $E$.\newline
In this set-up, one difficulty is that the self-intersection of the divisor $D$ appears
as a trace taken in a \emph{relative} situation. This means that one works with the difference between the ad\`ele class space (divided by $\hat\Z^*$), say $X$, and the ideles (also divided by $\hat\Z^*$), which form a subset $Y\subset X$. The explicit formulas are obtained in the form (after a cut-off) 
$$
(\Tr_X-\Tr_Y)(\pi(f))
$$
and this corresponds to the spectral realization as a cokernel of $E:\cF(X)\to \cF(Y)$. Thus, the trace on this cokernel corresponds to the {\em opposite} of $(\Tr_X-\Tr_Y)(\pi(f))$ as required by the minus sign in the Explicit Formulas. In fact, a first task should be to understand how to express this difference of traces as an intersection number and then develop an appropriate intersection theory. The  advantage of working   in a complex framework is that one could replace the naive real intersections by the intersection of complex manifolds and also that everything is compatible with the use of the Fourier transform. In fact, we also speculate that the divergent term in $\log \Lambda$ which enters as coefficient of $f(1)$ for the test function (see \cite{cmbook} Theorem 2.36), is due to the lack of good definition of self-intersection of the diagonal. While one obtains an infinite result when working naively, the implementation of a suitable intersection theory should provide the correct Euler characteristic.
Thus adapting  the Riemann-Roch strategy comprises the following five steps
\begin{enumerate}
\item Construct the complex lift $\Gamma$ of the Scaling Site.
\item Develop intersection theory  in such a way that the divergent term in $\log \Lambda$ (see \cite{cmbook} Theorem 2.36) is eliminated.
\item Formulate and prove a Hirzebruch-Riemann-Roch formula on $\Gamma^2$,  whose topological side part $\frac 12 c_1(E)^2$ is $\frac 12\inter(f,f)$ as in \eqref{negcrit1bis}. This step involves the lifting of the divisor $D(f)=\int f(\lambda) \Psi_{\lambda}\, d^*\lambda$ in characteristic $1$ to a divisor $\tilde D(f)$ in the complex set-up and the use of correspondences.
\item Use the assumed positivity of $\inter(f,f)$ to get an existence result for $H^0(\tilde D(f))$ or 
 $H^0(-\tilde D(f))$.
 \item Use tropical descent to get the effectivity of a divisor equivalent to $D(f)$ and finally get a contradiction.	\end{enumerate}
 The development of step 3 is the most problematic since in the lift from characteristic $1$ to characteristic $0$ one looses the automorphisms $\Aut(\rmax)=\R_+^*$ which are at the origin of the Frobenius correspondences $\Psi_{\lambda}$. We settle this problem in Sect. \ref{sect:7} using the Witt construction in characteristic $1$.

\section{Tropical descent and almost periodic functions}\label{sec:4}

In order to lift a continuous divisor $D(f)=\int f(\lambda) \delta_{\lambda}\, d^*\lambda$ on  the Scaling Site (in characteristic $1$, Sect. \ref{subsec:3.1}) to a \emph{discrete} divisor $\tilde D(f)$ on a complex geometric  space,  one first needs to understand how to generalize Jensen's formula to a case where the Jensen function is no longer a piecewise linear affine convex function with integral slopes but  an arbitrary convex function. \newline
In this part we explain how H. Bohr's theory of almost periodic functions, and the theory developed by B. Jessen on the density of zeros of almost periodic analytic functions, gives a satisfactory answer to this question. This technique  plays a crucial role in the process to extend the tropical descent procedure of Sect. \ref{subsec:3.2} to control the  continuous divisors, in characteristic $1$, following the Riemann-Roch lifting strategy. This procedure will also suggest a further important information on the need of a suitable compactification $G$ of the {\em imaginary direction} required for a correct complex lift of the Scaling Site. This part  will be developed  in Sect. \ref{sec:5}.

\subsection{Almost periodic functions} \label{subsect:4.1}

We recall the definition of almost periodic functions (see \cite{bohr}  and \cite{Besicovitch} for more details). Let $H$ be the locally compact abelian group $\R$ or $\Z$.

\begin{definition}\label{defnalmostper} Let  $f~:~H \longrightarrow \C$ be a bounded continuous function and $\varepsilon > 0$ a real number. An $\varepsilon$-almost period for $f$ is a number $\tau \in H$ such that  
$$
\Vert f(.+\tau)-f(.)\Vert_{\infty}:=\displaystyle \sup_{x \in H}|f(x+\tau)-f(x)| < \varepsilon.
$$
The function  $f$  is said to be almost periodic if for any $\varepsilon> 0$ the set of 
$\varepsilon$-almost periods of $f$ is relatively dense, i.e., there is a real number 
$l = l(\varepsilon) > 0$ such that any interval with length $l$ contains at least one $\varepsilon$-almost period. 
\end{definition}
The space of  almost periodic functions on $H$ is denoted by $\AP(H)$. In the following part we are mostly interested in the case $H=\R$ but will use the case $H=\Z$ when considering sequences.\newline
 By construction, $\AP(H)$ is a $C^*$-subalgebra   of the $C^*$-algebra $C_b(H)$ of bounded continuous functions on $H$. An important characterization 
of almost periodic functions was given by S. Bochner \cite{Bochner}. 
\begin{theorem}\label{Bochner} A bounded continuous function $f\in C_b(H)$ is an almost periodic function 
if and only if 
the family of translates $\{f(.+t)\}_{t \in H}$ is relatively compact in $C_b(H)$, \ie its closure is compact. 
\end{theorem}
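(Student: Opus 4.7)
The plan is to prove the two implications separately, in both cases exploiting completeness of $C_b(H)$ so that relative compactness reduces to total boundedness. The key in the forward direction is that almost periodicity provides a relatively dense set of translations producing $\varepsilon$-approximations of $f$ itself; the key in the reverse direction is a pigeonhole argument that converts total boundedness of the orbit into the relative density of almost periods.

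For the forward implication, I would first establish the standard auxiliary fact that an almost periodic function is automatically uniformly continuous: pick an $\varepsilon$-almost period lying in any prescribed interval of length $l(\varepsilon)$ and use the (automatic) uniform continuity of $f$ on a compact window of length $\sim l(\varepsilon)$ to transfer a modulus of continuity to all of $H$ by translation. This renders the orbit map $t\mapsto f(\cdot+t)$ from $H$ into $C_b(H)$ continuous. Now fix $\varepsilon>0$, let $l=l(\varepsilon)$, and observe that every $t\in H$ can be written as $t=\tau+r$ with $\tau$ an $\varepsilon$-almost period and $r\in[-l/2,l/2]$, since the interval $[t-l/2,t+l/2]$ contains such a $\tau$. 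Translation invariance of the sup norm gives
\[
\|f(\cdot+t)-f(\cdot+r)\|_\infty=\|f(\cdot+\tau)-f\|_\infty<\varepsilon.
\]
Hence the full orbit sits within distance $\varepsilon$ of $K_\varepsilon:=\{f(\cdot+r):r\in[-l/2,l/2]\}$, which is the continuous image of a compact set and therefore totally bounded. Covering $K_\varepsilon$ by finitely many $\varepsilon$-balls yields a covering of the orbit by finitely many $2\varepsilon$-balls, so the orbit is totally bounded in $C_b(H)$, and by completeness it is relatively compact.

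For the converse, assume the orbit is relatively compact and fix $\varepsilon>0$. Total boundedness produces $t_1,\dots,t_N\in H$ such that for every $t\in H$ there is an index $j=j(t)$ with $\|f(\cdot+t)-f(\cdot+t_j)\|_\infty<\varepsilon$. Changing variables $x\mapsto x-t_j$ shows that $t-t_j$ is an $\varepsilon$-almost period, so
\[
H \;=\; T(\varepsilon)+\{t_1,\dots,t_N\},
\]
where $T(\varepsilon)$ denotes the set of $\varepsilon$-almost periods of $f$. To extract relative density, set $L:=2\max_j|t_j|$. For arbitrary $a\in H$, choose $b:=a+\max_j|t_j|$ and decompose $b=\tau+t_j$ with $\tau\in T(\varepsilon)$; then $b-t_j-a=\max_k|t_k|-t_j\in[0,L]$, so $\tau\in[a,a+L]$. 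Every interval of length $L$ meets $T(\varepsilon)$, so $f$ is almost periodic.

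The genuinely substantive step, and the one where I expect the most care is needed, is the forward direction: one must show that the orbit is not merely precompact for uniform convergence on compact subsets (which would already follow from equicontinuity via Arzel\`a--Ascoli) but for the much stronger topology of uniform convergence on all of $H$. This is precisely where the relative density of $\varepsilon$-almost periods enters essentially, through the decomposition $t=\tau+r$ that squeezes every translate within $\varepsilon$ of a compact family. The converse is, by contrast, a clean combinatorial bookkeeping once total boundedness has been unpacked, with the only subtlety being that in the discrete setting $H=\Z$ one must interpret ``intervals of length $l$'' as runs of $l$ consecutive integers and verify that $\max_j|t_j|\in H$ so that the choice of $b$ is admissible.
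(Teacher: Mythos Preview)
The paper does not supply a proof of this theorem: it is quoted as Bochner's classical characterization with a reference to \cite{Bochner}, and then used to motivate von Neumann's extension to arbitrary groups. Your argument is correct and is essentially the standard proof; the forward direction via the decomposition $t=\tau+r$ with $\tau$ an $\varepsilon$-almost period and $r$ confined to a compact window, together with uniform continuity of $f$, is exactly the classical route, and your pigeonhole argument for the converse is the usual one. The only place one might want a line more of detail is the uniform-continuity lemma (the three-$\varepsilon$ argument transferring a local modulus of continuity via an almost period), but your sketch captures it.
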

Bochner's characterization lead J. von Neumann in \cite{vN} to extend the notion of almost periodic function to arbitrary groups by requiring the relative compactness for the uniform norm of the set of translates of  $f$. This definition does not make use of the topology of the group and von Neumann constructed the mean value of a function $f$  using the translation invariant element in the closed convex hull of the translates of the function.

\subsection{From Jensen to Jessen and the tropical descent}\label{subsec:4.2}

Jensen's formula in the annular case  
allows one to define the tropicalization of a holomorphic function. In \cite{jessen33}, Jessen   extended Jensen's formula to analytic almost periodic functions.\newline
Recall that an analytic function $f(z)$ in the strip $\Re(z)\in [\alpha,\beta]$ is called almost periodic when the function  $\R\ni t\mapsto f(\sigma+it)$ is uniformly almost periodic for $\sigma  \in [\alpha,\beta]$. \newline Jessen showed that, for such a function,  the following limit exists
\begin{equation}\label{jessen1}
	\varphi(\sigma):=\lim_{T\to \infty}\frac {1}{2T}\int_{-T}^T \log\vert f(\sigma+it)\vert dt
\end{equation}
and determines a real convex continuous function $\varphi(\sigma)$ of $\sigma  \in [\alpha,\beta]$. The function $\varphi(\sigma)$ is called the Jensen function of $f$. By convexity, the  derivative $\varphi'(\sigma)$ exists at all points of the interval except for a denumerable set $E$. For $\sigma_j$ outside $E$, Jessen proved that the relative frequency of zeros of $f$ in the strip  $\Re(z)\in [\sigma_1,\sigma_2]$ exists and is given by the variation of the derivative $\varphi'$. More precisely, if $N(T)$ denotes the number of zeros of $f$ with $\Re(z)\in [\sigma_1,\sigma_2]$, and $-T<\Im(z)<T$ one has 
\begin{equation}\label{jessen2}
\lim_{T\to \infty}\frac{N(T)}{2T}= \frac{\varphi'(\sigma_2)-\varphi'(\sigma_1)}{2\pi}.
\end{equation}

\subsection{Discrete lift of a continuous divisor}\label{subsec:4.3}

In this part we describe, following \cite{JT}, the procedure of lifting a tropical continuous divisor (\ie a formal integral of delta functions $\int f(\lambda) \delta_\lambda\, d\lambda$) to a discrete, \emph{integer valued} divisor, using the technique of almost periodic lifting.\newline
 The formal expression $\int f(\lambda) \delta_\lambda\, d\lambda$ replaces the finite discrete sum as in \eqref{rrchar1} of Sect. \ref{subsubsec:3.2.3}. We recall that the basic relation defining the divisor ${\rm div}(\phi)$, in characteristic $1$, of a piecewise affine function $\phi(\sigma)$ is ${\rm div}(\phi)=\Delta(\phi)$, where $\Delta$ is the Laplacian taken in the sense of distributions. Here, we extend this definition to convex functions in terms of the equation (taken in the sense of distributions)
\begin{equation}\label{rrchar1bis}
{\rm div}(\phi):=\Delta(\phi)
\end{equation}
Then, the almost periodic lifting of a convex function is the {\em choice} of an almost periodic analytic function $f$ whose tropicalization gives back the function $\phi(\sigma)$. More precisely (following \cite{JT}, Theorem 25) one has the next characterization of a Jensen function of an almost periodic analytic function
\begin{theorem}\label{Jessen}  (\cite{JT}, Theorem 25) A real function $\phi(\sigma)$, in the interval $\alpha<\sigma<\beta$, is the Jensen function of an almost periodic analytic function in the strip $\Re(z)\in [\alpha,\beta]$ if and only if $\phi(\sigma)$ is convex and for every compact interval $I\subset (\alpha,\beta)$ there exist a finite set $F$ of $\Q$-linearly independent real numbers and a real number $C<\infty$ such that the positive difference  $\phi'(\sigma_2)-\phi'(\sigma_1)$ of slopes of $\phi(\sigma)$ in intervals where it is affine is a rational combination of elements of $F$ with 
$$
\phi'(\sigma_2)-\phi'(\sigma_1)=\sum_{\mu\in F} r(\mu) \mu, \qquad \sum_{\mu\in F} r(\mu)^2\leq C\, \vert\phi'(\sigma_2)-\phi'(\sigma_1)\vert^2.
$$
\end{theorem}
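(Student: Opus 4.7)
Since this is the Jessen--Tornehave characterization (\cite{JT}, Thm.~25), the plan is to follow the classical route: deduce necessity from the spectral structure of almost periodic functions combined with Jessen's density formula \eqref{jessen2}, and deduce sufficiency by an explicit infinite-product construction. Throughout I would use two standing facts: (a) if $f$ is almost periodic analytic in the strip, then $\log|f(\sigma+it)|$ is subharmonic in $z$ and almost periodic in $t$, so by Bohr's mean value theorem the limit \eqref{jessen1} exists and yields a continuous convex $\varphi$; (b) by Bohr's theory, $f$ admits a generalized Dirichlet series $f(z)=\sum a_n e^{\lambda_n z}$ whose exponents $\lambda_n\in\R$ form a countable set, and the module generated by $\{\lambda_n\}$ over $\Z$ is the spectrum of $f$.

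For necessity, I would fix a compact interval $I\Subset(\alpha,\beta)$ and apply the argument principle on rectangles $[\sigma_1,\sigma_2]\times[-T,T]$ with $\sigma_1,\sigma_2\in I$ at points where $\varphi$ is differentiable. The contribution of the horizontal sides tends to $0$ after division by $2T$, so \eqref{jessen2} identifies $\varphi'(\sigma_2)-\varphi'(\sigma_1)$ with $2\pi$ times the mean winding density of the vertical loops $t\mapsto f(\sigma_j+it)$. By Bohr's mean-motion theorem for almost periodic curves, this winding density lies in the $\Q$-vector space spanned by the Fourier exponents $\lambda_n/2\pi$ of $f$ restricted to the relevant vertical strip. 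Choosing a maximal $\Q$-linearly independent finite set $F$ of exponents that dominate $f$ on a neighborhood of $I$ (the remaining exponents can be absorbed into a uniformly convergent tail) gives the required expansion $\varphi'(\sigma_2)-\varphi'(\sigma_1)=\sum_{\mu\in F} r(\mu)\mu$, and the quadratic bound $\sum r(\mu)^2\le C|\varphi'(\sigma_2)-\varphi'(\sigma_1)|^2$ drops out from a Parseval-type estimate applied to the arguments of the Dirichlet expansion on $I$.

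For sufficiency, given convex $\phi$ satisfying the stated condition, I would proceed by local construction and patching. On each compact subinterval $I\subset(\alpha,\beta)$ pick $F\subset\R$ and $C$ as in the hypothesis. For a single jump of slope $r(\mu)\mu$ at some $\sigma_0$, the elementary ``building block'' is
\[
g_\mu(z)\;:=\;1+e^{\mu(z-\sigma_0-i\theta_0)},
\]
whose Jensen function is $\max(0,\mu(\sigma-\sigma_0))$; its derivative jumps by $|\mu|$ at $\sigma_0$. More general coefficients $r(\mu)$ (including non-integral ones) are produced by taking products of translates $\prod_k g_\mu(z-i t_k)$ with the $t_k$ distributed so that the average density of zeros on the line $\Re z=\sigma_0$ equals $r(\mu)|\mu|/(2\pi)$; the $\Q$-linear independence of $F$ guarantees that distinct $\mu$-contributions do not interfere and that the resulting functions remain almost periodic with spectrum in $\Z F$. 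Assembling these blocks over all jumps gives a formal product
\[
f(z)\;=\;\prod_{\mu\in F}\prod_k g_\mu(z-it_k^{(\mu)}),
\]
and the hypothesis $\sum r(\mu)^2\le C|\phi'(\sigma_2)-\phi'(\sigma_1)|^2$ provides the $\ell^2$ control needed for the absolute convergence (after taking logarithms and subtracting principal parts) of this product uniformly on compact sub-strips of $I$. Finally one checks, via Bochner's criterion (Theorem \ref{Bochner}), that $f$ is almost periodic and that its Jensen function coincides with $\phi$ on $I$; gluing over an exhaustion of $(\alpha,\beta)$ by compact intervals completes the proof.

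The main obstacle is the sufficiency direction, and specifically the simultaneous control of three things in the infinite product: convergence of the analytic product (which needs the $\ell^2$ bound), preservation of almost periodicity under the infinite product (which is where Bochner's relative compactness criterion and the $\Q$-linear independence of $F$ are both essential), and the identification of the Jensen function of the limit with $\phi$. The delicate step is that passing to the limit in Jensen's formula \eqref{jessen1} requires dominated convergence of $\log|f|$ along vertical lines, which in turn relies on the quantitative form of the bound $\sum r(\mu)^2\le C|\phi'(\sigma_2)-\phi'(\sigma_1)|^2$ rather than merely finiteness of each sum. This is precisely why the condition is formulated quantitatively.
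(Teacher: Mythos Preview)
The paper does not give its own proof of this theorem: it is quoted verbatim from \cite{JT} (their Theorem~25) and used as input. What the paper \emph{does} do, in the paragraphs following the statement, is outline the mechanism underlying the Jessen--Tornehave argument, and that mechanism is quite different from yours. Their route is through the asymptotic distribution function of almost periodic \emph{sequences}: one places the zeros of the sought-for $f$ at points $z_k=V(k)+ik$, $k\in\Z$, where $V(k)$ is a real almost periodic sequence, and then the Jensen function $\varphi$ is recovered from the distribution function $\mu$ of $V$ via $\varphi'(\sigma)=\mu(\sigma)$ (equation \eqref{phiprime}). The structural conditions on $\varphi$ then translate into the characterization of which $\mu$ arise from almost periodic sequences with a given module of exponents, which is the content of Theorems~\ref{Jessen11} and~\ref{Jessen12} (i.e.\ \cite{JT}, Theorems~11 and~12). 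The rationality condition and the finite set $F$ come from the fact that constancy values of $\mu$ lie in $\tfrac{1}{2\pi}M_U$, and the quadratic bound is what governs the construction of $V$ in \cite{JT}.

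Your approach via infinite products of blocks $1+e^{\mu(z-\sigma_0-i\theta_0)}$ is a genuinely different strategy, and it has two gaps. First, in the sufficiency direction, your construction is tailored to the case where $\phi$ is piecewise affine with countably many slope jumps; but the theorem covers arbitrary convex $\phi$, and when $\phi$ is strictly convex (so $\phi'$ is continuous and the hypothesis on affine intervals is vacuous) there are no ``jumps'' to assemble and your product is empty. The Jessen--Tornehave construction handles this uniformly because a continuous distribution function $\mu$ is always realized by some almost periodic sequence (Theorem~\ref{Jessen12}), with no side condition. Second, in the necessity direction, your claim that a finite $\Q$-independent set $F$ of exponents suffices on each compact $I$, with the tail ``absorbed'', is not justified: the Dirichlet exponents of $f$ are in general infinite in number and the smallness of the tail in sup norm does not by itself force the mean winding density to depend only on finitely many of them. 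In \cite{JT} this finiteness is obtained indirectly, through the module structure of the distribution function of the zero sequence, not through truncation of the Dirichlet series of $f$.
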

These results are based on the characterization of the asymptotic distribution function of almost periodic sequences $U(k)\in \R$, $k\in \Z$. Here, to be almost periodic for sequences means, with $H=\Z$ in Definition \ref{defnalmostper}, that for any $\epsilon>0$ the set of $\epsilon$-periods
$$
\chi(\epsilon):=\{ j\mid \vert U(k+j)-U(k)\vert<\epsilon \qqq k\in \Z\}
$$
is relatively dense in $\Z$ (see Definition \ref{defnalmostper}). A distribution function is a non-decreasing function $\mu(\sigma)$ of a real variable $\sigma\in \R$ whose limit, when $\sigma\to -\infty$ is $0$ and whose limit, when $\sigma\to \infty$ is $1$. One defines $\mu(\sigma\pm 0)$ respectively as the limits on the left and on the right and one disregards the choice of a precise value in the interval $[\mu(\sigma-0),\mu(\sigma+0)]$ when the two values are different. This situation only occurs on a denumerable set of values of $\sigma$. The asymptotic distribution function of  an almost periodic sequence $U(k)$ of real numbers is defined using the densities of the subsets   $E_-(\sigma):=\{k\mid U(k)<\sigma\}$ and  $E_+(\sigma):=\{k\mid U(k)\leq\sigma\}$. For an arbitrary subset $E\subset \Z$, one defines first the lower and upper densities by the formulas 
\begin{equation}\label{density}
	\underline{\rho}(E):=\liminf_I \frac{\#\{E\cap I\}}{\#I}, \qquad 
	\overline{\rho}(E):=\limsup_I \frac{\#\{E\cap I\}}{\#I}
\end{equation}
where the limits are taken over all intervals $I=[a,b]\subset \Z$ whose length $b-a$ tends to $\infty$. Finally, the asymptotic distribution of  an almost periodic sequence $U(k)$, when it exists, is uniquely determined (as a distribution function in the above sense) as the non-decreasing function $\mu(\sigma)$ such that 
\begin{equation}\label{density1}
	\mu(\sigma-0)\leq \underline{\rho}(E_-(\sigma))\leq \overline{\rho}(E_+(\sigma)) \leq\mu(\sigma+0).
\end{equation}

A. Wintner showed that such a distribution function  exists for all almost periodic sequences $U(k)$ of real numbers  (see \cite{JT} Theorem 10 for a simple proof).  The almost periodic sequences are the  continuous functions on the almost periodic compactification  of $\Z$ which is the dual of the additive  group  $(\R/2\pi\Z)_{\rm dis}$  endowed with the discrete topology. This abelian group is uncountable but the Fourier transform of an almost periodic sequence $U(k)$ 
$$
\hat U(s):=\lim_{T\to \infty} \frac{1}{2T}\sum_{-T}^T U(k)e^{isk}, \ s\in (\R/2\pi\Z)_{\rm dis}
$$
vanishes except on a countable subset, called the set of exponents of $U$ in \opcit The subgroup $ M  \subset \R/2\pi\Z$ generated  by the exponents of $U$ is called the ``modul" $M_U$ of $U$. Its intersection with $2\pi \Q$ plays a role in particular in the following result 
\begin{theorem}\label{Jessen11} (\cite{JT}, Theorem 11) The asymptotic distribution function $\mu(\sigma)$ of  an almost periodic sequence $U(k)$ of real numbers is constant in an open interval if and only if the sequence does not take any value in this interval. In this case the value of $\mu(\sigma)$ in this interval is a rational number which belongs to  $\frac{1}{2\pi} M_U$.
\end{theorem}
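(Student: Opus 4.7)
The plan is to replace the sequence $U(k)$ by a continuous function on the Bohr compactification of $\Z$, and then exploit the extra factorisation through the character group of $M_U$. Let $B:=\widehat{(\R/2\pi\Z)_{\rm dis}}$ be the almost periodic compactification of $\Z$; by Bochner's characterization (Theorem \ref{Bochner}) the sequence $U$ extends to a unique continuous function $\tilde U:B\to\R$, and the asymptotic distribution $\mu$ equals the pushforward $\tilde U_*m_B$ of normalized Haar measure. Since $U$ has Fourier spectrum inside the subgroup $M_U\subset\widehat B=\R/2\pi\Z$, the extension $\tilde U$ factors as $\tilde U=g\circ\pi$, where $\pi:B\twoheadrightarrow K:=\widehat{M_U}$ is the Pontryagin dual of the inclusion $M_U\hookrightarrow\R/2\pi\Z$ (surjective because dual of an injection) and $g:K\to\R$ is continuous; thus $\mu=g_*m_K$ for $m_K$ the normalized Haar measure on the compact abelian group $K$.

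For the first equivalence, if $U(\Z)\cap(a,b)=\emptyset$ then $g(K)=\overline{U(\Z)}$ misses the open interval $(a,b)$, so $\mu((a,b))=0$ and $\mu$ is constant on $(a,b)$. Conversely, if some $U(k_0)\in(a,b)$, choose $\epsilon>0$ with $[U(k_0)-\epsilon,U(k_0)+\epsilon]\subset(a,b)$: by Definition \ref{defnalmostper} the set of $\epsilon$-almost-periods of $U$ is relatively dense in $\Z$, hence has positive lower density, and each such $\tau$ yields $U(k_0+\tau)\in(a,b)$, so \eqref{density1} forces $\mu$ to acquire positive mass inside $(a,b)$.

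For the rationality part, assume $\mu$ is constant on $(a,b)$ and fix $\sigma\in(a,b)$. Since $g(K)\cap(a,b)=\emptyset$, the level set
$$
S:=g^{-1}((-\infty,\sigma])=g^{-1}((-\infty,a])=K\setminus g^{-1}([b,+\infty))
$$
is simultaneously closed and open in $K$. The translation orbit of $1_S\in C(K)$ under $K$ is a compact set of $\{0,1\}$-valued functions whose pairwise sup-distances are $0$ or $1$, hence is finite; equivalently, the stabiliser $H:=\{k\in K\mid kS=S\}$ is an open subgroup of $K$ of finite index $n$, and $S$ is a union of $j$ cosets of $H$ for some $0\leq j\leq n$. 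Consequently $\mu((-\infty,\sigma])=m_K(S)=j/n$. By Pontryagin duality the characters of $K$ trivial on $H$ form a subgroup of $\widehat K=M_U$ of order $n$; any finite subgroup of $\R/2\pi\Z$ is cyclic (it sits inside the torsion subgroup $2\pi\Q/2\pi\Z$), so this subgroup is exactly $\{\tfrac{2\pi r}{n}\,:\,0\le r<n\}\subset M_U$. In particular $\tfrac{2\pi j}{n}\in M_U$ and
$$
\mu((-\infty,\sigma])=\frac{j}{n}=\frac{1}{2\pi}\cdot\frac{2\pi j}{n}\in\frac{1}{2\pi}M_U.
$$

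The main obstacle is not analytic but structural: one has to recognise that constancy of $\mu$ on the open interval $(a,b)$ \emph{forces} the level set $S$ to be honestly clopen in the compact abelian group $K$ (and not merely measurable up to a null set), and then harvest both the rationality and the precise element of $M_U$ by combining Pontryagin duality with the fact that finite subgroups of the circle $\R/2\pi\Z$ are automatically cyclic. The rest is compact-group bookkeeping, and the analytic input (continuous extension to $B$, the identification $\mu=g_*m_K$, and the equidistribution of $\Z$ in $B$) is classical.
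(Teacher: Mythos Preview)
Your proof is correct. The paper does not give a full proof (the result is quoted from Jessen--Tornehave) but sketches a more elementary route: for $\epsilon$ smaller than the length of the gap $(a,b)$, every $\epsilon$-almost-period of $U$ is a \emph{true} period of the sublevel set $E_-(\sigma)=\{k:U(k)<\sigma\}$, and periodic subsets of $\Z$ have rational density, which yields $\mu(\sigma)\in\Q$ immediately. Your argument is more structural: you pass to the compact group $K=\widehat{M_U}$, recognise the sublevel set as a clopen $S\subset K$, and then read off both the rationality and the finer statement $\mu(\sigma)\in\frac{1}{2\pi}M_U$ from the annihilator $H^\perp\subset M_U$ of the finite-index stabiliser of $S$, using that finite subgroups of $\R/2\pi\Z$ are cyclic. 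The $\frac{1}{2\pi}M_U$ membership is less immediate from the periodic-set argument alone: one must also observe that $1_{E_-(\sigma)}=h\circ U$ for some continuous $h$ (this is exactly where the gap is used), hence has spectrum contained in $M_U$, and then check that the group generated by the Fourier spectrum of an $n$-periodic sequence with minimal period $n$ is all of $\tfrac{2\pi}{n}\Z/2\pi\Z$. Your compact-group formulation packages this step transparently via Pontryagin duality.
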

The interesting outcome of this result is the rationality of the value $\mu(\sigma)=r$ in the interval where the spectrum is empty. The proof uses the fact that an $\epsilon$-period, for $\epsilon$ smaller than the size of the gap, is a true period for the subset where $U(k)<\sigma$ and that the density of a periodic set is a rational number. \newline
The following result (see \cite{JT}, Theorem 12) characterizes the distribution functions of the almost periodic sequences whose exponents belong to a fixed subgroup 
$
 M \subset \R/2\pi\Z
$
which is assumed to be everywhere dense for the usual topology
\begin{theorem}\label{Jessen12} Let $M$ be a given dense subgroup of $\R/2\pi\Z$. A distribution function is the asymptotic distribution function $\mu(\sigma)$ of  an almost periodic sequence $U(k)$ with exponents in $M$ if and only if it has compact support and the values  $\mu(\sigma)$ in constancy intervals belong to $\frac{1}{2\pi} M$. 
\begin{figure}[t]
\begin{center}
\includegraphics[scale=0.65,bb=150 10 138 260]{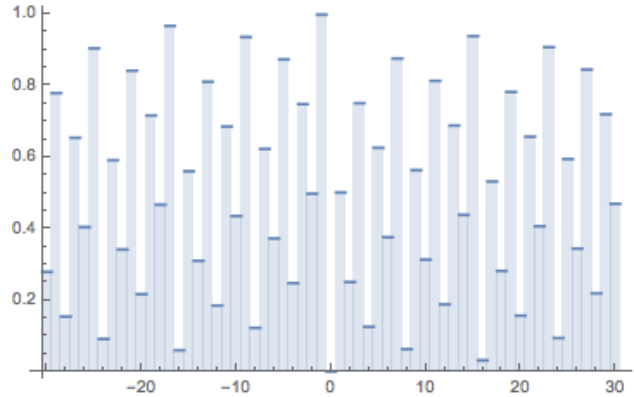}
\end{center}
\caption{Jessen sequence $U(k)$ \label{jensen2} }
\end{figure}

\end{theorem}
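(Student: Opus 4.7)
The proof splits into the necessity and sufficiency halves of the ``if and only if''.

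For \emph{necessity}, both conditions follow immediately from what is already established. Any almost periodic sequence is uniformly bounded (being a uniform limit of trigonometric polynomials), so its asymptotic distribution function $\mu$ has compact support. Since by hypothesis the exponents of $U$ lie in $M$, the modul $M_U$ is contained in $M$, and Theorem~\ref{Jessen11} then yields at once that the value of $\mu(\sigma)$ on any constancy interval lies in $\frac{1}{2\pi}M_U\subseteq\frac{1}{2\pi}M$.

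For \emph{sufficiency}, the strategy is to realize the prescribed distribution via a Bohr-type compactification. Let $G:=\widehat{M_{\rm dis}}$ be the Pontryagin dual of $M$ equipped with the discrete topology; this is a compact abelian group whose continuous characters are labelled by $M$. The canonical embedding $\iota:\Z\hookrightarrow G$, $\iota(k)(m):=e^{ikm}$, has dense image because $M$ is dense in $\R/2\pi\Z$, and hence by unique ergodicity the orbit $\{\iota(k)\}_{k\in\Z}$ is equidistributed for the Haar measure $\lambda_G$ on $G$. Consequently, for any continuous $F:G\to\R$ the sequence $U(k):=F(\iota(k))$ is almost periodic with exponents in $M$ (as $F$ is uniformly approximable by trigonometric polynomials in characters from $M$), and its asymptotic distribution function equals the pushforward $F_*\lambda_G$. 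The task therefore reduces to constructing a continuous $F:G\to\R$ with $F_*\lambda_G=\mu$.

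I would carry this out by approximation. First I would approximate $\mu$ uniformly by distribution functions $\mu_n$ whose supports remain in a fixed compact set, which are piecewise continuous with finitely many constancy intervals, and whose constancy-interval values all lie in $\frac{1}{2\pi}M$. For each $\mu_n$, I would build $F_n$ by exploiting that every character $\chi_m:G\to\R/2\pi\Z$ with $m\in M$ pushes $\lambda_G$ forward to the uniform measure on $\R/2\pi\Z$, so that preimages $\chi_m^{-1}([0,t))$ give measurable sets of Haar measure $t/2\pi$; choosing $t=2\pi r$ for each required $r\in\frac{1}{2\pi}M$ yields level sets of exactly the prescribed measures. Gluing these together with continuous interpolation (partitions of unity on $G$) produces a continuous $F_n$ whose pushforward approximates $\mu_n$. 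Uniform convergence $F_n\to F$ in $C(G)$ then delivers the sought $F$, and hence the sought almost periodic sequence $U$.

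The main obstacle lies in this last construction: one must make the level-set measures land \emph{exactly} in $\frac{1}{2\pi}M$ while keeping $F$ genuinely continuous on $G$ (indicator functions of arcs, for instance, are not almost periodic as sequences on $\Z$). The density of $M$ plays a double role here---it ensures $G$ carries enough characters to realize arbitrary Haar measures via arc-preimages, and it guarantees that the approximating trigonometric polynomials have exponents within $M$, so that the sup-norm limit preserves the spectrum constraint. This is essentially the content of the detailed construction carried out in \cite{JT}.
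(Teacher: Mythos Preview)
The paper does not give its own proof of this theorem; it is quoted as Theorem~12 of \cite{JT}. Immediately after the statement the paper offers a modern rephrasing: almost periodic sequences with exponents in $M$ are restrictions to $\Z$ of continuous real functions $h$ on the Pontrjagin dual $\hat M$, and their asymptotic distribution is the pushforward of Haar measure by $h$; constancy intervals of $\mu$ are gaps in the spectrum of $h$ and correspond to idempotents in $C(\hat M)$. Your sufficiency argument is built on exactly this framework, so your approach agrees with the paper's interpretation.

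Your necessity half is correct. For sufficiency, the reduction to producing $F\in C(G)$ with $F_*\lambda_G=\mu$ is the right reformulation and matches the paper's commentary, while the construction you sketch is where the real work lies and you appropriately defer it to \cite{JT}. One small correction: the density of $\iota(\Z)$ in $G=\widehat{M_{\rm dis}}$ holds for \emph{any} subgroup $M\subset\R/2\pi\Z$, since the annihilator of $\iota(\Z)$ in $M$ is $\{m\in M: e^{ikm}=1\ \forall k\}=\{0\}$; it does not require $M$ to be dense. The density hypothesis on $M$ enters elsewhere --- it guarantees that $G$ is large enough (equivalently, that $C(G)$ is rich enough) for the construction of $F$ to realize an arbitrary compactly supported $\mu$ satisfying the stated constraint on constancy values.
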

To phrase this result in modern terms, note that the asymptotic distribution functions $\mu(\sigma)$ of   almost periodic sequences $U(k)$ with exponents in $M$ are the same as the functions of the form 
$$
\mu(\sigma)=\nu(\{u\mid h(u)\leq \sigma\}
$$
where $h\in C(\hat M)$ is an arbitrary continuous function on the compactification of $\Z$ given by the Pontrjagin dual $\hat M$ of $M$, and where $\nu$ is the normalized Haar measure on $\hat M$. The constancy intervals of $\mu(\sigma)$ are gaps in the spectrum of $h$ and the corresponding spectral projection given by the Cauchy integral of the resolvent through the gap gives an idempotent in $C(\hat M)$. 
 If $M$ is torsion free (\ie $M$ intersects trivially with $2\pi\Q/2\pi\Z$), the compact group $\hat M$ is connected and the spectrum of any continuous function  $h$ is connected. \newline 
 The gaps in the spectrum of $h$ arise only from the torsion of $M$, and Theorem \ref{Jessen12} suggests that in order to describe all  asymptotic distribution functions $\mu(\sigma)$ of   almost periodic sequences $U(k)$ it is enough  
  to consider, instead of the almost periodic compactification  of $\Z$, (\ie the Pontrjagin dual of $(\R/2\pi\Z)_{\rm dis}$) the dual $G$ of  infinite torsion subgroups, \ie   groups of the form $D=H/\Z$ where $H\subset \Q$ is a subgroup of $\Q$ containing $\Z$ as a subgroup of infinite index.\newline   One  writes $D=\varinjlim D_n$ as a colimit of finite groups, thus its Pontrjagin dual $G=\hat D$ is a projective limit of finite groups and hence  a totally disconnected space. Theorem \ref{Jessen12} displays the role of  the idempotents in $C(G)$. This accounts for the existence of the many idempotents in $C(G)$ associated to constancy intervals of asymptotic distribution functions $\mu(\sigma)$. \newline
 To understand in explicit terms how to obtain general distribution functions,  we first work out the description of the distribution function $\mu(\sigma):=\sigma$ for $\sigma\in [0,1]$, for the group $H_p$ of rational numbers with denominator a power of $p$, with $p$ a fixed prime number. \newline
 In this case, the group $D=H/\Z$ is the colimit of the finite groups $D_n:=\Z/p^n\Z$, viewed as groups of roots of unity of order dividing $p^n$, so that the inclusions $D_n\subset D_{n+1}$ turn $D$ into a dense subgroup of $U(1)$. Dually, one thus gets a projective system of the finite cyclic groups $\hat D_n$ and an homomorphism 
$\Z\to \projlim \hat D_n$. The projective limit $K:=\projlim \hat D_n$  is topologically a Cantor set
\begin{lemma}\label{exampl} Let $p$ be a prime. \newline
$(i)$~There exists a unique sequence $U(x)$, $x\in \Z$ such that 
\begin{equation}\label{seqq}
	U(\sum_{j\geq 0} a_jp^j)=\sum_j a_jp^{-j-1},\quad \forall a_j\in \{0,\ldots, p-1\}, \ \ U(x):=\lim_{n\to \infty} U(x+p^n).
\end{equation}
$(ii)$~One has $U(x)\in H_p$, $\forall x\in \Z$, and
\begin{equation}\label{seqq1}
	\vert U(x+np^m)-U(x)\vert \leq p^{-m}\qqq m\in \N, \ x,n\in \Z
\end{equation}
$(iii)$~The  sequence $U(x)$ is almost periodic with modul $M=2\pi H_p$ 
	and has as distribution function $\mu(\sigma):=\sigma$, for $\sigma\in [0,1]$
\end{lemma}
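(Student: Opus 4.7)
The plan is to lift $U$ to a continuous map $\tilde U:\Z_p\to[0,1]$ on the $p$-adic integers, defined for $z=\sum_{j\geq 0}a_j p^j\in\Z_p$ (with $a_j\in\{0,\ldots,p-1\}$) by $\tilde U(z):=\sum_j a_j p^{-j-1}$. The series converges absolutely in $\R$ since each term is bounded by $(p-1)p^{-j-1}$, and if $y\equiv z\pmod{p^m}$ in $\Z_p$ their first $m$ digits coincide, giving the Lipschitz bound $|\tilde U(y)-\tilde U(z)|\leq\sum_{j\geq m}(p-1)p^{-j-1}=p^{-m}$. With this extension at hand, each assertion of the lemma becomes a statement about the continuous function $\tilde U$ on the compact group $\Z_p$, transferred back to $\Z\subset\Z_p$.

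For (i), the formula $U(x)=\sum_j a_jp^{-j-1}$ for $x\geq 0$ is exactly $\tilde U(x)$ evaluated on the finite $p$-adic expansion. For arbitrary $x\in\Z$ one has $x+p^n\to x$ in $\Z_p$ (since $p^n\to 0$ in the $p$-adic topology), so the Lipschitz estimate forces $U(x+p^n)\to\tilde U(x)$, showing both that the defining limit exists and that $U=\tilde U|_\Z$. For (ii) observe that the $p$-adic expansion of any $x\in\Z$ is eventually constant: for $x\geq 0$ the digits are ultimately $0$, while for $x<0$ they are ultimately $p-1$ (because $-1=\sum_{j\geq 0}(p-1)p^j$ in $\Z_p$). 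In the first case $\tilde U(x)$ is a finite sum in $H_p$; in the second it equals a finite sum in $H_p$ plus a tail $\sum_{j\geq n}(p-1)p^{-j-1}=p^{-n}\in H_p$; so $U(x)\in H_p$ either way. The estimate \eqref{seqq1} then follows immediately from the Lipschitz bound since $np^m$ has $p$-adic valuation at least $m$.

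Part (iii) splits into three assertions. Almost periodicity follows directly from \eqref{seqq1}: given $\epsilon>0$ one picks $m$ with $p^{-m}<\epsilon$, and every element of the relatively dense subgroup $p^m\Z\subset\Z$ is a uniform $\epsilon$-almost period. For the distribution function, the crucial observation is that $\tilde U$ pushes the normalized Haar measure $\nu$ on $\Z_p$ forward to Lebesgue measure on $[0,1]$: under $\nu$ the digits $a_j$ are i.i.d.\ uniform on $\{0,\ldots,p-1\}$, so $\tilde U(z)=\sum a_jp^{-j-1}$ has exactly the law of the base-$p$ expansion of a uniform real in $[0,1]$. Since $\Z$ is equidistributed in $\Z_p$ (Weyl's criterion: for every nontrivial character $\chi_{a/p^n}$ of $\Z_p$ the sum $\sum_{k=-T}^{T}e^{2\pi i ak/p^n}$ remains bounded as $T\to\infty$), the asymptotic distribution of $(U(k))_{k\in\Z}$ agrees with this pushforward, giving $\mu(\sigma)=\sigma$ on $[0,1]$.

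Finally, to identify the modul $M_U=2\pi H_p$: continuity of $\tilde U$ on $\Z_p$ forces the Fourier--Bohr spectrum of $U$ into the Pontryagin dual $\widehat{\Z_p}=H_p/\Z$, so $M_U\subseteq 2\pi H_p$. For the reverse inclusion one fixes $s=k/p^n$ with $\gcd(k,p)=1$ and computes the Fourier coefficient $\int_{\Z_p}\tilde U\,\overline{\chi_s}\,d\nu$ digit by digit; independence of the $a_j$ under $\nu$ factorizes the integral into a product of elementary sums of $p$-th roots of unity, and one checks that all contributions vanish except the one coming from the digit $j=n-1$, which evaluates via the identity $\sum_{b=0}^{p-1}b\,\omega^b=p/(\omega-1)$ (for $\omega=e^{-2\pi ik/p}$) to a nonzero number. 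This establishes that $k/p^n$ is an exponent of $U$ for every $n$ and every $k$ coprime to $p$, and these generate $H_p/\Z$. The Fourier computation is the main obstacle of the proof; the remainder is a clean packaging of the $\Z_p$-extension of $U$.
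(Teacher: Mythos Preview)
Your proof is correct. The approach differs from the paper's mainly in packaging: you make the continuous extension $\tilde U:\Z_p\to[0,1]$ explicit from the outset and deduce everything from its Lipschitz property and the fact that it pushes Haar measure to Lebesgue measure, invoking equidistribution of $\Z$ in $\Z_p$ via Weyl's criterion. The paper instead works directly with the base-$p$ digit formulas: it verifies the limit in \eqref{seqq} for negative $x$ by an explicit computation with the expansion of $x+p^k$, and it proves the distribution function is $\mu(\sigma)=\sigma$ by computing, for each dyadic interval $I=[\sum_{j<k}a_jp^{-j-1},\,\sum_{j<k}a_jp^{-j-1}+p^{-k})$, the density of $\{x:U(x)\in I\}$ as $p^{-k}$ by noting that membership in $I$ fixes the first $k$ digits of $x$. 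The $\Z_p$ interpretation you use is mentioned in the paper only as a remark after the proof.

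One point worth noting: the paper's proof of (iii) simply asserts that almost periodicity with modul $2\pi H_p$ follows from \eqref{seqq1}. That inequality indeed gives the inclusion $M_U\subseteq 2\pi H_p$ (since $U$ factors through $\Z_p$), but not the reverse. Your explicit Fourier computation---isolating the surviving $j=n-1$ term and evaluating it via $\sum_{b=0}^{p-1}b\,\omega^b=p/(\omega-1)$ for $\omega$ a primitive $p$-th root of unity---actually establishes the equality $M_U=2\pi H_p$ and is therefore more complete on this point than the paper's argument.
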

\begin{proof} $(i)$~Let $x\in \N$ be a positive integer with expansion in base $p$ given by $x=\sum_{j= 0}^k a_jp^j$. One has for $m>k$, $U(x+p^m)=U(x)+p^{-m-1}$ thus $U(x)$ fulfills the continuity condition as in \eqref{seqq}. For $x\in \Z$, $x<0$ let $k>0$ be such that $y=x+p^k>0$ and let $y=\sum_{0\leq j<k} b_jp^j$ be its expansion in base $p$. One has 
$$
\lim_{n\to \infty} U(y+p^n-p^k)=\sum_{0\leq j<k} b_jp^{-j-1}+\sum_{k}^\infty (p-1)p^{-\ell-1}=\sum_{0\leq j<k} b_jp^{-j-1}+p^{-k}
$$
Replacing $k$ by  $k'\geq  k$ replaces $y$ by $y'=\sum_{0\leq j<k} b_jp^j+\sum_{k}^{k'-1}(p-1)p^m$  and gives the same result for $U(x)$ since $\sum_{k}^{k'-1}(p-1)p^{-m-1}=p^{-k}-p^{-k'}$. Thus for any $x\in \Z$ the limit $\lim_{n\to \infty} U(x+p^n)$ exists and  this shows the existence and uniqueness of the sequence fulfilling \eqref{seqq}.\newline
$(ii)$~The proof of $(i)$ shows that  $U(x)\in H_p$, $\forall x\in \Z$. Let us prove 
 \eqref{seqq1}. One can assume $n>0$ by symmetry and $x>0$ using \eqref{seqq}. 
Replacing $x$ by $x+np^m$ does not alter the  digits $a_j$ of 
$x$  in base $p$ for  $j<m$. Thus one has
$$
\vert U(x+np^m)-U(x)\vert \leq \sum_{j\geq m} (p-1)p^{-j-1}=p^{-m}.
$$
$(iii)$~By \eqref{seqq1} the  sequence $U(x)$ is almost periodic with modul $M=2\pi H_p$. 
Let $k>0$ and $a_j\in \{0,\ldots, p-1\}$ for $0\leq j\leq k-1$. Let $I\subset [0,1]$ be the interval
$$
I=[\sum_{j<k} a_jp^{-j-1},\sum_{j<k} a_jp^{-j-1}+p^{-k})
$$
One has using the almost periodicity
$$
\lim_{T\to \infty} \frac{1}{2T}\# \{x\in \Z \mid \vert x\vert\leq T, \ U(x)\in I\}
=\lim_{T\to \infty} \frac{1}{T}\# \{x\in \N \mid x\leq T, \ U(x)\in I\}
$$
Moreover 
$$
\lim_{T\to \infty} \frac{1}{T}\# \{x\in \N \mid x\leq T, \ U(x)\in I\}=p^{-k}
$$
since the condition $U(x)\in I$ for $x\in \N$ means that the first $k$ digits of $x$ in base $p$ are equal to the $a_j$. Thus the density of the subset $\{x\in \Z\mid U(x)\in I\}$ is $p^{-k}$ and coincides with the length (and hence the  Lebesgue measure) of the interval $I$ in the range of $U$. This shows that $U$ has distribution function $\mu(\sigma)=\sigma$ for $\sigma\in [0,1]$. \qed\end{proof} 
 
\begin{rem}\label{symhypo}
 In the above example we have chosen $D=H_p/\Z=\Q_p/\Z_p$ and its dual is the group $\Z_p$ of $p$-adic integers (using the self-duality of the $p$-adic numbers $\Q_p$). The sequence $U$ is thus obtained by mapping $\Z_p$ to real numbers by means of (up to an overall factor $p$)  $\phi(\sum_{j\geq 0} a_jp^j)=\sum a_jp^{-j}$. This map is continuous but  not additive. It fulfills however the restricted additivity $\phi(x+y)=\phi(x)+\phi(y)$ when no carry over is involved in computing $x+y$.
		\end{rem}

\begin{corollary}\label{exampl1}\
\begin{enumerate}
\item[(i)] Let $h\in C[0,1]$ be a real valued function and $U$ as in \eqref{seqq}. Then the sequence $h(U)(n):=h(U(n))$ is almost periodic and its distribution function $\mu$ is the primitive of the image by $h$ of the Lebesgue measure $m$ on $[0,1]$, \ie one has $d\mu=h(m)$.
\item[(ii)] Let $\mu$ be a strictly increasing continuous function in a real interval $[\alpha,\beta]$ with $\mu(\alpha)=0$ and $\mu(\beta)=1$. Then with $h\in C[0,1]$ as in  (i), its inverse function, $\mu$ is the distribution function of $h(U)$. 
\end{enumerate}
\end{corollary}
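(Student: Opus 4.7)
The plan is to deduce (i) from Lemma~\ref{exampl} by a soft measure-theoretic change of variables, and then to obtain (ii) by specializing (i) to $h=\mu^{-1}$. For the almost periodicity of $h(U)$ I would invoke Definition~\ref{defnalmostper} together with the uniform continuity of $h$ on the compact $[0,1]$: given $\varepsilon>0$, pick $\delta>0$ so that $|s-t|<\delta$ forces $|h(s)-h(t)|<\varepsilon$; then every $\delta$-almost period of $U$ is an $\varepsilon$-almost period of $h\circ U$, and the relative density of almost periods is inherited verbatim. Hence $h(U)\in\AP(\Z)$.

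For the distribution-function assertion in (i), the content is to compute, for each $\sigma\in\R$, the densities of the sets $E_\pm(\sigma)$ entering \eqref{density1}, rewritten as $E_+(\sigma)=\{n\mid U(n)\in A(\sigma)\}$ with $A(\sigma):=h^{-1}((-\infty,\sigma])\cap[0,1]$ and similarly for $E_-$. The proof of Lemma~\ref{exampl}(iii) in fact establishes that the density of $\{n\mid U(n)\in I\}$ equals $m(I)$ on the base of dyadic $p$-adic half-open intervals $I\subset[0,1]$. By a standard monotone-class/regularity argument this extends to every Borel set $A\subset[0,1]$ with $m(\partial A)=0$; since $h$ is continuous, $\partial A(\sigma)\subset h^{-1}(\{\sigma\})$, and the latter has positive Lebesgue measure for at most countably many $\sigma$. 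Off that exceptional set one therefore reads $\mu(\sigma)=m(A(\sigma))=(h_{*}m)((-\infty,\sigma])$, i.e.\ $d\mu=h_{*}m=h(m)$.

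For (ii) the hypotheses make $\mu:[\alpha,\beta]\to[0,1]$ a homeomorphism, so $h:=\mu^{-1}\in C[0,1]$. Applying (i), $h(U)$ is almost periodic with distribution function $\nu(\sigma)=m(h^{-1}((-\infty,\sigma])\cap[0,1])$. For $\sigma\in[\alpha,\beta]$, monotonicity of $h$ gives $h^{-1}((-\infty,\sigma])\cap[0,1]=[0,\mu(\sigma)]$, whence $\nu(\sigma)=\mu(\sigma)$; for $\sigma<\alpha$ and $\sigma\ge\beta$ the identification is trivial. Thus $\nu=\mu$ as claimed.

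The main delicate step is the measure-theoretic extension in (i) from densities on the explicit $p$-adic intervals to densities on the sets $A(\sigma)$. The cleanest route, which I would follow, is to identify $U$ with a Haar-measure-preserving map $\Z\to G=\widehat{H_p/\Z}$: by Bochner's characterization (Theorem~\ref{Bochner}) the almost periodic sequence $U$ extends to a continuous $\tilde U:G\to[0,1]$, the density of $\{n\mid U(n)\in A\}$ coincides with the normalized Haar measure of $\tilde U^{-1}(A)$, and Lemma~\ref{exampl}(iii) forces $\tilde U_{*}\mathrm{Haar}=m$; the conclusion $d\mu=h_{*}m=(h\circ\tilde U)_{*}\mathrm{Haar}$ then reduces to pushforward of measures, bypassing any subtlety between upper and lower densities.
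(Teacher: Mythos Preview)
Your proposal is correct and follows essentially the same route as the paper: almost periodicity via uniform continuity of $h$, the distribution identity via pushforward of Lebesgue measure under $h$, and (ii) by specializing to $h=\mu^{-1}$. The paper's proof is terser---it simply asserts that the density of $\{n\mid U(n)\in h^{-1}((a,b))\}$ equals $m(h^{-1}((a,b)))$ without further comment---whereas you correctly observe that Lemma~\ref{exampl}(iii) only establishes this on the base of $p$-adic intervals and that an extension step (monotone class, or your cleaner Haar-measure identification on the compact dual) is needed to reach arbitrary Borel sets. This is a genuine refinement rather than a different strategy.
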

\begin{proof} {\it (i)} The almost periodicity follows from the continuity of $h$. One has
$$
h(U(n))\in (a,b)\iff U(n)\in h^{-1}((a,b))
$$
and the density of this set of integers is the Lebesgue measure of $h^{-1}((a,b))$. This shows that  the measure $d\mu$ is the image by $h$ of the Lebesgue measure on $[0,1]$.\newline
{\it (ii)} follows from {\it (i)} since for any pair of real numbers $a<b$ one has
$$
h(u)\in (a,b)\iff u\in (\mu(a),\mu(b)).
$$\qed \end{proof}
In the construction provided in  \cite{JT} of an almost periodic analytic function $f$ whose associated Jensen function is a given convex function $\varphi$, the zeros of $f$ can be taken of the form $z_k=V(k)+ik$ for $k\in \Z$,  where $V(k)$ is an almost periodic sequence. The  Jensen function $\varphi$ is  related to the asymptotic distribution function $\mu$ of $V$ by the equation 
\begin{equation}\label{phiprime}
\varphi'(\sigma)=\mu(\sigma).
\end{equation}
Next, we apply Corollary \ref{exampl1} to construct a lifted divisor associated to the formal expression $\int f(\lambda) \delta_\lambda\, d\lambda$.  We write $f=f_+-f_-$, where $f_\pm$ is positive, and we assume that the support of $f_\pm$ is a compact real interval $I_\pm=[\alpha_\pm,\beta_\pm]$. We also assume for simplicity that $\int_{I_\pm} f_\pm(\lambda) \, d\lambda=1$.  Let us define 
\begin{equation}\label{lifted0}
h_\pm:[0,1]\to [\alpha_\pm,\beta_\pm], \qquad h_\pm(u)=a\iff \int_{\alpha_\pm}^a f_\pm (v)dv=u.
\end{equation}
Then the construction of \cite{JT} provides the following lift 

\begin{lemma}\label{lift} The following discrete integral divisor lifts the continuous divisor $\int f(\lambda) \delta_\lambda\, d\lambda$
\begin{equation}\label{lifted}
D:=\sum_{k\in \Z} 	\delta_{h_+(U(k))+ik}-\delta_{h_-(U(k))+ik}.
\end{equation}	
\end{lemma}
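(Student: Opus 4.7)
To say that $D$ \emph{lifts} the continuous divisor $\int f(\lambda)\,\delta_\lambda\,d\lambda$ means, in the spirit of Jessen's formula \eqref{jessen2}, that the horizontal asymptotic density of the points of $D$ recovers the measure $f(\lambda)\,d\lambda$ on $\R$; equivalently, the Jensen function $\varphi$ of an almost periodic analytic function whose zero divisor is $D$ satisfies, in the sense of distributions, the equality $\Delta(\varphi)=f(\lambda)\,d\lambda$, matching the tropical definition of divisor \eqref{rrchar1bis}. I therefore plan to compute the horizontal density of each of the two sub-divisors $\sum_k \delta_{h_\pm(U(k))+ik}$ separately and then combine them with signs.

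For the positive part, set $\mu_+(\sigma):=\int_{\alpha_+}^\sigma f_+(v)\,dv$, so that $\mu_+$ is a continuous non-decreasing function from $I_+$ onto $[0,1]$ whose right-continuous inverse is precisely the function $h_+$ of \eqref{lifted0}. By Lemma \ref{exampl}(iii) the sequence $U(k)$ is almost periodic with Lebesgue distribution function on $[0,1]$, and Corollary \ref{exampl1}(ii) then yields that the real sequence $h_+(U(k))$ is almost periodic with asymptotic distribution function $\mu_+$. Explicitly, for every subinterval $[\sigma_1,\sigma_2]\subset I_+$,
\begin{equation*}
\lim_{T\to\infty}\frac{1}{2T}\,\#\{k\in\Z:|k|\le T,\ h_+(U(k))\in[\sigma_1,\sigma_2]\}=\int_{\sigma_1}^{\sigma_2}f_+(\lambda)\,d\lambda.
\end{equation*}
Since the imaginary parts of the points $h_+(U(k))+ik$ tile $i\R$ with unit density, this horizontal density is, up to the standard normalisation in Jessen's formula \eqref{jessen2}, the zero density of the corresponding almost periodic analytic lift produced by \cite{JT}, i.e.\ Theorem \ref{Jessen}. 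The same argument applied to $f_-,h_-,\mu_-$ yields the density $f_-(\lambda)\,d\lambda$ for the second family of points.

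Forming the signed combination and interpreting the negative contribution as a pole divisor, one sees that $D$ is the divisor of zeros minus poles of the quotient $g_+/g_-$ of the two analytic lifts, whose Jensen function $\varphi=\varphi_+-\varphi_-$ is a difference of convex functions and satisfies $\Delta(\varphi)=f_+-f_-=f$ as distributions. This identifies the tropicalisation of $D$ with $\int f(\lambda)\,\delta_\lambda\,d\lambda$, proving the lemma. The main technical obstacle is the possible vanishing of $f_\pm$ on subintervals of $I_\pm$: on such intervals $\mu_\pm$ is constant and $h_\pm$ has jumps, so the strict continuity hypothesis of Corollary \ref{exampl1}(ii) fails. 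The resolution is to observe that the constancy levels of $\mu_\pm$ form a countable subset of $[0,1]$ which is missed by almost every value of the equidistributed sequence $U(k)$, so that the identification of the asymptotic distribution function is unaffected; the existence of an analytic almost periodic realisation then follows directly from the rationality hypothesis of Theorem \ref{Jessen}, which is automatic because $f\,d\lambda$ is an absolutely continuous measure with no singular slope jumps.
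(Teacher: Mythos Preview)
Your proposal is correct and rests on the same computational core as the paper: the equidistribution of $U(k)$ in $[0,1]$ (Lemma~\ref{exampl}(iii)) together with the pushforward under $h_\pm$, which sends Lebesgue measure to $f_\pm(\lambda)\,d\lambda$. The paper, however, carries this out more directly: it simply evaluates the averaged divisor $D$ against a test function $\psi\circ\Re$ and computes
\[
\lim_{T\to\infty}\frac{1}{2T}\sum_{|k|\le T}\bigl(\psi(h_+(U(k)))-\psi(h_-(U(k)))\bigr)
=\int_0^1(\psi\circ h_+)-\int_0^1(\psi\circ h_-)
=\int f(\lambda)\psi(\lambda)\,d\lambda,
\]
using only the equidistribution of $U$ and the change of variables $u=\mu_\pm(a)$; no appeal to Jessen's theorem, analytic realisations, or Jensen functions is made. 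Your detour through Theorem~\ref{Jessen} and the quotient $g_+/g_-$ is not needed for the lemma as stated, and your worry about constancy intervals of $\mu_\pm$ dissolves once you work with general continuous test functions rather than indicator functions of strips.
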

\begin{proof} For a complex test function $\psi$ defined  on the real half-line $(0,\infty)$ one has by definition
$$
<\int f(\lambda) \delta_\lambda\, d\lambda,\psi>=\int f(\lambda) \psi(\lambda)\, d\lambda.
$$
It is enough to show that when evaluated on the function $z\mapsto \psi\circ \Re(z)$, the discrete divisor $D$ gives the same result after averaging. One has 
\begin{multline*}
\lim_{T\to \infty}\frac{1}{2T}\sum_{k\in \Z,\vert k\vert\leq T} 	<\delta_{h_+(U(k))+ik}-\delta_{h_-(U(k))+ik},\psi\circ \Re >=\\
=\lim_{T\to \infty}\frac{1}{2T}\sum_{k\in \Z,\vert k\vert\leq T}(\psi(h_+(U(k)))- 
\psi(h_-(U(k)))=\\
=\int_0^1 (\psi \circ h_+)(u)du-\int_0^1 (\psi \circ h_-)(u)du=\\
=\int_{\alpha_+}^{\beta_+}\psi(a)d\left(\int_{\alpha_+}^a f_+ (v)dv\right)
-\int_{\alpha_-}^{\beta_-}\psi(a)d\left(\int_{\alpha_-}^a f_- (v)dv\right)
=\int f(\lambda) \psi(\lambda)\, d\lambda.
\end{multline*}
\qed\end{proof}

\begin{rem}
The construction as in \eqref{seqq} generalizes when $H_p$ is replaced by any infinite subgroup of $\Q$  not isomorphic to $\Z$. This is also   explained in \cite{JT} and the connection with the Scaling Site should be explored further. In fact, note that for the periodic orbits of the Scaling Site the restriction on the slopes of the convex functions of the structure sheaf was stated in terms of these slopes belonging to $H_p$, and this condition corresponds to \eqref{phiprime}.
\end{rem}

\no In the above development (following \cite{JT}) we have worked with the  complex right-half plane, however in relation with the action of $\SL(2,\R)$, the use of the upper-half plane $\H:=\{z\in \C\mid \im(z)>0\}$ is more convenient.\newline
  Figure \ref{jensen3} shows the discrete almost periodic (in the horizontal direction) distribution of points in the upper half plane $\H$ associated to $k+iU(k)\in \H$
  
\begin{figure}[t]
  \begin{center}
\includegraphics[scale=0.65,bb=150 10 138 250]{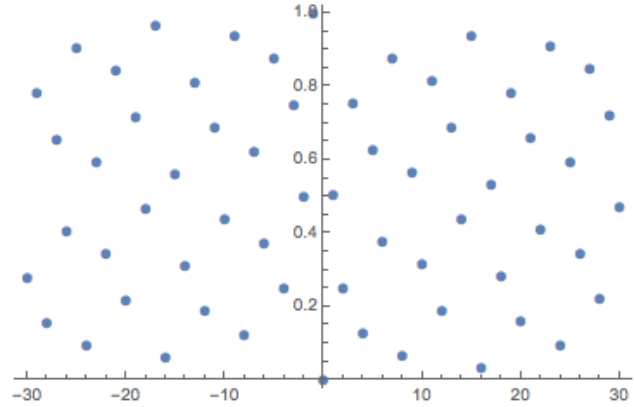} \end{center}
\caption{Jessen sequence  $k+iU(k)\in \H$}
\label{jensen3}
\end{figure}

\no Next Figure \ref{jensen4}   represents the divisor (zeros in blue, poles in red) associated to $f(x)=-2 x^3+3 x^2-x$, written as $f=f_+-f_-$ with $f_+(x)=x(1-x)$ and $f_-(x)=2x(1-x)^2$
\begin{figure}[h!]
\begin{center}
\includegraphics[scale=0.65,bb=150 10 138 250]{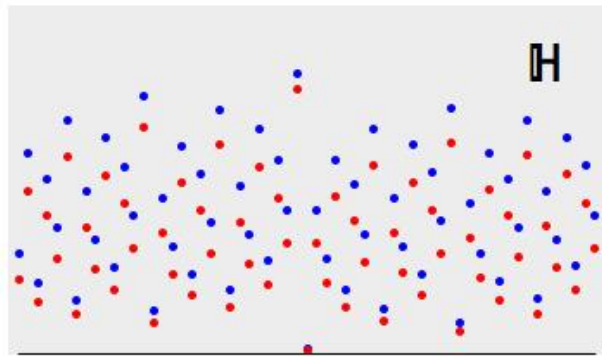}
\end{center}
\caption{Divisor for $f(x)=-2 x^3+3 x^2-x$ written as $f=f_+-f_-$, with $f_+(x)=x(1-x)$ and $f_-(x)=2x(1-x)^2$ \label{jensen4} }
\end{figure}

\section{The complex lift of the Scaling Site}\label{sec:5}

In this section we refine the framework of Jensen's formula to construct the complex lift of the Scaling Site. The original set-up of Jensen's theory and the related tropicalization map, explained in Sect. \ref{subsubsec:3.2.2}, provide us with   the following starting point.\newline 
We let $\dst:=\{q\in \C\mid 0<\vert q\vert \leq 1\}$ be the punctured unit disk in $\C$. The monoid $\nt$ acts naturally on $\dst$ by means of the map $q\mapsto q^n$. In this way, one defines a ringed topos by endowing the topos $\dst\rtimes \nt$ with the structure sheaf $\cO$ of complex analytic functions.\newline
 Given a pair of open sets $\Omega,\Omega'$ in $\D^*$ and an integer $n\in \nt$ with $q^n\in \Omega'$ for  $q\in \Omega$, there is a natural restriction map 
$$
\Gamma(\Omega',\cO)\to \Gamma(\Omega,\cO), \qquad f(q)\mapsto f(q^n).
$$

\begin{lemma}\label{mapu} The map $u: \dst\to [0,\infty)$, $u(q):=-\log\vert q\vert$, extends to a geometric morphism of toposes	$u:\dst\rtimes \nt\to \rnt$.
\end{lemma}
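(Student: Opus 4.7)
The plan is to verify that the map $u(q) := -\log|q|$ from $\dst$ to $[0,\infty)$ is continuous and $\nt$-equivariant for the natural actions, and then invoke the general principle that a continuous equivariant map of spaces with monoid action induces a geometric morphism between the associated semidirect-product toposes.

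First I would record the two elementary properties of $u$. Continuity of $q \mapsto -\log|q|$ on $\dst$ is immediate since $|q| > 0$ there. For equivariance, given $n \in \nt$ and $q \in \dst$,
\[
u(q^n) \;=\; -\log|q^n| \;=\; -n\log|q| \;=\; n\cdot u(q),
\]
which matches the $\nt$-action on $[0,\infty)$ by multiplication. In particular, if $\Omega \subset [0,\infty)$ is open, then $u^{-1}(\Omega) \subset \dst$ is open and satisfies $u^{-1}(n\cdot \Omega) = \{q^{1/n} \mid q \in u^{-1}(\Omega)\}$ in the appropriate equivariant sense: the pullback along $u$ intertwines the restriction maps $f(v) \mapsto f(nv)$ on $[0,\infty)$ with $f(q) \mapsto f(q^n)$ on $\dst$.

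Next I would package this as a morphism of sites. Take as a site of definition for $\rnt$ the category whose objects are open subsets of $[0,\infty)$ with morphisms generated by open inclusions together with the maps $\Omega \to n \cdot \Omega$ coming from the $\nt$-action, equipped with the open cover topology; and similarly for $\dst \rtimes \nt$. The assignment $\Omega \mapsto u^{-1}(\Omega)$ on objects, together with the obvious action on morphisms (sending the $\nt$-morphism $\Omega \to n\cdot\Omega$ to the corresponding morphism $u^{-1}(\Omega) \to u^{-1}(n\cdot\Omega)$ implemented at the level of sheaves by $f(q) \mapsto f(q^n)$), defines a functor of sites that is continuous (it sends covers to covers because $u$ is continuous) and preserves the needed finite limits (because $u^{-1}$ commutes with finite intersections and preserves the terminal object). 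By the standard comparison theorem (Grothendieck--Verdier, or SGA IV Exposé IV), such a morphism of sites induces a geometric morphism of sheaf toposes, with inverse image given by sheafification of the obvious presheaf pullback and with a right adjoint direct image.

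The main obstacle, if any, is purely formal: one must be careful that the $\nt$-action on $\dst$ by $q \mapsto q^n$ is by \emph{continuous} (not open) maps, so that the site of $\dst \rtimes \nt$ is really the semidirect product in the correct sense. Once this is set up consistently with the site underlying $\rnt$ (where the action $v \mapsto nv$ is likewise continuous but not open on all of $[0,\infty)$ because of the fixed point $0$), the verification reduces to the two displayed identities above and the resulting geometric morphism $u: \dst\rtimes\nt \to \rnt$ is canonically defined. This is the starting point for regarding $(\dst\rtimes\nt,\cO)$ as the complex lift of the Scaling Site $\scal2$.
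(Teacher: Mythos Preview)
Your proposal is correct and follows essentially the same approach as the paper: the paper's proof is the one-liner ``This follows from the continuity and $\nt$-equivariance of the map $u(q):=-\log\vert q\vert$,'' and you have simply unpacked the site-theoretic machinery behind that sentence. The only substantive content in either argument is the computation $u(q^n)=n\,u(q)$ together with continuity, and you have both.
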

\begin{proof} This follows from the continuity  and $\nt$-equivariance of the map 
	$u(q):=-\log\vert q\vert$.
\qed\end{proof}

\no In order to work with continuous divisors as explained in Sect. \ref{subsec:3.2}, we consider the covering of  $\dst$ defined by the closed upper half plane $\bar\H$ 
$$
q:\bar\H\to  \dst, \qquad q(z)=e^{2\pi i z},\qquad \forall z\in \bar\H=\{z\in \C\mid \Im(z)\geq 0\},
$$ 
and make a compactification of the real direction in $\bar\H$ using a group compactification $G$ of $\R$ motivated by the results of Sect. \ref{sec:4}. 
   The compact group $G\supset \R$ used in this construction is the smallest compactification of $\R$ on which $\Q^*$ acts (by unique divisibility of the dual discrete group) by extending its natural action on $\R$. The dual of  $G$ is the additive discrete group $\Q$. \newline
   At this point one could proceed in terms of ringed toposes and this would amount, in the construction of $\dst\rtimes \nt$, to replace the punctured unit disk $\dst$ by its proetale cover given by the projective limit 
    $$
\tilde \dst:= \varprojlim_{\nt}( \dst,z\mapsto z^n).
 $$
 on which $\nt$ is acting by lifting the above action (see Proposition \ref{proetcov}). \newline
 In this paper we prefer to proceed at the adelic level and 
 the complex lift that we are going to describe in details is  obtained as the fibered product of the ad\`ele class space of $\Q$ and  $G$. It is thus  the quotient of  $\A_\Q\times G$ by the diagonal action of $\Q^*$. \newline
    The construction of the compactification $G$ is developed, in adelic terms, in Sect. \ref{subsec:5.1}. It leads, in Sect. \ref{subsec:5.2}, to the adelic definition of the complex lift. In Sects. \ref{subsec:5.3}, \ref{subsec:5.4}, we analyse the restriction of the so obtained  complex lift to the periodic orbits and to the classical orbit of the ad\`ele class space. In particular, the restriction to the classical orbit turns out to be the projective limit $\tilde \D^*$ (Proposition \ref{proetcov}) of the open unit disks (whose closed version was mentioned above in the topos theoretic context).

 The complex lift is naturally  endowed with a one dimensional complex foliation that we describe and  analyze both on the periodic and classical orbits. It is this foliation that provides the geometric meaning of the discrete lift of continuous divisors of Lemma \ref{lift}. This part is explained in Sect. \ref{subsec:5.4}. This foliation retains a meaning on the full complex lift, it is still one dimensional (complex) with leaves being the orbits of the right action of the $aX+b$ group.

\no By construction of the group $G$, the scaling action on $\A_\Q\times G$ exists  for rational values of $\lambda$ and it extends the action by scaling on $\R\subset G$. At the  archimedean place, this action is simply given by (\ie induces a) multiplication of the complex variable by $\lambda\in\Q^*$ and thus preserves the complex structure.  
  At the geometric level what really matters in this construction  is to have a compact abelian group $G$ which compactifies the additive locally compact group $\R$ and is such that the action of $\Q^*$ on $\R$ by multiplication extends  to $G$. After implementing such data one forms the double quotient
  \begin{equation}\label{doubq}
  C(G):=	\Q^*\backslash (\A_\Q\times G)/(\hatz\times \id).
  \end{equation}
  By construction, the space $C(G)$ maps onto the principal factor $\Q^*\backslash \A_\Q/\hatz$ of the ad\`ele class space and the fibers of this projection only involve the compact freedom in $G$. Writing $\A_\Q=\A_f\times \R$, one sees that the element $-1\in \Q^*$ acts as identity on $\A_f/\hatz$ but \emph{non trivially} on $\R\times G$. Moreover, $[0,\infty)\times G\subset \R\times G$ is a fundamental domain for the action of $\pm 1$ and in the quotient the boundary $\{0\}\times G$ is divided by the symmetry $u\mapsto -u$. This fact accounts for the use of the complex half-plane $\H$ in the above discussion with this nuance on the boundary. The gain, before the division by $\hatz$, is that one retains the additive group structure that we expect to play a key role in the definition of the de Rham complex, since the $H^2$ should be generated by the Haar measure. 

 \subsection{Adelic almost periodic compactification of $\R$}\label{subsec:5.1}

The main requirement on an almost periodic compactification $G$ of $\R$  is that the action of $\Q^*$ on $\R$ by multiplication extends  to $G$. Then, by turning to the Pontrjagin duals one derives a morphism $\rho:\hat G \to \hat \R$ with dense range. The fact that the scaling action of $\Q^*$ on $\R$  extends  to $G$ means here that the subgroup  $\rho(\hat G)$ is stable under multiplication 	by $\Q^*$ and hence is a $\Q$-vector subspace of $\R$. \newline
The simplest case is when this vector space is one dimensional. We shall now describe this special case in details. Thus, and up to an overall scaling, we assume  $\rho(\hat G)=\Q\subset \hat\R$.

\no We denote by $G:=\A_\Q/(\Q,+)$  the compact group quotient of the additive group of  ad\`eles by the discrete subgroup $\Q\subset \A_\Q$. We first recall how one can interpret the group $G$ as the projective limit of the compact groups $G_n:=\R/n\Z$ under the natural morphisms 
\begin{equation}\label{projgn}
  \gamma_{n,m}:G_m\to G_n, \qquad \gamma_{n,m}(x+m\Z)=x+n\Z,\qquad\forall n\vert m.
  \end{equation}
  First, notice that there is a natural group isomorphism 
  $$
  (\hat\Z\times \R)/\Z\simeq \A_\Q/(\Q,+)
  $$
  that is deduced by using the inclusion of  additive groups $\hat\Z\times \R\subset \A_\Q$   together with the two equalities
  $$
  \Q\cap  (\hat\Z\times \R)=\Z, \qquad \Q +  (\hat\Z\times \R)=\A_\Q
  $$
  where the latter derives from the density of $\Q$ in finite ad\`eles $\A_f$. 
  
 \no Next, recall that by construction the group $\hat\Z$ is the projective limit of the finite groups $\Z/n\Z$, and hence one obtains
  $$
   (\hat\Z\times \R)/\Z=\varprojlim (\Z/n\Z\times \R)/\Z.
  $$
 Moreover one has a group isomorphism 
  $$
  \rho_n:(\Z/n\Z\times \R)/\Z\to G_n=\R/n\Z, \qquad \rho_n(j,x)=x-j+n\Z
  $$
  and when $n\vert m$ one also has 
  $$
  \gamma_{n,m}\circ \rho_m(j,x)=x-j+n\Z=\rho_n(j,x).
  $$
  The outcome is that the projective system  $ (\hat\Z\times \R)/\Z=\varprojlim (\Z/n\Z\times \R)/\Z$ is isomorphic to the projective system $\{\gamma_{n,m}:G_m\to G_n\}$ and one derives in this way a natural isomorphism of the corresponding projective limits. \vspace{0.005cm}
    
   Let us now understand the action of $\Q^*$ on $G$ by checking that $G$ is a uniquely divisible group. We show that the multiplication by an integer $n>0$ defines a bijection of $G$ on itself. First, notice that its range contains the subgroup of $(\hat\Z\times \R)/\Z$  of the classes of the elements $(0,t)\in \{0\}\times \R\subset \hat\Z\times \R$ and is therefore dense in $G$ since $\Z$ is dense in $\hat \Z$. Since $G$ is compact, the image of the multiplication by $n$ is  closed and thus equal to $G$. Let us now show that the kernel of the multiplication by $n$ is trivial. The equality $(na,ns)=(m,m)$ with $a\in \hat Z$ and $m\in \Z$ implies that $m$ is divisible by $n$ since one has $m/n\in \hat\Z$. Hence one obtains $(a,s)\sim (0,0)$ and the multiplication by $n$ is therefore proven to be bijective.\vspace{.05in}
   
   \no Next lemma provides several relevant details on the chosen almost periodic compactification $G$ of $\R$
   
\begin{lemma}\label{adelicomp} Let $G:=\A_\Q/(\Q,+)$ be the compact group described above. \begin{enumerate}
\item[(i)] The homomorphism $\R\ni t\mapsto a(t)=(0,t)\in \A_\Q/(\Q,+)$ determines an almost periodic compactification of $\R$.
\item[(ii)] Let $\alpha \in \hat \A_\Q\cap \Q^\perp$ be a non-trivial additive character of the ad\`eles which restricts to the identity on $\Q\subset \A_\Q$. The map $\Q\ni q\mapsto \alpha(q\cdot~)$ identifies the additive group $\Q$ with the Pontrjagin dual $\hat G$.
\item[(iii)] Let $\alpha=\prod \alpha_v$ be the standard choice of the additive character of the ad\`eles, with $\alpha_\infty(s)=e^{2 \pi i s}$. The  Pontrjagin dual $\rho$ of the map $a$ as in {\it (i)} identifies $r\in\Q$ with the character 
 (element of $\hat\R$) given by $\R\ni s\mapsto e^{2 \pi i rs}$.	
 \end{enumerate}
\end{lemma}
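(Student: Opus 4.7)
The plan is to verify the three parts by leveraging the explicit description of $G=\A_\Q/\Q$ developed just above the lemma, in particular the isomorphism $G\simeq(\hat\Z\times\R)/\Z$ and the self-duality of the ad\`eles.

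For part \textit{(i)}, I would first note that $a\colon\R\to G$ is continuous because it is the composition of the continuous injection $\R\hookrightarrow\A_\Q$ as the archimedean factor with the quotient map. The key point is density of $a(\R)$ in $G$. Working in the model $G\simeq(\hat\Z\times\R)/\Z$, an arbitrary class is represented by $(x,s)\in\hat\Z\times\R$. Using the density of $\Z$ in $\hat\Z$, choose $n_k\in\Z$ with $n_k\to x$ and set $t_k:=s-n_k\in\R$; then $a(t_k)\equiv(0,s-n_k)\equiv(x-n_k,0)\pmod{\Z}$ converges to $(x,s)\bmod\Z$. Hence $a(\R)$ is dense in the compact group $G$, so $(G,a)$ is by definition an almost periodic (Bohr-type) compactification of $\R$: for any $f\in C(G)$, the translates $\{f(a(\,\cdot\,+t))\}_{t\in\R}$ are a continuous image of the compact family $\{f(\,\cdot+g)\}_{g\in G}$ in $C_b(G)$ and hence relatively compact in $C_b(\R)$, which by Bochner's criterion (Theorem~\ref{Bochner}) means $f\circ a\in\AP(\R)$.

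For part \textit{(ii)}, the Pontrjagin dual of $G=\A_\Q/\Q$ is by definition the annihilator
\[
\hat G=\Q^\perp=\{\chi\in\hat\A_\Q\mid \chi|_\Q\equiv 1\}.
\]
The hypothesis on $\alpha$ places $\alpha\in\Q^\perp$, and the self-duality of $\A_\Q$ furnishes the group isomorphism $\A_\Q\xrightarrow{\sim}\hat\A_\Q$, $y\mapsto \alpha(y\,\cdot\,)$. Under this isomorphism the annihilator of $\Q$ in $\hat\A_\Q$ pulls back to $\{y\in\A_\Q\mid \alpha(yq)=1\;\forall q\in\Q\}$, which is the standard self-duality statement that equals $\Q$ itself (the closed subgroup of $\A_\Q$ annihilating $\Q$ under the ad\`ele pairing is $\Q$). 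Therefore $q\mapsto \alpha(q\,\cdot\,)$ is a group isomorphism $\Q\xrightarrow{\sim}\hat G$; the map is a homomorphism because $\alpha$ is additive, injective because $\alpha$ is non-trivial on $\A_\Q$ and $q\A_\Q=\A_\Q$ for $q\neq 0$, and surjective by the self-duality just invoked.

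For part \textit{(iii)}, I only need to unwind definitions. The dual $\rho=\hat a\colon\hat G\to\hat\R$ sends a character $\chi$ of $G$ to $\chi\circ a$. Taking $\chi=\chi_r$ the character corresponding to $r\in\Q$ under the identification of (ii), I compute, for $s\in\R$,
\[
\rho(r)(s)=\chi_r(a(s))=\alpha\bigl(r\cdot(0,s)\bigr)=\alpha\bigl((0,0,\ldots,rs)\bigr)=\prod_{p}\alpha_p(0)\cdot\alpha_\infty(rs)=e^{2\pi i rs},
\]
where I used that ad\`ele multiplication is componentwise and each $\alpha_p$ is trivial at $0$, together with the normalisation $\alpha_\infty(u)=e^{2\pi i u}$. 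The only potential obstacle in the whole argument is the cleanness of the self-duality step in part \textit{(ii)}, which is a classical result on ad\`eles and hence can be cited rather than reproved; the rest is unwinding of the explicit model of $G$.
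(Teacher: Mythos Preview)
Your argument is correct and follows the same route as the paper: density of $a(\R)$ in $G$ via density of $\Q$ (equivalently $\Z$) in the finite ad\`eles, the self-duality $\A_\Q\simeq\hat\A_\Q$ with $\Q^\perp=\Q$ for (ii), and the direct evaluation $\alpha(r\,a(s))=\alpha_\infty(rs)$ for (iii).

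Two small points on part (i). First, the intermediate equivalence you wrote, $(0,s-n_k)\equiv(x-n_k,0)\pmod\Z$, is a slip: what you want is $(0,s-n_k)\equiv(n_k,s)\pmod\Z$ by adding the diagonal element $(n_k,n_k)$, and then $(n_k,s)\to(x,s)$ in $\hat\Z\times\R$ gives the desired convergence. Second, the paper also verifies that $a$ is injective (a nonzero rational has nonzero component at every place, so $(0,t)\in\Q$ forces $t=0$); you did not state this, and it is part of what ``compactification'' means.
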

\begin{proof} {\it (i)} By construction of the ad\`eles, the subgroup $\Q\subset \A_\Q$ is discrete and cocompact thus $G$ is a compact abelian group. The kernel of the homomorphism $a:\R\to \A_\Q/(\Q,+)$ is trivial since a non-zero rational has non-zero components at every local place. The homomorphism $a$ has also dense range since  the subgroup $a(\R)+\Q$ is dense in the ad\`eles as a consequence of the density of $\Q$ in the finite ad\`eles. \newline
{\it (ii)} The pairing of $\A_\Q$ with itself given by  $\alpha(xy)$ identifies $\A_\Q$ with its Pontrjagin dual and the quotient of $\A_\Q$ by $\Q^\perp=\Q$ with the Pontjagin dual of $\Q$ (see \cite{weilbasic}).\newline
{\it (iii)} One has $\alpha(r\,a(s))=\alpha_\infty(rs)=e^{2 \pi i r s}$.\qed\end{proof}

\subsection{The adelic complex lift}\label{subsec:5.2}

Next, we assemble together the ad\`ele class space and the adelic almost periodic compactification of $\R$. Our primary goal is to describe  the complex structure that arises  from the pair $(x,y)$ of variables at the archimedean place, and to verify that these variables are rescaled by the same rational number under the action of $\Q_+^*$. \newline
 In the following part we shall work with the full ad\`ele class space and postpone the division by $\hatz$ after this development 
\begin{lemma}\label{adelicomp1} Let $P(\Q)$ be the $ax+b$ group over $\Q$. The left action of $P(\Q)$ on the adelic affine plane $\A_\Q^2$ defined by 
\begin{equation}\label{actionpq}
	\ell\left(
\begin{array}{cc}
 a & b \\
 0 & 1 \\
\end{array}
\right)(x,y):=(ax+b,ay)
\end{equation}
	preserves the complex structure at the archimedean place given by  $\bar\partial=\partial_x+i\partial_y$.
\end{lemma}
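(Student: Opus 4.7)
\medskip

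\noindent\textbf{Proof plan.} The statement is a local computation at the archimedean place, so I would immediately reduce to that component. Write $a=(a_v)$, $b=(b_v)\in\A_\Q$ and $(x,y)=((x_v),(y_v))\in\A_\Q^2$, and project the action \eqref{actionpq} onto the factor $\R\times\R$ indexed by $v=\infty$. Since $a,b\in\Q$, the archimedean components $a_\infty,b_\infty$ are real numbers, and the induced map $\ell_\infty:\R^2\to\R^2$ is simply $(x,y)\mapsto(a_\infty x+b_\infty,\,a_\infty y)$. The remaining non-archimedean components are translated by additive ad\`eles, which is a purely set-theoretic operation not interacting with the complex structure at infinity, so all the content lies in showing that $\ell_\infty$ is holomorphic.

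The cleanest way to see this is to introduce the complex coordinate $z=x+iy$ at the archimedean place, in which $\ell_\infty$ reads $z\mapsto a_\infty z+b_\infty$. Because $a_\infty\in\R\subset\C$, this is a $\C$-affine map of $\C$, hence holomorphic; equivalently, the real Jacobian of $\ell_\infty$ is the scalar matrix $a_\infty\cdot\mathrm{Id}$, which commutes with the standard almost-complex structure $J=\bigl(\begin{smallmatrix}0&-1\\1&0\end{smallmatrix}\bigr)$.

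To match the formulation of the lemma verbatim, I would then verify preservation of the operator $\bar\partial=\partial_x+i\partial_y$ by a direct chain-rule computation: for any smooth $f$ on $\R^2$,
\begin{equation*}
\partial_x(f\circ\ell_\infty)=a_\infty\,(\partial_x f)\circ\ell_\infty,\qquad
\partial_y(f\circ\ell_\infty)=a_\infty\,(\partial_y f)\circ\ell_\infty,
\end{equation*}
whence $\bar\partial(f\circ\ell_\infty)=a_\infty\,(\bar\partial f)\circ\ell_\infty$. In particular the kernel of $\bar\partial$, i.e.\ the sheaf of holomorphic functions, is stable under pullback by $\ell_\infty$, which is the assertion. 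Since the computation makes no use of invertibility of $a$, it is valid for every element of $P(\Q)$, including those with $a=0$ in degenerate strata, and since the formula for the action is covariant under composition in $P(\Q)$, one immediately gets stability of the complex structure under the full left action.

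There is no real obstacle here; the only subtle point worth flagging is that one must remember that $a$ acts on \emph{both} $x$ and $y$ with the same scalar, which is what ensures the Jacobian is a real multiple of the identity (rather than a diagonal matrix with unequal entries, which would not commute with $J$). This is precisely the ``triangular'' feature of $P$ that distinguishes it from a diagonal torus action and is the geometric reason why the complex structure on the leaves survives the quotient by $P(\Q)$ in the definition of $\caq$.
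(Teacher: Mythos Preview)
Your proof is correct and follows essentially the same approach as the paper's: the paper's one-line argument observes that translation by $b$ commutes with $\bar\partial$ and that multiplication by $a$, acting identically on both coordinates, merely rescales $\bar\partial$, which is exactly your chain-rule identity $\bar\partial(f\circ\ell_\infty)=a_\infty\,(\bar\partial f)\circ\ell_\infty$. Your version is simply more explicit, and your remark that the equal scaling on both entries is the crucial point matches the paper's emphasis precisely.
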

\begin{proof} The statement holds because the translation by $b$ commutes with the operator $\bar\partial$ and the multiplication by $a$, being the same on both entries, just rescales the operator.\qed\end{proof} 

\begin{definition}\label{adelicomp2} The {\rm adelic complex lift} is the adelic quotient 
	\begin{equation}\label{adelicomp3}
	\caq:=P(\Q)\backslash 	\A_\Q^2.
	\end{equation}
	We denote by $\gaq$ the further quotient obtained  by implementing the action of $\hatz$ on  $\A_\Q^2$ given by multiplication on the second adelic variable $y$.
\end{definition}

\no Recall that for any commutative ring $R$ the algebraic group $P$ is defined as
\begin{equation}\label{alggroupP}
P(R):=\{\left(
\begin{array}{cc}
 a & b \\
 0 & 1 \\
\end{array}
\right)\mid a\in R^{-1},\ b\in R\}.
\end{equation}
One has a canonical inclusion of groups $\GL_1(R)\subset P(R)$ given by 
\begin{equation}\label{alggroupPbis}
\GL_1(R)\ni a\mapsto \left(
\begin{array}{cc}
 a & 0 \\
 0 & 1 \\
\end{array}
\right).
\end{equation}
For any commutative ring $R$, we introduce the notation
\begin{equation}\label{algspaceP}
\overline{P(R)}:=\{\left(
\begin{array}{cc}
 a & b \\
 0 & 1 \\
\end{array}
\right)\mid a\in R,\ b\in R\}.
\end{equation}
By construction $\overline{P(R)}\subset M_2(R)$, moreover there is a canonical set-theoretic  identification $\overline{P(R)}\simeq R^2$ given by the first line of the matrix. In the following part we prefer to work with the identification $\iota:R^2\to \overline{P(R)}$ defined by 
\begin{equation}\label{algspaceiota}
\iota(x,y):=\left(
\begin{array}{cc}
 y & x \\
 0 & 1 \\
\end{array}
\right).
\end{equation}

\begin{lemma}\label{Pdescrofgamma}\
\begin{enumerate}
\item[(i)] Let $R=\A_\Q$, then the bijection $\iota$ of \eqref{algspaceiota} is equivariant for the left action of $P(\Q)$ and induces a bijection
\begin{equation}\label{algspaceP1}
j:\caq=P(\Q)\backslash 	\A_\Q^2\stackrel{\sim}{\to} 	P(\Q)\backslash
\overline{P(\A_\Q)}.
\end{equation}
\item[(ii)] Let $K$ be the  compact subgroup $\hatz\subset \GL_1(\A_\Q)$, then $j$ induces a canonical bijection 
	\begin{equation}\label{algspaceP2}
j:\gaq \stackrel{\sim}{\to} 	P(\Q)\backslash
\overline{P(\A_\Q)}/K.
\end{equation}
\item[(iii)] The action of $P(\R)$  by right multiplication on  $P(\Q)\backslash
\overline{P(\A_\Q)}$ is free on the open subset $V$ determined by the conditions $y_f\neq 0$ and $y_\infty \neq 0$, where $y_\infty\in \R$ (resp. $y_f\in \A_f$) is the archimedean (resp. non archimedean) component of $y$ for $\left(
\begin{array}{cc}
 y & x \\
 0 & 1 \\
\end{array}
\right)\in \overline{P(\A_\Q)}$. 
\end{enumerate}
\end{lemma}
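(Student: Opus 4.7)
The plan is to verify equivariance of $\iota$ in each of the first two parts and then to analyse stabilisers for the third.

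For (i), I would perform a direct matrix calculation: for $\gamma = \begin{pmatrix} a & b \\ 0 & 1 \end{pmatrix} \in P(\Q)$ and $(x,y)\in\A_\Q^2$, one has
\[
\gamma\cdot\iota(x,y) \;=\; \begin{pmatrix} a & b \\ 0 & 1 \end{pmatrix}\begin{pmatrix} y & x \\ 0 & 1 \end{pmatrix} \;=\; \begin{pmatrix} ay & ax+b \\ 0 & 1 \end{pmatrix} \;=\; \iota(ax+b,\,ay),
\]
which agrees exactly with the action $\ell$ defined in \eqref{actionpq}. Since $\iota$ is a set-bijection intertwining the two $P(\Q)$-actions, passing to quotients produces the claimed bijection $j$ of \eqref{algspaceP1}.

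For (ii), the embedding $\GL_1\hookrightarrow P$ of \eqref{alggroupPbis} identifies $k\in\hatz\subset\GL_1(\A_\Q)$ with the matrix $\begin{pmatrix} k & 0 \\ 0 & 1 \end{pmatrix}$, and
\[
\iota(x,y)\cdot\begin{pmatrix} k & 0 \\ 0 & 1 \end{pmatrix} \;=\; \begin{pmatrix} yk & x \\ 0 & 1 \end{pmatrix} \;=\; \iota(x,\,yk).
\]
Hence, under $\iota$, the right $K$-action on $\overline{P(\A_\Q)}$ matches the action of $\hatz$ on the second $\A_\Q$-coordinate that defines $\gaq$. Because matrix multiplication is associative, the left $P(\Q)$- and the right $K$-actions commute, so $j$ descends to the bijection \eqref{algspaceP2}.

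For (iii), I would translate freeness on the quotient into a stabiliser statement. Let $m=\iota(x,y)$ with $(x,y)\in V$, and let $g\in P(\R)$ be embedded as $(1_f,g_\infty)\in P(\A_\Q)$. Then $[m]\cdot g=[m]$ in $P(\Q)\backslash\overline{P(\A_\Q)}$ means $mg=\gamma m$ for some $\gamma=\begin{pmatrix} p & q \\ 0 & 1 \end{pmatrix}\in P(\Q)$. Comparison at the finite places yields $\gamma m_f=m_f$, that is $(p-1)y_f=0$ and $q=(1-p)x_f$. The crucial input is the inclusion $\Q^\times\subset\A_f^\times$: any nonzero rational is a unit at every finite place, hence invertible in $\A_f$. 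Consequently, if $p\neq 1$ then $(p-1)y_f=0$ would force $y_f=0$, contradicting $(x,y)\in V$. Therefore $p=1$, whence $q=0$ and $\gamma=1$. The archimedean comparison then reduces to $m_\infty g_\infty = m_\infty$, and the identical argument inside the field $\R$, using $y_\infty\neq 0$, yields $g_\infty=1$.

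No real obstacle arises here; the only conceptual point is the embedding $\Q^\times\subset\A_f^\times$, which is precisely what singles out the hypothesis $y_f\neq 0$ (rather than the weaker $y\neq 0$) as the correct non-archimedean transversality condition, mirroring $y_\infty\neq 0$ at the archimedean place.
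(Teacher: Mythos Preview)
Your proof is correct and follows essentially the same approach as the paper: the same matrix computations for (i) and (ii), and the same stabiliser analysis for (iii). You are in fact slightly more explicit than the paper about the key point in (iii), namely that $\Q^\times\subset\A_f^\times$ (so that $(p-1)y_f=0$ with $p\neq 1$ forces $y_f=0$), which the paper leaves implicit in the line ``$y_f=ay_f$ shows that $a=1$ since $y_f\neq 0$''; the only small item you omit is the paper's explicit remark that the conditions $y_f\neq 0$, $y_\infty\neq 0$ are $P(\Q)$-invariant and hence define a well-defined subset $V$ of the quotient.
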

\begin{proof} {\it (i)} The multiplication rule 
$$
\left(
\begin{array}{cc}
 a & b \\
 0 & 1 \\
\end{array}
\right)\left(
\begin{array}{cc}
 y & x \\
 0 & 1 \\
\end{array}
\right)=
\left(
\begin{array}{cc}
 ay & a x +b\\
 0 & 1 \\
\end{array}
\right)
$$
shows the equivariance of the map $\iota$ with respect to the action \eqref{actionpq}. \newline
{\it (ii)} The   right action of $\GL_1(\A_\Q)$ on $\overline{P(\A_\Q)}$ is given by the formula
$$
\left(
\begin{array}{cc}
 y & x \\
 0 & 1 \\
\end{array}
\right)\left(
\begin{array}{cc}
 u & 0 \\
 0 & 1 \\
\end{array}
\right)=
\left(
\begin{array}{cc}
 uy & x\\
 0 & 1 \\
\end{array}
\right).
$$
Thus, under the isomorphism $j$ of \eqref{algspaceP1} the right action of $\hatz$ corresponds to the action of $ \hatz$ by multiplication on the $y$-component and hence it defines the required isomorphism.\newline
{\it (iii)} The conditions $y_f\neq 0$ and $y_\infty \neq 0$ are invariant under left multiplication by  $P(\Q)$ since this action replaces $y$ by $ay$ for a non-zero rational number $a$. Thus these conditions define an open subset $V$ of the quotient $P(\Q)\backslash
\overline{P(\A_\Q)}$. Let $\pi:\overline{P(\A_\Q)}\to P(\Q)\backslash
\overline{P(\A_\Q)}$ be the canonical quotient map.    The right action of  $P(\R)$  
$$
\left(
\begin{array}{cc}
 y & x \\
 0 & 1 \\
\end{array}
\right)\left(
\begin{array}{cc}
 u & v \\
 0 & 1 \\
\end{array}
\right)=
\left(
\begin{array}{cc}
 uy & vy+x\\
 0 & 1 \\
\end{array}
\right)
$$
leaves $y_f$ and $x_f$ unchanged. The open set  $V\subset P(\Q)\backslash
\overline{P(\A_\Q)}$ is invariant under this action. Let $z\in V$ and $g\in P(\R)$ be such that $zg=z$, with 
$$
z=\pi(\left(
\begin{array}{cc}
 y & x \\
 0 & 1 \\
\end{array}
\right)),\qquad  g=\left(
\begin{array}{cc}
 u & v \\
 0 & 1 \\
\end{array}
\right).
$$
The equality $zg=z$ means that there exists $h\in P(\Q)$ with $zg=hz$, thus one derives 
$$
h=\left(
\begin{array}{cc}
 a & b \\
 0 & 1 \\
\end{array}
\right),\qquad \left(
\begin{array}{cc}
 y & x \\
 0 & 1 \\
\end{array}
\right)\left(
\begin{array}{cc}
 u & v \\
 0 & 1 \\
\end{array}
\right)=\left(
\begin{array}{cc}
 a & b \\
 0 & 1 \\
\end{array}
\right)\left(
\begin{array}{cc}
 y & x \\
 0 & 1 \\
\end{array}
\right).
$$
The equality $y_f=ay_f$ shows that $a=1$ since $y_f\neq 0$.   Then, the equality $x_f=ax_f+b$ forces $b=0$. Hence one gets $h=1$. In turn, the equality $uy_\infty=y_\infty$ proves that $u=1$ since $y_\infty\neq 0$. Finally, $vy_\infty+x_\infty=x_\infty$ implies $v=0$.  
 \qed\end{proof} 
 
 Let $P_+(\R)\subset P(\R)$ be the connected component of the identity, in formulas
 $$
 P_+(\R):=\{\left(
\begin{array}{cc}
 a & b \\
 0 & 1 \\
\end{array}
\right)\mid a>0\}.
 $$
 With the notations of Lemma \ref{Pdescrofgamma}, we shall use the right action of $P_+(\R)\subset P(\R)$ to obtain a foliation of $V$ by one dimensional complex leaves endowed with a natural metric. 
  We let 
 $$
 X=\left(
\begin{array}{cc}
 0 & 1 \\
 0 & 0 \\
\end{array}
\right), \qquad Y=\left(
\begin{array}{cc}
 1 & 0 \\
 0 & 0 \\
\end{array}
\right)
 $$
 be the generators of the Lie algebra of $P_+(\R)$: one has $[Y,X]=X$.
 
 \begin{proposition}\label{ax+bact}\
 \begin{enumerate}
 \item[(i)] The  free right action of  $P_+(\R)$ on the (open) space $V$ endows the orbits with a unique Riemannian metric so that the vector fields $X,Y$ form an orthonormal basis (at each point) and a unique complex structure such that  $$\bar \partial(f)=0\iff (X+iY)f=0.$$
\item[(ii)] Each orbit as in (i) is isomorphic to the complex upper-half  plane $\H=\{x+iy\mid y>0\}$ with the Poincar\'e metric.
\item[(iii)] The Laplacian $\Delta=-\bar\partial^*\bar \partial$, for the  Riemannian metric as in (ii), is equal to $X^2+(Y-\frac 12)^2-\frac 14$.
\end{enumerate}	
\end{proposition}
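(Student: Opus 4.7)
\medskip

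\noindent\textbf{Proof proposal.} The plan is to reduce every assertion to a direct computation on a single orbit after exhibiting an explicit isomorphism with the Poincar\'e half-plane, and then to observe that the structures defined orbit-wise are invariant by the right action of $P_+(\R)$ so that they do indeed descend to well-defined transverse-invariant structures on each leaf of the foliation of $V$.

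\smallskip

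\emph{Step 1 (identification of an orbit with $\H$).} By Lemma \ref{Pdescrofgamma} (iii) the right action of $P_+(\R)$ on $V$ is free, so each orbit of a base point $z_0\in V$ is in $P_+(\R)$-equivariant bijection with $P_+(\R)$ itself via $g\mapsto z_0 g$. I identify $P_+(\R)$ with the upper half plane $\H$ through the diffeomorphism
\[
\Phi:P_+(\R)\xrightarrow{\;\sim\;} \H,\qquad \left(\begin{array}{cc} a & b \\ 0 & 1\end{array}\right)\longmapsto b+ia,
\]
using the right-invariant coordinates $(b,a)$ with $a>0$. The one-parameter subgroups generated by $X$ and $Y$ are $\exp(tX)=\left(\begin{smallmatrix}1&t\\0&1\end{smallmatrix}\right)$ and $\exp(tY)=\left(\begin{smallmatrix}e^t&0\\0&1\end{smallmatrix}\right)$; right multiplication by these families and differentiation at $t=0$ yield under $\Phi$
\[
X \;\longmapsto\; y\,\partial_x,\qquad Y\;\longmapsto\; y\,\partial_y,
\]
where $z=x+iy$ is the standard coordinate on $\H$.

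\smallskip

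\emph{Step 2 (complex structure and metric).} Requiring $X,Y$ to be orthonormal forces $g_{xx}=g_{yy}=1/y^2$, which is precisely the Poincar\'e metric $(dx^{2}+dy^{2})/y^{2}$. For the complex structure, I note that
\[
X+iY \;=\; y(\partial_x+i\partial_y)\;=\;2y\,\partial_{\bar z},
\]
so $(X+iY)f=0$ if and only if $\partial_{\bar z}f=0$, i.e.\ the complex structure specified in (i) coincides with the standard one on $\H$. Uniqueness of both structures is built into the normalization. Since $X,Y$ are the fundamental vector fields of a free right action and the defining conditions are tautologically invariant under this action, the metric and complex structure are canonical on every orbit. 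This proves (i) and (ii).

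\smallskip

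\emph{Step 3 (the Laplacian).} Expanding the right-hand side of (iii) gives
\[
X^2+\bigl(Y-\tfrac12\bigr)^{2}-\tfrac14 \;=\; X^{2}+Y^{2}-Y.
\]
Using $X=y\partial_x$ and $Y=y\partial_y$ one computes $X^{2}=y^{2}\partial_x^{2}$ and $Y^{2}=y\partial_y+y^{2}\partial_y^{2}$, whence
\[
X^{2}+Y^{2}-Y \;=\; y^{2}\bigl(\partial_x^{2}+\partial_y^{2}\bigr),
\]
which is the Laplace--Beltrami operator $\Delta_{\mathrm{LB}}$ of the Poincar\'e metric on functions. It remains to check that $\Delta_{\mathrm{LB}}$ agrees with $-\bar\partial^{*}\bar\partial$ in the chosen convention. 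With the Riemannian volume $dx\wedge dy/y^{2}$ and $\bar\partial f=\partial_{\bar z}f\,d\bar z$, an integration by parts using the fact that the dual basis to $(X,Y)$ is $(dx/y,dy/y)$, so that $d\bar z=(1/y)(X^{*}+iY^{*})$, identifies the formal adjoint of $X+iY$ as the operator producing the divergence terms that exactly cancel the first-order correction $-Y$ in $\Delta_{\mathrm{LB}}$. The outcome is the identity $-\bar\partial^{*}\bar\partial=X^{2}+Y^{2}-Y$, which is (iii).

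\smallskip

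\emph{Main obstacle.} Steps 1--2 are essentially bookkeeping once $\Phi$ is in place. The delicate point is Step~3: getting the right normalization of $\bar\partial^{*}$ (which depends on whether one takes $d\bar z$ or $2\,d\bar z$ as the generator of $(0,1)$-forms and on the Riemannian volume $dx\,dy/y^{2}$) so that the first-order term exactly accounts for the $-Y$ and the shift by $\tfrac14$. This is where one must be careful with factors of $2$ coming from $\partial_{\bar z}=\tfrac12(\partial_x+i\partial_y)$.
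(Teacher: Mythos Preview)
Your proof is correct and follows essentially the same route as the paper: identify each orbit with $P_+(\R)\simeq\H$ via $\left(\begin{smallmatrix}a&b\\0&1\end{smallmatrix}\right)\mapsto b+ia$, read off $X=y\partial_x$, $Y=y\partial_y$, derive the Poincar\'e metric from orthonormality, and obtain the Laplacian from the algebraic identity $X^2+Y^2-Y=y^2(\partial_x^2+\partial_y^2)$. The paper phrases well-definedness by noting that a change of basepoint amounts to a \emph{left} translation on $P_+(\R)$ and that the transported structures are left-invariant, whereas you argue directly that $X,Y$ are globally defined on $V$ as fundamental vector fields; both viewpoints are equivalent. Your caution about normalizations in $-\bar\partial^{*}\bar\partial$ is well placed, but the paper itself simply takes $\Delta=y^2(\partial_x^2+\partial_y^2)$ as the Laplacian on $\H$ without spelling out that step either.
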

\begin{proof} Recall that the Poincar\'e complex half-plane $\H$ is a one dimensional complex manifold endowed with the Riemannian metric 
$$
ds^2=\frac{dx^2+dy^2}{y^2}.
$$
The group $\GL(2,\R)^+$ acts by automorphisms of $\H$ as follows
$$
\left(
\begin{array}{cc}
 a & b \\
 c & d \\
\end{array}
\right)\cdot z:=\frac{az+b}{cz+d}.
$$
Using the inclusion $P_+(\R)\subset \GL(2,\R)^+$ obtained by setting $c=0$ and $d=1$, and selecting the point $z=i$, one obtains the left invariant Riemannian metric $ds^2=a^{-2}(da^2+db^2)$ on $P_+(\R)$, and the complex structure such that $2\bar \partial f= (\partial_a f-i \partial_b f)(da+i db)$. The vector fields which provide the right action of $P$ on itself are $Y=a\partial_a$ and $X=a\partial_b$. Using these fields, the Laplacian $\Delta=a^2(\partial_a^2+\partial_b^2)$ is given by 
\begin{equation}\label{lapxy}
	\Delta=X^2+(Y-\frac 12)^2-\frac 14
\end{equation}
Indeed, one has $Y^2-Y=(a\partial_a)(a\partial_a)-a\partial_a=a^2(\partial_a^2)$ and $X^2=a^2(\partial_b^2)$. 

\no For the orbit $L$ though $x\in V$ one has a bijection defined by $\phi_x:P_+(\R)\stackrel{\sim}{\to} L$, $\phi_x(g):=xg$, while for another point $y=xg_0$ of the same orbit one has
$
\phi_y(g)=\phi_x(g_0g).
$
Thus, the full geometric structure of $P_+(\R)$, invariant under left translations, carries over unambiguously to the orbit and the three statements follow from their validity on $P_+(\R)$.\qed\end{proof}

\subsection{The periodic orbits}\label{subsec:5.3}

The complex foliation of the open invariant set $V\subset P(\Q)\backslash
\overline{P(\A_\Q)}$ as in Proposition \ref{ax+bact} is, by construction, invariant under the right action of $K=\hatz$. To describe the geometric structure induced on the quotient $\gaq = 	P(\Q)\backslash
\overline{P(\A_\Q)}/\hatz$,  we start by investigating the induced structure on the  periodic orbit associated to a prime $p$ in the ad\`ele class space.\newline
 More precisely, we consider the  subset $\prod(p)$ of $\A_\Q/\hatz$  of classes modulo $\hatz$ of ad\`eles 
$$
a=(a_v), \ \ a_v\in \Z_v^*,\quad \forall v\notin\{p,\infty\}, \ \ a_p=0, \ \ a_\infty=\lambda >0.
$$
Any such  class is uniquely determined by $\lambda$  and will be denoted by $\pi(\lambda)\in \prod(p)$.

\begin{lemma}\label{lemper}The image in  $\gaq = 	P(\Q)\backslash
\overline{P(\A_\Q)}/\hatz$ of $G\times \prod(p)\subset \overline{P(\A_\Q)}/\hatz$ is the compact space
	\begin{equation}\label{gammap}
\Gamma(p):=p^\Z\backslash\left((\A_\Q/(\Q,+))\times \prod(p)\right)
\end{equation}
which is described by the mapping torus of the homeomorphism $\psi:G\to G$ given by multiplication by $p$.
\end{lemma}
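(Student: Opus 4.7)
\medskip
\noindent\textbf{Proof plan.} The strategy is to unfold the various quotients in the definition of $\gaq$ so that $\prod(p)$ appears explicitly, then identify the stabilizer in $\Q^\times$ of the slice $\prod(p)$ and finally recognise the resulting quotient as a mapping torus.

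\smallskip
\noindent\emph{Step 1: a convenient presentation of $\gaq$.} Using the identification $\iota$ of \eqref{algspaceiota} and Lemma \ref{Pdescrofgamma}, I decompose the left action of $P(\Q)$ on $\overline{P(\A_\Q)}\simeq\A_\Q^2$ as a semidirect product $P(\Q)=N(\Q)\rtimes \Q^\times$, where $N(\Q)$ is the unipotent subgroup of translations $(x,y)\mapsto(x+b,y)$, $b\in\Q$. Quotienting first by $N(\Q)$ yields $G\times\A_\Q$ with $G=\A_\Q/(\Q,+)$; the remaining action of $\Q^\times$ is the diagonal one $q\cdot(g,y)=(qg,qy)$, and $\hatz$ acts only on the second factor by multiplication. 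This gives the presentation $\gaq\simeq \Q^\times\backslash\bigl(G\times(\A_\Q/\hatz)\bigr)$, and under this identification $G\times\prod(p)$ sits naturally inside $\overline{P(\A_\Q)}/\hatz$.

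\smallskip
\noindent\emph{Step 2: the stabilizer of $\prod(p)$ in $\Q^\times$.} The set $\prod(p)\subset\A_\Q/\hatz$ is parametrized by $\lambda\in\R_+^\times$ via $\lambda\mapsto\pi(\lambda)$. If $q\in\Q^\times$ satisfies $q\cdot\pi(\lambda)\in\prod(p)$, then the archimedean condition $q\lambda>0$ forces $q>0$, while the conditions $(q\pi(\lambda))_v\in\Z_v^\times\pmod{\hatz}$ for every finite $v\neq p$ force $v_v(q)=0$ for all such $v$. Hence $q$ is a positive rational which is a unit away from $p$, i.e.\ $q\in p^\Z$. The induced action of $p^\Z$ on $\prod(p)\simeq\R_+^\times$ is just multiplication by $p$.

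\smallskip
\noindent\emph{Step 3: computing the image.} Two points $(g,y),(g',y')\in G\times\prod(p)$ have the same image in $\gaq$ iff there exists $q\in\Q^\times$ with $(qg,qy)=(g',y')$; by Step 2 this forces $q\in p^\Z$. Thus the image of $G\times\prod(p)$ in $\gaq$ is exactly $p^\Z\backslash(G\times\prod(p))=\Gamma(p)$, with $p^\Z$ acting diagonally.

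\smallskip
\noindent\emph{Step 4: mapping torus and compactness.} The change of variables $\lambda=p^t$ identifies $\prod(p)\simeq\R_+^\times$ with $\R$, and converts the diagonal action of the generator $p\in p^\Z$ into $(g,t)\mapsto (pg,t+1)$. Hence $\Gamma(p)\simeq (G\times\R)/\Z$ with this action, which is by definition the mapping torus of the homeomorphism $\psi:G\to G$, $\psi(g)=pg$. Compactness follows since this realises $\Gamma(p)$ as a fibre bundle over the circle $p^\Z\backslash\R_+^\times$ with compact fibre $G=\A_\Q/(\Q,+)$.

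\smallskip
The main obstacle is not conceptual but bookkeeping: one has to keep track simultaneously of the left $P(\Q)$-action, the right $\hatz$-action and the embedding of the slice $\prod(p)$, and in particular to make sure that the stabilizer calculation in Step 2 correctly uses the quotient by $\hatz$ (so that only the $p$-adic valuation survives as an obstruction).
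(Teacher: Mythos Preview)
Your proof is correct and follows essentially the same line as the paper's argument: both identify the stabilizer of the slice $\prod(p)$ under the diagonal $\Q^\times$-action as $p^\Z$ (using the unit conditions at finite places $v\neq p$ and positivity at $\infty$), conclude that the image is $p^\Z\backslash(G\times\prod(p))$, and then recognise this as the mapping torus of multiplication by $p$ on the compact group $G$. Your presentation is somewhat more explicit in first splitting off the unipotent part $N(\Q)$ to obtain the presentation $\gaq\simeq\Q^\times\backslash(G\times(\A_\Q/\hatz))$, whereas the paper works directly with the $P(\Q)$-action, but this is a purely cosmetic difference.
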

\begin{proof} We recall that the left action of $P(\Q)$ on $\overline{P(\A_\Q)}$ is given by
$$
\left(
\begin{array}{cc}
 a & b \\
 0 & 1 \\
\end{array}
\right)\left(
\begin{array}{cc}
 y & x \\
 0 & 1 \\
\end{array}
\right)=
\left(
\begin{array}{cc}
 ay & a x +b\\
 0 & 1 \\
\end{array}
\right).
$$
 Two elements $y=\pi(\lambda)$ and $ y'=\pi(\lambda')$ in $ \prod(p)$ are equivalent under the action of $a\in\Q^\times$ if and only if $\lambda/\lambda'\in p^\Z$, \ie  $a\in p^\Z$.
Thus the orbits of the left action of $P(\Q)$ are the same as the orbits of $p^\Z$ in $(\A_\Q/(\Q,+))\times \prod(p)$. The group $G$ is compact and the multiplication by $p$ defines an automorphism $\psi$ of $G$ as can be seen on the Pontrjagin dual $\Q$ which is a uniquely divisible group. Thus, as a topological space $\Gamma(p)=p^\Z\backslash\left(G\times \prod(p)\right)$ is the mapping torus of the homeomorphism $\psi$ and is a compact space.\qed\end{proof}

\no The geometric structure of the space $\Gamma(p)$ as in \eqref{gammap} is described as follows

\begin{theorem}\label{gammap1}\
\begin{enumerate}	
\item[(i)] The foliation of $V$ as in Proposition \ref{ax+bact} induces on $\Gamma(p)$ 
	the foliation of $G$ by 
	the cosets of the subgroup $a(\R)$ combined with the action of $\R_+^*$ on $\prod(p)$.
	\item[(ii)] The foliation $F$ on $\Gamma(p)$ as in (i) is by one dimensional complex   leaves which are Riemann surfaces of curvature $-1$. All leaves of $F$, except one, are isomorphic to $\H$. The exceptional leaf is the quotient  $p^\Z\backslash \H$. 
	\item[(iii)] The foliated space $(\Gamma(p),F)$ is, at the measure theory level, a factor of type III$_\lambda$, for $\lambda=\frac 1p$.
	\end{enumerate}
\end{theorem}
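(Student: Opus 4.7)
The plan is to handle the three parts in sequence, leveraging Proposition \ref{ax+bact} and the structure of $G = \A_\Q/\Q$ from Lemma \ref{adelicomp}.

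For (i), I would compute the right $P_+(\R)$-action on $\overline{P(\A_\Q)}$ directly. Since $P_+(\R)$ embeds at the archimedean place, the action fixes the finite adelic part $(y_f, x_f)$, while the matrix product gives $(y_\infty,x_\infty) = (\lambda, x_\infty) \mapsto (u\lambda,\,\lambda v + x_\infty)$ for $(u,v)\in P_+(\R)$. Restricting to $\Gamma(p)$, where the archimedean $x$-part projects to $g \in G$ and $y_\infty$ traces $\prod(p)\cong \R_+^*$, this yields exactly the claimed description: the scaling $\pi(\lambda)\mapsto \pi(u\lambda)$ on $\prod(p)$ combined with the translation $g \mapsto g + \lambda\,a(v)$ along the coset of $a(\R)$ in $G$.

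For (ii), Proposition \ref{ax+bact} endows each $P_+(\R)$-orbit with the Poincar\'e half-plane structure on $\H$, of constant curvature $-1$. To decide which leaves are closed under the $p^\Z$-quotient, I would parameterise the space of $P_+(\R)$-orbits inside $G\times\prod(p)$ by the class of $g$ in $G/a(\R)$ and use the exact sequences of Sect. \ref{subsec:5.1} to identify $G/a(\R)\cong \hat\Z/\Z$, on which $p^\Z$ acts by multiplication. Since $\hat\Z$ is torsion-free with $\hat\Z\cap\Q=\Z$, multiplication by $p$ on $\hat\Z/\Z$ has $0$ as its unique fixed point; the exceptional leaf is therefore the one with $g\in a(\R)$. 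Parameterising it by $(a(v),\pi(u))$ one finds that the generator of $p^\Z$ acts as $(u,v)\mapsto (pu,pv)$, which in the $\H$-coordinate $z=v+iu$ is the hyperbolic M\"obius transformation $z\mapsto pz$, with quotient $p^\Z\backslash\H$. All other leaves have trivial $p^\Z$-stabiliser and project isomorphically onto $\H$.

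For (iii), the factor-type computation would proceed through a complete transversal. Fixing $\lambda=1$ inside $\prod(p)$ and slicing $G$ transverse to $a(\R)$ produces a transversal canonically identified with the profinite group $\hat\Z$: indeed $G=(\hat\Z\times\R)/\Z$ fibres over $\R/\Z$ with fibre $\hat\Z$, and the leaf direction $a(\R)$ is precisely the $\R$-direction. The transverse holonomy equivalence relation is then generated by translation by $\Z$, coming from the monodromy of that fibration, and by multiplication by $p$, coming from the $p^\Z$-quotient defining the mapping torus. With respect to the Haar probability measure on $\hat\Z$, the first is measure-preserving and ergodic (by density of $\Z$ in $\hat\Z$), so on its own produces a hyperfinite type $\mathrm{II}_1$ relation, whereas the elementary identity $[\Z_p : p\Z_p] = p$ (with multiplication by $p$ trivial at all other local places) shows that the second transformation has Radon--Nikodym derivative $1/p$. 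The ratio set of the resulting cocycle is $p^\Z$, and Krieger's classification then identifies the factor as type $\mathrm{III}_\lambda$ with $\lambda = 1/p$. The main obstacle I anticipate is precisely in (iii): one must verify that the naive Radon--Nikodym computation really captures the module of the foliation von Neumann algebra, i.e.\ that the choice of transversal, and the separation of the $\mathrm{II}_1$-piece (from the fibration $G\to \R/\Z$) from the $\mathrm{III}$-twist (from the $p^\Z$-scaling), are compatible with the groupoid structure of $F$.
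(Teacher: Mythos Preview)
Your proposal is correct and follows essentially the same route as the paper. For (i) both you and the paper compute the right $P_+(\R)$-action explicitly and read off the translation-plus-scaling description; for (ii) the paper works with $\A_f/\Q$ where you use $\hat\Z/\Z$, but these are the same space via $\hat\Z\cap\Q=\Z$ and $\hat\Z+\Q=\A_f$, and the fixed-point analysis is identical; for (iii) the paper likewise reduces to the $\Z$-action on $\hat\Z$ by translation (ergodic, measure-preserving) together with multiplication by $p$ rescaling the measure, and invokes the resulting module $p^\Z$ to conclude type~III$_{1/p}$. One small point to tighten in (ii): you should check that multiplication by $p$ on $\hat\Z/\Z$ has no nontrivial \emph{periodic} orbit, not merely no nontrivial fixed point, since the mapping-torus quotient is by all of $p^\Z$; your torsion-freeness argument extends immediately, as $(p^n-1)x\in\Z$ with $x\in\hat\Z$ forces $x\in\Q\cap\hat\Z=\Z$ for any $n\neq 0$, exactly as in the paper.
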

\begin{proof} {\it (i)} By Lemma \ref{lemper}, $\Gamma(p)$ is the quotient of $G\times \R_+^*$ by the action of powers of the map $\theta$, given  by $G\times \R_+^*\ni (x,y)\mapsto \theta(x,y):=(\psi(x),py)$, where $\psi$ is as in Lemma \ref{lemper}.
The  right action of  $P_+(\R)$ on $\gaq$  
induces on $p^\Z\backslash\left((\A_\Q/(\Q,+))\times \prod(p)\right)$ the following right  action 
$$
\phi_{u,v}(x,y):=(x+vy,uy)\qqq (x,y)\in G\times \R_+^*, \qquad \left(
\begin{array}{cc}
 u & v \\
 0 & 1 \\
\end{array}
\right)\in P_+(\R).
$$
This is a translation $x\mapsto x+a(vy)$ in the variable $x\in G$, where $vy\in \R$ by construction, and is the scaling $y\mapsto uy$  by $u>0$ in the variable $y\in \R_+^*$. The compact group $G=\A_\Q/(\Q,+)$ is foliated by the cosets of the subgroup $a(\R)$ of Lemma \ref{adelicomp}. This foliation is globally invariant under the action of $\psi$ because the subgroup $a(\R)$ is globally invariant under this action. More precisely
the foliation of $G$ by the cosets of $a(\R)$ derives from the flow $\phi_t(x):=x+a(t)$, $t\in \R$, $x\in G$ and   
one has 
$$
\psi(\phi_t(x))=p \,(x+a(t))=px +pat=\psi(x)+a(pt)=\phi_{pt}(\psi(x)).
$$
The right action of  $P_+(\R)$ on $G\times \R_+^*$  commutes with  $\theta$ and  thus drops down to the quotient $\Gamma(p)$ 
 $$
 \theta(\phi_{u,v}(x,y))=(\psi(\phi_{vy}(x)),p(yu))=(\phi_{vpy}(\psi(x)),pyu)=\phi_{u,v}(\theta(x,y)).
 $$

\begin{figure}[t]
\begin{center}
\includegraphics[scale=0.45,bb=190 70 150 350]{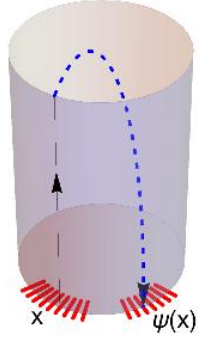}
\end{center}
\caption{The foliation of $G$ by the cosets of the subgroup $a(\R)$ is preserved by the map $\psi$ and thus extends to a two dimensional foliation of the mapping torus of $\psi$. \label{maptorus} }
\end{figure}

\no The  orbits of the right action of $P_+(\R)$ on $\Gamma(p)$ coincide with the leaves of the foliation of the mapping torus of $\psi$ induced by the foliation of $G$ by the cosets of the subgroup $a(\R)$,  as in Figure \ref{maptorus}. \newline
{\it (ii)} For $(x,y)\in G\times \R_+^*$, $\left(
\begin{array}{cc}
 u & v \\
 0 & 1 \\
\end{array}
\right)\in P_+(\R)$ and $n\in \Z$ one has 
$$
\phi_{u,v}(x,y)=\theta^n(x,y) \iff u=p^n, \qquad x+a(vy)=p^n x.
$$
For $n=0$ this gives $u=1$ and  $v=0$. Assume now $n\neq 0$. One has $G=\A_\Q/\Q$, $p^n x\in x+\Q +a(\R)$. Let $x=(x_f,x_\infty)$ correspond to the decomposition $\A_\Q=\A_f\times \R$. The above condition means that $p^n x_f\in x_f+\Q $, \ie (since $n\neq 0$) that $x_f\in \Q$.
This shows that the right action of  $P_+(\R)$ on  $\Gamma(p)$ is free on the orbit of $(x,y)$ provided $x_f\notin \Q$, \ie equivalently $x\notin a(\R)$. Thus, as in Proposition \ref{ax+bact}, these orbits of the right action of  $P_+(\R)$ inherit a canonical structure of Riemann surface isomorphic to $\H$. The right action of  $P_+(\R)$ gives the two vector fields
\begin{equation}\label{vectorfields}
X(f)(x,y)=y\partial_tf(x+a(t),y), \qquad Y(f)(x,y)=y\partial_yf(x,y). 
\end{equation}
The vector fields $X$ and $Y$ verify the Lie algebra of the affine $ax+b$ group: 
\begin{equation}\label{vectorfields1}
[Y,X]=X.
\end{equation}
  Assume now that $x\in a(\R)$. Then the orbit of $(x,y)\in \Gamma(p)$ under the right action  of  $P_+(\R)$ is
$$
p^\Z\backslash \left(a(\R)\times \prod(p)\right)\simeq p^\Z\backslash \H
$$
and does not depend upon the choice of the base point $(x,y)$. 
The complex structure makes sense and as a complex space one gets an open subset of the elliptic curve $E=p^\Z\backslash \C^\times$. One has $E\simeq \C/\Gamma$ by the isomorphism $e:\C/\Gamma\to E$, $e(z):=e^{2 \pi i z}$ and $\Gamma=\Z+ \frac{\log p}{2\pi i}\Z$.\newline
	{\it (iii)} At the measure theory level, the space of leaves of the foliation of $G$ by the cosets of the subgroup $a(\R)$ is the same as the quotient of the finite ad\`eles $\A_f$ by the additive subgroup $\Q$. The action of $\Q$ by addition on the finite ad\`eles $\A_f$ is ergodic and measure preserving. In fact since $\hat \Z$ is open in
	$\A_f$ and $\Q$ is dense, every orbit meets  $\hat \Z$. Moreover, if $b\in a+\Q$ with $a,b\in \hat\Z$ one has $b-a\in \hat\Z\cap \Q=\Z$. Also, the action of $\Z$ on $\hat \Z$ by translation is ergodic by uniqueness of Haar measure on a compact group and density of $\Z$ in $\hat \Z$. 	
	
\no In $\Gamma(p)$ the leaves of the two dimensional foliation all meet the fiber of the projection $\Gamma(p)\to p^\Z\backslash \prod(p)$ over the point $\pi(1)$ and in the leaf space a leaf of the foliation of $G$ by the cosets of the subgroup $a(\R)$ gets identified with its image by the map $\psi$ (see Figure \ref{maptorus}). Thus the leaf space is the quotient of $\A_f/\Q$ (the quotient of the finite ad\`eles $\A_f$ by the additive subgroup $\Q$) by the further action by multiplication by powers of $p$. This latter action rescales the invariant measure by a factor of $p$ and thus one obtains a factor of type  III$_\lambda$, where $\lambda=\frac 1p$.\qed\end{proof}
 
\begin{rem}\label{transverse}\
\begin{enumerate}
 \item The Haar measure $dn(x)$ on $G$ gives an invariant transverse measure $\Lambda$ for the flow $\phi_t$, moreover $dn(x)$ is invariant under the automorphism of multiplication by $p$. But the above transverse measure $\Lambda$ is not invariant under multiplication by $p$ because it is obtained as the contraction of $dn(x)$ by the flow $\phi_t$ and this flow is rescaled by multiplication by $p$.
\item Both $dn(x)$ and the measure on $\prod(p)$ given by $dy/y$, are invariant under multiplication by $p$ and thus the product measure descends to a measure on $\Gamma(p)$ given by 
	\begin{equation}\label{measurep}
\int f(x,y) dm(x,y):= \int_1^p\int_G f(x,y)dn(x)\frac{dy}{y}.
\end{equation}
\end{enumerate}
\end{rem}

\vspace{.1in}

 Next, we use the  basis of differential forms along the leaves which is dual to the vector fields \eqref{vectorfields1}. It is given by  $\alpha=y^{-1}dx,\,\beta= y^{-1}dy$ in the cotangent space to the leaves.\newline
 Next statement computes the de Rham cohomology of $\Gamma(p)$

\begin{proposition}\label{derham}
	The canonical projection $\Gamma(p)\to p^\Z\backslash \prod(p)= \R_+^*/p^\Z$ is an isomorphism in the de Rham cohomology.
\end{proposition}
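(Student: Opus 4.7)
The plan is to prove that $\pi^{*}:H^{*}(S^{1})\to H^{*}(\Gamma(p))$ is an isomorphism by an explicit Fourier-analytic computation along the compact abelian ``fiber'' $G=\A_{\Q}/(\Q,+)$, using the description of $\Gamma(p)$ as a mapping torus provided by Lemma~\ref{lemper}. First I would set up the leafwise de~Rham complex of the foliation $F$ of Theorem~\ref{gammap1}: using the coframe $(\alpha,\beta)$ dual to $(X,Y)$, the complex lives in degrees $0,1,2$ with
\[
d\alpha=\alpha\wedge\beta,\qquad d\beta=0,\qquad df=X(f)\,\alpha+Y(f)\,\beta,
\]
and a straightforward computation gives $d(f\alpha+g\beta)=\bigl(f-Y(f)+X(g)\bigr)\alpha\wedge\beta$. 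The projection $\pi:\Gamma(p)\to\R_{+}^{\ast}/p^{\Z}$ pulls $dy/y$ back to $\beta$, so $\pi^{*}$ is manifestly a morphism of complexes.

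The key tool is Pontrjagin duality (Lemma~\ref{adelicomp}): $\hat G=\Q$, and the automorphism $\psi$ of $G$ is dual to multiplication by $p$ on $\Q$, so $\psi^{*}\chi_{r}=\chi_{pr}$. A leafwise form on $\Gamma(p)$, pulled back to $G\times\R_{+}^{\ast}$, can thus be written as $\sum_{r\in\Q}\omega_{r}(y)\chi_{r}(x)$ with $\omega_{r}$ a form in $y$; the $p^{\Z}$-invariance condition translates into the relation $c_{pr}(y)=c_{r}(py)$ between Fourier coefficients. This decomposes the leafwise de~Rham complex of $\Gamma(p)$ as a direct sum, over the orbits of multiplication by $p$ on $\Q$, of sub-complexes preserved by $d$.

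Second, I would treat the two cases separately. The \emph{trivial orbit} $\{0\}$ consists of forms with coefficients depending only on $y$ and satisfying $c_{0}(py)=c_{0}(y)$, i.e.\ functions on $S^{1}=\R_{+}^{\ast}/p^{\Z}$; the restricted sub-complex is canonically identified with the de~Rham complex of $S^{1}$ (via $\beta\leftrightarrow dy/y$) and accounts for the image of $\pi^{*}$. For a \emph{non-trivial orbit} $p^{\Z}r_{0}$ with $r_{0}\neq 0$, the operators $X,Y$ act on the mode of weight $r$ by multiplication by $2\pi ir y$ and by $y\partial_{y}$ respectively, so the cohomological equations can be solved explicitly using the invertibility of $2\pi ir y$. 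Concretely, any closed $1$-form $f_{r}\alpha+g_{r}\beta$ in this sector admits the primitive $\phi_{r}=f_{r}/(2\pi ir y)$ (one checks $X\phi_{r}=f_{r}$ and, using closedness, $Y\phi_{r}=g_{r}$), and any $2$-form $h_{r}\alpha\wedge\beta$ is exact with $g_{r}=h_{r}/(2\pi ir y)$ and $f_{r}=0$. Similarly, the kernel of $d$ in degree $0$ in a non-trivial orbit is forced to vanish by $X\phi_{r}=0$. Thus each non-trivial orbit contributes zero cohomology.

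The main obstacle is to pin down a regularity class of forms on $\Gamma(p)$ in which both the Fourier decomposition is valid \emph{and} the mode-by-mode primitives reassemble into a form of the same class. The natural choice is forms that are $C^{\infty}$ along the leaves with Schwartz-type decay of the Fourier coefficients on $\Q$ in the transverse (totally disconnected) direction of $G$; the factor $1/(2\pi ir)$ in the primitive then never hurts decay, so the mode-wise solutions assemble into smooth forms. Alternatively, one can work at the level of leafwise-$L^{2}$ forms, where the spectral decomposition is automatic and the same invertibility argument yields a clean Hodge-theoretic version of the statement. In either framework, combining the two orbit cases gives $H^{*}(\Gamma(p))\cong H^{*}(S^{1})$ with $\pi^{*}$ realizing the isomorphism.
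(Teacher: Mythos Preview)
Your overall architecture matches the paper's proof exactly: Fourier-decompose along $G$ using $\hat G=\Q$, split the leafwise de~Rham complex into a direct sum over $p^{\Z}$-orbits in $\Q$, kill the non-trivial orbits via the invertibility of $X$, and identify the contribution of the trivial orbit with $H^{*}(\R_{+}^{*}/p^{\Z})$. Your treatment of the non-trivial orbits is correct and is equivalent to the paper's contracting homotopy $s=X^{-1}$ (you write down the primitives that $s$ produces).

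There is, however, a genuine gap in your handling of the trivial orbit. You assert that ``the restricted sub-complex is canonically identified with the de~Rham complex of $S^{1}$ (via $\beta\leftrightarrow dy/y$)'', but this is not so: on the $r=0$ sector one has $X=0$, yet the generators $\alpha$ and $\alpha\wedge\beta$ are still present. The trivial-orbit complex is
\[
C^{\infty}(S^{1})\ \xrightarrow{\ f\mapsto Y(f)\beta\ }\ C^{\infty}(S^{1})\alpha\oplus C^{\infty}(S^{1})\beta\ \xrightarrow{\ (f,g)\mapsto (f-Yf)\,\alpha\wedge\beta\ }\ C^{\infty}(S^{1})\,\alpha\wedge\beta,
\]
and $\pi^{*}$ only hits the sub-complex spanned by $1$ and $\beta$. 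To conclude that $\pi^{*}$ is a quasi-isomorphism you must show that the ``$\alpha$-part'' contributes nothing, i.e.\ that $\id-Y$ is an isomorphism of $C^{\infty}(\R_{+}^{*}/p^{\Z})$. This is what the paper checks: in the Fourier basis of $\R_{+}^{*}/p^{\Z}$ the eigenvalues of $\id-Y$ are $1-2\pi i n/\log p$, $n\in\Z$, none of which vanish, so $\id-Y$ is bijective (with control of Fourier decay). Without this step you have not ruled out spurious classes in $H^{1}$ (from $f\alpha$ with $(\id-Y)f=0$) or in $H^{2}$ (from $\mathrm{coker}(\id-Y)$). Once you add this argument, your proof is complete and coincides with the paper's; your remarks on the choice of regularity class are reasonable, and the paper's own choice (finite Fourier support in the $G$-direction, reducing each non-trivial orbit to $C_{c}^{\infty}(\R_{+}^{*})$) is the simplest one that makes both the decomposition and the reassembly automatic.
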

\begin{proof} The de Rham complex on $\Gamma(p)$ is described as follows using the Lie algebra $L$ of the affine group and its dual $L^*$. We take the basis $(X,Y)$ for $L$ and the dual basis $(\alpha,\beta)$ for $L^*$. One lets $\Omega^j:=\cA\otimes \wedge^j L^*$ where $\cA$ is an algebra of functions on $\Gamma(p)$ stable under the derivations $X,Y$. The differential is given by 
$$
df=X(f)\alpha+Y(f)\beta \qqq f\in \Omega^0, \ \ d(f\alpha+g\beta)= df\wedge \alpha+fd\alpha+dg\wedge \beta, \ \ d\alpha=\alpha\wedge \beta
$$
We first describe the algebra $\cA$ of functions on $\Gamma(p)$ stable under the derivations $X,Y$. Let $\cB$ be the algebra of functions on $G$ linearly generated by the characters $e_q$ for $q\in \Q$. Thus the multiplication rule is $e_qe_{q'}=e_{q+q'}$ for all $q,q'\in \Q$. Let $f(y,q)$ be a function on $\R_+^*\times \Q$ and define
$$
\hat f(x,y):=\sum_\Q f(y,q)e_q(x)
$$
This definition of  $\hat f$ is meaningful if one assumes that for each $y\in \R_+^*$ the function $q\mapsto f(y,q)$ has finite support. The condition that $\hat f$ defines a function on $\Gamma(p)$ is  $\hat f(px,py)=\hat f(x,y)$ and since $e_q(px)=e_{pq}(x)$, the condition means that 
\begin{equation}\label{functionsgp}
f(py,p^{-1}q)=f(y,q)\qqq y \in \R_+^*, \ q\in \Q.
\end{equation}
In terms of the function $f(y,q)$ the derivations\footnote{The product is the convolution in the variable $q$ and the ordinary product in the variable $y$.} $X$ and $Y$ become
\begin{equation}\label{derivationsgp}
X(f)(y,q)=2\pi i yq f(y,q), \qquad Y(f)(y,q)=y\partial_yf(y,q).
\end{equation} 
The group $p^\Z$ acts on $\Q$ by multiplication and \eqref{functionsgp} and \eqref{derivationsgp} show that the de Rham complex is a direct sum over the orbits $\cO$ of this action
$$
(\Omega,d)=\bigoplus_{\cO \in p^\Z\backslash\Q}(\Omega(\cO),d).
$$
The trivial orbit of $0\in \Q$ corresponds to the pull back by the projection $\Gamma(p)\to p^\Z\backslash \prod(p)= \R_+^*/p^\Z$ and the vector field $X$ gives $0$, thus the contribution of this orbit reduces to the following complex of functions on $\R_+^*/p^\Z$
$$
df=Y(f)\beta \qqq f\in \Omega^0, \ \ d(f\alpha+g\beta)= (-Y(f)+f) \alpha\wedge \beta, \ \ d(f\alpha\wedge \beta)=0. 
$$
The map $\id-Y$ is diagonalized in the basis of characters of $\R_+^*/p^\Z\simeq U(1)$ and the eigenvalues are the complex values $1-2\pi i n/\log p$, $n\in \Z$. Indeed, with $u=\log y$ the condition $f(py)=f(y)$ becomes periodicity of period $\log p$ and $Y$ becomes $\partial_u$. It follows that $\id-Y$ is an isomorphism on smooth functions, since it does not affect the rapid decay of the Fourier coefficients. This shows that the extra part due to the  presence of the sub-complex of the $f\alpha$ and $f\alpha\wedge \beta$ does not contribute to the cohomology.\newline 
Next, we consider the contribution of a non-trivial orbit $\cO=p^\Z q_0$ with $q_0\neq 0$. A function $f(y,q)$ restricted to this orbit can be seen as a function on $\R_+^*\times \Z$ given by $h(y,n)=f(y,p^n q_0)$. Then, condition \eqref{functionsgp} becomes $h(py,n-1)=h(y,n)$. This shows that the restriction of $f$ to the orbit is entirely specified by the function on $\R_+^*$ given by $\phi(y)=f(y,q_0)$. Moreover this function is smooth and its support intersects finitely each orbit $p^\Z y$. We thus deal with the space $C_c^\infty(\R_+^*)$ of smooth compactly supported functions on $\R_+^*$. Let us compute the operators $X,Y$ in terms of the functions $\phi(y)$. Using \eqref{derivationsgp} we get 
$$
(X\phi)(y)=(2\pi i q_0)y\phi(y), \ \ (Y\phi)(y)=y\partial_y\phi(y). 
$$
Thus the operator $X$ is invertible, and using its inverse $X^{-1}$ one defines a homotopy $s:(\Omega(\cO),d)\to (\Omega(\cO),d)$, by
$$
s(f\alpha+g\beta):=X^{-1}(f)\in \Omega^0(\cO), \ \ s(f\alpha\wedge \beta)=X^{-1}(f)\beta.
$$
Next, we check that $ds+sd=\id$. This is clear on $\Omega^0$ since $sdf=X^{-1}X(f)=f$. On $\Omega^2$ is also clear since $ds(f\alpha\wedge \beta)=dX^{-1}(f)\beta=XX^{-1}(f)\alpha\wedge \beta=f\alpha\wedge \beta$. On $\Omega^1$ one has 
$$
(ds+sd)(f\alpha+g\beta)=dX^{-1}(f)+s\left((-Y(f)+f+X(g)) \alpha\wedge \beta\right)=
$$
$$
=f\alpha+YX^{-1}(f)\beta +X^{-1}(-Y(f)+f+X(g))\beta=f\alpha+g\beta.
$$
This is because $YX^{-1}(f)+X^{-1}(-Y(f)+f)=0$ which follows from the commutation relation \eqref{vectorfields1} by multiplying on both sides by $X^{-1}$.\qed\end{proof}

\subsection{The classical orbit}\label{subsec:5.4}

Consider the  subset $J\subset \A_\Q/\hatz$ of classes of ad\`eles modulo $\hatz$
\begin{equation}\label{Jdefn1}
	a=(a_v), \ \ a_v\in \Z_v^*\qqq v\neq\infty, \qquad  a_\infty=\lambda >0.
\end{equation}
A class as in \eqref{Jdefn1} is uniquely determined by $\lambda$  and will be denoted $j(\lambda)\in J$. Two such classes are in the same (classical) orbit for the left action of $\Q^*$ if and only if they are equal. Thus the structure of $\gaq$ over a classical orbit is simply that of the product 
$$
\Gamma_{\Q,{\rm cl}}\simeq G\times \R_+^*.
$$
Thus, in order to exploit measure theory and   de Rham theory on this plain product it is enough to supply this description for $G$ foliated by the cosets of the subgroup $a(\R)$. 
The  right action of  $P_+(\R)$ on $\gaq$  
induces on $\Gamma_{\Q,{\rm cl}}$ the right  action 
$$
\phi_{u,v}(x,y):=(x+a(vy),uy)\qqq (x,y)\in G\times \R_+^*, \qquad \left(
\begin{array}{cc}
 u & v \\
 0 & 1 \\
\end{array}
\right)\in P_+(\R).
$$

\begin{proposition}\label{classorb1}\
\begin{enumerate}
\item[(i)] The space $\Gamma_{\Q,{\rm cl}}$ is locally compact. 
\item[(ii)] The  right action of  $P_+(\R)$ on $\Gamma_{\Q,{\rm cl}}$ is free and defines a foliation $F$ by Riemann surfaces  isomorphic to $\H$. 
\item[(iii)] The foliated space $(\Gamma_{\Q,{\rm cl}},F)$ is, at the measure theory level, a factor of type II$_\infty$.
\item[(iv)] The de Rham complex of $(\Gamma_{\Q,{\rm cl}},F)$ is the tensor product of the de Rham complex of $\R_+^*$ by the de Rham complex of the foliation $(G,W)$ of $G$ by 
	the cosets of the subgroup $a(\R)$.
\item[(v)] The de Rham cohomology of $(G,W)$ is one dimensional in degree $0$ and $1$ and vanishes in higher degrees.
\end{enumerate} 
\end{proposition}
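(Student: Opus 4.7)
The plan is to leverage the product description $\Gamma_{\Q,\mathrm{cl}}\simeq G\times\R_+^*$ stated in the preamble, which immediately yields (i) since $G$ is compact by Lemma~\ref{adelicomp}(i) and $\R_+^*$ is locally compact. For (ii), freeness of $\phi_{u,v}(x,y)=(x+a(vy),uy)$ is elementary: the equation $uy=y$ with $y>0$ forces $u=1$, and then $a(vy)=0$ combined with the injectivity of $a:\R\hookrightarrow G$ (Lemma~\ref{adelicomp}(i)) gives $vy=0$, hence $v=0$. The Riemann surface structure and the identification of each leaf with $\H$ endowed with the Poincar\'e metric then follow directly from Proposition~\ref{ax+bact}, since the free right action endows each orbit with the homogeneous complex geometry of $P_+(\R)$ itself.

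For (iii), the task is to identify the measured transverse structure. The leaf through $(x,y)$ is the product $(x+a(\R))\times\R_+^*$, so transversally leaves are parametrized by cosets of $a(\R)$ in $G$. Using $G\simeq(\hat\Z\times\R)/\Z$ from Sect.~\ref{subsec:5.1}, a transversal is given by $\hat\Z/\Z$ with its Haar probability measure; the holonomy equivalence relation on this transversal is precisely the translation action of $\Z$ on $\hat\Z$, which is measure-preserving and ergodic — the same argument as in the proof of Theorem~\ref{gammap1}(iii), but \emph{minus} the rescaling by $p^\Z$ that produced type~III there. This yields a type~II relation with finite transverse probability measure; since the leaves $\H$ carry infinite hyperbolic volume, the resulting factor is II$_\infty$ rather than II$_1$.

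For (iv), the key observation is that although the right action of $P_+(\R)$ on $G\times\R_+^*$ is \emph{not} a product action, the \emph{leaves} are products: the orbit through $(x_0,y_0)$ equals $(x_0+a(\R))\times\R_+^*$, since $\{a(vy_0):v\in\R\}=a(\R)$ for $y_0>0$. Hence the tangent distribution to $F$ is equivalently spanned by the commuting vector fields $\tilde X=\partial_t|_{a(t)}$ and $\tilde Y=\partial_y$, living purely in the $G$ and $\R_+^*$ factors respectively, so the foliated de Rham complex factorizes as $\Omega^*(G,W)\otimes\Omega^*(\R_+^*)$. For (v), I would mimic the Fourier computation inside the proof of Proposition~\ref{derham}: take $\cA$ to be the algebra linearly generated by the characters $e_q$, $q\in\Q$, and use that, by Lemma~\ref{adelicomp}(iii), the derivation $\tilde X$ acts on $e_q$ as multiplication by $2\pi iq$. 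Thus $\tilde X$ is invertible on the non-constant Fourier modes and kills the constants, yielding $H^0(G,W)=\C$ and $H^1(G,W)=\C\cdot\alpha$; higher cohomology vanishes since the foliation is one-dimensional.

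The most delicate point is (iii): one must check that the natural semifinite trace on the foliation von Neumann algebra is infinite but well-defined. The absence of a type~III normalizer (there is no $p^\Z$-rescaling here, contrary to the periodic case) is what keeps us in type~II, while the non-compactness of the $\H$-leaves — built into the Poincar\'e metric by (ii) — is what promotes II$_1$ into II$_\infty$. Once (iii) is secured, steps (iv)–(v) follow cleanly from the product foliation structure combined with the Fourier/character computation used previously in Proposition~\ref{derham}.
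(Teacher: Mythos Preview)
Your proposal is correct and follows essentially the same route as the paper: the product description for (i), freeness plus Proposition~\ref{ax+bact} for (ii), the identification of the leaf space with $\A_f/\Q$ (equivalently, $\hat\Z$ modulo $\Z$-translation) for (iii), the product-foliation structure for (iv), and the character-by-character Fourier computation for (v). One small slip: in (iii) you write ``a transversal is given by $\hat\Z/\Z$'', but you clearly mean the transversal is $\hat\Z$ itself with holonomy given by $\Z$-translation (as your next clause makes explicit); the quotient $\hat\Z/\Z$ is the leaf space, not the transversal.
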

\begin{proof} {\it (i)} follows from the isomorphism $\Gamma_{\Q,{\rm cl}}\simeq G\times \R_+^*$. \newline
	{\it (ii)} The freeness of the right action is clear. As in Proposition \ref{ax+bact} the orbits of the right action of  $P_+(\R)$ inherit a canonical structure of Riemann surface isomorphic to $\H$. 	\newline
	{\it (iii)} The space of leaves of the foliated space $(\Gamma_{\Q,{\rm cl}},F)$ is the same as for  the foliation of $G$ by the cosets of the subgroup $a(\R)$ and is thus the quotient of   the finite ad\`eles $\A_f$ by the additive subgroup $\Q$. It is thus ergodic of type II$_\infty$.\newline
	{\it (iv)} follows because the foliation $(\Gamma_{\Q,{\rm cl}},F)$ is the product of $(G,W)$ by the trivial foliation of $\R_+^*$. \newline
	{\it (v)} Let $\cB$ be the algebra of functions on $G$ linearly generated by the characters $e_q$ for $q\in \Q$. The operator  $d_W$ of differentiation along the flow lines fulfills $d_W(e_q)=2\pi i q\, e_q$. Thus its kernel is one dimensional and spanned by $e_0$. Its cokernel is given by the linear form $L$ associated to the Haar measure of $G$, \ie $L(e_q)=0$ for $q\neq 0$ and $L(e_0)=1$. Thus de Rham cohomology of $(G,W)$ is one dimensional in degree $0$ and $1$ and vanishes in higher degrees.\qed\end{proof}
	\begin{theorem}\label{liftclass} 
	Let  $C=\int f(\lambda) \delta_\lambda\, d\lambda$ be a continuous divisor on $\R_+^*$ with compact support. There exists a finite union of graphs $G_j^\pm$  of maps $g_j^\pm$
	 \begin{equation}\label{liftedthm}
g_j^\pm: D_j^\pm\to \R_+^*, \ \  D_j^\pm\subset \hat \Z\subset G
\end{equation}	
such that the leafwise discrete divisor $D:=\sum \pm G_j^\pm$ is a lift of $C$.
\end{theorem}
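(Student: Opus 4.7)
The plan is to transfer the almost-periodic lift of Lemma \ref{lift} to the foliated space $\Gamma_{\Q,{\rm cl}}\simeq G\times\R_+^*$ of Proposition \ref{classorb1}, using $\hat\Z\subset G$ as a transversal to the foliation $F$. The first observation is that because $\hat\Z\cap\Q=\Z$, two points of $\hat\Z$ lie on the same leaf of the foliation of $G$ by cosets of $a(\R)$ if and only if they differ by an integer; hence every leaf meets $\hat\Z$ in a coset of $\Z$, which is a countable discrete subset. Consequently the graph of any map $g:D\to\R_+^*$ with $D\subset\hat\Z$ is automatically leafwise discrete in $G\times\R_+^*$, so the problem is reduced to choosing the $g_j^\pm$ so that the resulting signed sum of graphs realizes $C$ under the natural averaging against the invariant transverse measure.

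First I would write $f=f_+-f_-$ with $f_\pm\geq 0$ continuous and compactly supported, and then split the two densities into finitely many pieces $f=\sum_j \varepsilon_j f_j$, with $\varepsilon_j\in\{+,-\}$ and each $f_j\geq 0$ supported on a compact interval $I_j=[\alpha_j,\beta_j]$ of unit total mass $\int f_j(\lambda)\,d\lambda=1$. Such a finite decomposition is possible by slicing the sub-graphs of $f_\pm$ into horizontal strips of unit area, and this partition accounts for the finite union appearing in the statement. To each $f_j$ I associate the inverse cumulative distribution map $h_j:[0,1]\to I_j$ of \eqref{lifted0}, so that $(h_j)_*(du)=f_j(\lambda)\,d\lambda$.

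Next, fix a prime $p$ and invoke Remark \ref{symhypo}: the continuous surjection $\phi_p:\Z_p\to[0,1]$ given by $\phi_p(\sum_{k\geq 0} a_kp^k):=\sum_{k\geq 0} a_k p^{-k-1}$ pushes normalized Haar measure on $\Z_p$ forward to Lebesgue measure on $[0,1]$, by Lemma \ref{exampl}~(iii) and the first part of Corollary \ref{exampl1}. Composing with the projection $\hat\Z=\Z_p\times\prod_{\ell\ne p}\Z_\ell\twoheadrightarrow\Z_p$, I obtain a continuous map $\phi:\hat\Z\to[0,1]$ with the same pushforward property, and I set $g_j:=h_j\circ\phi:\hat\Z\to\R_+^*$, $D_j^{\varepsilon_j}:=\hat\Z$, and let $G_j^{\varepsilon_j}$ be the graph of $g_j$ in $\hat\Z\times\R_+^*\subset G\times\R_+^*$, weighted with sign $\varepsilon_j$. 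The first paragraph then shows that $D:=\sum_j \varepsilon_j\, G_j^{\varepsilon_j}$ is leafwise discrete.

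The lift identity is the computation that for any test function $\psi\in C_c(\R_+^*)$,
\begin{equation*}
\int_{\hat\Z}\psi(g_j(x))\,dx=\int_0^1\psi(h_j(u))\,du=\int_{I_j}\psi(\lambda)f_j(\lambda)\,d\lambda,
\end{equation*}
so that summing with signs yields $\int f(\lambda)\psi(\lambda)\,d\lambda$, matching the pairing of $C$ with $\psi\circ\Re$ as in the proof of Lemma \ref{lift}. The hard part is the precise definition of ``$D$ is a lift of $C$'': namely identifying the Haar average on $\hat\Z$ appearing above with the correct notion of transverse integration on the type II$_\infty$ foliated space $(\Gamma_{\Q,{\rm cl}},F)$ of Proposition \ref{classorb1}~(iii), and verifying compatibility with the one-dimensional complex structure on the leaves (where the $\R_+^*$-coordinate plays the role of the imaginary direction of $\H$ while the $\hat\Z$-transversal supplies the almost-periodic real direction of Section \ref{subsec:4.3}).
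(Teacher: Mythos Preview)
Your proposal is correct and follows exactly the route the paper takes: the paper's proof of Theorem \ref{liftclass} is the single sentence ``This follows from Lemma \ref{lift}'', and what you have written is a careful unpacking of that reference, transporting the almost-periodic construction of Section \ref{sec:4} to the transversal $\hat\Z\subset G$ via the map $\phi_p$ of Remark \ref{symhypo} and the pushforward identity of Corollary \ref{exampl1}. Your observation that $\hat\Z$ meets each leaf of $(G,W)$ in a $\Z$-coset, and your verification of the averaging identity, are precisely the missing links between Lemma \ref{lift} (phrased in $\H$ with the integer lattice) and the statement over $G\times\R_+^*$.

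One small remark: your decomposition into finitely many unit-mass pieces by ``horizontal slicing'' tacitly requires $\int f_\pm(\lambda)\,d\lambda\in\N$; this is the same simplifying hypothesis made explicitly before Lemma \ref{lift} (``We also assume for simplicity that $\int_{I_\pm} f_\pm(\lambda)\,d\lambda=1$''). For the general case you would either rescale the transverse density or, more in keeping with the statement, take the residual domain $D_j^\pm$ to be a proper clopen subset of $\hat\Z$ of the appropriate Haar mass --- which the inclusion $D_j^\pm\subset\hat\Z$ in \eqref{liftedthm} already allows.
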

	 \begin{proof} This follows from Lemma \ref{lift}. \qed\end{proof}
	
	We next give a canonical isomorphism of the classical orbit $\Gamma_{\Q,{\rm cl}}$ 
	with the proetale cover $\tilde \D^*$ of the punctured open unit disk $\D^*$ constructed from the projective system defined as follows
	$$
	E_n:=\D^*, \ \ p_{(n,m)}: E_m\to E_n, \ \  p_{(n,m)}(z):=z^a \qqq m=na, \ z\in E_m=\D^*
	$$
where the indexing set $\nt$ is ordered by divisibility.	 By construction, $\Gamma_{\Q,{\rm cl}}\simeq G\times \R_+^*$ is the projective limit
\begin{equation}\label{projgnlim1}
G\times \R_+^*=\varprojlim (G_n,\gamma_{n,m})\times \R_+^*= \varprojlim \H/n\Z 
  \end{equation}
  where the projective limit in the right hand side uses the canonical projections $\H/m\Z\to \H/n\Z$ for $m=na$ corresponding to the $\gamma_{n,m}$,
$$
\gamma_{n,m}:G_m\to G_n, \ \ \gamma_{n,m}(x+m\Z)=x+n\Z \qqq n\vert m
$$
		\begin{proposition}\label{proetcov} Let $\tilde \D^*:=\varprojlim (E_n,p_{(n,m)})$. 
		The maps $e_n:\H/n\Z\to \D^*$, 
		$
		e_n(z)= \exp(2 \pi i \frac zn)
		$
		assemble into an isomorphism  $\exp:\Gamma_{\Q,{\rm cl}}\to \tilde \D^*$.
	\end{proposition}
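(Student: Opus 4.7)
The plan is to exhibit $\exp$ as the limit of an isomorphism of projective systems. Concretely, I would check that the maps $e_n:\H/n\Z\to E_n=\D^*$, $e_n([z]):=\exp(2\pi i z/n)$, are individually homeomorphisms and are compatible with the transition maps $\gamma_{n,m}$ on the source and $p_{(n,m)}$ on the target. Combined with the identification \eqref{projgnlim1}, i.e.\ $G\times \R_+^*=\varprojlim \H/n\Z$ (via $(x+n\Z,y)\mapsto [x+iy]$), this promotes the system $(e_n)$ to the required isomorphism.

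First, I would verify the individual properties of $e_n$. Well-definedness on $\H/n\Z$ follows from $\exp(2\pi i(z+n)/n)=\exp(2\pi i z/n)$. Since $\Im(z)>0$ on the classical orbit (because $y\in\R_+^*$), one has $|e_n(z)|=\exp(-2\pi\Im(z)/n)\in(0,1)$, so $e_n$ lands in the open punctured disk. Injectivity on $\H/n\Z$ is immediate from $\exp(2\pi i w)=1\Leftrightarrow w\in\Z$, while surjectivity onto $\D^*$ is the standard covering property of $\exp$ restricted to the upper half plane; hence each $e_n$ is a homeomorphism (a holomorphic diffeomorphism, in fact).

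Second, I would check compatibility of the $e_n$ with the two projective systems. For $m=na$ and $z\in\H$,
\begin{equation*}
p_{(n,m)}\bigl(e_m([z])\bigr)=\bigl(\exp(2\pi i z/m)\bigr)^{a}=\exp(2\pi i z/n)=e_n(\gamma_{n,m}([z])),
\end{equation*}
since $\gamma_{n,m}$ is the natural quotient $\R/m\Z\to\R/n\Z$ (and extends to $\H/m\Z\to\H/n\Z$ in the obvious way). This says $(e_n)_{n\in\nt}$ is a morphism of projective systems, so it assembles into a continuous map $\exp:\varprojlim \H/n\Z\to\varprojlim E_n=\tilde\D^*$. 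Because each $e_n$ is a homeomorphism, the componentwise inverses $e_n^{-1}$ form a morphism in the opposite direction, and the limit map is a homeomorphism.

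There is no real obstacle here beyond bookkeeping: the only point to be careful about is the boundary distinction between $\Im(z)=0$ and $\Im(z)>0$ (i.e.\ the open versus closed punctured disk). Since the classical orbit uses $\R_+^*$ and not $[0,\infty)$, every leaf coordinate lies in the open upper half plane and $\exp$ takes values in the open punctured unit disk, which is precisely how $\tilde\D^*$ is defined in the statement. Combining with \eqref{projgnlim1} then yields the desired isomorphism $\exp:\Gamma_{\Q,{\rm cl}}\to\tilde\D^*$.
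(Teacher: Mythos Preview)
Your proposal is correct and follows essentially the same approach as the paper: verify that each $e_n$ is an isomorphism and check the compatibility $p_{(n,m)}\circ e_m=e_n\circ\gamma_{n,m}$ to obtain an isomorphism of projective systems. Your write-up is in fact more detailed than the paper's, which simply records the compatibility computation and invokes that each $e_n$ is an isomorphism.
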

	 \begin{proof}  For each integer $n$ the map $e_n:\H/n\Z\to \D^*$ is an isomorphism. One has the compatibility for $m=na$ 
	 $$
	 p_{(n,m)}(e_m(z))=e_m(z)^a=\exp(2 \pi i \frac{a z}{m})
	 =\exp(2 \pi i \frac{z}{n})=e_n(z).
	 $$
	Thus this gives an isomorphism of the projective systems. \qed\end{proof}
	
	\begin{rem} \label{additivestruct} 
	The formulation of Proposition \ref{classorb1} does not reflect the additive structure at the archimedean place. 	Instead of $J$ as in \eqref{Jdefn1}, one can consider the subset $\tilde J\subset \A_\Q/\hatz$ formed of classes of ad\`eles  modulo $\hatz$
$$
a=(a_v), \ \ a_v\in \Z_v^*\qqq v\neq\infty.
$$
Then, the value of $a_\infty$ gives an isomorphism $\tilde J\simeq \R$ and the equivalence for the multiplicative action of $\Q^*$ is reduced to the orbit of $\pm 1$. In this way one obtains the following refinement of $\Gamma_{\Q,{\rm cl}}$
$$
\bar\Gamma_{\Q,{\rm cl}}\simeq (G\times \R)/\pm 1=\Gamma_{\Q,{\rm cl}} \cup (G/\pm 1).
$$
Thus the only additional piece is $G/\pm 1$.
	\end{rem}

\section{The moduli space interpretation}\label{sec:6}

In this section we relate the noncommutative space $\caq=P(\Q)\backslash
\overline{P(\A_\Q)}$ described in  Sect. \ref{sec:5} to the $\GL(2)$-system (see \cite{cmbook}). This system was conceived as a higher dimensional generalization of the BC-system \cite{BC} and its main new feature is provided by its arithmetic subalgebra of modular functions. The classical Shimura scheme $Sh(\GL_2,\H^\pm):=\GL_2(\Q)\backslash \GL_2(\A_{\Q})/\C^\times$ recalled in Sect. \ref{subsec:6.1} appears as the set of classical points of the noncommutative space $\overline{Sh^{\rm nc}(\GL_2,\H^\pm)}$ underlying the $\GL(2)$-system.    
This noncommutative space  admits a simple description as the double quotient
\[
\overline{Sh^{\rm nc}(\GL_2,\H^\pm)}= \GL_2(\Q)\backslash
M_2(\A_\Q)^\bullet /\C^\times
\]
 obtained by replacing  in the construction of $Sh(\GL_2,\H^\pm)$ the middle term $\GL_2(\A_{\Q})$ by $M_2(\A_\Q)^\bullet:= M_2(\A_{\Q,f})\times (M_2(\R)\smallsetminus
\{0 \})$ \ie the space of matrices (of ad\`eles) with non-zero archimedean component. \newline
 In Sect. \ref{subsec:6.2} we construct a map $\theta:\caq\to \overline{Sh^{\rm nc}(\GL_2,\H^\pm)}$ using the natural inclusion $\overline{P(\A_\Q)}\subset M_2(\A_\Q)^\bullet$ (Lemma \ref{fundinclusion}). The important feature of this inclusion is that, at the archimedean place, the inclusion $\overline{P(\R)}\subset M_2(\R)\smallsetminus
\{0 \})$ induces a bijection of $\overline{P(\R)}$ with the complement of a single point in $(M_2(\R)\smallsetminus
\{0 \})/\C^\times$.\newline
  In Sect. \ref{subsec:6.3} we use the description of the $\GL(2)$-system in terms of $\Q$-lattices to give a geometric interpretation of a generic element of $\caq$ in terms of commensurability classes of {\em parabolic} $\Q$-lattices. More precisely, we show in Theorem \ref{comparecomm} that the space of parabolic $\Q$-lattices, up to commensurability, is canonically isomorphic to the quotient 
$
\caqo:=P(\Q)\backslash (\overline{P(\A_{\Q,f})}\times P(\R))
$.\newline
 In Sect. \ref{subsec:6.4} we interpret these results in terms of  elliptic curves $E$ endowed with a  {\em triangular} structure, \ie a pair of elements of the Tate module $T(E)$ fulfilling an orthogonality relation (Definition \ref{triangdefn}). In Theorem \ref{ellipticcurve} we prove that the triangular condition characterizes the range of the map $\theta$.
  The equivalence relation of commensurability of $\Q$-lattices is then interpreted in terms of isogenies of triangular elliptic curves in Sect. \ref{subsec:6.5}. \newline
    In Sect. \ref{subsec:6.6} we show that the complex structure on $\caq$ inherited from the right action of $P^+(\R)$ coincides with the natural complex structure as a moduli space of elliptic curves. In Sect. \ref{subsec:6.7} we briefly describe the right action of $P(\hat \Z)$, while the boundary cases are described in Sect. \ref{subsec:6.8}.

 \subsection{Notations}\label{subsec:6.1}

In this part we fix the notations for the Shimura scheme $Sh(\GL_2,\H^\pm)$.
The group $\GL_2^+ (\R)$ acts on the complex upper-half
plane $\H$ by fractional linear transformations 
\begin{equation}\label{actalpha1}
\alpha(z) = \frac{az+b}{cz+d} \,,\quad \forall \alpha=\begin{pmatrix} a &b \\
c &d \end{pmatrix} \in \GL_2^+ (\R).
\end{equation}
We identify the multiplicative group $\C^\times$ as the subgroup  ${\rm SO}_2(\R)\times \R_+^*\subset\GL_2^+ (\R)$ by the map 
\begin{equation}\label{inclGL2R}
 a+i b \in \C^\times \mapsto \begin{pmatrix} a &b \\ -b &a
\end{pmatrix} \in \GL_2^+ (\R).
\end{equation}
The quotient $\GL_2^+ (\R)/\C^\times$ gets thus identified with the upper-half
plane $\H$ by the map
\begin{equation}\label{upper}
\alpha\in \GL_2^+ (\R)\mapsto z=\alpha(i)\in \H.
\end{equation}
In fact, the same map identifies the quotient $\GL_2(\R)/\C^\times$ with the disjoint union $\H^\pm$ of the upper and lower half
planes. 
By definition $Sh(\GL_2,\H^\pm)$ is  the
quotient
\begin{equation}\label{ShGL2Hpm}
Sh(\GL_2,\H^\pm):=\GL_2(\Q)\backslash \GL_2(\A_{\Q})/\C^\times=\GL_2(\Q)\backslash (\GL_2(\A_{\Q,f})\times
\H^\pm),
\end{equation}
where the left action of $\GL_2(\Q)$ in $(\GL_2(\A_{\Q,f})\times
\H^\pm)$ is via the diagonal embedding in the product $\GL_2(\A_{\Q,f})\times \GL_2 (\R)$. $Sh(\GL_2,\H^\pm)$ is a scheme over $\C$ (see \cite{milne}, Remark 2.10) which is the inverse limit of the Shimura varieties obtained as quotients by compact open subgroups  $K\subset \GL_2(\A_{\Q,f})$.
The space $Sh(\GL_2,\H^\pm)$ has infinitely many connected components. They are the fibers of the map
\begin{equation}\label{pi0Sh}
\det \times {\rm sign}: Sh(\GL_2,\H^{\pm}) \to Sh(\GL_1,\{\pm 1\}),
\end{equation}
where the determinant $\det: \GL_2(\A_{\Q,f})\to \GL_1(\A_{\Q,f})$ gives a map to  the group of finite ideles. Passing to the quotient  gives a map to the idele class group modulo its archimedean component, \ie here the group $\hatz$. The fiber of the map  \eqref{pi0Sh} over the point $(1,1)\in Sh(\GL_1,\{\pm 1\})$ is the connected quotient 
\begin{equation}\label{ShGL2Hpm0}
Sh_0(\GL_2,\H^\pm):=\SL_2(\Q)\backslash (\SL_2(\A_{\Q,f})\times
\H).
\end{equation}
By strong approximation (see \opcit Theorem 1.12) $\SL_2(\Q)$ is dense in $\SL_2(\A_{\Q,f})$,  thus one derives 
\begin{equation}\label{ShGL2Hpm0bis}
Sh_0(\GL_2,\H^\pm)=\SL_2(\Z)\backslash (\SL_2(\hat\Z)\times
\H).
\end{equation}
Using the identification
$
\SL_2(\hat\Z) =
\varprojlim_N \SL_2(\Z/N\Z),
$
the above quotient is associated to the {\em modular tower}, that is the tower of modular curves. More 
precisely, for $N \in \N$, let $Y(N)=\Gamma(N)\backslash \H$ be the
 modular curve of level $N$, where $\Gamma(N)$
is the principal congruence subgroup of $\Gamma=\SL_2(\Z)$. One has
\begin{equation}\label{ShGL2Hpm0ter}
Sh_0(\GL_2,\H^\pm)=\varprojlim_N \Gamma(N)\backslash 
\H=\varprojlim_N Y(N).
\end{equation}

\subsection{The relation with the $\GL(2)$-system}\label{subsec:6.2}

In the following part we explain the relation between the arithmetic construction of  $\caq=P(\Q)\backslash
\overline{P(\A_\Q)}$ and the $\GL(2)$-system.    The noncommutative  space underlying the $\GL(2)$-system contains the
quotient
\begin{equation}\label{ShGL2HpmNC}
Sh^{\rm nc}(\GL_2,\H^\pm):= \GL_2(\Q)\backslash
(M_2(\A_{\Q,f})\times \H^\pm),
\end{equation}
 and enlarges it by taking cusps into account. It is defined as the double quotient
\begin{equation}\label{ncShGL2comp}
\overline{Sh^{\rm nc}(\GL_2,\H^\pm)}:= \GL_2(\Q)\backslash
M_2(\A_\Q)^\bullet /\C^\times,
\end{equation}
where one sets
$$ M_2(\A_\Q)^\bullet:= M_2(\A_{\Q,f})\times (M_2(\R)\smallsetminus
\{0 \}). $$

\no Next lemma defines a canonical map $\caq \stackrel{\theta}{\to}\overline{Sh^{\rm nc}(\GL_2,\H^\pm)}$

\begin{lemma}\label{fundinclusion} \
\begin{enumerate}
\item[(i)] The inclusion $\overline{P(\R)}\subset (M_2(\R)\smallsetminus
\{0 \})$ induces a bijection of $\overline{P(\R)}$ with the complement in $(M_2(\R)\smallsetminus
\{0 \})/\C^\times$ of the point $\infty$ given by the class of matrices with vanishing second line.
\item[(ii)] The inclusion $\overline{P(\A_\Q)}\subset M_2(\A_\Q)^\bullet$ induces a morphism of noncommutative spaces
\begin{equation}\label{P2gl20}
	\caq=P(\Q)\backslash
\overline{P(\A_\Q)}\stackrel{\theta}{\longrightarrow} \GL_2(\Q)\backslash
M_2(\A_\Q)^\bullet /\C^\times	=\overline{Sh^{\rm nc}(\GL_2,\H^\pm)}.
\end{equation}
\end{enumerate}
\end{lemma}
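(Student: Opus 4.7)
The plan is to prove (i) by an explicit parametrization of the $\C^\times$-orbits in $M_2(\R)\smallsetminus\{0\}$ using the second row of a matrix as the ``moving part'', and then to deduce (ii) essentially formally, checking that the inclusions at every place land in the required spaces and are compatible with the two equivalence relations in play.

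For (i), I would start by recording the explicit formula for right multiplication: for $M=\begin{pmatrix} \alpha & \beta \\ \gamma & \delta \end{pmatrix}\in M_2(\R)$ and $k=\begin{pmatrix} c & d \\ -d & c \end{pmatrix}\in \C^\times\subset\GL_2(\R)$ under the embedding \eqref{inclGL2R}, the second row of $Mk$ is $(\gamma c-\delta d,\,\gamma d+\delta c)$. The key observation is that this is the image of $(\gamma,\delta)$ under the matrix $k$ acting by the standard complex multiplication rule, so it is zero iff $(\gamma,\delta)=(0,0)$. This immediately shows that the subset $\infty\subset(M_2(\R)\smallsetminus\{0\})/\C^\times$ of classes with vanishing second row is $\C^\times$-invariant and disjoint from the image of $\overline{P(\R)}$ (whose elements have second row $(0,1)$).

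Next, for surjectivity onto the complement of $\infty$, given $M$ with $(\gamma,\delta)\neq(0,0)$ I would choose $c=\delta/(\gamma^2+\delta^2)$ and $d=\gamma/(\gamma^2+\delta^2)$; then the computation above yields second row $(0,1)$ for $Mk$, putting $Mk$ in $\overline{P(\R)}$. For injectivity, if $p_1=p_2k$ with $p_1,p_2\in\overline{P(\R)}$ and $k\in\C^\times$, then $k=p_2^{-1}p_1$ is upper triangular (since $\overline{P(\R)}$ is stable under products and inverses when restricted to its invertible elements), and an upper triangular matrix of the form $\begin{pmatrix} c & d \\ -d & c \end{pmatrix}$ must have $d=0$ and hence $k$ scalar; comparing the $(2,2)$-entries of $p_1$ and $p_2k$ then forces $c=1$, so $p_1=p_2$.

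For (ii), the proof is essentially a verification. I would first check the inclusion $\overline{P(\A_\Q)}\subset M_2(\A_\Q)^\bullet$: at the finite places, $\overline{P(\A_{\Q,f})}\subset M_2(\A_{\Q,f})$ is tautological; at the archimedean place, every element of $\overline{P(\R)}$ has second row $(0,1)\neq 0$ and thus lies in $M_2(\R)\smallsetminus\{0\}$. Then, since $P(\Q)\subset\GL_2(\Q)$ via the defining embedding and right multiplication by $\C^\times$ is trivial on $\caq$ on the source side (no $\C^\times$-quotient is taken), the natural map
\[
\theta\colon P(\Q)\backslash\overline{P(\A_\Q)}\longrightarrow \GL_2(\Q)\backslash M_2(\A_\Q)^\bullet/\C^\times
\]
that sends a $P(\Q)$-orbit to the larger $\GL_2(\Q)\times\C^\times$-orbit is well-defined. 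In the noncommutative-geometric framework recalled in Sect.~\ref{subsubsec:2.1.1}--\ref{subsubsec:2.1.3}, where a quotient by an equivalence relation is encoded by the corresponding convolution algebra, passing from a finer equivalence relation (here, that of $P(\Q)$ acting by left translation on $\overline{P(\A_\Q)}$) to a coarser one (that of $\GL_2(\Q)$ together with $\C^\times$ acting on $M_2(\A_\Q)^\bullet$) amounts to a morphism at the level of these algebras, which is what we mean by a morphism of noncommutative spaces. The only nontrivial point is the inclusion at the archimedean place, already addressed by (i). I do not foresee any serious obstacle: the entire argument reduces to the transparent linear-algebra computation in the second row described above, whose only subtle aspect is recognizing that the failure of surjectivity in (i) is precisely the single exceptional $\C^\times$-orbit $\{\infty\}$.
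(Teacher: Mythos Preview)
Your approach is essentially the paper's: both hinge on the second-row computation, and the paper's explicit decomposition \eqref{PCdec} is your normalization $c=\delta/(\gamma^2+\delta^2)$, $d=\gamma/(\gamma^2+\delta^2)$ repackaged. Part (ii) is handled identically.

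Two small gaps in (i) to repair. First, your injectivity step writes $k=p_2^{-1}p_1$, but $p_2\in\overline{P(\R)}$ need not be invertible (its $(1,1)$-entry may be $0$), so the parenthetical about ``inverses when restricted to invertible elements'' does not cover all cases. The clean fix is already in your hands: with $(\gamma,\delta)=(0,1)$ your own formula gives second row $(-d,c)$ for $p_2k$, and $(-d,c)=(0,1)$ forces $k=1$ without any invertibility hypothesis. Second, you have not checked that the matrices with vanishing second row form a \emph{single} $\C^\times$-orbit, so that $\infty$ really is one point. This is immediate from the same computation applied to the first row: the action on $\begin{pmatrix} a & b\\ 0 & 0\end{pmatrix}$ sends the top row $(a,b)$ to $(ac-bd,\,ad+bc)$, which is complex multiplication $(a+ib)\mapsto (a+ib)(c+id)$ and hence transitive on $\R^2\smallsetminus\{0\}$.
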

\begin{proof} {\it (i)} Note that for real matrices the following implication holds
$$
\left(
\begin{array}{cc}
 a & b \\
 0 & 1 \\
\end{array}
\right)=\left(
\begin{array}{cc}
 a' & b' \\
 0 & 1 \\
\end{array}
\right)\left(
\begin{array}{cc}
 x & y \\
 -y & x \\
\end{array}
\right)\Longrightarrow \, y=0\ \& \ x=1 \Longrightarrow a=a'\ \&\  b=b'.
$$
Thus the induced map $\overline{P(\R)}\to(M_2(\R)\smallsetminus
\{0 \})/\C^\times$ is injective.\newline
Also, and again for real matrices one has, provided $c$ or $d$ is non-zero
\begin{equation}\label{PCdec}
\left(
\begin{array}{cc}
 a & b \\
 c & d \\
\end{array}
\right)=\left(
\begin{array}{cc}
 \frac{a d-b c}{c^2+d^2} & \frac{a c+b d}{c^2+d^2} \\
 0 & 1 \\
\end{array}
\right).\left(
\begin{array}{cc}
 d & -c \\
 c & d \\
\end{array}
\right).
\end{equation}
Thus all matrices in $M_2(\R)$ whose second line is non-zero 
belong to $\overline{P(\R)}/\C^\times$.
When both $c$ and $d$ are zero, one derives 
$$
\left(
\begin{array}{cc}
 a & b \\
 0 & 0 \\
\end{array}
\right)=\left(
\begin{array}{cc}
 1 & 0 \\
 0 & 0 \\
\end{array}
\right).\left(
\begin{array}{cc}
 a & b \\
 -b & a \\
\end{array}
\right).
$$
This means that when $c=d=0$  the right action of $\C^\times$ determines  a single orbit $\{\infty\}$ provided one stays away from the matrix $0$. Thus one obtains a canonical bijection
$$
(M_2(\R)\smallsetminus
\{0 \})/\C^\times=\overline{P(\R)}\cup \{\infty\}.
$$
{\it (ii)} By construction one has the inclusion 
$
\overline{P(\A_\Q)}\subset M_2(\A_\Q)^\bullet
$
and moreover the groups involved on both sides of the double quotient $\overline{Sh^{\rm nc}(\GL_2,\H^\pm)}$ are larger than those involved on the left hand side, thus one gets the required map $\theta$.\qed\end{proof} 

\no The proof of Lemma \ref{fundinclusion} shows that one has the identification
\begin{equation}\label{P2gl2}
M_2(\A_\Q)^\bullet/\C^\times= M_2(\A_{\Q,f})\times (\overline{P(\R)}\cup \{\infty\})
\end{equation}
By construction, one has the factorization
$$
\overline{P(\A_\Q)}=\overline{P(\A_{\Q,f})}\times \overline{P(\R)}
$$
Thus the map $\theta$ as in \eqref{P2gl20}, when considered at the archimedean place, only misses the point at infinity of $\overline{P(\R)}\cup \{\infty\}$.

\subsection{Commensurability classes of parabolic $\Q$-lattices}\label{subsec:6.3}

In this section we give  a geometric interpretation of the subspace
$$
\caqo:=P(\Q)\backslash (\overline{P(\A_{\Q,f})}\times P(\R))\subset P(\Q)\backslash
\overline{P(\A_\Q)}=:\caq.
$$
To this end, we introduce the notion of {\em parabolic} $\Q$-lattice in Definition \ref{defnpara}. Then, by implementing the commensurability equivalence relation we provide, in Proposition \ref{comparecomm}, the geometric description of $\caqo$
in terms of commensurability classes of  parabolic $\Q$-lattices.  The condition $\det(g_\infty)\neq 0$ which defines the subspace $\caqo\subset \caq$ is invariant under the left action of $P(\Q)$ and defines a dense open set in the naive quotient topology.  One obtains the canonical identification 
\begin{equation}\label{adelquot2dQlattpar}
\caqo=P^+(\Q)\backslash (\overline{P(\A_{\Q,f})}\times P^+(\R)).
\end{equation}
 We recall (see \cite{cmbook}, III Definition 3.17) that a two dimensional $\Q$-lattice is  a pair $(\Lambda,\phi)$ where $\Lambda\subset \C$ is a lattice and $\phi:\Q^2/\Z^2 \to \Q\Lambda / \Lambda$ is an arbitrary morphism of abelian groups. The morphism $\phi$ encodes the non-archimedean components of the lattice. The action of $\C^\times$ by scaling on $\Q$-lattices is given by
\begin{equation}\label{scaleQlat2}
\lambda(\Lambda,\phi)=(\lambda\Lambda,\lambda\phi) \,,\quad \forall
\lambda \in \C^\times.
\end{equation}
 The set of $2$-dimensional $\Q$-lattices is (see \cite{cmbook}, III Proposition 3.37) the quotient space
\begin{equation}\label{2dQlat}
\Gamma\backslash (M_2(\hat\Z)\times \GL_2^+(\R)),
\end{equation}
where $\Gamma=\SL_2(\Z)$. The set of 2-dimensional $\Q$-lattices up
to scaling is therefore identified with 
\begin{equation}\label{2dQlatScale}
\Gamma\backslash (M_2(\hat\Z)\times \GL_2^+(\R))/\C^\times=\Gamma\backslash (M_2(\hat\Z)\times \H).
\end{equation}
In this part, we provide the details of this identification. We use, as in \opcit  the basis $\{e_1=1,e_2=-i\}$ of $\C$ as a
2-dimensional $\R$-vector space to let $\GL_2 (\R)$ act on $\C$ as
$\R$-linear transformations. More precisely,
\begin{equation}\label{actalpha}
\alpha(x e_1 + y e_2) = (a x+ b y) e_1 + (cx+ dy) e_2, \qquad \alpha=\begin{pmatrix} a &b \\
c &d \end{pmatrix} \in \GL_2 (\R).
\end{equation}
Every 2-dimensional $\Q$-lattice $(\Lambda,\phi)$ can then be
described by the data 
\begin{equation}\label{data2Qlat}
(\Lambda,\phi)=(\alpha^{-1}\Lambda_0, \alpha^{-1}\rho),  \qquad \Lambda_0:=\Z e_1 + \Z e_2= \Z + i \Z
\end{equation}
for some $\alpha\in\GL_2^+(\R)$ and some $\rho\in M_2(\hat\Z)$ unique up to the left diagonal action of $\Gamma=\SL_2(\Z)$. Let us explain the notation $\alpha^{-1}\rho$ used in \eqref{data2Qlat}. The action of $\Z$ by multiplication on the abelian group $\Q/\Z$ extends to an isomorphism of rings $\hat\Z=\Hom(\Q/\Z,\Q/\Z)$ and  $M_2(\hat\Z)=\Hom(\Q^2/\Z^2,\Q^2/\Z^2)$. This gives meaning to the notation $a x\in \Q/\Z$ for $a\in \hat\Z$ and $x\in \Q/\Z$. 
We associate to $\rho\in
M_2(\hat\Z)$ the map
\begin{equation}\label{rhoLambda0}
\rho :\Q^2/\Z^2 \to \Q\Lambda_0 / \Lambda_0, \qquad  \rho(u)=
\rho_1(u)e_1 +\rho_2(u)e_2,
\end{equation}
where $\rho_1(u)=a x+b y$ and $\rho_2(u)=c x+d y$ for $u=(x,y)\in \Q^2/\Z^2$.
The action of $\rho$ is similar to the action of $\alpha$ as in \eqref{actalpha}
\begin{equation}\label{actrho}
\rho((x,y)) = (a x+ b y) e_1 + (cx+ dy) e_2\qqq (x,y)\in (\Q/\Z)^2,\quad \rho=\begin{pmatrix} a &b \\
c &d \end{pmatrix} \in M_2 (\hat\Z).
\end{equation}
To understand the extra structure on $\Q$-lattices which reduces the group $\GL(2)$ down to the parabolic subgroup $P$, we first consider the archimedean component.
The natural characterization of the subgroup $P^+(\R)\subset\ \GL_2^+(\R)$ is that its elements $g$ fulfill $\tau\circ g=\tau$, where $\tau$ is the projection on the imaginary axis
$$
\tau:x e_1 + y e_2\mapsto y e_2, \qquad \tau=\begin{pmatrix} 0 &0 \\
0 &1 \end{pmatrix}.
$$ 
For $z=x+iy\in \C$, we let  $\Im(z):=y$ denote the imaginary part of $z$, thus with our choice of basis one has $\Im(xe_1+ye_2)=-y$:  we shall keep track of this minus sign here below. This projection defines (Lemma \ref{extrastructure} $(ii)$) a	character 
of the elliptic curve $E=\C/\Lambda$ where the lattice $\Lambda$ is of the form 
\begin{equation}\label{Lambda0}
\Lambda=\alpha^{-1}\Lambda_0, \qquad \Lambda_0:=\Z e_1 + \Z e_2= \Z + i \Z.
\end{equation}
We define the orthogonal of a lattice $\Lambda$ by the formula
$$
\Lambda^\perp=\{z\in\C\mid <z,z'>\in \Z,\qquad \forall z'\in \Lambda.
$$
Here we use the standard non-degenerate pairing defining the duality, given by 
 $$
 <z,z'>:=\Re(z\bar z')=xx'+yy',\qquad \forall z=x+iy, \quad z'=x'+iy'.
 $$

\begin{lemma}\label{extrastructure} Let $\Lambda=\alpha^{-1}\Lambda_0$ be a $\Q$-lattice, with $\alpha\in P(\R)$. Then
 \begin{enumerate}
\item[(i)] $\Im(\Lambda)=\Z$. 
\item[(ii)] The linear map $\Im$ induces a group homomorphism $\Im:E\to \R/\Z$ from the elliptic curve 
$E=\C/\Lambda$ to the abelian group $U(1):=\R/\Z$, \ie a character  of the abelian group $E$.
\item[(iii)] The orthogonal lattice $\Lambda^\perp$ contains the vector $e_2$.
\end{enumerate}
\end{lemma}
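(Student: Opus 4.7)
The plan is a direct unwinding of the definitions; the content of the lemma is essentially the observation that the lower row $(0,1)$ of $\alpha\in P(\R)$ preserves the $e_2$-coordinate integrally. First I would write $\alpha=\begin{pmatrix}a & b\\ 0 & 1\end{pmatrix}$ with $a\in\R^\times$, $b\in\R$, invert it, and apply the $\R$-linear action \eqref{actalpha} to conclude that $\Lambda=\alpha^{-1}\Lambda_0$ consists exactly of the vectors $\frac{m-bn}{a}\,e_1+n\,e_2$ with $m,n\in\Z$. In particular the $e_2$-component of any element of $\Lambda$ is an integer, and this single structural fact underlies all three assertions.

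For (i), using $e_1=1$ and $e_2=-i$ the imaginary part of $xe_1+ye_2$ equals $-y$, and the observation above yields $\Im(\Lambda)=\Z$ at once. For (ii), since $\Im\colon\C\to\R$ is $\R$-linear it is a homomorphism of additive groups, and part (i) implies that it sends $\Lambda$ into $\Z$, so it descends to a continuous surjective group homomorphism $E=\C/\Lambda\to\R/\Z=U(1)$. For (iii), I would evaluate $\langle e_2,z'\rangle=\Re(e_2\overline{z'})$ on an arbitrary $z'\in\Lambda$; with $e_2=-i$ and the $e_2$-coefficient of $z'$ equal to $n\in\Z$, a short computation gives $\langle e_2,z'\rangle=n\in\Z$, so $e_2\in\Lambda^\perp$.

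No step here presents a genuine obstacle: the three items are three facets of the same triangularity property of $\alpha$, and the arithmetic content of the lemma — a character of $E$ together with a distinguished primitive dual vector $e_2\in\Lambda^\perp$ — will become substantive only in the next step, where it is to be interpreted as the \emph{triangular structure} on the corresponding elliptic curve announced in Section~\ref{subsec:6.4}.
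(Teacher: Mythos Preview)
Your proof is correct and amounts to the same verification as the paper's, just carried out in explicit coordinates rather than via the adjoint. The paper packages all three items through the single observation $\alpha^t e_2=e_2$: part~(i) is obtained from $\Im(\Lambda)=\langle\alpha^{-1}\Lambda_0,\alpha^t e_2\rangle=\langle\Lambda_0,e_2\rangle=\Z$, and part~(iii) from the dual-lattice formula $\Lambda^\perp=\alpha^t\Lambda_0$ together with $e_2=\alpha^t e_2\in\alpha^t\Lambda_0$. Your direct computation of the generic element $\frac{m-bn}{a}\,e_1+n\,e_2$ of $\Lambda$ is exactly the unwinding of this, and is equally valid; the only thing the paper's phrasing buys is the identity $\Lambda^\perp=\alpha^t\Lambda_0$ as a byproduct, which is reused in the proof of Lemma~\ref{extrastructure1}.
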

\begin{proof} {\it (i)} One has the implications
$$
\alpha=\begin{pmatrix} a &b \\
0 &1 \end{pmatrix}\Rightarrow \alpha^t=\begin{pmatrix} a &0 \\
b &1 \end{pmatrix}\Rightarrow \alpha^t e_2=e_2,
$$
 thus $\alpha^t$, the transpose of the matrix $\alpha$, fulfills $\alpha^t e_2=e_2$ and
$$
\Im(\Lambda)=<\Lambda,e_2>=<\alpha^{-1}\Lambda_0,\alpha^t e_2>=<\Lambda_0,e_2>=\Z.
$$
{\it (ii)} For $\xi\in E=\C/\Lambda$ the value $\Im \xi$ is meaningful modulo $\Im(\Lambda)=\Z$, thus the group homomorphism $\Im:E\to \R/\Z$ is well defined. 
\newline
{\it (iii)} For $\Lambda$  as in \eqref{Lambda0},   $\Lambda^\perp=\alpha^{t}\Lambda_0$. This follows from $\Lambda_0=\Lambda_0^\perp$ and 
$$
<\alpha^{-1}\xi,\eta>=<\xi,(\alpha^{-1})^{t}\eta>, \ (\alpha^{-1})^{t}\eta \in \Lambda_0 \iff 
\eta \in \alpha^{t}\Lambda_0.
$$
Then the orthogonal lattice always contains the vector $e_2$, since one has $\alpha^t e_2=e_2$.\qed
\end{proof}

 Next, we restrict the homomorphisms $\phi$ for $\Q$-lattices $(\Lambda,\phi)$ in the same way as we restricted the lattices  in Lemma \ref{extrastructure}.   From  \eqref{actrho} and the definition of $\overline{P(R)}$  in \eqref{algspaceP}, one has
  \begin{equation}\label{actrhoeq}
\rho \in \overline{P(\hat\Z)} \iff \rho_2(u)=y \qqq u=(x,y)\in \Q^2/\Z^2.
\end{equation}
To write this condition in terms of  $\phi:\Q^2/\Z^2 \to \Q\Lambda / \Lambda$, with $\phi=\alpha^{-1}\rho$ and for $\Lambda=\alpha^{-1}\Lambda_0$,  $\alpha\in P(\R)$, we use the character $\chi=-\Im: E\to \R/\Z$ (sending torsion points to torsion points). One has $\chi \circ \alpha^{-1}=\chi$, since $\alpha^{-1}\in P(\R)$ and 
$$
\chi\circ \phi:\Q^2/\Z^2\to \Q/\Z,   \qquad \chi\circ \phi=\chi \circ \alpha^{-1}\circ \rho=\chi \circ \rho=\rho_2.
$$
One thus obtains
\begin{equation}\label{chiphi}
\chi\circ \phi=\rho_2.
\end{equation}

\begin{lemma}\label{extrastructure1} Let $(\Lambda,\phi)$ be a two dimensional $\Q$-lattice described by data $(\Lambda,\phi)=(\alpha^{-1}\Lambda_0, \alpha^{-1}\rho)$, 
for some $\alpha\in\GL_2^+(\R)$ and some $\rho\in M_2(\hat\Z)$. Then, 
\begin{equation}\label{charofP}
(\rho,\alpha)\in \Gamma\backslash\left(
\overline{P(\hat\Z)}\times P^+(\R)\right)\iff\Im(\Lambda)=\Z \ \&  \ \chi\circ \phi(u)=y \qqq u=(x,y)\in \Q^2/\Z^2
\end{equation}
where $\chi:\C/\Lambda\to \R/\Z$ is given by $\chi=-\Im$ and  $\Gamma= \SL_2(\Z)$ acts diagonally.
\end{lemma}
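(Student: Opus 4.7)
The forward direction follows directly from the preceding results. Given a representative $(\rho,\alpha)$ with $\alpha\in P^+(\R)$ and $\rho\in\overline{P(\hat\Z)}$, Lemma~\ref{extrastructure}(i) yields $\Im(\Lambda)=\Z$, while \eqref{chiphi} combined with \eqref{actrhoeq} gives $\chi\circ\phi(u)=\rho_2(u)=y$ for every $u=(x,y)\in\Q^2/\Z^2$.

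For the converse, the task is to produce $\gamma\in\SL_2(\Z)$ moving the initial pair $(\rho,\alpha)$ into $\overline{P(\hat\Z)}\times P^+(\R)$. I would first treat the archimedean component by choosing a $\Z$-basis of $\Lambda$ adapted to the hypothesis $\Im(\Lambda)=\Z$. The homomorphism $\Im:\Lambda\to\Z$ is then surjective with kernel $\Lambda\cap\R$, itself of rank one since $\Lambda$ has rank two. Writing this kernel as $\Z\omega_1$ with $\omega_1>0$, and splitting the resulting short exact sequence (possible because $\Z$ is free), one obtains a $\Z$-basis $(\omega_1,\omega_2)$ of $\Lambda$ in which $\omega_1$ is positive real and $\Im(\omega_2)=-1$. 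Reading off this basis gives a matrix $\tilde\alpha\in P^+(\R)$ with $\tilde\alpha^{-1}\Lambda_0=\Lambda$; setting $\gamma:=\tilde\alpha\,\alpha^{-1}\in\GL_2(\Z)$, the positivity of both $\det\tilde\alpha$ and $\det\alpha$ forces $\det\gamma=1$, so $\gamma\in\SL_2(\Z)$.

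For the non-archimedean component one uses the new representative $(\gamma\rho,\gamma\alpha)$. Since $\gamma\alpha\in P^+(\R)\subset P(\R)$, the identity $\chi\circ(\gamma\alpha)^{-1}=\chi$ established in the proof of Lemma~\ref{extrastructure} still applies, so that \eqref{chiphi} becomes $\chi\circ\phi=(\gamma\rho)_2$. The standing hypothesis $\chi\circ\phi(u)=y$ then translates into $(\gamma\rho)_2(u)=y$ for all $u\in\Q^2/\Z^2$, which by \eqref{actrhoeq} is exactly the condition $\gamma\rho\in\overline{P(\hat\Z)}$.

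The main technical point is the orientation bookkeeping in the basis choice: selecting $\omega_2$ with $\Im(\omega_2)=-1$ rather than $+1$ is what guarantees that $\gamma$ has positive determinant and so lies in $\SL_2(\Z)$, not merely in $\GL_2(\Z)$. Once this is settled, the non-archimedean statement is an immediate consequence of \eqref{chiphi} and \eqref{actrhoeq}, and the lemma is reduced to the purely archimedean splitting argument above.
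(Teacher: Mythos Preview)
Your proof is correct and follows essentially the same strategy as the paper: find $\gamma\in\SL_2(\Z)$ bringing $\alpha$ into $P^+(\R)$ via the hypothesis $\Im(\Lambda)=\Z$, then deduce the non-archimedean condition from \eqref{chiphi} and \eqref{actrhoeq}. The only difference is cosmetic: the paper works with the dual lattice $\Lambda^\perp$ (observing that $e_2\in\Lambda^\perp$ is primitive and completing it to a basis via Bezout, then transposing), whereas you work directly with $\Lambda$ by splitting the exact sequence $0\to\Lambda\cap\R\to\Lambda\stackrel{\Im}{\to}\Z\to 0$; both constructions produce the same $\tilde\alpha\in P^+(\R)$.
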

\begin{proof} By Lemma \ref{extrastructure}, one has $\alpha \in P(\R)\Rightarrow \Im(\Lambda)=\Z $. Moreover, it follows from the above discussion that $\rho \in \overline{P(\hat\Z)}$ is equivalent to $\rho_2(u)=y$, thus one gets $\chi\circ \phi(u)=y$ for any $ u=(x,y)\in \Q^2/\Z^2$. This shows the implication $\Rightarrow$ in \eqref{charofP}, since the $\Q$-lattice  $(\Lambda,\phi)$ associated to $(\rho,\alpha)$ only depends upon the orbit of this pair under the left diagonal action of $\Gamma= \SL_2(\Z)$. The character $\chi:\C/\Lambda\to \R/\Z$ is induced by $-\Im:\C\to \R$ and only depends upon the lattice $\Lambda$ so that the conditions on the right hand side of \eqref{charofP} only depend on the $\Q$-lattice  $(\Lambda,\phi)$. \newline
Conversely, let us now assume that these conditions hold. The condition $\Im(\Lambda)=\Z$ means that $e_2\in \Lambda^\perp$ is not divisible, \ie $e_2$ does not belong to any multiple of $\Lambda^\perp$. It follows (using Bezout's theorem) that there exists $\xi\in \Lambda^\perp$ such that $\Z \xi+ \Z e_2=\Lambda^\perp$. Define $\beta\in\GL_2^+(\R)$ as $\beta(e_1)=\pm\xi$ and $\beta(e_2)=e_2$. One has $\Lambda^\perp=\beta(\Lambda_0)$ by construction and thus one derives
$$
\Lambda=(\Lambda^\perp)^\perp=(\beta(\Lambda_0))^\perp
\quad =(\beta^t)^{-1}\Lambda_0.
$$
For $\beta=\begin{pmatrix} a &b \\
c &d \end{pmatrix} \in \GL_2^+(\R)$, one derives from \eqref{actalpha},  $\beta(e_2)=b e_1+ d e_2$ and thus $\beta(e_2)=e_2$ is equivalent to $b=0$ and $d=1$. In turns these conditions mean that $\alpha'=\beta^t\in P^+(\R)$. In this way we have proven that, using the condition $\Im(\Lambda)=\Z$, we can find $\alpha'\in P^+(\R)$ such that $\Lambda=\alpha'^{-1}\Lambda_0$. The equality $\alpha'^{-1}\Lambda_0=\alpha^{-1}\Lambda_0$ shows that  $\gamma=\alpha'\alpha^{-1}\in \Gamma=\SL_2(\Z)$ and $\gamma\alpha\in P^+(\R)$. Thus by replacing   $(\rho,\alpha)$ with $(\gamma\rho,\gamma\alpha)$ we can assume that $(\Lambda,\phi)=(\alpha^{-1}\Lambda_0, \alpha^{-1}\rho)$, with $\alpha\in P^+(\R)$. The second hypothesis $\chi\circ \phi(u)=y \qqq u=(x,y)\in \Q^2/\Z^2$ implies,  using \eqref{chiphi}, that $\rho_2(u)=y \qqq u=(x,y)\in \Q^2/\Z^2$ and thus, by \eqref{actrhoeq}, that $\rho \in \overline{P(\hat\Z)}$.\qed\end{proof}

\begin{figure}[t]
\begin{center}
\includegraphics[scale=0.65,bb=150 10 138 200]{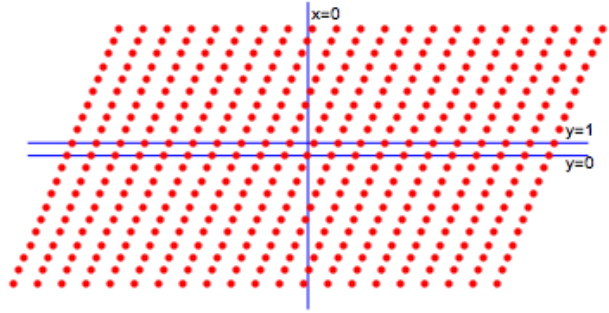}
\caption{A parabolic lattice \label{lattice1} }
\end{center}
\end{figure}

\begin{definition}\label{defnpara} A {\em parabolic} 2-dimensional $\Q$-lattice is a $\Q$-lattice of the form $(\Lambda,\phi)=(\alpha^{-1}\Lambda_0, \alpha^{-1}\rho)$, where $\rho\in \overline{P(\hat \Z)}$ and $\alpha\in P^+(\R)$. \newline	
We say that a parabolic $\Q$-lattice $(\Lambda,\phi)$ is degenerate when $\rho_1=0$.
\end{definition}
Notice that for a parabolic 2-dimensional $\Q$-lattice,  the pair $(\rho, \alpha)$ such that $(\Lambda,\phi)=(\alpha^{-1}\Lambda_0, \alpha^{-1}\rho)$ is unique up to the diagonal action of $\Gamma\cap P^+(\R)=P^+(\Z)\simeq \Z$. Thus the space of parabolic 2-dimensional $\Q$-lattices is described by the quotient
\begin{equation}\label{parab}
\Pi:=P^+(\Z)\backslash (\overline{P(\hat \Z)}\times P^+(\R)).
\end{equation}

\begin{rem}\label{arithprogremark} \
\begin{enumerate}
\item When the parabolic 2-dimensional $\Q$-lattice $(\Lambda,\phi)$ is degenerate, there exists a unique $\alpha\in P^+(\R)$ such that $(\Lambda,\phi)=(\alpha^{-1}\Lambda_0, \alpha^{-1}p)$,
where $p=\begin{pmatrix} 0 &0 \\
0 &1 \end{pmatrix}\in \overline{P(\hat \Z)}$. 
\item Let $\Lambda\subset \C$ be a $\Q$-lattice such that $\Im(\Lambda)=\Z$, then $\Lambda$ is characterized by the following arithmetic progression in $\R$ with associated lattice $L=\Lambda\cap \R$
 \begin{equation}\label{apoflambda}
 	A={\rm prog}(\Lambda):=\{u\in \R\mid i+u\in \Lambda\}.
 \end{equation}
 Let $(\Lambda,\phi)=(\alpha^{-1}\Lambda_0, \alpha^{-1}\rho)$ be a parabolic $\Q$-lattice, with $\alpha=\begin{pmatrix} y & x \\ 0 &1 \end{pmatrix}\in P^+(\R)$. 
Then one has	${\rm prog}(\Lambda)=y^{-1}(\Z+x)$, with 
 $L=y^{-1}\Z$. The pair $(L,\xi)$, with 
 $\xi:\Q/\Z\to \R/L$,  $\xi(u):=\phi(u,0)$  determines a one dimensional $\Q$-lattice $(L,\xi)$.
 \end{enumerate}
\end{rem}

The next step we undertake is to describe the meaning of commensurability for parabolic $\Q$-lattices. We recall from \cite{cmbook} the following (see Definition 3.17)

\begin{definition}\label{defncommens}
	Two $\Q$-lattices are said to be commensurable ( 
$ (\Lambda_1, \phi_1) \sim (\Lambda_2, \phi_2)$)
when
\begin{equation}\label{commensrel}
\Q\Lambda_1=\Q\Lambda_2 \ \ \ \text{ and } \ \  \phi_1 = \phi_2 \ \mod. \
\Lambda_1 + \Lambda_2 .
\end{equation}
\end{definition}
Commensurability is an equivalence relation (\!\cite{cmbook} Lemma 3.18). By applying Proposition 3.39 of \opcit, the space of commensurability classes of 2-dimensional $\Q$-lattices
up to scaling is given by the quotient space
\begin{equation}\label{adelquot2dQlatt}
\GL_2^+(\Q)\backslash (M_2(\A_{\Q,f})\times \H)
\end{equation}
here $\GL_2^+(\Q)$ acts diagonally by $(\rho,z)\mapsto
(g\rho,g(z))$. We then continue by providing the description of the orbits for this action. Let us first consider the orbit of $\Lambda_0:=\Z e_1 + \Z e_2= \Z + i \Z$.

\begin{lemma}\label{lambdazero} Let $g\in \GL_2(\Q)$ and assume that $\Lambda=g\Lambda_0$ fulfills $\Im \Lambda=\Z$. Then there exist $h\in P(\Q)$ and $k\in \Gamma= \SL_2(\Z)$ such that $g=hk$.	
\end{lemma}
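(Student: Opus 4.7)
The plan is to unpack the hypothesis $\Im \Lambda = \Z$ into a divisibility condition on the entries of the bottom row of $g$, and then use Bezout's identity to construct $k$ explicitly.

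First, I would translate the condition $\Im \Lambda = \Z$ into matrix form. Writing $g = \begin{pmatrix} a & b \\ c & d \end{pmatrix} \in \GL_2(\Q)$ and recalling from \eqref{actalpha} that $g$ acts on $\C$ via $g(me_1 + ne_2) = (am+bn)e_1 + (cm+dn)e_2$ with $e_1 = 1$, $e_2 = -i$, an arbitrary element of $\Lambda = g\Lambda_0$ has imaginary part $-(cm+dn)$ for $m,n \in \Z$. Hence $\Im \Lambda = c\Z + d\Z$ as a subgroup of $\Q$, and the hypothesis forces $c, d \in \Z$ with $\gcd(c,d) = 1$. (Alternatively, one could deduce this from Lemma \ref{extrastructure} $(iii)$ applied to $\Lambda^\perp$, since $e_2 \in \Lambda^\perp$ non-divisible is the dual statement.)

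Second, by Bezout's identity there exist $p, q \in \Z$ with $pd - qc = 1$. Set
$$ k := \begin{pmatrix} p & q \\ c & d \end{pmatrix} \in \SL_2(\Z) = \Gamma. $$

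Third, define $h := g k^{-1}$. Computing
$$ h = \begin{pmatrix} a & b \\ c & d \end{pmatrix}\begin{pmatrix} d & -q \\ -c & p \end{pmatrix} = \begin{pmatrix} ad - bc & bp - aq \\ 0 & 1 \end{pmatrix}, $$
one sees directly that $h$ has the required parabolic form, with top-left entry $\det(g) \neq 0$, so $h \in P(\Q)$. Then $g = hk$ is the desired decomposition.

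The argument is essentially immediate once the sign/basis conventions are clarified; there is no real obstacle beyond verifying that $\Im \Lambda = \Z$ translates correctly to $c\Z + d\Z = \Z$ under the $\R$-linear action of $\GL_2(\R)$ on $\C$ with respect to the basis $\{e_1, e_2\} = \{1, -i\}$. The uniqueness of the decomposition (up to the diagonal action of $P(\Z) \cap \Gamma \simeq \Z$) is not asserted here but would follow by the same linear algebra.
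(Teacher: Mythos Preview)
Your proof is correct and essentially identical to the paper's: both translate $\Im\Lambda=\Z$ into $c,d\in\Z$ coprime, invoke Bezout, and right-multiply $g$ by an explicit element of $\SL_2(\Z)$ to land in $P(\Q)$. Your matrix $k^{-1}=\begin{pmatrix} d & -q \\ -c & p \end{pmatrix}$ is exactly the paper's $w$ (with $u=-q$, $v=p$), so the two arguments differ only in notation.
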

\begin{proof} For $g=\begin{pmatrix} a &b \\
c &d \end{pmatrix}\in \GL_2(\Q)$, one then obtains 
$$
g(x e_1 + y e_2) = (a x+ b y) e_1 + (cx+ dy) e_2, \ \ \Im(g(x e_1 + y e_2))=-(cx+ dy).
$$ 
Thus $\Im \Lambda=\Z$ means  that $c,d\in  \Z$ and they are relatively prime. Let then $u,v\in \Z$ be such that $cu+dv=1$. Set $w=\begin{pmatrix} d &u \\
-c &v \end{pmatrix}$, then one has $w\in \Gamma$ and
$$
gw=\left(
\begin{array}{cc}
 a d-b c & a u+b v \\
 0 & c u+d v \\
\end{array}
\right)=\left(
\begin{array}{cc}
 a d-b c & a u+b v \\
 0 & 1 \\
\end{array}
\right)\in P(\Q).
$$
Thus by taking $h=gw$ and $k=w^{-1}$ one obtains the required factorization. \qed\end{proof}

We recall   (\!\cite{cmbook}, Sect. III.5) that the equivalence relation of commensurability on the space of 
2-dimensional $\Q$-lattices, is induced by the partially defined action of
$\GL_2^+(\Q)$.  Indeed, for $g\in \GL_2^+(\Q)$
and $(\Lambda,\phi)=(\alpha^{-1}\Lambda_0,\alpha^{-1}\rho)$ such
that $g\rho\in M_2(\hat\Z)$, the $\Q$-lattice $(\alpha^{-1} g^{-1}
\Lambda_0, \alpha^{-1}\rho)$ is commensurable to $(\Lambda,\phi)$.
Moreover all $\Q$-lattices commensurable to a given
$(\Lambda,\phi)$ are of this form. Here we used, as done above, the description of 2-dimensional $\Q$-lattices as 
$$
\Gamma\backslash (M_2(\hat\Z)\times \GL_2^+(\R)),\ \ (\rho,\alpha)\mapsto (\Lambda,\phi)=(\alpha^{-1}\Lambda_0, \alpha^{-1}\rho).
$$
We can now state the following key result on commensurability

\begin{theorem}\label{comparecomm}\
\begin{enumerate}
\item[(i)] Two parabolic two dimensional $\Q$-lattices $(\Lambda_j,\phi_j)=(\alpha_j^{-1}\Lambda_0, \alpha_j^{-1}\rho_j)$, $j=1,2$, with $\rho_j\in \overline{P(\hat \Z)}$ and $\alpha_j\in P^+(\R)$ are commensurable (as $\Q$-lattices) if and only if there exists $g\in P^+(\Q)$ such that $\rho_2=g\rho_1$ and $\alpha_2=g\alpha_1$.	
\item[(ii)] The space of parabolic $\Q$-lattices up to commensurability is canonically isomorphic to the quotient $\caqo$ as in  \eqref{adelquot2dQlattpar}.
\end{enumerate}
\end{theorem}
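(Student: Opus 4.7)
The plan is to deduce Theorem \ref{comparecomm} from the general adelic description of commensurability recalled just before the statement: commensurability classes of two-dimensional $\Q$-lattices (with the scale retained) are identified with $\GL_2^+(\Q) \backslash (M_2(\A_{\Q,f}) \times \GL_2^+(\R))$, and two $\Q$-lattices with representatives $(\rho_j, \alpha_j) \in M_2(\hat\Z) \times \GL_2^+(\R)$ are commensurable if and only if there exists $g \in \GL_2^+(\Q)$ with $(\rho_2, \alpha_2) = (g\rho_1, g\alpha_1)$ (any residual $\Gamma$-ambiguity in the parameterization is absorbed into the $\GL_2^+(\Q)$-action, since $\Gamma \subset \GL_2^+(\Q)$).

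For part (i), the backward implication is immediate: any $g \in P^+(\Q)$ provides such a relation a fortiori, since $P^+(\Q) \subset \GL_2^+(\Q)$. For the converse, suppose the two parabolic lattices are commensurable and let $g \in \GL_2^+(\Q)$ realize the relation. The critical observation is that $\alpha_2 = g\alpha_1$ with $\alpha_1,\alpha_2 \in P^+(\R)$ forces $g = \alpha_2\alpha_1^{-1} \in P^+(\R)$, and the elementary identity $P^+(\R) \cap \GL_2^+(\Q) = P^+(\Q)$ (a rational matrix with positive determinant which is upper-triangular with lower-right entry $1$ lies in $P^+(\Q)$) yields the desired $g \in P^+(\Q)$.

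For part (ii), the natural assignment sends a parabolic $\Q$-lattice with representative $(\rho, \alpha) \in \overline{P(\hat\Z)} \times P^+(\R)$ (unique modulo the diagonal action of $\Gamma \cap P^+(\R) = P^+(\Z)$, as noted in \eqref{parab}) to its class in $\caqo$. This map is well-defined because $P^+(\Z) \subset P^+(\Q)$, and by part (i) it descends to a bijection from parabolic commensurability classes onto its image in $\caqo$. For surjectivity, given $(\rho, \alpha) \in \overline{P(\A_{\Q,f})} \times P^+(\R)$ with $\rho = \left(\begin{smallmatrix} a & b \\ 0 & 1 \end{smallmatrix}\right)$, I pick a positive integer $u$ clearing the finitely many $p$-adic denominators of $a$, so that $ua \in \hat\Z$, and then use $\Q + \hat\Z = \A_{\Q,f}$ to find $v \in \Q$ with $ub + v \in \hat\Z$; the element $g = \left(\begin{smallmatrix} u & v \\ 0 & 1 \end{smallmatrix}\right) \in P^+(\Q)$ then carries $(\rho, \alpha)$ into $\overline{P(\hat\Z)} \times P^+(\R)$, producing the required parabolic representative.

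The main subtle point is confirming that the commensurability equivalence, which lives naturally on the double quotient $\GL_2^+(\Q)\backslash(M_2(\A_{\Q,f}) \times \GL_2^+(\R))$, can indeed be realized by a single $g$ on the chosen representatives rather than only up to $\Gamma$-ambiguity; once this is cleanly handled, the decisive content reduces to the intersection identity $P^+(\R) \cap \GL_2^+(\Q) = P^+(\Q)$ that drives (i), together with the elementary denominator-clearing argument giving surjectivity in (ii).
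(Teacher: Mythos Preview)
Your proof is correct and follows essentially the same approach as the paper: part (i) is reduced to the intersection identity $\GL_2^+(\Q)\cap P^+(\R)=P^+(\Q)$ via $g=\alpha_2\alpha_1^{-1}$, and part (ii) is obtained by the same denominator-clearing argument (choose $\alpha\in\Q_+^\times$ with $\alpha a\in\hat\Z$, then $\beta\in\Q$ with $\alpha b+\beta\in\hat\Z$) to show every $P^+(\Q)$-orbit in $\overline{P(\A_{\Q,f})}\times P^+(\R)$ meets $\overline{P(\hat\Z)}\times P^+(\R)$. The ``subtle point'' you flag about the $\Gamma$-ambiguity is handled exactly as you indicate, by absorbing $\gamma\in\Gamma$ into $g\in\GL_2^+(\Q)$.
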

\begin{proof} {\it (i)} Since $ P^+(\Q)\subset \GL_2^+(\Q)$, the existence of $g\in P^+(\Q)$ with $\rho_2=g\rho_1$ and $\alpha_2=g\alpha_1$ implies the commensurability. Conversely, if  two parabolic $\Q$-lattices are commensurable, there exists $g\in \GL_2^+(\Q)$ such that $\rho_2=g\rho_1$ and $\alpha_2=g\alpha_1$.	 Because $P^+(\R)$ is a group and $\alpha_j\in P^+(\R)$, one gets   
$g\in P^+(\R)$ and thus $g\in \GL_2^+(\Q)\cap P^+(\R)=P^+(\Q)$. \newline
{\it (ii)} Let $h=\begin{pmatrix} a &b \\
0 &1 \end{pmatrix}\in \overline{P(\A_{\Q,f})}$. We  show that there exists $g\in P^+(\Q)$ such that $gh\in \overline{P(\hat \Z)}$. Let $\alpha\in \Q_+^\times$ such that $\alpha a\in \hat \Z$.  Since $\hat \Z$ is open and $\Q$ is dense in $\A_{\Q,f}$ with $\hat \Z+\Q=\A_{\Q,f}$, there exists $\beta\in \Q$ such that $\alpha b+\beta\in \hat \Z$. One then derives 
$$
\begin{pmatrix} \alpha &\beta \\
0 &1 \end{pmatrix}\begin{pmatrix} a &b \\
0 &1 \end{pmatrix}= \begin{pmatrix} \alpha a &\alpha b +\beta\\
0 &1 \end{pmatrix}\in \overline{P(\hat \Z)}.
$$
It follows that all the left $P^+(\Q)$ orbits of elements of $\overline{P(\A_{\Q,f})}\times P^+(\R)$ intersect the open subset $\overline{P(\hat \Z)}\times P^+(\R)$ whose elements yield parabolic $\Q$-lattices. Thus by applying $(i)$ one obtains the required isomorphism.\qed
\end{proof}

\begin{rem}\label{bcremark} From the point of view of noncommutative geometry, the quotient space 
derived by applying the commensurability relation on the space \eqref{parab} of parabolic $\Q$-lattices is best described by considering the crossed product, in the sense of \cite{Neshveyev},  by the Hecke algebra of double classes of the subgroup $P^+(\Z)\subset P^+(\Q)$. We find quite remarkable (and encouraging) that this Hecke algebra is precisely the one on which the BC-system is based. 	
\end{rem}

\subsection{$\caq$ and $\gaq$ as moduli spaces of elliptic curves}\label{subsec:6.4}

In this section we use the description of $Sh^{\rm nc}(\GL_2,\H^\pm)$ as a moduli space of elliptic curves to obtain a similar interpretation  for the spaces $\caq=P(\Q)\backslash
\overline{P(\A_\Q)}$ and  $\gaq=P(\Q)\backslash
\overline{P(\A_\Q)}/\hatz$. We first formulate Lemma \ref{extrastructure1} in terms of the global Tate module\footnote{The global Tate module $TE$ is best described at the conceptual level as the etale fundamental group $\pie(E,0)$, where $E$ is viewed as a curve over $\C$. Given $\rho\in \Hom (\Q/\Z, E_{\rm tor })$ the corresponding element
of $\pie(E,0)$ is given by the $(\rho(\frac 1 n))_{n\in \N}$.} of the elliptic curve $E=\C/\Lambda$.   For the theory developed in this paper, we think of the global Tate module as the abelian group
\begin{equation}\label{torEhatZbis}
 TE= \Hom (\Q/\Z,
E_{\rm tor })\,.
\end{equation}
We denote by $E_{\rm tor }=\Q\Lambda/\Lambda$ 
the torsion subgroup
 of the elliptic curve $E$. In this way, the morphism 
$\phi$ in the definition of a two-dimensional $\Q$-lattice is now seen as the map $\phi:\Q^2/\Z^2 \to E_{\rm tor }$. By applying the covariant functor $T:=\Hom
(\Q/\Z, -)$, we rewrite  $\phi$ as a $\hat\Z$-linear map
\begin{equation}\label{phiH1}
T(\phi):\hat\Z\oplus\hat\Z \to TE,
\end{equation}
that is given by  a pair of elements $(\xi,\eta)$ of $TE$. The character $\chi:\C/\Lambda\to U(1)$ induces a homomorphism of torsion subgroups $\chi:E_{\rm tor }\to \Q/\Z$, and by applying $\Hom
(\Q/\Z, -)$, we obtain a morphism 
$$
T(\chi): TE\to \hat\Z.
$$
The following result characterizes the parabolic $\Q$-lattices among two-dimensional $\Q$-lattices.

\begin{proposition}\label{extrastructure2} Let $(\Lambda,\phi)$ be a two dimensional $\Q$-lattice, 
$E=\C/\Lambda$ the associated  elliptic curve, and
 $(\xi,\eta)$ the related pair of points in the total Tate module $TE$. Then $(\Lambda,\phi)$ is a parabolic $\Q$-lattice  if and only if 
\begin{equation}\label{charofP1}
\Im(\Lambda)=\Z \ \ \& \ T(\chi)(\xi)=0, \ \ T(\chi)(\eta)=\id, \ \text{for} \ \chi=-\Im.
\end{equation} 
\end{proposition}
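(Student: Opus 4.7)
The plan is to reduce the statement to Lemma \ref{extrastructure1} by unwinding the definition of the pair $(\xi,\eta)$ associated to $\phi$ via the functor $T=\Hom(\Q/\Z,-)$, and then translating the pointwise condition $\chi\circ\phi(x,y)=y$ into a statement about the two morphisms $T(\chi)(\xi)$ and $T(\chi)(\eta)$ in $\Hom(\Q/\Z,\Q/\Z)=\hat\Z$.

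First, I would fix notation for the decomposition. The $\hat\Z$-linear map $T(\phi):\hat\Z\oplus\hat\Z\to TE$ in \eqref{phiH1} is determined by its two components $\xi=T(\phi)(1,0)$ and $\eta=T(\phi)(0,1)$, viewed as elements of $TE=\Hom(\Q/\Z,E_{\rm tor})$. Evaluated on $u\in\Q/\Z$ these are $\xi(u)=\phi(u,0)$ and $\eta(u)=\phi(0,u)$ via the canonical identification $\Hom(\Q/\Z,\Q/\Z\oplus\Q/\Z)\cong\Hom(\Q/\Z,\Q/\Z)^{2}$ induced by the inclusions of the two summands. Composition with $\chi:E_{\rm tor}\to\Q/\Z$ then yields
\[
T(\chi)(\xi)(u)=\chi(\phi(u,0))\,,\qquad T(\chi)(\eta)(u)=\chi(\phi(0,u))\qquad\forall u\in\Q/\Z.
\]

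Next, by Lemma \ref{extrastructure1} a two dimensional $\Q$-lattice $(\Lambda,\phi)$ is parabolic if and only if $\Im(\Lambda)=\Z$ (so that $\chi=-\Im:E\to\R/\Z$ is well defined) and $\chi\circ\phi(x,y)=y$ for every $(x,y)\in\Q^{2}/\Z^{2}$. Since $\phi$ and $\chi$ are group homomorphisms, the identity $\chi\circ\phi(x,y)=y$ is equivalent to the two separate identities $\chi(\phi(x,0))=0$ and $\chi(\phi(0,y))=y$ for all $x,y\in\Q/\Z$. Under the formulas displayed above these translate exactly into $T(\chi)(\xi)=0$ and $T(\chi)(\eta)=\id_{\Q/\Z}$ in $\Hom(\Q/\Z,\Q/\Z)$, proving the implication parabolic $\Rightarrow$ \eqref{charofP1}.

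Conversely, assume \eqref{charofP1}. The condition $\Im(\Lambda)=\Z$ again guarantees that $\chi$ is defined, and the two identities $T(\chi)(\xi)=0$, $T(\chi)(\eta)=\id$ give $\chi(\phi(x,0))=0$ and $\chi(\phi(0,y))=y$ for all $x,y\in\Q/\Z$. Adding these two equalities and using additivity of $\phi$ and $\chi$ yields $\chi\circ\phi(x,y)=y$ for every $(x,y)\in\Q^{2}/\Z^{2}$, which by Lemma \ref{extrastructure1} characterizes $(\Lambda,\phi)$ as a parabolic $\Q$-lattice. I expect no serious obstacle here: the only point requiring care is the precise identification of $(\xi,\eta)$ with the two partial maps $u\mapsto\phi(u,0)$ and $u\mapsto\phi(0,u)$, since the functor $T$ commutes with finite direct sums, and once this is in place the rest is formal.
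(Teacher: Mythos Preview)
Your proposal is correct and follows essentially the same approach as the paper: both reduce to Lemma \ref{extrastructure1} and translate its pointwise condition $\chi\circ\phi(x,y)=y$ through the functor $T$. The paper's proof is terser, invoking only the faithfulness of $T$ (in the form $T(h)=0\Rightarrow h=0$, since $T(h)(\id)=h$), whereas you spell out explicitly that $T(\chi)(\xi)=\chi\circ\xi$ and $T(\chi)(\eta)=\chi\circ\eta$ as elements of $\Hom(\Q/\Z,\Q/\Z)$ and then split the bilinear condition into its two components via additivity; this is the same content unpacked.
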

\begin{proof} The result follows from Lemma \ref{extrastructure1}, using the faithfulness of the functor $T$ in the form $$
h\in \Hom
(\Q/\Z, \Q/\Z)\ \& \ T(h)=0 \Rightarrow h=0.
$$
This is clear, by using $T(h)(\id)=h$. \qed
\end{proof} 

Given two elliptic curves $E=\C/\Lambda$ and $E'=\C/\Lambda'$, an isomorphism $j:E\to E'$ is given by the multiplication map by  $\lambda\in \C^\times$, such that $\Lambda'=\lambda \Lambda$. It follows that the  elliptic curve $E$ endowed with  the pair $(\xi,\eta)\in T(E)$ associated to a two dimensional $\Q$-lattice $(\Lambda,\phi)$ determines the latter  up to scale. In particular, passing from a parabolic $\Q$-lattice to the associated triple  $(E;\xi,\eta)$   is equivalent to assign the map  $\theta$ from parabolic $\Q$-lattices to $\Q$-lattices up to scale
 	\begin{equation}\label{P2gl2scal}
P^+(\Z)\backslash (\overline{P(\hat \Z)}\times P^+(\R))\stackrel{\theta}{\longrightarrow} \Gamma\backslash
(M_2(\hat\Z)\times \GL_2^+(\R)) /\C^\times.
\end{equation}

\begin{rem}
	If one ignores the non-archimedean components,  the map $\theta$ restricts to the map  $\theta_\infty:P^+(\Z)\backslash P^+(\R)\to \Gamma\backslash\GL_2^+(\R) /\C^\times$ induced by the inclusion $P^+(\R)\subset \GL_2^+(\R) /\C^\times$. Notice that this restriction is far from being injective. Indeed, let $\alpha\in P^+(\R)$ and $\gamma\in \Gamma$. Let  $\gamma\alpha=\alpha'\lambda$ be the $P\C^\times$ decomposition  of $\gamma\alpha$ as in \eqref{PCdec}. Then $\alpha'\in P^+(\R)$ and  $\theta_\infty(\alpha')=\theta_\infty(\alpha)$, while  $\alpha'\notin P^+(\Z)\alpha$, unless $\gamma\in P^+(\Z)$.
\end{rem}

Next proposition shows that the implementation of the non-archimedean components makes $\theta$ injective except in a well understood case, corresponding to the vanishing of the non-archimedean components

\begin{proposition}\label{comparescal}
 	The natural map $\theta$ as in \eqref{P2gl2scal} 
 	is injective except when the parabolic $\Q$-lattices are degenerate (Definition \ref{defnpara}). Furthermore, the following formula defines a free action of $\Z$ on the degenerate parabolic $\Q$-lattices, whose orbits are the fibers of the map $\theta$
 	\begin{equation}\label{P2gl2scal1}
\tau(c)(p,\alpha)=(p,t_c(\alpha)), \ \ 	t_c(\alpha)=\begin{pmatrix} 1 &0 \\
c &1 \end{pmatrix}\alpha\ (1+c\bar z)^{-1}\qqq c\in \Z.\end{equation}
Here, $p=\begin{pmatrix} 0 &0 \\
0 &1 \end{pmatrix}\in \overline{P(\hat \Z)}$, $\alpha=\begin{pmatrix} y &x \\
0 &1 \end{pmatrix}\in P^+(\R)$, and $z=x+iy\in \C$.
 \end{proposition}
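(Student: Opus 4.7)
The plan is to compare, on $\overline{P(\hat\Z)}\times P^+(\R)$, the equivalence relation defining parabolic $\Q$-lattices (the $P^+(\Z)$-action on the left) with the finer relation defining $\Q$-lattices up to scale (the $\SL_2(\Z)$-action on the left and $\C^\times$-action on the right via \eqref{inclGL2R}). Two pairs $(\rho_j,\alpha_j)$ have the same image under $\theta$ iff there exist $\gamma\in\SL_2(\Z)$ and $\lambda\in\C^\times$ with $\rho_2=\gamma\rho_1$ and $\alpha_2=\gamma\alpha_1\lambda^{-1}$. Writing $\gamma=\begin{pmatrix}a&b\\ c&d\end{pmatrix}$ and $\rho_1=\begin{pmatrix}A&B\\ 0&1\end{pmatrix}$, the condition $\gamma\rho_1\in\overline{P(\hat\Z)}$ reduces to the two equations $cA=0$ and $cB+d=1$ in $\hat\Z$.

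I would first handle the non-degenerate case. Since the $P^+(\Z)$-action translates $B$ by integers, a non-degenerate class admits either a representative with $A\neq 0$, or one with $A=0$ and $B\in\hat\Z\setminus\Z$ (the remaining possibility $A=0$, $B\in\Z$ reduces to $\rho_1=p$ and is degenerate). Using that $\hat\Z=\prod_\ell\Z_\ell$ is a product of integral domains, $cA=0$ with $A\neq 0$ forces $c=0$; and when $A=0$, the relation $cB=1-d\in\Z$ combined with $\hat\Z\cap\Q=\Z$ and $B\notin\Z$ again forces $c=0$. With $c=0$, $\det\gamma=1$ and $d=1$ yield $a=d=1$, so $\gamma\in P^+(\Z)$; uniqueness of the $PC$-decomposition \eqref{PCdec} then gives $\lambda=1$ and $(\rho_2,\alpha_2)\in P^+(\Z)\cdot(\rho_1,\alpha_1)$, proving $\theta$ is injective on non-degenerate classes. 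In the degenerate case $\rho_1=p$, the system reduces to $d=1$ only, and $\gamma p=\begin{pmatrix}0&b\\ 0&1\end{pmatrix}$ is brought back to $p$ by the $P^+(\Z)$-element $\begin{pmatrix}1&-b\\ 0&1\end{pmatrix}$; after this normalization $\gamma$ takes the form $\gamma_c=\begin{pmatrix}1&0\\ c&1\end{pmatrix}$, $c\in\Z$, and these exhaust the non-trivial cosets of the $P^+(\Z)$-stabilizer of $p$ inside $\SL_2(\Z)$. For $\alpha=\begin{pmatrix}y&x\\ 0&1\end{pmatrix}$ and $z=x+iy$, direct computation of $\gamma_c\alpha=\begin{pmatrix}y&x\\ cy&cx+1\end{pmatrix}$ combined with \eqref{PCdec} and the identity $c^2y^2+(cx+1)^2=|1+c\bar z|^2$ identifies the $\C^\times$-factor as the matrix of $1+c\bar z$ under \eqref{inclGL2R}, yielding $\alpha_2=\gamma_c\alpha(1+c\bar z)^{-1}=t_c(\alpha)$, exactly formula \eqref{P2gl2scal1}.

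Finally, the action property $\tau(c_1+c_2)=\tau(c_1)\circ\tau(c_2)$ follows from $\gamma_{c_1}\gamma_{c_2}=\gamma_{c_1+c_2}$ together with uniqueness of the $PC$-decomposition. For freeness, passing to $\GL_2^+(\R)/\C^\times\simeq\H$ via $\alpha\mapsto\alpha(i)=z$ converts $t_c(\alpha)=\alpha$ into the M\"obius identity $\gamma_c\cdot z=z/(cz+1)=z$, i.e.\ $cz^2=0$, forcing $c=0$ since $\Im z=y>0$. The main obstacle is the subcase $A=0$, $B\neq 0$ of the non-degenerate analysis: ruling out non-trivial $\gamma\in\SL_2(\Z)$ there rests on the arithmetic input $\hat\Z\cap\Q=\Z$ coupled with the $P^+(\Z)$-normalization of $B$ modulo $\Z$.
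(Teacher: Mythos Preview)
Your argument is correct and follows essentially the same route as the paper's proof: both set up the relation $(\rho',\alpha')=\gamma(\rho,\alpha\lambda)$, extract the conditions $cA=0$ and $cB+d=1$ from $\gamma\rho\in\overline{P(\hat\Z)}$, use $\hat\Z\cap\Q=\Z$ to force $c=0$ in the non-degenerate case, and then in the degenerate case normalize to $\rho=p$ and identify $\gamma=\gamma_c$ via the $PC$-decomposition \eqref{PCdec}. The only cosmetic differences are that you organize the dichotomy as ``degenerate vs.\ non-degenerate'' rather than ``$c=0$ vs.\ $c\neq 0$'' (these are contrapositives of each other), and you verify freeness via the M\"obius fixed-point equation $z/(cz+1)=z$ instead of the paper's explicit coordinate computation of the first row of $t_c(\alpha)$.
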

 \begin{proof}  We first test the injectivity of $\theta$. Let $(\rho,\alpha)$ and $(\rho',\alpha')$ be elements of $\overline{P(\hat \Z)}\times P^+(\R)$ and assume that one has an equality of the form 
 $$
 (\rho',\alpha')=\gamma (\rho,\alpha \lambda), \ \gamma \in \Gamma, \ \lambda \in \C^\times.
 $$
 Let $\gamma=\begin{pmatrix} a &b \\
c &d \end{pmatrix}$ and $\rho=\begin{pmatrix} u &v \\
0 &1 \end{pmatrix}$. One has $\gamma \rho=\rho'\in \overline{P(\hat \Z)}$ and thus $cu=0$. This implies that either $c=0$ or $u=0$, since $c\in \Z$ and $u\in \hat \Z$.\newline
 Assume first that $c=0$. Then $\gamma \rho=\rho'$ implies $d=1$ and it follows that $a=1$ so that $\gamma\in P^+(\Z)$. Then the equality $\gamma\alpha \lambda=\alpha'$ implies  $\lambda\in P(\R)$ and since $\lambda \in \C^\times$, one concludes that $\lambda=1$ and $\gamma\alpha =\alpha'$. This shows that $(\rho,\alpha)$ and $(\rho',\alpha')$ are equal in $P^+(\Z)\backslash (\overline{P(\hat \Z)}\times P^+(\R))$. \newline
 Assume now that $c\neq 0$. Then one has $u=0$. 
Moreover since $\gamma \rho=\rho'\in \overline{P(\hat \Z)}$ one gets $cv+d=1$. This gives $v=(1-d)/c$ and since $v\in \hat\Z$ and $c,d\in \Z$ one gets that $v\in \hat\Z\cap \Q=\Z$. Then, by replacing $(\rho,\alpha)$ with $\delta(\rho,\alpha)$, where $\delta=\begin{pmatrix} 1 &-v \\
0 &1 \end{pmatrix}$ one obtains the equality, in $P^+(\Z)\backslash (\overline{P(\hat \Z)}\times P^+(\R))$ of $(\rho,\alpha)$, with an element of the form $(p,\alpha'')$. It remains to see when two such elements are equal in $\Gamma\backslash
(M_2(\hat\Z)\times \GL_2^+(\R)) /\C^\times$. Thus we now assume that $\rho=\rho'=p$. The equality 
$\gamma \rho=\rho'$ now means that $b=0$ and $d=1$. But since $\gamma\in \Gamma$ one gets also that $a=1$ and thus $\gamma= \begin{pmatrix} 1 &0 \\
c &1 \end{pmatrix}$. Now by \eqref{PCdec} there exists uniquely $\alpha''\in P(\R)$ and $\lambda'\in \C^\times$ such that $\gamma\alpha=\alpha''\lambda'$ and the equality $ (\rho',\alpha')=\gamma (\rho,\alpha \lambda)$ shows that $\alpha'=\alpha''$ and $\lambda'=\lambda^{-1}$. One has 
 $$
 \gamma\alpha=\begin{pmatrix} 1 &0 \\
c &1 \end{pmatrix}\alpha = \begin{pmatrix} 1 &0 \\
c &1 \end{pmatrix}\begin{pmatrix} y &x \\
0 &1 \end{pmatrix}=\begin{pmatrix} y &x \\
yc & xc+1 \end{pmatrix}=\alpha'\begin{pmatrix} xc+1 & -yc \\
yc & xc+1 \end{pmatrix}=\alpha' (1+c \bar z).
 $$
Thus $\alpha'=t_c(\alpha)$. The first line of $t_c(\alpha)$ is 
$$
\left(\frac{y}{y^2 c^2+(x c+1)^2},\frac{(x^2 +y^2) c+x}{y^2 c^2+(x c+1)^2}\right)
$$
and one has $t_{c'+c}(\alpha)=t_{c'}(t_c(\alpha))$ for all $c,c'\in \Z$ since
$$
\begin{pmatrix} 1 &0 \\
c'+c &1 \end{pmatrix}\alpha= \begin{pmatrix} 1 &0 \\
c' &1 \end{pmatrix}\gamma\alpha=\begin{pmatrix} 1 &0 \\
c' &1 \end{pmatrix}\alpha'(1+c \bar z)\in t_{c'}(t_c(\alpha))\C^\times.
$$
  By construction, the map $\theta$ is invariant under left multiplication by $\Gamma$ and right multiplication by $\C^\times$, thus one has, with $\gamma=\begin{pmatrix} 1 &0 \\
c &1 \end{pmatrix}\in \Gamma$, and $z=x+iy$ 
$$
\theta((p,\alpha))=
\theta((\gamma p,\gamma\alpha))=\theta((p,\alpha' (1+c\bar z)))=
\theta((p,\alpha'))=\theta(\tau(c)(p,\alpha))
$$
Finally, we claim that the pairs $(p,t_c(\alpha))$ for $c\in \Z$ are all distinct elements of $P^+(\Z)\backslash (\overline{P(\hat \Z)}\times P^+(\R))$. Indeed, if $(p,t_c(\alpha))=u(p,t_{c'}(\alpha))$ for some $u\in P^+(\Z)$ the equality $up=p$ implies $u=1$, thus it is enough to show that the $t_c(\alpha)$ are all distinct.  But since $y\neq 0$, the equality $t_c(\alpha)=t_{c'}(\alpha)$ implies in particular $(x^2 +y^2) c=(x^2 +y^2) c'$ and hence $c=c'$.\qed   \end{proof}

 Next, we  associate a character $\chi\in \Hom(E,\R/\Z)$, unique up to sign, to certain elements of $T(E)$.
 
\begin{lemma}\label{uniquetate}
Let $\xi$ be an element of the total Tate module $TE$ of the elliptic curve $E=\C/\Lambda$. Let 
$$
\xi^\perp:= \{\chi\in \Hom(E,\R/\Z)\mid T(\chi)(\xi)=0\}\subset \Hom(E,\R/\Z)\simeq \Lambda^\perp. 
$$
Then if $\xi \neq 0$ and $\xi^\perp\neq \{0\}$ one has $\xi^\perp= \Z\,\alpha$, for a primitive character $\alpha$ unique up to sign.
\end{lemma}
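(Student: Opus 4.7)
The plan is to reduce the statement to a linear algebra question over $\hat \Z$ by first making explicit the structures on $TE$ and $\Lambda^\perp$. Namely, since $E_{\rm tor} = \Q\Lambda/\Lambda \simeq (\Q/\Z)\otimes_\Z \Lambda$ and $\Lambda$ is free of rank $2$ over $\Z$, applying $\Hom(\Q/\Z,-)$ yields a canonical isomorphism $TE \simeq \hat \Z \otimes_\Z \Lambda$, which is a free $\hat \Z$-module of rank $2$. On the other hand the real inner product identifies $\Lambda^\perp$ with $\Hom_\Z(\Lambda,\Z)$, and a character $\chi\in \Lambda^\perp$ sends the torsion subgroup $E_{\rm tor}$ into $\Q/\Z$, so $T(\chi)\colon TE\to \hat\Z$ is simply the $\hat\Z$-linear extension $\chi\otimes \id$ of the $\Z$-dual vector $\chi:\Lambda\to \Z$.

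With these identifications, choose a $\Z$-basis $e_1,e_2$ of $\Lambda$ and write $\xi=a_1\otimes e_1+a_2\otimes e_2$ with $(a_1,a_2)\in \hat\Z^2$ not both zero. For $\chi\in \Lambda^\perp$ with coordinates $(c_1,c_2)=(\chi(e_1),\chi(e_2))\in \Z^2$, the condition $T(\chi)(\xi)=0$ becomes the single equation $c_1a_1+c_2a_2=0$ in $\hat\Z$. This is the concrete form I would use throughout.

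The key step is to show $\xi^\perp$ has $\Z$-rank at most one. Suppose it contained two $\Z$-linearly independent elements with coordinate vectors $(c_1,c_2)$ and $(c_1',c_2')$. Cramer's rule gives an integer $D=c_1c_2'-c_2c_1'\neq 0$ such that $D a_1=D a_2=0$ in $\hat\Z$. Since $\hat\Z$ is torsion-free (it is the product of the $\Z_p$), this forces $a_1=a_2=0$, contradicting $\xi\neq 0$. Hence $\xi^\perp$ is a subgroup of the free abelian group $\Lambda^\perp\simeq \Z^2$ of rank at most $1$, and under the assumption $\xi^\perp\neq \{0\}$ it is infinite cyclic, generated by some $\alpha$.

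Finally, to see that $\alpha$ can be taken primitive in $\Lambda^\perp$ and is then unique up to sign, I would argue again with the torsion-freeness of $\hat\Z$: if $\alpha=n\beta$ for some $\beta\in \Lambda^\perp$ and $n>0$, then $n\,T(\beta)(\xi)=T(\alpha)(\xi)=0$ implies $T(\beta)(\xi)=0$, so $\beta\in \xi^\perp=\Z\alpha$, which forces $n=1$. Since $\xi^\perp\simeq \Z$ is generated uniquely up to the unit group $\Z^\times=\{\pm1\}$, the primitive generator is determined up to sign. I do not expect any real obstacle here; the only subtlety to watch is to use the correct torsion-free property of $\hat\Z$ in the determinantal step, which is what makes the rank bound work.
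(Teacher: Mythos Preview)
Your proof is correct and uses the same coordinate reduction as the paper (identifying $TE\simeq\hat\Z^2$, $\Lambda^\perp\simeq\Z^2$, and $\xi^\perp$ with the integer solutions of $c_1a_1+c_2a_2=0$), but your key step differs slightly from the paper's. The paper localizes: it picks a prime $\ell$ with $(a_{1,\ell},a_{2,\ell})\neq(0,0)$, finds the unique-up-to-scalar relatively prime pair $(n,m)$ solving $n a_{1,\ell}+m a_{2,\ell}=0$ in $\Z_\ell$, and then checks that either this $(n,m)$ already lies in $\xi^\perp$ or else no nonzero integer multiple of it can vanish at some other prime $\ell'$, contradicting $\xi^\perp\neq\{0\}$. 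Your Cramer's-rule argument is a cleaner packaging of the same idea: rather than working one prime at a time, you use torsion-freeness of $\hat\Z$ globally to kill two independent relations at once. The paper's route is more constructive (it hands you the generator $(n,m)$ explicitly from a single prime component), while yours is more uniform and also gives the primitivity of the generator by a second appeal to torsion-freeness rather than leaving it as ``clear.'' Both are equally valid and rest on the same underlying fact.
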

\begin{proof} By fixing  a basis of $\Hom(E,\R/\Z)\simeq \Lambda^\perp$ we may identify 
$$
TE=\hat \Z \times \hat \Z, \ \ \xi=(u,v), \ \ \xi^\perp=\{(n,m)\in \Z^2\mid nu+mv=0\}.
$$
Since $\xi \neq 0$, let $\ell$ be a prime such that $(u_\ell,v_\ell)\neq (0,0)$. Then since $\xi^\perp\neq \{0\}$ there exists relatively prime integers $(n,m)\neq (0,0)$ such that $n u_\ell+m v_\ell=0$ and any solution of $n' u_\ell+m' v_\ell=0$ is a multiple of $(n,m)$. If $nu+mv=0$ in $\hat \Z$ one has $\xi^\perp=\{k(n,m)\mid k\in \Z\}$. Otherwise, there exists $\ell'$ such that $n u_{\ell'}+m v_{\ell'}\neq 0$. Then $k(n u_{\ell'}+m v_{\ell'})\neq 0$ for any $k\neq 0$ and this contradicts $\xi^\perp\neq \{0\}$. This shows that $\xi^\perp= \Z\,\alpha$, where $\alpha=(n,m)$. Uniqueness up to sign is clear.\qed\end{proof}
 
Next result gives an intrinsic description of the range of the map $\theta$ in terms of the geometric interpretation of $\Q$-lattices up to scale  given in terms of an elliptic curve $E$, and a
pair of points $\xi=(\xi_1,\xi_2)$ in the total Tate module $TE$ (see also  Proposition 3.38 of \cite{cmbook}).

\begin{theorem}\label{ellipticcurve} Let $E$ be an elliptic curve together with a
pair of elements $(\xi,\eta)$ of the total Tate module $TE$. Assume $\xi \neq 0$. Then the corresponding $\Q$-lattice belongs to the range of the map $\theta$ if and only if one has $<\xi^\perp,\eta>=\Z$, where $<\xi^\perp,\eta>:=\{T(\chi)(\eta)\mid \chi \in \xi^\perp\}\subset \hat \Z$.	
\end{theorem}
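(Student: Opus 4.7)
The plan is to reinterpret membership in the range of $\theta$ in intrinsic Tate-module terms via Proposition \ref{extrastructure2}, and then to analyze the resulting algebraic condition using Lemma \ref{uniquetate}. Since $\theta$ lands in the space of $\Q$-lattices up to scale, $(E,\xi,\eta)$ lies in the range if and only if some rescaling of a representative $(\Lambda,\phi)$ is parabolic. By Proposition \ref{extrastructure2}, this parabolicity is encoded by the existence of a \emph{primitive} character $\chi\in\Hom(E,\R/\Z)$ satisfying $T(\chi)(\xi)=0$ and $T(\chi)(\eta)=\id$. The rescaling freedom is used precisely to turn any such $\chi$ into $-\Im$: if $\chi=\Re(\cdot\,\bar w)$ with $w$ a primitive element of $\Lambda^\perp$, the choice $\mu=-i\bar w$ turns the character transported to $\C/\mu\Lambda$ into $-\Im$ and simultaneously ensures $\Im(\mu\Lambda)=\Z$. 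Thus the theorem reduces to the equivalence: there exists a primitive $\chi\in\xi^\perp$ with $T(\chi)(\eta)=1\in\hat\Z$ if and only if $\langle\xi^\perp,\eta\rangle=\Z$.

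For the forward direction, assume such a primitive $\chi$ exists. Then $\xi^\perp\neq 0$, so Lemma \ref{uniquetate} yields $\xi^\perp=\Z\alpha$ for some primitive $\alpha$, unique up to sign. Writing $\chi=k\alpha$ with $k\in\Z$, primitivity of $\chi$ forces $k=\pm 1$, so $\chi=\pm\alpha$, and $\langle\xi^\perp,\eta\rangle=\Z\cdot T(\alpha)(\eta)=\Z\cdot(\pm T(\chi)(\eta))=\Z$.

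The converse is where the central technical point lies. Given $\langle\xi^\perp,\eta\rangle=\Z$, Lemma \ref{uniquetate} again gives $\xi^\perp=\Z\alpha$ with $\alpha$ primitive, and the hypothesis becomes $\Z\cdot T(\alpha)(\eta)=\Z$ inside $\hat\Z$. The key arithmetic observation --- which I expect to be the main subtlety --- is that for $x\in\hat\Z$, one has $\Z x=\Z$ if and only if $x=\pm 1$: indeed, an integer $|n|>1$ is not invertible in $\Z_p$ for any $p\mid n$, so $nx=1$ in $\hat\Z$ is only solvable for $|n|=1$. This pins down $T(\alpha)(\eta)=\pm 1$, and after flipping the sign of $\alpha$ if needed, $\alpha$ is a primitive element of $\xi^\perp$ with $T(\alpha)(\eta)=1$. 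The reduction step of the first paragraph, applied with $\chi=\alpha$, then produces a parabolic representative of $(E,\xi,\eta)$, completing the argument.
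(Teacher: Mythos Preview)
Your proof is correct and follows essentially the same route as the paper: both directions hinge on Lemma \ref{uniquetate} to write $\xi^\perp=\Z\alpha$ with $\alpha$ primitive, and on rescaling by $\C^\times$ to arrange that the distinguished character becomes $-\Im$, after which Proposition \ref{extrastructure2} applies. The one place where you are more careful than the paper is the arithmetic point that $\Z\cdot T(\alpha)(\eta)=\Z$ inside $\hat\Z$ forces $T(\alpha)(\eta)=\pm 1$; the paper simply asserts that ``one can choose the sign in such a way that $T(\chi)(\eta)=1$'' without spelling this out, whereas you supply the (correct) justification via non-invertibility of $|n|>1$ in $\Z_p$ for $p\mid n$.
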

\begin{proof} Assume first that the datum $(E;\xi,\eta)$ arises from a $\Q$-lattice  $(\Lambda,\phi)=(\alpha^{-1}\Lambda_0, \alpha^{-1}\rho)$, where $\rho\in \overline{P(\hat \Z)}$ and $\alpha\in P^+(\R)$. 	
The action of $\rho$ is given in \eqref{actrho}.
 This determines 
$$
\xi \in \Hom(\Q/\Z,E_{\rm tor}), \ \xi(x)=\alpha^{-1}(ax e_1+cx e_2)\in \Q\Lambda/\Lambda=E_{\rm tor}
$$
and similarly 
$$
\eta \in \Hom(\Q/\Z,E_{\rm tor}), \ \eta(y)=\alpha^{-1}(by e_1+dy e_2)\in \Q\Lambda/\Lambda=E_{\rm tor}.
$$
Since $\rho\in \overline{P(\hat \Z)}$, one has $c=0$ and thus $T(\chi)(\xi)=0$, where the character $\chi$ of $E$ is associated to the element $\alpha^t(e_2)=e_2$ of the dual lattice $\Lambda^\perp$. Since $\chi$ is primitive and $\xi\neq 0$ one has $\xi^\perp=\Z\chi$. Since $d=1$ one gets that $\chi\circ \eta(y)=y$ for all $y\in \Q/\Z$ and thus one obtains $<\xi^\perp,\eta>=\Z$ as required. 
Conversely, let us assume that $(E;\xi,\eta)$ fulfill $\xi \neq 0$ and $<\xi^\perp,\eta>=\Z$. By Lemma \ref{uniquetate} one has $\xi^\perp= \Z\,\chi$, for a primitive character $\chi$ unique up to sign. Moreover since $<\xi^\perp,\eta>=\Z$ one can choose the sign in such a way that 
$T(\chi)(\eta)=1$. Consider the pair $(E,\chi)$ of the elliptic curve $E$ and the primitive character $\chi$. Let $E=\C/\Lambda$, then $\chi\in \Lambda^\perp$, and using the scaling action of $\C^\times$, we can assume that $\chi=e_2$. Since $\chi$ is primitive one has  $\Im(\Lambda)=\Z$ and the 
 linear map $-\Im$ induces the group morphism $\chi:\C/\Lambda\to \R/\Z$.  We also have $T(\chi)(\xi)=0$ and $T(\chi)(\eta)=1$ so that Proposition \ref{extrastructure2} applies  showing that the $\Q$-lattice $(\Lambda,\phi)$, with $\phi=(\xi,\eta)$ is parabolic. Thus $(E;\xi,\eta)$ is in the range of the map $\theta$.\qed \end{proof} 
 
 \begin{definition}\label{triangdefn} A triangular structure on an elliptic curve $E$ is a pair $(\xi,\eta)$ of elements of the Tate module $T(E)$, such that $\xi \neq 0$ and $<\xi^\perp,\eta>=\Z$. 	
 \end{definition}
  In the following, we shall abbreviate ``elliptic curve with triangular structure" by ``triangular elliptic curve". 
  
  \no By Proposition 3.38 of \cite{cmbook} a triangular elliptic curve corresponds to a $\Q$-lattice $(\Lambda,\phi)$ unique up to scale, and by Proposition \ref{comparescal} this datum  corresponds  to a unique parabolic $\Q$-lattice which we call the associated $\Q$-lattice.

\subsection{Commensurability and isogenies}\label{subsec:6.5}

We  recall that an isogeny from an abelian variety $A$ to another $B$ is a surjective morphism with finite kernel. 
In this section we describe how a triangular structure behaves under isogenies.  At the geometric level, the commensurability relation is obtained from the following notion of isogeny between triangular elliptic curves

\begin{definition}\label{isogendefn} An {\em isogeny} $f:(E,\xi,\eta)\to (E',\xi',\eta')$ of triangular elliptic curves is an isogeny $f: E\to E'$ such that $T(f)(\xi)=\xi'$ and  $T(f)(\eta)=\eta'$.	
 \end{definition}
 For ordinary isogenies, one can use the dual isogeny to show that  the existence of an isogeny $E\to E'$ is a symmetric relation. This result uses the fact that multiplication by a positive integer $n$ is an isogeny. In our set-up the multiplication by $n$ gives $\xi'=n\xi$ and $\eta'=n\eta$. This modification does not alter the orthogonal, \ie one has  $\xi'^\perp=\xi^\perp$. But one has $<\xi'^\perp,\eta'>=n\Z$, thus the triangular condition is not fulfilled unless $n=\pm 1$.\newline
  The following result determines the equivalence relation generated by isogenies
  
\begin{proposition}\label{isogenies1} Let $(E;\xi,\eta)$ and $(E';\xi',\eta')$ be two triangular elliptic curves and $(\Lambda,\phi)$ and $(\Lambda',\phi')$ the associated parabolic $\Q$-lattices.\begin{enumerate}
\item[(i)] Let $f:(E;\xi,\eta)\to (E';\xi',\eta')$ be an isogeny of triangular elliptic curves. Then $\Lambda\subset \Lambda'$, $f:\C/\Lambda\to \C/\Lambda'$ is the  map induced by the identity and $\phi'=f\circ \phi$.	
\item[(ii)] The parabolic  $\Q$-lattices $(\Lambda,\phi)$ and $(\Lambda',\phi')$ are commensurable if and only if there exist two isogenies $f:(E,\xi,\eta)\to (E'',\xi'',\eta'')$ and $f':(E',\xi',\eta')\to (E'',\xi'',\eta'')$ to the same triangular elliptic curve.
\end{enumerate}
\end{proposition}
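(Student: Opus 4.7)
For part (i), the approach is to use the rigidity forced by the triangular structure together with the parabolic normalization $\Im(\Lambda)=\Im(\Lambda')=\Z$. Any isogeny $f:E=\C/\Lambda\to E'=\C/\Lambda'$ is multiplication by some $\lambda\in\C^\times$ with $\lambda\Lambda\subset\Lambda'$. From Theorem~\ref{ellipticcurve}, the triangular structure on each side singles out the character $\chi=-\Im$ (respectively $\chi'=-\Im$) as the unique primitive generator of $\xi^\perp$ (resp.\ $\xi'^\perp$) whose sign is pinned down by $T(\chi)(\eta)=1=T(\chi')(\eta')$. The pulled-back character $\chi'\circ f$ satisfies $T(\chi'\circ f)(\xi)=T(\chi')(\xi')=0$ and $T(\chi'\circ f)(\eta)=T(\chi')(\eta')=1$, so by Lemma~\ref{uniquetate} and the sign normalisation one has $\chi'\circ f=\chi$. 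Evaluating $-\Im(\lambda z)=-\Im(z)$ at $z=1$ and $z=i$ forces $\lambda=1$, so $f$ is induced by $\id_\C$, giving $\Lambda\subset\Lambda'$. The relation $\phi'=f\circ\phi$ then translates the hypothesis $T(f)(\xi,\eta)=(\xi',\eta')$ through the faithfulness of $T$.

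For the $(\Rightarrow)$ direction of (ii), the natural candidate for the common target is $\Lambda'':=\Lambda+\Lambda'$. Since $\Im(\Lambda)+\Im(\Lambda')=\Z$, the lattice $\Lambda''$ is still parabolic, and both canonical quotients $f:E\to E''=\C/\Lambda''$, $f':E'\to E''$ are identity-induced. The commensurability hypothesis $\phi\equiv\phi'\pmod{\Lambda+\Lambda'}$ says exactly that $T(f)(\xi,\eta)$ and $T(f')(\xi',\eta')$ coincide as elements of $TE''$; call this common pair $(\xi'',\eta'')$. Since $\Hom(\Q/\Z,\Lambda''/\Lambda)=0$ (a divisible source, finite target), $T(f)$ is injective, so $\xi''\neq 0$. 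The identity $\chi''\circ f=\chi$ carries the parabolic normalization of $\phi$ over to $\phi''$, verifying that $(E'';\xi'',\eta'')$ is triangular.

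For the $(\Leftarrow)$ direction, once (i) has been applied one knows $\Lambda,\Lambda'\subset\Lambda''$ with $f,f'$ both identity-induced, hence $\Q\Lambda=\Q\Lambda'=\Q\Lambda''$. The reductions $\bar\phi,\bar\phi':\Q^2/\Z^2\to\Q\Lambda/(\Lambda+\Lambda')$ become equal after further projection to $\Q\Lambda/\Lambda''$, so their difference is a group homomorphism with values in the finite group $\Lambda''/(\Lambda+\Lambda')$. Because $\Q^2/\Z^2$ is divisible, any such homomorphism vanishes, yielding $\phi\equiv\phi'\pmod{\Lambda+\Lambda'}$ and hence commensurability. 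The main subtlety I anticipate is precisely this last step: the common triangular target $(E'';\xi'',\eta'')$ is not a priori forced to be $\Lambda+\Lambda'$, so one must pass from congruence modulo the potentially larger $\Lambda''$ to congruence modulo $\Lambda+\Lambda'$; the divisibility argument above is what makes this work, and it relies crucially on the fact that all the lattices in play are parabolic so that $\Lambda+\Lambda'$ is again of finite index in $\Lambda''$.
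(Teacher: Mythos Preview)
Your proof is correct and follows essentially the same architecture as the paper's. Part (i) matches: both arguments pin down $\chi'\circ f=\chi$ and use it to force $\lambda=1$. For the forward direction of (ii), both take $\Lambda''=\Lambda+\Lambda'$; you are slightly more careful than the paper in verifying that $(E'';\xi'',\eta'')$ is genuinely \emph{triangular} and not just parabolic, supplying the injectivity of $T(f)$ to get $\xi''\neq 0$---a point the paper leaves implicit.

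The one real difference is in the converse of (ii). The paper dispatches it in one line: by (i), each of $(\Lambda,\phi)$ and $(\Lambda',\phi')$ is commensurable with $(\Lambda'',\phi'')$, and then one cites transitivity of commensurability (Lemma 3.18 of \cite{cmbook}). Your route instead verifies the defining congruence $\phi\equiv\phi'\pmod{\Lambda+\Lambda'}$ directly, by observing that $\bar\phi-\bar\phi'$ lands in the finite group $\Lambda''/(\Lambda+\Lambda')$ and invoking divisibility of $\Q^2/\Z^2$. This works and is self-contained, but the transitivity shortcut is cleaner. One small inaccuracy in your commentary: the finiteness of $\Lambda''/(\Lambda+\Lambda')$ comes from $\Q\Lambda=\Q\Lambda'=\Q\Lambda''$ (a consequence of (i)), not from parabolicity per se.
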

\begin{proof} {\it (i)} By definition, an isogeny $f:E\to E'$ is a holomorphic group morphism $f:\C/\Lambda\to \C/\Lambda'$, $f(z)=\lambda z$, $\forall z\in \C$, where the complex number $\lambda$ is such that $\lambda\Lambda\subset\Lambda'$. The characters $\chi$ and $\chi'$ uniquely determined by the triangular structure are given in both cases by minus the imaginary part, and one has $\chi'\circ f=\chi$. This shows that, modulo $\Z$, one has $\Im(\lambda z)=\Im(z)$ for all $z\in \C$, \ie $\Im((\lambda-1)\C)\subset \Z$. Thus $\lambda =1$, $\Lambda\subset \Lambda'$, and  $f$ is the  map induced by the identity.\newline
{\it (ii)} By applying Definition \ref{defncommens}, 
when the two $\Q$-lattices $\Lambda_1=\Lambda$, $\Lambda_2=\Lambda'$ are parabolic, one derives 
$$
\Im(\Lambda_j)=\Z \ \&  \ \chi\circ \phi_j(u)=y \qqq u=(x,y)\in \Q^2/\Z^2
$$
for the character $\chi=-\Im$. Then  the lattice $\Lambda''=\Lambda_1 + \Lambda_2$ fulfills $\Im(\Lambda'')=\Z$ and  the quotient maps $f_j:\C/\Lambda_j\to \C/\Lambda''$ fulfill $\chi\circ f_j=\chi$, since for $z\in \C$ one has $\Im(z+ \Lambda_j)=\Im(z)+\Z=\Im(z+ \Lambda'')$. It follows that the two equal maps 
$\phi:=f_j\circ \phi_j$ fulfill the condition 
$$
\chi\circ \phi(u)=y \qqq u=(x,y)\in \Q^2/\Z^2,
$$
so that the pair $(\Lambda'',\phi)$ is a parabolic $\Q$-lattice. Thus the inclusions $\Lambda_j\subset \Lambda''$ induce isogenies to the same triangular elliptic curve as required.  To prove the converse it is enough, using the transitivity of the commensurability relation for $\Q$-lattices, to show that if $f:(E,\xi,\eta)\to (E',\xi',\eta')$ is an isogeny of triangular elliptic curves the associated $\Q$-lattices are commensurable. This fact follows from $(i)$.\qed
\end{proof}

\subsection{The complex structure}\label{subsec:6.6}

Proposition \ref{comparescal} states that
 	the natural map $\theta$ as in \eqref{P2gl2scal} from parabolic $\Q$-lattices to $\Q$-lattices up to scale
 	is injective except in the degenerate case. Thus  $\theta$  provides, by pull back, a large class of functions, by implementing the arithmetic subalgebra of the $\GL_2$-system (\!\cite{cmbook} Chapter 3, \S 7). The functions in this  algebra are holomorphic for  the natural complex structure on the moduli space of elliptic curves and in this section we compare this complex structure 
with the one on the space $\Pi=P^+(\Z)\backslash (\overline{P(\hat \Z)}\times P^+(\R))$  defined using the right action of $P^+(\R)$ (Proposition \ref{ax+bact}).\newline
 We recall that the complex structure on the moduli space of elliptic curves is obtained by comparing two descriptions of the quotient space $\GL_2^+ (\R)/\C^\times$. The first one identifies $\GL_2^+ (\R)/\C^\times$ with the complex upper half plane $\H$ via the map
\begin{equation}\label{upper}
C:\alpha\in \GL_2^+ (\R)\mapsto z=\alpha(i)=\frac{ai+b}{ci+d}\in \H.
\end{equation}
The second description derives  from the space $\cB/\C^\times$ of pairs $(\xi_1,\xi_2)$ of $\R$-independent elements of $\C$ up to scale. The maps 
\begin{equation}\label{upperbis}
r:\cB/\C^\times\to\H^\pm =\H\cup -\H, \ \ r(\xi_1,\xi_2)= -\xi_2/\xi_1\in \C\setminus \R=\H^\pm
\end{equation}
and
\begin{equation}\label{upper1}
B: \GL_2^+ (\R)\to \cB/\C^\times, \ \ B(\alpha)= (\alpha^{-1}e_1,\alpha^{-1}e_2), 
\end{equation}
fulfill $r\circ B=C$. Indeed, both maps only depend on the right coset in $\GL_2^+ (\R)/\C^\times$. The right $\C^\times$-coset associated to
$z=x+i y\in \H$ contains, in view of \eqref{upper},  the matrix $$\alpha= \begin{pmatrix} y &x \\
0 &1
\end{pmatrix} \in \GL_2^+ (\R).$$ One can replace 
$\alpha^{-1}= y^{-1}\,\begin{pmatrix} 1 &-x \\
0 &y
\end{pmatrix}$, up to scale, by $g=\begin{pmatrix} 1 &-x \\
0 &y
\end{pmatrix}$ and obtain  
$$g(e_1)=\begin{pmatrix} 1 &-x \\
0 &y
\end{pmatrix}\begin{pmatrix} 1  \\
0 
\end{pmatrix}=e_1=1, \ g(e_2)=\begin{pmatrix} 1 &-x \\
0 &y
\end{pmatrix}\begin{pmatrix} 0  \\
1 
\end{pmatrix}=-x e_1+ y e_2=-z.$$
This shows that $r\circ B=C$. Notice that it is meaningless to use the action of $\GL_2^+ (\R)$ on the elements $(\xi_1,\xi_2)$ of a basis because this action does \emph{not} commute with scaling. 

Next, we consider the complex  structure on $\Pi=P^+(\Z)\backslash (\overline{P(\hat \Z)}\times P^+(\R))$ as defined in Lemma \ref{adelicomp1}, namely by means of  the ``dbar" operator $\partial_x+i\partial_y$ and in terms of the map $\iota$ of \eqref{algspaceiota}, \ie  of the matrix $\begin{pmatrix} y &x \\
0 &1
\end{pmatrix} \in P^+ (\R)$. The above calculation then shows that this complex structure is identical to the canonical complex structure on the moduli space of elliptic curves. We now verify that this complex structure  can also be described, as in Proposition \ref{ax+bact}, using the right action of $P^+(\R)$ on $\Pi$.\newline
 The ``dbar" operator for the latter structure is defined by $X+iY$, where the two vector fields $X,Y$ on $\Pi$ are defined as $X=y\partial_x$ and $Y=y\partial_y$, corresponding to the Lie algebra elements of the one parameter subgroups $u(\epsilon)$ for $X$  
$$
\epsilon \mapsto u(\epsilon):=\begin{pmatrix} 0 &\epsilon \\
0 &1
\end{pmatrix}, \ \  \  \begin{pmatrix} y &x \\
0 &1
\end{pmatrix} u(\epsilon)= \begin{pmatrix} y &x+ y \epsilon \\
0 &1
\end{pmatrix} $$
and $v(\epsilon)$ for $Y$
$$\epsilon \mapsto v(\epsilon):=\begin{pmatrix} e^\epsilon &0 \\
0 &1
\end{pmatrix},\ \  \  \begin{pmatrix} y &x \\
0 &1
\end{pmatrix} v(\epsilon)= \begin{pmatrix} y e^\epsilon & x \\
0 &1
\end{pmatrix}.
$$
The comparison of the complex structures is summarized in the following statement

 \begin{proposition}\label{comparescalcs}
 	The natural map from parabolic $\Q$-lattices to $\Q$-lattices up to scale
 	\begin{equation}\label{P2gl2scalbis}
\theta:\Pi\to \Gamma\backslash
(M_2(\hat\Z)\times \GL_2^+(\R)) /\C^\times	
\end{equation}
is holomorphic for the canonical complex structure on the moduli space of elliptic curves and the complex structure on $\Pi$ associated to the right action of $P^+(\R)$.
\end{proposition}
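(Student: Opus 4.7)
The plan is to reduce the holomorphy check to a local computation in the archimedean coordinates $(x,y)$ parametrizing $\alpha = \begin{pmatrix} y & x \\ 0 & 1 \end{pmatrix} \in P^+(\R)$, since the factor $\overline{P(\hat\Z)}$ is totally disconnected and plays no role in either complex structure.

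First I would recall the archimedean part of the map $\theta$: composing the inclusion $P^+(\R) \hookrightarrow \GL_2^+(\R)$ with the projection to $\GL_2^+(\R)/\C^\times \cong \H$ gives, via the identity $r\circ B = C$ of \eqref{upperbis} and the explicit computation immediately preceding the proposition, the map $(x,y) \mapsto z := x+iy \in \H$. On $\H$ the anti-holomorphic direction is $\partial_{\bar z} = \tfrac12(\partial_x + i\partial_y)$, while on the source the $\bar\partial$-operator is $X + iY$ with $X = y\partial_x$ and $Y = y\partial_y$. One therefore has
\[
X + iY = y(\partial_x + i\partial_y) = 2y\,\partial_{\bar z},
\]
so the two kernels on smooth functions coincide (since $y>0$), proving holomorphy of the archimedean component of $\theta$.

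To descend to $\Pi = P^+(\Z)\backslash (\overline{P(\hat\Z)} \times P^+(\R))$ and to the target moduli space I would verify the remaining quotient operations. The left $P^+(\Z) \subset \SL_2(\Z)$ action on the archimedean factor restricts the standard $\SL_2(\Z)$ M\"obius action on $\H$ to the translations $z \mapsto z + n$, which is biholomorphic, so the complex structure on $\Pi$ is well defined and $\theta$ descends holomorphically. The only non-obvious check concerns the degenerate locus of Proposition \ref{comparescal}, where the fibers of $\theta$ are the orbits $\tau(c)(p,\alpha) = (p, t_c(\alpha))$; by the formula \eqref{P2gl2scal1}, the induced action on $z = x+iy$ is the M\"obius action of $\gamma = \begin{pmatrix} 1 & 0 \\ c & 1 \end{pmatrix} \in \SL_2(\Z)$, with the scalar factor $(1+c\bar z)^{-1}$ being precisely the one absorbed on the right-hand side when passing to the $\C^\times$-quotient. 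This bookkeeping, keeping the scaling ambiguity and the $\SL_2(\Z)$-action in sync, is the only place where care is needed; the heart of the proof is the single identity $X + iY = 2y\,\partial_{\bar z}$.
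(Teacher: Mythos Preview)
Your core computation is correct and matches the paper's argument: the paper simply applies $X+iY$ to the pullback $f=x+iy$ of the local parameter $z\in\H$ and gets
\[
(X+iY)(f)=y\partial_x(x+iy)+iy\partial_y(x+iy)=y-y=0,
\]
which is the same content as your identity $X+iY=2y\,\partial_{\bar z}$ with $y>0$.

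However, the paragraph on the degenerate locus is unnecessary and slightly misplaced. Holomorphy of $\theta$ is a local condition on the source $\Pi$; it does not care whether $\theta$ is injective, so the analysis of the fibers $\tau(c)$ from Proposition~\ref{comparescal} plays no role here. You only need that the complex structure on $\Pi$ is well defined modulo the left $P^+(\Z)$-action (your translation remark handles this) and that the target carries its usual structure from $\H$; nothing further about the degenerate fibers is required. The paper accordingly omits any such discussion.
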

\begin{proof} It suffices  to check that $(X+iY)(f)=0$, where the function $f$ is the pullback by $\theta$ of the local parameter $z\in \H=\GL_2^+(\R)) /\C^\times$. This fact follows from the direct computation
$$
f(\begin{pmatrix} y &x \\
0 &1
\end{pmatrix})=x+iy, \ \ (X+iY)(f)=y\partial_x (x+iy)+iy \partial_y (x+iy)=0.
$$ 
\qed\end{proof}

 \subsection{The right action of $P(\hat \Z)$}\label{subsec:6.7}

In order to pass from $\caq=P(\Q)\backslash
\overline{P(\A_\Q)}$ to  $\gaq=P(\Q)\backslash
\overline{P(\A_\Q)}/\hatz$ one needs to divide the component in the ad\`ele class space by the action of $\hatz$ given by multiplication. This action is induced by the right action of $\hatz$  on  $\Pi=P^+(\Z)\backslash (\overline{P(\hat \Z)}\times P^+(\R))$, and it is meaningful even  before passing to commensurability classes. In this section we provide its geometric meaning  in terms of parabolic $\Q$-lattices.   
It turns out that this is the special case (obtained by restricting to the subgroup $\hatz\subset P(\hat \Z)$ of diagonal matrices) of the right action of $P(\hat \Z)$ on $\Pi$, whose  geometric meaning is given in the following Proposition \ref{rightactpz}.

\begin{proposition}\label{rightactpz}
Let $(E;\xi,\eta)$ be a triangular elliptic curve and $(\Lambda,\phi)$ the associated $\Q$-lattice. Its image, under the right action of $w=\begin{pmatrix} u &v \\
0 &1 \end{pmatrix} \in  P(\hat \Z)$, is the triangular elliptic curve $(E;\xi',\eta')$ where $\xi'=\xi\circ u$ and $\eta'=\eta+\xi\circ v$.	
\end{proposition}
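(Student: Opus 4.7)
The plan is to unwind the definitions and carry out a short matrix computation. First I would identify how the right action is implemented at the level of representing pairs. By Theorem \ref{comparecomm} and Definition \ref{defnpara}, the parabolic $\Q$-lattice $(\Lambda,\phi)$ is represented, uniquely up to the diagonal left action of $P^+(\Z)$, by a pair $(\rho,\alpha)\in \overline{P(\hat\Z)}\times P^+(\R)$ with $(\Lambda,\phi)=(\alpha^{-1}\Lambda_0,\alpha^{-1}\rho)$. The right action of $w\in P(\hat\Z)$ on $\Pi$ is then $(\rho,\alpha)\cdot w=(\rho w,\alpha)$, and this descends to the quotient because right multiplication commutes with left multiplication by $P^+(\Z)$.

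Second, I would observe that $\alpha$ is unchanged by the action, so the lattice $\Lambda=\alpha^{-1}\Lambda_0$ and hence the elliptic curve $E=\C/\Lambda$ are preserved; only the homomorphism $\phi$ is modified, to $\phi'=\alpha^{-1}(\rho w)$. Writing $\rho=\begin{pmatrix} a & b \\ 0 & 1\end{pmatrix}$ and $w=\begin{pmatrix} u & v \\ 0 & 1\end{pmatrix}$, formula \eqref{actrho} gives $\phi(x,y)=\alpha^{-1}((ax+by)e_1+ye_2)$, whence, under the identification $\xi(x)=\phi(x,0)$, $\eta(y)=\phi(0,y)$,
\[
\xi(x)=\alpha^{-1}(ax\,e_1),\qquad \eta(y)=\alpha^{-1}(by\,e_1+y\,e_2).
\]

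Third, the matrix product
\[
\rho w=\begin{pmatrix} au & av+b \\ 0 & 1\end{pmatrix}
\]
substituted into the formula for $\phi$ yields
\[
\xi'(x)=\alpha^{-1}(aux\,e_1)=\xi(ux),\qquad \eta'(y)=\alpha^{-1}((av+b)y\,e_1+y\,e_2)=\xi(vy)+\eta(y).
\]
Interpreting $u,v\in\hat\Z$ as endomorphisms of $\Q/\Z$ under the canonical isomorphism $\hat\Z=\End(\Q/\Z)$, this is exactly the asserted equalities $\xi'=\xi\circ u$ and $\eta'=\eta+\xi\circ v$.

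There is no serious obstacle in this argument: the content is essentially the formula \eqref{actrho} combined with the bilinearity of matrix multiplication. The only point that requires a moment's thought is the compatibility of the passage $\phi\leftrightarrow (\xi,\eta)$ with the right $\hat\Z$-module structure on $TE$, which is forced by the fact that the action of $\hat\Z$ on $\Q/\Z$ is functorially the one inducing the Tate module structure; once this is granted the proposition is a one-line matrix computation.
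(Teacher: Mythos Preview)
Your argument is correct and follows essentially the same route as the paper: represent the parabolic $\Q$-lattice by $(\rho,\alpha)$, observe that the right action of $w$ affects only $\rho$, compute $\rho w=\begin{pmatrix} au & av+b\\ 0 & 1\end{pmatrix}$, and read off $\xi',\eta'$ from \eqref{actrho}. The one point the paper adds and you omit is the verification that $(E;\xi',\eta')$ is again triangular: since $w\in P(\hat\Z)$ means $u\in\hat\Z^\times$, one has $(\xi\circ u)^\perp=\xi^\perp$ and $\langle\xi^\perp,\xi\circ v\rangle=0$, so $\langle\xi'^\perp,\eta'\rangle=\langle\xi^\perp,\eta\rangle=\Z$; this is worth stating, but it is routine once noticed.
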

\begin{proof}  Note that the condition $<\xi^\perp,\eta>=\Z$ of Theorem \ref{ellipticcurve} still holds for the transformed pair $(E;\xi',\eta')$, 
since $(\xi\circ u)^\perp=\xi^\perp$ and $<\xi^\perp,\xi\circ v>=0$. 
For $\rho=\begin{pmatrix} a &b \\
0 &1 \end{pmatrix} \in  \overline{P(\hat \Z)}$, one has  
$$
\xi(x)=\alpha^{-1}(ax e_1)\in \Q\Lambda/\Lambda, \ \ 
\eta(y)=\alpha^{-1}(by e_1+y e_2)\in \Q\Lambda/\Lambda
$$
and for $w=\begin{pmatrix} u &v \\
0 &1 \end{pmatrix} \in  P(\hat \Z)$, one obtains
$$
\begin{pmatrix} a &b \\
0 &1 \end{pmatrix} \begin{pmatrix} u &v \\
0 &1 \end{pmatrix}=\begin{pmatrix} au &av +b\\
0 &1 \end{pmatrix}=
\begin{pmatrix} a' &b' \\
0 &1 \end{pmatrix}.
$$
 Thus the right action of $w$ determines the new pair $(\xi',\eta')$ 
 $$
\xi'(x)=\alpha^{-1}(aux e_1)\in \Q\Lambda/\Lambda, \ \ 
\eta'(y)=\alpha^{-1}(by e_1+y e_2)+\alpha^{-1}(avx e_1)\in \Q\Lambda/\Lambda,
$$
and one concludes $\xi'=\xi\circ u$ and $\eta'=\eta+\xi\circ v$.\qed	
\end{proof}

\subsection{Boundary cases}\label{subsec:6.8}

Theorem \ref{ellipticcurve}  and Proposition \ref{comparescal}, show that triangular elliptic curves  are classified by the subspace
$$
\Pi':=P^+(\Z)\backslash\{(\rho,\alpha)\in \overline{P(\hat \Z)}\times P^+(\R)\mid \rho =\begin{pmatrix} u &v \\ 0 &1 \end{pmatrix}, \ u\neq 0\}\subset \Pi.
$$  
The condition $u\neq 0$ in this definition is meaningful  in the quotient since the left action of $P^+(\Z)$ leaves $u$ unaltered. Assume now that $u=0$ and $v\notin \Z$. Then, with the notations of Proposition \ref{comparescal}, $\rho\notin P^+(\Z)p$. This result thus shows that the corresponding triple $(E;\xi,\eta)$ still characterizes the element of $\Pi$. One has $\xi=0$ since $\rho(x,0)=u xe_1=0$, moreover one also gets  $\rho(0,y)=v ye_1+y e_2$. Let $\chi$ be a character of $E$, then $\chi=\alpha^t(ne_1+me_2)$,  with $n,m\in \Z$. Thus $T(\chi)(\eta)=nv+m$. Since $v\notin \Z$, while $v\in \hat \Z$, one derives  $v\notin \Q$. Thus $\chi=\alpha^t(e_2)$  is the only character which takes the value $1$ on $\eta$, \ie 
\begin{equation}\label{condcond}
\exists ! \chi\in \xi^\perp ~{\rm such~ that} ~T(\chi)(\eta)=1. 
\end{equation} 
Note that if  \eqref{condcond} holds, then, when $\xi\neq 0$,  Lemma \ref{uniquetate} shows  that there exists a primitive character $\chi_0$ of $E$ with $\xi^\perp=\Z\chi_0$. Since $\chi\in \xi^\perp$, one thus gets $\chi=n \chi_0$ for some $n\in \Z$. Thus  $T(\chi)(\eta)=n T(\chi_0)(\eta)$ and $n T(\chi_0)(\eta)=1$. But $T(\chi_0)(\eta)\in \hat \Z$ and one then gets $n=\pm 1$. This shows that one can refine the definition of a triangular structure using \eqref{condcond} in place of the condition
$$
\xi \neq 0\ \ \& \ <\xi^\perp,\eta>=\Z.
$$
  Thus, the case $u=0$ (\ie $\xi=0$) and $v\notin \Z$ is covered by the following
  
\begin{definition}\label{degeneratetr} A degenerate triangular structure on an elliptic curve $E$ is a pair $(\chi,\eta)$ of a character 	$\chi:E\to \R/\Z$ and an element  $\eta \in T(E)$ with  $T(\chi)(\eta)=1$.
\end{definition}
This notion also covers  the case  $u=v=0$, \ie of degenerate parabolic $\Q$-lattices.  Indeed, in this case Proposition \ref{comparescal} shows  that one needs to choose the character $\chi\in \xi^\perp$ so that $<\chi,\eta>=1$.
More precisely, let $\tilde \theta$ be the  map which associates to a degenerate parabolic $\Q$-lattice $(\Lambda,\phi)$  the degenerate triangular structure on $E=\C/\Lambda$ given by the pair $(\chi,\eta)$, where $\chi=-\Im$ and $\eta(y)=\phi((0,y))$ for all $y\in \Q/\Z$. Then we have the following

\begin{proposition}\label{degeneratetr1}  Two degenerate parabolic $\Q$-lattices are the same if and only if the  degenerate triangular elliptic curves, associated via the map $\tilde \theta$,  are isomorphic. 	
\end{proposition}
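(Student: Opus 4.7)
The forward implication is immediate: if $(\Lambda, \phi) = (\Lambda', \phi')$ then $\tilde\theta$ returns the identical triple and the identity of $E$ is a trivial isomorphism of degenerate triangular elliptic curves.

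For the converse, let $f: (E, \chi, \eta) \to (E', \chi', \eta')$ be an isomorphism of degenerate triangular elliptic curves. Writing $E = \C/\Lambda$ and $E' = \C/\Lambda'$, the map $f$ lifts to multiplication by some $\lambda \in \C^\times$ with $\lambda \Lambda = \Lambda'$. By the construction of $\tilde\theta$, the characters $\chi$ and $\chi'$ are both induced by $-\Im : \C \to \R$ (which descends modulo $\Z$ since Lemma \ref{extrastructure} gives $\Im(\Lambda) = \Im(\Lambda') = \Z$). The compatibility $\chi' \circ f = \chi$ then reads $\Im((\lambda - 1)z) \in \Z$ for every $z \in \C$; since $(\lambda - 1)\C = \C$ whenever $\lambda \neq 1$, this forces $\lambda = 1$, exactly as in the proof of Proposition \ref{isogenies1}$(i)$. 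Consequently $\Lambda = \Lambda'$, $f = \id_E$, and the condition $T(f)(\eta) = \eta'$ collapses to $\eta = \eta'$.

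It remains to promote the equality $(\Lambda, \eta) = (\Lambda', \eta')$ to equality of the full parabolic $\Q$-lattices. By Remark \ref{arithprogremark}$(1)$, a degenerate parabolic $\Q$-lattice is uniquely of the form $(\alpha^{-1}\Lambda_0, \alpha^{-1}p)$ with $p = \begin{pmatrix} 0 & 0 \\ 0 & 1 \end{pmatrix}$ and $\alpha \in P^+(\R)$; using \eqref{actrho} one computes $\phi((x,y)) = \alpha^{-1}(y e_2) = \phi((0,y)) = \eta(y)$, so $\phi$ is entirely determined by its values on $\{0\}\times \Q/\Z$, i.e. by $\eta$. Hence $\eta = \eta'$ combined with $\Lambda = \Lambda'$ yields $\phi = \phi'$, and the two parabolic $\Q$-lattices coincide. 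The only non-routine step is the rigidity argument forcing $\lambda = 1$; everything else is a direct unwinding of the definitions, and importantly the special feature of the degenerate case ($\xi = 0$, so $\rho_1 = 0$) is exactly what causes $\phi$ to reduce to a function of the second coordinate, making $\eta$ a complete substitute for $\phi$.
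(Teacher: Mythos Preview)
Your proof is correct and follows essentially the same route as the paper's: both arguments use the character compatibility $\chi'\circ f=\chi$ to force $\lambda=1$, deduce $\Lambda=\Lambda'$ and $f=\id$, and then invoke the degeneracy (via $\rho=p$, so $\phi((x,y))=\alpha^{-1}(ye_2)=\eta(y)$) to conclude that $\eta=\eta'$ implies $\phi=\phi'$. Your write-up is slightly more explicit in citing Remark~\ref{arithprogremark}$(1)$ and in spelling out why $\Im((\lambda-1)\C)\subset\Z$ forces $\lambda=1$, but the logical skeleton is identical to the paper's proof.
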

\begin{proof} Let $(\Lambda,\phi)$ and $(\Lambda',\phi')$ be degenerate parabolic $\Q$-lattices, and $E,(\chi,\eta)$, $E',(\chi',\eta')$ their images under $\tilde \theta$. By the  degeneracy hypothesis there exists uniquely $\alpha,\alpha'\in P^+(\R)$ such that 
$$
\Lambda=\alpha^{-1}\Lambda_0,\ \phi(x,y)=\alpha^{-1}(ye_2) \qqq x,y\in \Q/\Z
$$
and similarly for $(\Lambda',\phi')$. An isomorphism $j:E\to E'$ is implemented by the multiplication by a complex number $\lambda$ such that $\lambda\Lambda=\Lambda'$. 
  If $j$ preserves the degenerate triangular structure, one has $\chi'\circ j=\chi$, \ie $\Im (\lambda z)=\Im(z)$ modulo $\Z$ for all $z\in E=\C/\Lambda$ and thus $\lambda =1$. Hence one derives $\Lambda=\Lambda'$ and $j$ is the identity. Let us show that $\phi'=\phi$. One has $\phi(x,y)=\eta(y)$, $\forall x,y\in \Q/\Z$, and since by hypothesis $j\circ \eta=\eta'$ one concludes that $\phi'=\phi$.\qed \end{proof}

Next we consider the degeneracy  occurring when in the matrix $\alpha=\begin{pmatrix} a &b \\ 0 &1 \end{pmatrix}\in P^+(\R)$, $a$ tends to $0$. We follow the lattice $\Lambda=\alpha^{-1}\Lambda_0$ up to scale. One has 
$$
a\alpha^{-1}=a\left(
\begin{array}{cc}
 \frac{1}{a} & -\frac{b}{a} \\
 0 & 1 \\
\end{array}
\right)= \left(
\begin{array}{cc}
 1 & -b \\
 0 & a \\
\end{array}
\right), \ \ a\alpha^{-1}\begin{pmatrix} x \\ y \end{pmatrix}=
\left(
\begin{array}{cc}
 1 & -b \\
 0 & a \\
\end{array}
\right)\begin{pmatrix} x \\ y \end{pmatrix}=\begin{pmatrix} x-by \\ ay \end{pmatrix}.
$$
Thus when $a\to 0$ the lattice $\Lambda=\alpha^{-1}\Lambda_0$ up to scale converges pointwise (\ie for each fixed pair $(x,y)$) towards the subgroup of $\R\subset \C$ given by 
\begin{equation}\label{lambdab}
 \Lambda(b):=\Z+b\Z\subset \R\subset \C.
\end{equation}
The subgroup $\Lambda(b)$ only depends upon $b\in \R/\Z$ and the quotient $\R/\Lambda(b)$ corresponds to the noncommutative torus $\T^2_b$. In fact, the composition with $\rho=\begin{pmatrix} u &v \\ 0 &1 \end{pmatrix} \in  \overline{P(\hat \Z)}$ gives the following $\Q$-pseudolattice in the sense of Definition 3.106 of \cite{cmbook}
$$
\phi:\Q^2/\Z^2\to \Q \Lambda(b)/\Lambda(b), \ \ \phi((x,y))=u x+v y -b y.
$$
When $b\notin \Q$, the subspace $\Q \Lambda(b)\subset \R$ is two dimensional over $\Q$ and one defines a character on the $\Q$-rational points by setting $\chi(x-by):=y\in \Q/\Z$, for $x-by\in \Q \Lambda(b)/\Lambda(b)$. Using this character one gets $\chi\circ \phi((x,y))=y$ and this condition characterizes the relevant pseudolattices. 

\section{Lift of the Frobenius correspondences}\label{sect:7}

In this paper we have constructed the simplest complex lift of the Scaling Site, using an almost periodic compactification of the added imaginary direction. We have illustrated the role of the tropicalization map and found a surprising relation between the obtained complex lift and the $\GL_2$ system of \cite{cmbook}.

In order to complete the Riemann-Roch strategy in this lifted framework one meets a fundamental difficulty tied up to the loss of the one parameter group of automorphisms of $\rma$ in moving from characteristic $1$ to characteristic zero. The difficulty arises in the  construction of the proper lift of the correspondences  $\Psi_\lambda$ which are canonical in characteristic $1$. The natural candidates in characteristic zero come from the right action of $P_+(\R)$. This choice is justified using the tropicalization map which is given on all terms by the determinant (on $2$ by $2$ matrices of ad\`eles) and makes the following diagram commutative 
\begin{equation}\label{tropdet}
\xymatrix@C=20pt@R=35pt{
  P(\Q)\ar[d]^{\det}\ar[rr]^{} && \overline{P(\A_\Q)}\ar[d]_{\det} && P_+(\R)\ar[d]_{\det}\ar[ll]^{}
\\
\Q^\times \ \ar[rr]^{}&& \A_\Q &&  \R^*_+ \ar[ll]^{}}
\end{equation}
The problem arises because the right action of  $\R^*_+\subset P_+(\R)$ does not preserve the complex structure defined in 
 Proposition \ref{ax+bact}. More precisely this action preserves the foliation (since the leaves are precisely the orbits of the right action of $P_+(\R)$)  but it does not preserve  the complex structure of the leaves. Indeed, the right action of  $\R^*_+\subset P_+(\R)$ is of the form 
$$
\left(
\begin{array}{cc}
 y & x \\
 0 & 1 \\
\end{array}
\right)\left(
\begin{array}{cc}
 \lambda & 0 \\
 0 & 1 \\
\end{array}
\right)=
\left(
\begin{array}{cc}
  \lambda y & x\\
 0 & 1 \\
\end{array}
\right)
$$
and since it replaces $x+iy\in \H$ by $x+i \lambda y$  it does not respect the complex structure.

\subsection{Witt construction in characteristic $1$ and lift of the $\Psi_\lambda$}\label{subsect:7.1}
  In \cite{ccthickening} we already addressed the problem of the loss of the one parameter group of automorphisms of $\rma$ in moving from characteristic $1$ to characteristic zero.  In that paper we also developed the archimedean analogue of the basic steps of the construction of the rings of $p$-adic periods and we defined the universal thickening of the real numbers. In fact we showed that  when one applies the analogue of the Witt construction to the real tropical hyperfield  $\trop$,
 the universal $W$-model of $\trop$ exists and it coincides with the triple which was constructed in \cite{CCentropy,Cwitt}, by working with the tropical semi-field $\rmax$ of characteristic one and implementing  concrete formulas, involving entropy, which extend the Teichm\"uller formula for sums of Teichm\"uller lifts to the case of characteristic one. We use the notation $[x]=\tau(x)$ for the Teichm\"uller lift of $x\in \R_+^*$. These elements generate linearly the ring $W$, they fulfill the multiplication rule $[xy]=[x][y]$ and the automorphisms of $\rmax$ lift to automorphisms $\theta_\lambda$ of $W$ such that 
 $$
 \theta_\lambda([x])=[x^\lambda] \qqq x\in \rmax, \, \lambda \in \R_+^*.
 $$
 We shall use complex coefficients so that in first approximation $W$ is the complex group ring of the multiplicative group $\R_+^*$. We disregard here the nuances obtained from various completions of $W$ explored in \cite{ccthickening} and concentrate on the algebraic question of showing why the use of $W$ as coefficients resolves the problem of the lack of invariance of the complex structure under the right action of $\R_+^*$. For each $\lambda \in \R_+^*$ one has a ring homomorphism $\chi_\lambda:W\to \C$ which is $\C$-linear and such that $\chi_\lambda([x])=x^\lambda$ for any $x\in \R_+^*$. By construction one has $\chi_\lambda\circ \theta_\mu=\chi_{\lambda\mu}$ and the character $\chi_1$ plays a key role in  \cite{ccthickening} where it is denoted $\theta$. Now given a function with values in $W$ on  a leaf given by  an orbit of the right action of  $P_+(\R)$ we say that $f$ is holomorphic when 
 \begin{equation}\label{holom}
 \left(\lambda X+i Y\right) 	\chi_\lambda(f)=0 \qqq \lambda \in \R_+^*,
 \end{equation}
 where $X,Y$ correspond to the generators of the Lie algebra of  $P_+(\R)$ as in Proposition \ref{ax+bact}. 
We then restore the invariance under the right action of $\R_+^*\subset P_+(\R)$ by combining it with the automorphisms $ \theta_\lambda\in \Aut(W)$ as we did in the construction of the arithmetic Frobenius in \cite{CC4}. By construction the right action $R(\mu)$  of $\R_+^*\subset P_+(\R)$ extends to $W$ valued functions so that the following equation holds
\begin{equation}\label{holombis}
 	\chi_\lambda(R(\mu)(f))=\mu^Y \chi_\lambda(f) \qqq \lambda, \mu \in \R_+^*
 \end{equation}
\begin{proposition}\label{frobarith}  For $\mu \in \R_+^*$, the operation $\fr^a_\mu$,
\begin{equation}\label{holom1}
f\mapsto \fr^a_\mu(f), \ \fr^a_\mu(f):= \theta_\mu(R(\mu^{-1})( f))
 \end{equation}
	preserves the holomorphy condition \eqref{holom}.
\end{proposition}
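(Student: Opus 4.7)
The plan is to apply the holomorphy condition \eqref{holom} at the shifted parameter $\lambda\mu$, after rewriting $\chi_\lambda(\fr^a_\mu(f))$ in terms of $\chi_{\lambda\mu}(f)$ and commuting the operators $X$ and $Y$ past an intertwiner of the form $\mu^{-Y}$. First I would use the basic compatibility $\chi_\lambda\circ \theta_\mu=\chi_{\lambda\mu}$ (recorded just before Proposition \ref{frobarith}) together with \eqref{holombis} applied to $R(\mu^{-1})$ to obtain
$$
\chi_\lambda\bigl(\fr^a_\mu(f)\bigr)=\chi_{\lambda\mu}\bigl(R(\mu^{-1})(f)\bigr)=\mu^{-Y}\chi_{\lambda\mu}(f).
$$

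Next, I would read off the commutation properties of $\mu^{-Y}$ with $X$ and $Y$ from the Lie-algebra relation $[Y,X]=X$ that was established in the proof of Proposition \ref{ax+bact}. Since $Y$ commutes with any function of itself, one has $[Y,\mu^{-Y}]=0$. For $X$, the relation $YX=XY+X$ iterates to $Y^nX=X(Y+1)^n$, hence $g(Y)X=Xg(Y+1)$ for any reasonable function $g$ of $Y$; choosing $g(t)=\mu^{-t}$ yields
$$
X\,\mu^{-Y}=\mu\,\mu^{-Y}X.
$$
A direct verification on the explicit realization $X=y\partial_x$, $Y=y\partial_y$, with $\mu^{-Y}$ acting as $f(x,y)\mapsto f(x,\mu^{-1}y)$, confirms this commutation and shows it is a statement about genuine differential operators rather than a purely formal manipulation.

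Combining these two steps gives
$$
(\lambda X+iY)\,\chi_\lambda\bigl(\fr^a_\mu(f)\bigr)
=(\lambda X+iY)\,\mu^{-Y}\chi_{\lambda\mu}(f)
=\mu^{-Y}\bigl((\lambda\mu)X+iY\bigr)\chi_{\lambda\mu}(f),
$$
and the right-hand side vanishes by the holomorphy condition \eqref{holom} applied to $f$ at the parameter $\lambda\mu\in\R_+^*$. Since $\lambda\in\R_+^*$ was arbitrary, $\fr^a_\mu(f)$ satisfies \eqref{holom}.

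The only delicate step is the commutation $X\mu^{-Y}=\mu\,\mu^{-Y}X$, which is where the rescaling by $\mu$ in the right action of $\R^*_+\subset P_+(\R)$ gets exactly absorbed so as to match the shift $\lambda\mapsto\lambda\mu$ coming from $\chi_\lambda\circ\theta_\mu=\chi_{\lambda\mu}$. Thus the main point to articulate carefully in the final write-up is that the twist by $\theta_\mu$ in the definition of $\fr^a_\mu$ precisely compensates the failure of $R(\mu^{-1})$ to preserve the complex structure of the leaves, recovering the invariance that was lost in passing from characteristic one to characteristic zero.
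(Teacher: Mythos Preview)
Your proof is correct and follows essentially the same route as the paper: compute $\chi_\lambda(\fr^a_\mu(f))=\mu^{-Y}\chi_{\lambda\mu}(f)$ via $\chi_\lambda\circ\theta_\mu=\chi_{\lambda\mu}$ and \eqref{holombis}, then use the commutation $X\mu^{-Y}=\mu\,\mu^{-Y}X$ coming from $[Y,X]=X$ to pull $(\lambda X+iY)$ through $\mu^{-Y}$ as $(\lambda\mu X+iY)$ and invoke \eqref{holom} at the shifted parameter. Your extra justification of the commutation (iterating $Y^nX=X(Y+1)^n$ and checking it on the concrete realization $X=y\partial_x$, $Y=y\partial_y$) is a helpful elaboration but not a different argument.
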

\begin{proof} One has
$$
\chi_\lambda(\fr^a_\mu(f))=\chi_\lambda\circ\theta_\mu(R(\mu^{-1})( f))=
\chi_{\lambda \mu}(R(\mu^{-1})( f))=\mu^{-Y}\chi_{\lambda \mu}(f)
$$
using  \eqref{holombis} for the last equality.
By construction $Y$ commutes with $\mu^Y$, and since $[Y,X]=X$, one has $\mu^Y X \mu^{-Y}=\mu X$ so that
$$
\left(\lambda X+iY\right) \mu^{-Y}=\mu^{-Y}\left(\lambda \mu X+iY\right)
$$
and one gets
$$
\left(\lambda X+iY\right)\chi_\lambda(\fr^a_\mu(f))=
\left(\lambda X+iY\right) \mu^{-Y}\chi_{\lambda \mu}( f)=\mu^{-Y}\left(\lambda \mu X+iY\right)\chi_{\lambda \mu}( f)=0.
$$
Thus $\fr^a_\mu(f)$ fulfills \eqref{holom} if $f$ does.
\qed\end{proof}
To give a non-trivial example of a $W$-valued function which is holomorphic in the sense of \eqref{holom}, we take the classical orbit $\Gamma_{\Q,{\rm cl}}$ 
	which is the pro-\' etale cover $\tilde \D^*$. One can represent the elements of $\Gamma_{\Q,{\rm cl}}$ as pairs $(x,y)\in G\times \R_+^*$ and the following equality defines a function  
 \begin{equation}\label{holom2}
 q:\Gamma_{\Q,{\rm cl}}\to W, \ \  q(x+iy):=[e^{-2\pi y}]e^{2\pi i x}
  \end{equation}
  where,  by Lemma \ref{adelicomp} $(iii)$, $e^{2\pi i x}$ makes sense as a complex number for any $x\in G$. 
  \begin{proposition}\label{functionq} The function $q:\Gamma_{\Q,{\rm cl}}\to W$ of \eqref{holom2} is holomorphic in the sense of condition \eqref{holom}. Moreover it is invariant under the transformations $\fr^a_\mu$ for any $\mu \in \R_+^*$. The same holds for the rational powers $q^r$ of $q$, $\forall r\in \Q$.
\end{proposition}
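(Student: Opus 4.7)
\medskip

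\noindent\textbf{Proof Proposal.} The plan is a direct computation which reduces to verifying that $[e^{-2\pi y}]e^{2\pi i x}$ is an eigenfunction of the scaled vector field $\lambda X+iY$ with eigenvalue zero. First, evaluate $\chi_\lambda$ on the Teichm\"uller component: since $\chi_\lambda([a])=a^\lambda$ for all $a\in\R_+^*$, one has
\begin{equation*}
\chi_\lambda(q)(x,y)=e^{-2\pi \lambda y}\,e^{2\pi i x}\qquad\forall (x,y)\in G\times \R_+^*,
\end{equation*}
where $e^{2\pi i x}$ is interpreted, following Lemma \ref{adelicomp} (iii), as the character of $G$ associated with $1\in\Q\simeq \hat G$.

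Next, I compute the action of the vector fields $X,Y$ given by \eqref{vectorfields}. Since $X(f)(x,y)=y\partial_t f(x+a(t),y)\vert_{t=0}$ and the restriction of the character $e^{2\pi i\cdot}$ to the subgroup $a(\R)\subset G$ is $s\mapsto e^{2\pi i s}$ by Lemma \ref{adelicomp} (iii), one obtains
\begin{equation*}
X(\chi_\lambda(q))=2\pi i y\,e^{-2\pi\lambda y}e^{2\pi i x},\qquad Y(\chi_\lambda(q))=-2\pi\lambda y\,e^{-2\pi\lambda y}e^{2\pi i x},
\end{equation*}
so that $(\lambda X+iY)(\chi_\lambda(q))=2\pi i\lambda y\,\chi_\lambda(q)-2\pi i\lambda y\,\chi_\lambda(q)=0$. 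This establishes the holomorphy condition \eqref{holom} for every $\lambda\in\R_+^*$. For the invariance under $\fr^a_\mu$, the right action $R(\mu^{-1})$ corresponds to $(x,y)\mapsto(x,\mu^{-1}y)$ (this is the convention forced by \eqref{holombis} and $Y=y\partial_y$), hence $R(\mu^{-1})(q)(x,y)=[e^{-2\pi\mu^{-1}y}]e^{2\pi i x}$. Applying $\theta_\mu$ and using $\theta_\mu([a])=[a^\mu]$ gives
\begin{equation*}
\fr^a_\mu(q)(x,y)=\theta_\mu\bigl([e^{-2\pi\mu^{-1}y}]\bigr)e^{2\pi i x}=[e^{-2\pi y}]e^{2\pi i x}=q(x,y),
\end{equation*}
which is the required invariance.

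Finally, for rational powers $q^r$ with $r\in\Q$, the multiplicativity of the Teichm\"uller lift yields $[e^{-2\pi y}]^r=[e^{-2\pi r y}]$, and $e^{2\pi i r x}$ is a well-defined function on $G$ because $r\in\Q$ is an element of the Pontryagin dual $\hat G$. Thus $q^r(x,y)=[e^{-2\pi r y}]\,e^{2\pi i r x}$, and the identical computation with $1$ replaced by $r$ in each occurrence verifies both $(\lambda X+iY)(\chi_\lambda(q^r))=0$ and $\fr^a_\mu(q^r)=q^r$. The only point requiring care is that the character $e^{2\pi i r x}$ is globally defined on the compact group $G$ precisely because of the choice of $G$ as the adelic compactification of $\R$ on which $\Q^\times$ acts, so that $\Q\subset\hat G$; this is the structural reason the rational powers behave as well as the function $q$ itself.
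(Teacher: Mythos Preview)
Your proof is correct and follows essentially the same approach as the paper's own proof: both compute $\chi_\lambda(q)=e^{-2\pi\lambda y}e^{2\pi i x}$, verify $(\lambda X+iY)\chi_\lambda(q)=0$ by direct differentiation, check the $\fr^a_\mu$-invariance via $\theta_\mu([e^{-2\pi\mu^{-1}y}])=[e^{-2\pi y}]$, and then observe that the same computation goes through for $q^r$ once one knows $e^{2\pi i r x}$ is well-defined on $G$ for $r\in\Q$. The only cosmetic difference is that the paper packages the holomorphy check as $(\lambda y\partial_x+iy\partial_y)(x+i\lambda y)=0$ applied to the exponential, whereas you compute $X$ and $Y$ on $\chi_\lambda(q)$ separately; this is the same calculation.
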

\begin{proof} One has by construction
$$
 \chi_\lambda(q)(x+iy)=e^{-2\pi \lambda y}e^{2\pi i x}=e^{2\pi i (x+i\lambda y)}
 $$
 Moreover
 $$
 (\lambda y\partial_x +i y\partial_y)(x+i\lambda y)=0\ \Rightarrow \ \left(\lambda X+i Y\right) 	\chi_\lambda(q)=0
 $$
 which gives condition \eqref{holom}. Finally the equality
$$
 \theta_\mu(q(x+i \mu^{-1}y)=[(e^{-2\pi \frac{y}{\mu}})^\mu]e^{2\pi i x}
 =[e^{-2\pi y}]e^{2\pi i x}=q(x+iy)
 $$	
 shows that $q$ is invariant under the transformations $\fr^a_\mu$. The same proof applies to the rational powers of $q$ which make sense because of Lemma \ref{adelicomp} $(iii)$.
\qed\end{proof}
\begin{remark}\label{perfectoid} Proposition \ref{functionq} suggests that for the topos counterpart $\tilde \dst\rtimes \nt$ of the above adelic description of the complex lift  (see the left column of Figure \ref{globalpic}) the structure sheaf  involves the ring $W[q^r]$ generated by rational powers $q^r$ of $q$ over $W$. 	
\end{remark}

\subsection{Quantization}\label{subsect:7.2}
 It is still unclear in which precise sense the complex lift constructed in this paper can be used to ``quantize" the Scaling Site. One of the origins of the world of characteristic $1$ is the inverse process of quantization, it is called ``dequantization".  It was developed under the name of idempotent analysis by the school of Maslov, Kolokolstov and Litvinov \cite{Maslov,Lit}. One of their key discoveries is that the Legendre transform which plays a fundamental role in all of physics and in particular in 
 thermodynamics in the nineteenth century, is simply the Fourier transform  in the framework of  idempotent analysis. There is a whole  circle of ideas which compares tropicalization, dequantization and deformation of complex structures (see \eg \cite{Baier} and the references there) and these ideas should be carefully identified for the complex lift constructed in our paper. In particular the deformation of complex structures used in Sect. \ref{subsect:7.1} and the interpretation of its limit as a real polarization should be clarified.  In the process of quantizing a classical dynamical system the expected outcome is a self-adjoint operator in Hilbert space.
We expect here that the operator $X+iY$ will play a role as well as the ``transverse elliptic theory'' developed in \cite{cmindex}. Indeed, when viewing the ad\`ele class space $\Q^\times\backslash \A_\Q$  as a noncommutative space and the complex lift $\caq$  over it, the complex structure takes place in the transverse direction. In fact, as we have seen in Sect.  \ref{sec:5}, the space of points of $\caq$ fibers over $\Q^\times\backslash \A_\Q$ with fiber the almost periodic compactification $G$ of $\R$. The effect of the almost periodic  compactification is occurring purely in the transversal direction and it thus suggests that the $\bar \partial$ operator associated to the complex structure should be viewed as a $K$-homology class in the relative type II set-up. The results of \cite{CoMo1,CoMo2,CoMo3} on the transverse structure of modular Hecke algebras should then be brought into play.

\begin{acknowledgement}
The second named author would like to thank Alain Connes for introducing her to the noncommutative geometric vision on the Riemann Hypothesis and for sharing with her many mathematical ideas and insights.\newline
Caterina Consani is partially supported by the Simons Foundation collaboration Grant no. 353677. She would like to thank Coll\`ege de France for some financial support.

\end{acknowledgement}

\end{document}